\setlist{leftmargin=*,itemindent=10pt,align=left,itemsep=0.5\baselineskip}
\newtheorem{lemma}{Lemma}[section]
\newtheorem{theorem}[lemma]{Theorem}
\newtheorem{corollary}[lemma]{Corollary}
\newtheorem{definition}[lemma]{Definition}
\newtheorem{remark}[lemma]{Remark}
\newtheorem{remarks}[lemma]{Remarks}
\newtheorem{proposition}[lemma]{Proposition}
\newcommand{\bfalpha}{{\bm{\alpha}}}
\newcommand{\bflambda}{{\bm{\lambda}}}
\newcommand{\bfmu}{{\bm{\mu}}}
\newcommand{\bfgamma}{{\bm{\gamma}}}
\newcommand{\bfA}{{\bm{A}}}
\newcommand{\bfb}{{\bm{b}}}
\newcommand{\bfB}{{\bm{B}}}
\newcommand{\bfc}{{\bm{c}}}
\newcommand{\bfC}{{\bm{C}}}
\newcommand{\bfD}{{\bm{D}}}
\newcommand{\bfe}{{\bm{e}}}
\newcommand{\bff}{{\bm{f}}}
\newcommand{\bfF}{{\bm{F}}}
\newcommand{\bfg}{{\bm{g}}}
\newcommand{\bfG}{{\bm{G}}}
\newcommand{\bfk}{{\bm{k}}}
\newcommand{\bfK}{{\bm{K}}}
\newcommand{\bfl}{{\bm{\ell}}}
\newcommand{\bfL}{{\bm{L}}}
\newcommand{\bfM}{{\bm{M}}}
\newcommand{\bfn}{{\bm{n}}}
\newcommand{\bfN}{{\bm{N}}}
\newcommand{\bfS}{{\bm{S}}}
\newcommand{\bfT}{{\bm{T}}}
\newcommand{\bfu}{{\bm{u}}}
\newcommand{\bfU}{{\bm{U}}}
\newcommand{\bfv}{{\bm{v}}}
\newcommand{\bfx}{{\bm{x}^\prime}}
\newcommand{\bfz}{{\bm{z}}}
\newcommand{\bfr}{{\bm{r}}}
\newcommand{\bfzero}{{\bm{0}}}
\DeclareMathOperator{\curl}{curl}
\DeclareMathOperator{\Div}{div}
\DeclareMathOperator{\Grad}{grad}
\DeclareMathOperator{\Op}{Op}
\DeclareMathOperator{\ran}{ran}
\newcommand{\nablap}{\nabla^\perp}
\newcommand{\nablac}{\nabla \cdot}
\newcommand{\nablapc}{\nablap\! \cdot}
\newcommand{\In}{\qquad \text{in} \;}
\newcommand{\at}{\qquad \text{at} \; }
\newcommand{\ee}{\mathrm{e}}
\newcommand{\ii}{\mathrm{i}}
\title[A generalised Dirichlet--Neumann operator]{Analytical study of a generalised Dirichlet--Neumann operator and application to three-dimensional water waves on Beltrami flows}
\author{M. D. Groves}
\address{M. D. Groves, Fachrichtung Mathematik, Universit\"at des Saarlandes, Postfach 15 11 50, 66041 Saarbr\"ucken, Germany}
\email{groves@math.uni-sb.de}
\author{D. Nilsson}
\address{D. Nilsson, Centre for Mathematical Sciences, Lund University, P.O. Box 118, 22100 Lund, Sweden}
\email{dag.nilsson@math.lu.se}
\author{S. Pasquali}
\address{S. Pasquali, Universit\'e Paris-Saclay, CNRS, Laboratoire de math\'ematiques d'Orsay, 91405, Orsay, France}
\email{stefano.pasquali@universite-paris-saclay.fr}
\author{E. Wahl\'en}
\address{E. Wahl\'en, Centre for Mathematical Sciences, Lund University, P.O. Box 118, 22100 Lund, Sweden}
\email{erik.wahlen@math.lu.se}
\numberwithin{equation}{section}
\begin{document}
% put \maketitle before the abstract in the class article
%\maketitle
%\listoftodos

\begin{abstract}
We consider three-dimensional doubly periodic steady water waves with vorticity, under the action of gravity
and surface tension; in particular we consider so-called Beltrami flows, for which the velocity field and the vorticity are collinear. 
We adapt a recent formulation of the corresponding problem for localised waves which involves a generalisation of the classical
Dirichlet--Neumann operator. We study this operator in detail, extending some well-known results for the classical Dirichlet--Neumann operator, such as the Taylor expansion in homogeneous powers of the wave profile, the computation of its differential and the asymptotic expansion of its associated symbol. A new formulation of the problem as a single equation for the wave profile is also presented and discussed in a similar vein.
As an application of these results we prove existence of doubly periodic gravity-capillary steady waves and construct approximate doubly periodic gravity steady waves.\\
\emph{Keywords}:  Beltrami flows, vorticity, water waves \\
\emph{MSC2020}: 76B15, 76B45, 47G30
\end{abstract}
% put \maketitle after the abstract in the class amsart
\maketitle
%\tableofcontents

\allowdisplaybreaks

\section{Introduction} \label{sec:intro}

This paper is concerned with three-dimensional doubly periodic steady water waves with vorticity, under the action of gravity and surface tension. Irrotational water waves have been studied extensively, both in two and three dimensions (see the survey paper by Haziot \emph{et al.} \cite{HaziotHurStraussTolandWahlenWalshWheeler22} and references therein); fewer results are available for non-zero vorticity, although it may be significant for modelling the interaction of three-dimensional waves with non-uniform currents. We restrict ourselves to Beltrami fields, in which the velocity field $\bfu$ and the vorticity $\curl \bfu$ are collinear, so that $\curl \bfu = \alpha \bfu$; more precisely, we consider the so-called strong Beltrami fields, for which the proportionality factor $\alpha$ is a constant (this case appears to be the most relevant, since Enciso and Peralta-Salas \cite{EncisoPeraltaSalas16} proved that Beltrami fields with non-constant proportionality factors are `rare' in a topological sense).

The importance of Beltrami fields in the context of ideal fluids, and more precisely in the context of stationary Euler flows, was highlighted by Arnold \cite{Arnold66} and Arnold and Khesin \cite{ArnoldKhesin}: indeed, Arnold's structure theorem ensures that, under suitable technical assumptions, a smooth stationary solution to the three-dimensional Euler equation is either integrable or a Beltrami field. It is thus natural to expect that more complex dynamics (usually associated to turbulent flows in physical literature) in stationary fluids are related to Beltrami fields (see Monchaux \emph{et al.} \cite{MonchauxRaveletDubrulleChiffaudelDaviaud06}). 
The dynamics of Beltrami fields, and in particular the dynamics of the so-called ABC flows, have been numerically studied by H\'{e}non \cite{Henon66} and Dombre \emph{et al.} \cite{DombreFrischGreeneHenonMehrSoward86}. Such studies lead to the conjecture that  Beltrami fields should exhibit chaotic dynamics together with a positive measure set of invariant tori, much like the restriction to an energy level of a typical mechanical system with two degrees of freedom; recently Enciso, Peralta-Salas and Romaniega \cite{EncisoPeraltaSalasRomaniega23} proved that with probability one a random Beltrami field in ${\mathbb R}^3$ exhibits chaotic regions that coexist with invariant tori of complicated topology.

There has recently been some interest in variational formulations of the three-dimensional steady water-wave problem 
with relative velocities given by Beltrami fields. We mention a recent variational formulation by Lokharu and Wahlén \cite{LokharuWahlen19} for doubly periodic waves which is valid under general assumptions on the wave profile (including for example the case of overhanging wave profiles). More recently, Groves and Horn  \cite{GrovesHorn20} gave another variational formulation for localised waves (solitary waves)
under the more classical assumption that the free surface is given by the graph of an unknown function $\eta$ depending only on the horizontal directions. Their formulation, which can be considered as a generalisation of an alternative variational framework for three-dimensional irrotational water waves by Benjamin \cite[\S6.6]{Benjamin84}, is not only more explicit, but it allows one to recover the classical  Zakharov--Craig--Sulem formulation of steady water waves in the irrotational case $\alpha =0$. Moreover, this formulation leads naturally to the definition of a generalised Dirichlet--Neumann operator $H(\eta)$ which reduces to the classical Dirichlet--Neumann operator in the irrotational case.

In this paper we perform an analytical study of the generalised Dirichlet--Neumann operator (whose definition is subtly different in the present context of doubly periodic waves)
and of a related operator appearing in a new single equation formulation of the problem, extending some well-known results for the classical Dirichlet--Neumann operator, such as the Taylor expansion in homogeneous powers of the profile $\eta$ by Craig and Sulem \cite{CraigSulem93}, the computation of its differential by Lannes \cite[\S3.3]{Lannes}, and the asymptotic expansion of its associated symbol (see Alazard and M{\'e}tivier
\cite[\S2.4]{AlazardMetivier09}).

As an application of the above results, we prove the existence of doubly periodic gravity-capillary waves by Lyapunov--Schmidt reduction, recovering a result recently given by Lokharu, Seth and Wahlén \cite{LokharuSethWahlen20}. We also show how the reduction can be formally carried out in the absence of surface tension and thus compute approximate doubly periodic gravity waves in the form of formal power series.
The failure of the Lyapunov--Schmidt reduction for gravity waves is due to the presence of small divisors when attempting to invert the
relevant linear operator. This problem has been overcome for irrotational waves by Iooss and Plotnikov \cite{IoossPlotnikov09,IoossPlotnikov11} using Nash-Moser theory; its treatment for Beltrami flows is deferred to a future article.

\subsection{The hydrodynamic problem} \label{subsec:model}

We consider an incompressible inviscid fluid occupying a three-dimensional domain with flat bottom, under the action of gravity and surface tension. We study steady water waves, namely a fluid flow in which the velocity field and the free-surface profile are stationary with respect to a uniformly translating frame. In this moving frame, the fluid domain can be parametrized by
\begin{align*}
D_\eta &\coloneqq  \{ (\bfx,z) \in \mathbb{R}^2 \times \mathbb{R}\colon -h < z < \eta(\bfx) \},
\end{align*}
so that the free surface is given by the graph of an unknown function $\eta\colon\mathbb{R}^2 \to (-h,\infty)$, and $h>0$ is the depth of the fluid.
We consider a so-called strong Beltrami flow, in which the velocity field $\bfu\colon\overline{D_\eta} \to \mathbb{R}^3$ and the vorticity $\curl \bfu$ are collinear, that is $\curl \bfu = \alpha \bfu$ for some constant $\alpha$. The equations describing the flow are given by
\begin{alignat}{2}
\Div \bfu &= 0 & & \In D_\eta, \label{eq:Solenoidal} \\
\curl \bfu &= \alpha \bfu & & \In D_\eta, \\%\label{eq:Beltrami} \\
\bfu \cdot \bfe_3 &= 0 & & \at z=-h, \label{eq:Imperm} \\
\bfu \cdot \bfn &= 0 & & \at z=\eta, \label{eq:KinFree} \\
\frac{1}{2} |\bfu|^2 + g \eta & -\beta \; \left( \frac{\eta_x}{ (1+|\nabla\eta|^2)^{1/2} } \right)_x - \beta \; \left( \frac{\eta_y}{ (1+|\nabla\eta|^2)^{1/2} }\right)_y = \frac{1}{2} |\bfc|^2 & & \at z=\eta, \label{eq:DynFree}
\end{alignat}
where $\nabla \eta\coloneqq(\eta_x,\eta_y)^T$, $g$ is the acceleration due to gravity, $\beta$ is the coefficient of surface tension, $\bfc\coloneqq(c_1,c_2)^T$ is the wave velocity, $\bfe_3\coloneqq(0,0,1)^T$ and
$$\bfn\coloneqq\frac{1}{1+|\nabla \eta|^2}\bfN, \qquad \bfN\coloneqq(-\eta_x,-\eta_y,1)^T$$
denotes the outward unit normal vector. We discuss doubly periodic solutions to
\eqref{eq:Solenoidal}--\eqref{eq:DynFree}, that is solutions which satisfy
$$\eta(\bfx+\bflambda)=\eta(\bfx), \qquad \bfu(\bfx+\bflambda,z)=\bfu(\bfx,z)$$
for every $\bflambda \in \Lambda$, where $\Lambda$ is the lattice given by
$$
\Lambda \coloneqq  \{ \bflambda = m_1 \bflambda_1 + m_2 \bflambda_2 \colon m_1, m_2 \in \mathbb{Z}\}
$$
for two linearly independent vectors $\bflambda_1$, $\bflambda_2$. The functions $\eta$ and $\bfu$ are thefore defined on the
periodic domains ${\mathbb R}^2/\Lambda$ and (with a slight abuse of notation) $D_\eta/\Lambda$.

A `trivial solution' of \eqref{eq:Solenoidal}--\eqref{eq:DynFree} is given by $(0,\bfu^\star)$, where $\bfu^\star$ is the two-parameter family of laminar flows
\begin{align*} 
\bfu^\star &\coloneqq  c_1 \bfu^{(1)} + c_2 \bfu^{(2)}, \; \; c_1,c_2 \in \mathbb{R}, \\%\label{eq:Trivial} \\
\bfu^{(1)} &\coloneqq  (\cos(\alpha z), -\sin(\alpha z),0)^T, \nonumber \\
\bfu^{(2)} &\coloneqq  (\sin(\alpha z), \cos(\alpha z),0)^T. \nonumber
\end{align*}
We consider solutions $(\eta,\bfu)$ of \eqref{eq:Solenoidal}--\eqref{eq:DynFree} which are small perturbations of $(0,\bfu^\star)$; setting $\bfv= \bfu - \bfu^\star$ and representing the velocity field $\bfv$ by a solenoidal vector potential $\bfA$, we seek solutions $(\eta,\bfA)$ of the equations
\begin{alignat}{2}
\Div \bfA &= 0 & & \In D_\eta, \label{eq:Solenoidal2} \\
\curl \curl \bfA &= \alpha \curl \bfA & & \In D_\eta, \\%\label{eq:Beltrami2} \\
\bfA \times \bfe_3 &= \bfzero & & \at z=-h, \label{eq:Imperm2} \\
\bfA \cdot \bfn &= 0 & & \at z=\eta, \\
\curl \bfA \cdot \bfn + \bfu^\star \cdot \bfn &= 0 & & \at z=\eta, \label{eq:KinFree2} \\
\frac{1}{2} |\curl \bfA|^2 + \curl \bfA \cdot \bfu^\star + g  \eta & -\beta \; \left( \frac{\eta_x}{ (1+|\nabla\eta|^2)^{1/2} } \right)_x - \beta \; \left( \frac{\eta_y}{ (1+|\nabla\eta|^2)^{1/2} }\right)_y = 0 & & \at z=\eta. \label{eq:DynFree2}
\end{alignat}
Note that \eqref{eq:Solenoidal}--\eqref{eq:Imperm} are implied by \eqref{eq:Solenoidal2}--\eqref{eq:Imperm2},
while \eqref{eq:KinFree}, \eqref{eq:DynFree} are equivalent to \eqref{eq:KinFree2}, \eqref{eq:DynFree2};
furthermore $\bfu^\star = \curl \bfA^\star$, where
\begin{align*}
\bfA^\star &\coloneqq  \frac{c_1}{\alpha} \bfA^{(1)} + \frac{c_2}{\alpha} \bfA^{(2)}, \\%\label{eq:TrivialUec} \\
\bfA^{(1)} &\coloneqq  (\cos(\alpha z)-1, -\sin(\alpha z),0)^T, \\
\bfA^{(2)} &\coloneqq  (\sin(\alpha z), \cos(\alpha z)-1,0)^T. 
\end{align*}

\begin{remark}
In the irrotational case $\alpha =0$ we can write $\curl \bfA = \mathrm{grad}\,\varphi$ for a scalar potential $\varphi$, so that
\eqref{eq:Solenoidal2}--\eqref{eq:DynFree2} becomes the classical steady water-wave problem
\begin{alignat*}{2}
\Delta \varphi & =0 & & \In D_\eta, \\
\partial_n \varphi & =0 & & \at z=-h, \\
(1+|\nabla\eta|^2)^\frac{1}{2}\partial_n \varphi & = \bfc \cdot \nabla \eta & & \at z = \eta, \\
\frac{1}{2} |\Grad \varphi|^2 + \bfc \cdot (\varphi_x,\varphi_y)^T + g  \eta & -\beta \; \left( \frac{\eta_x}{ (1+|\nabla\eta|^2)^{1/2} } \right)_x - \beta \; \left( \frac{\eta_y}{ (1+|\nabla\eta|^2)^{1/2} }\right)_y = 0 & & \at z=\eta.
\end{alignat*}
\end{remark}

\subsection{The formulation} \label{subsec:Lagrangian}

Let $\bfF=(F_1,F_2,F_3)^T$ be a three-dimensional vector field, and denote by $\bfF_\mathrm{h}=(F_1,F_2)^T$ its horizontal component and by
$\bfF_{\parallel}=\bfF_\mathrm{h}+F_3\nabla\eta|_{z=\eta}$ the horizontal component of its tangential part at $z=\eta$. Let $\bff=(f_1,f_2)^T$
be a two-dimensional vector field and write $\bff^\perp =(f_2,-f_1)^T$. According to the Hodge--Weyl decomposition for doubly periodic vector fields on $\mathbb{R}^2$ (see Majda and Bertozzi \cite[Proposition 1.18]{MajdaBertozzi}) we have
\begin{equation}
\bff = \bfgamma+\nabla\Phi+\nablap\Psi, \label{eq:HW decomp}
\end{equation}
$$
\bfgamma \coloneqq  \langle \bff \rangle, \quad
\Phi \coloneqq  \Delta^{-1}(\nablac \bff), \quad \Psi\coloneqq \Delta^{-1}(\nablapc \bff),
$$
where
$\langle \bff \rangle$ denotes the mean value of $\bff$ over one periodic cell,
$\nabla \coloneqq  (\partial_x,\partial_y)^T,$ $\nablap \coloneqq  (\partial_y,-\partial_x)^T$
and $\Delta^{-1}$ is the two-dimensional periodic Newtonian potential.

Equations \eqref{eq:Solenoidal2}--\eqref{eq:DynFree2} can
be reformulated in terms of $\eta$ and the mean-value and gradient-potential parts of $(\curl \bfA)_\parallel$
using the following procedure. Fix $\bfgamma$ and $\Phi$, let $\bfA$ be the unique solution of the boundary-value problem
\begin{alignat}{2}
\Div \bfA &= 0 & & \In D_\eta, \label{Intro A BVP 1} \\
\curl \curl \bfA &= \alpha  \curl \bfA & & \In D_\eta, \label{Intro A BVP 2} \\
\bfA \times \bfe_3 &= \bfzero & & \at z=-h, \label{Intro A BVP 3} \\
\bfA \cdot \bfn &= 0 & & \at z=\eta, \label{Intro A BVP 4} \\
(\curl \bfA)_\parallel &= \bfgamma+\nabla\Phi - \alpha \nablap\Delta^{-1}(\nablac \bfA_\parallel^\perp) & & \at z=\eta, \label{Intro A BVP 5}
\end{alignat}
and define  the \emph{generalised Dirichlet--Neumann operator} by the formula
\begin{align} \label{eq:GenDNOp}
H(\eta)(\bfgamma,\Phi) &\coloneqq  \curl \bfA \cdot \bfN|_{z=\eta}=\nablac \bfA_\parallel^\perp.
\end{align}
(Note that $\Psi=\Delta^{-1}(\nablapc (\curl \bfA)_{\parallel})$ is necessarily given by $\Psi = -\alpha\, \Delta^{-1}(\nablac\bfA_\parallel^\perp)$ because
\begin{equation}
\Psi = -\Delta^{-1}(\nablac\curl \bfA^{\perp}_\parallel)=-\Delta^{-1}(\curl\curl \bfA\cdot\bfN\big|_{z=\eta})
=-\alpha\, \Delta^{-1}( \curl \bfA\cdot\bfN\big|_{z=\eta})=- \alpha\, \Delta^{-1} (\nablac\bfA^{\!\perp}_\parallel),
\label{eq:compatibility}
\end{equation}
in which the vector identity $\curl \bfF\cdot\bfN\big|_{z=\eta}=\nablac \bfF^\perp_\parallel$ has been used.)

\begin{proposition}
Equations \eqref{eq:KinFree2} and \eqref{eq:DynFree2} are equivalent to
\begin{align}
& H(\eta)(\bfgamma,\Phi) + \bfu^\star \cdot \bfN|_{z=\eta} =0, \label{eq:GZCS1} \\
& \frac{1}{2} |\bfK(\eta)(\bfgamma,\Phi)|^2\! -\! \frac{ ( H(\eta)(\bfgamma,\Phi) + \bfK(\eta)(\bfgamma,\Phi) \!\cdot\! \nabla\eta)^2 }{2(1+|\nabla\eta|^2)}  \nonumber \\
&\qquad\quad\mbox{} + \bfK(\eta)(\bfgamma,\Phi) \cdot \bfu_\mathrm{h}^\star|_{z=\eta}+ g\eta - \beta \left( \frac{\eta_x}{ (1+|\nabla\eta|^2)^{1/2} } \right)_{\!\!x}\!\!\! - \beta \left( \frac{\eta_y}{ (1+|\nabla\eta|^2)^{1/2} } \right)_{\!\!y}\!\!\!= 0, \label{eq:GZCS2}\hspace{-0.35cm}
\end{align}
where
$$
\bfK(\eta)(\bfgamma,\Phi) \coloneqq  \bfgamma+\nabla\Phi - \alpha  \nablap \Delta^{-1}(H(\eta)(\bfgamma,\Phi)).
$$
\end{proposition}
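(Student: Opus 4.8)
The plan is to unwind the definitions of $H(\eta)$ and $\bfK(\eta)$ and verify that equations \eqref{eq:KinFree2}--\eqref{eq:DynFree2} translate line by line into \eqref{eq:GZCS1}--\eqref{eq:GZCS2}. The starting observation is that, given $\eta$, $\bfgamma$ and $\Phi$, the boundary-value problem \eqref{Intro A BVP 1}--\eqref{Intro A BVP 5} determines $\bfA$ (hence $\curl\bfA$) uniquely, and by construction the horizontal tangential part of $\curl\bfA$ at the surface is
$$(\curl\bfA)_\parallel = \bfgamma + \nabla\Phi - \alpha\nablap\Delta^{-1}(\nablac\bfA_\parallel^\perp) = \bfgamma + \nabla\Phi - \alpha\nablap\Delta^{-1}(H(\eta)(\bfgamma,\Phi)) = \bfK(\eta)(\bfgamma,\Phi),$$
using \eqref{eq:GenDNOp} and the identity $\curl\bfF\cdot\bfN|_{z=\eta}=\nablac\bfF^\perp_\parallel$ recorded after \eqref{eq:compatibility}. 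Thus $\bfK(\eta)(\bfgamma,\Phi)$ is literally the horizontal tangential component of the relative velocity $\curl\bfA$ at the free surface, and $H(\eta)(\bfgamma,\Phi)=\curl\bfA\cdot\bfN|_{z=\eta}$ is its normal component against $\bfN$. The point of the proposition is that these two pieces of data, together with $\eta$, reconstruct $\curl\bfA|_{z=\eta}$ completely.

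The kinematic equation is immediate: \eqref{eq:KinFree2} reads $\curl\bfA\cdot\bfn + \bfu^\star\cdot\bfn = 0$ at $z=\eta$, and since $\bfn = \bfN/(1+|\nabla\eta|^2)$ this is equivalent to $\curl\bfA\cdot\bfN + \bfu^\star\cdot\bfN = 0$, i.e. to $H(\eta)(\bfgamma,\Phi) + \bfu^\star\cdot\bfN|_{z=\eta} = 0$, which is \eqref{eq:GZCS1}. For the dynamic equation one must express $\tfrac12|\curl\bfA|^2 + \curl\bfA\cdot\bfu^\star$ at $z=\eta$ in terms of $H$ and $\bfK$. First I would decompose an arbitrary vector $\bfF$ evaluated at the surface into its component along $\bfN$ and its tangential part: writing $\bfF_\mathrm{h}$ for the horizontal part and $F_3$ for the vertical component, one has $\bfF\cdot\bfN = F_3 - \bfF_\mathrm{h}\cdot\nabla\eta$ and $\bfF_\parallel = \bfF_\mathrm{h} + F_3\nabla\eta$, and a short computation gives the orthogonal decomposition
$$|\bfF|^2 = \frac{(\bfF\cdot\bfN)^2}{1+|\nabla\eta|^2} + \frac{|\bfF_\parallel - (\bfF\cdot\bfN)\nabla\eta|^2}{1+|\nabla\eta|^2}$$
(the two vectors $\bfN$ and a suitable tangential vector being orthogonal). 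Applying this with $\bfF = \curl\bfA$, using $\curl\bfA\cdot\bfN = H$ and $(\curl\bfA)_\parallel = \bfK$, and simplifying $|\bfK - H\nabla\eta|^2 = |\bfK|^2 - 2H\,\bfK\cdot\nabla\eta + H^2|\nabla\eta|^2$, one obtains
$$\tfrac12|\curl\bfA|^2\big|_{z=\eta} = \tfrac12|\bfK|^2 - \frac{(H + \bfK\cdot\nabla\eta)^2}{2(1+|\nabla\eta|^2)} + \tfrac12 H^2,$$
after collecting terms; and since \eqref{eq:GZCS1} says $H = -\bfu^\star\cdot\bfN$, the extra $\tfrac12 H^2$ will combine with the cross term $\curl\bfA\cdot\bfu^\star$. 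For that cross term I would write $\curl\bfA\cdot\bfu^\star = (\curl\bfA)_\parallel\cdot\bfu^\star_\mathrm{h} + (\curl\bfA\cdot\bfN)(\bfu^\star\cdot\bfN)/\text{(something)}$; more cleanly, using the surface decomposition $\bfG = \bfG_\parallel/(1+|\nabla\eta|^2)\cdot(\text{tangential directions}) \oplus (\bfG\cdot\bfn)\bfN$ and the facts that $\bfu^\star$ is horizontal (so $u^\star_3 = 0$, hence $\bfu^\star\cdot\bfN = -\bfu^\star_\mathrm{h}\cdot\nabla\eta$ and $\bfu^\star_\parallel = \bfu^\star_\mathrm{h}$), one finds $\curl\bfA\cdot\bfu^\star|_{z=\eta} = \bfK\cdot\bfu^\star_\mathrm{h} + H(\bfu^\star\cdot\bfN)/(1+|\nabla\eta|^2)\cdot$ correction terms. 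I expect that when \eqref{eq:GZCS1} is imposed the $H$-dependent remainder from $\tfrac12|\curl\bfA|^2$ exactly cancels the $H$-dependent remainder from $\curl\bfA\cdot\bfu^\star$, leaving $\tfrac12|\bfK|^2 - (H+\bfK\cdot\nabla\eta)^2/(2(1+|\nabla\eta|^2)) + \bfK\cdot\bfu^\star_\mathrm{h}$, which is precisely the velocity-dependent part of \eqref{eq:GZCS2}. The surface-tension and gravity terms are identical on both sides and require no manipulation.

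The main obstacle is the bookkeeping in the dynamic equation: one has to be careful that the decomposition of $|\curl\bfA|^2$ and of $\curl\bfA\cdot\bfu^\star$ into tangential and normal parts is done with respect to the same (non-orthonormal) frame, and that the $H^2$ and $H\,\bfu^\star\cdot\bfN$ contributions are tracked correctly so that invoking \eqref{eq:GZCS1} produces the stated cancellation. It is worth noting that the equivalence is asserted under the standing assumption that \eqref{eq:GZCS1} holds (equivalently \eqref{eq:KinFree2}), so one is free to use $H = -\bfu^\star\cdot\bfN|_{z=\eta}$ throughout the reduction of \eqref{eq:DynFree2}; without that identity the two extra terms would not combine. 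A secondary point to check is the compatibility relation \eqref{eq:compatibility}, which guarantees that the $\nablap$-part $\Psi$ of $(\curl\bfA)_\parallel$ is automatically $-\alpha\Delta^{-1}(H(\eta)(\bfgamma,\Phi))$ and hence that $\bfK$ as defined really equals $(\curl\bfA)_\parallel$ — but this has already been established in the text preceding the proposition, so it may simply be cited.
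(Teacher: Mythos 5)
Your overall strategy --- identify $\bfK(\eta)(\bfgamma,\Phi)$ with $(\curl\bfA)_\parallel|_{z=\eta}$ via the boundary condition \eqref{Intro A BVP 5} and the compatibility relation \eqref{eq:compatibility}, dispose of \eqref{eq:KinFree2} immediately since $\bfn$ is a positive multiple of $\bfN$, and then rewrite $\tfrac12|\curl \bfA|^2+\curl\bfA\cdot\bfu^\star$ at the surface in terms of $H$ and $\bfK$ --- is the right one, and is presumably the ``elementary calculation'' the paper alludes to (no proof is printed). The kinematic half is fine. The dynamic half as written, however, contains a genuine error. The decomposition
$$|\bfF|^2 = \frac{(\bfF\cdot\bfN)^2}{1+|\nabla\eta|^2} + \frac{|\bfF_\parallel - (\bfF\cdot\bfN)\nabla\eta|^2}{1+|\nabla\eta|^2}$$
is false: $\bfF_\parallel=\bfF_\mathrm{h}+F_3\nabla\eta$ is \emph{not} an orthogonal tangential projection of $\bfF$ (it is the covariant pull-back of $\bfF$ to the surface, cf.\ $(\Grad\phi)_\parallel=\nabla(\phi|_{z=\eta})$), so no Pythagoras-type identity of this form holds. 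Test it with $\nabla\eta=(1,0)^T$ and $\bfF=(1,0,0)^T$: the left-hand side is $1$, the right-hand side is $\tfrac12+\tfrac12|(2,0)^T|^2=\tfrac52$. Consequently your intermediate identity $\tfrac12|\curl\bfA|^2=\tfrac12|\bfK|^2-\tfrac{(H+\bfK\cdot\nabla\eta)^2}{2(1+|\nabla\eta|^2)}+\tfrac12 H^2$ is also false (and is not even what your own decomposition would yield), and the step where ``the extra $\tfrac12 H^2$ combines with the cross term'' is only asserted (``I expect that\ldots'') and cannot go through as stated.

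The correct computation is in fact shorter. Writing $\bfF=\curl\bfA|_{z=\eta}$, the two relations $H=\bfF\cdot\bfN=F_3-\bfF_\mathrm{h}\cdot\nabla\eta$ and $\bfK=\bfF_\parallel=\bfF_\mathrm{h}+F_3\nabla\eta$ are solved by
$$F_3=\frac{H+\bfK\cdot\nabla\eta}{1+|\nabla\eta|^2},\qquad \bfF_\mathrm{h}=\bfK-F_3\nabla\eta,$$
whence
$$\tfrac12|\bfF|^2=\tfrac12|\bfF_\mathrm{h}|^2+\tfrac12F_3^2=\tfrac12|\bfK|^2-F_3\,\bfK\cdot\nabla\eta+\tfrac12F_3^2(1+|\nabla\eta|^2)
=\tfrac12|\bfK|^2-\frac{(H+\bfK\cdot\nabla\eta)^2}{2(1+|\nabla\eta|^2)}+HF_3;$$
the extra term is $HF_3$, not $\tfrac12H^2$. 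Since $\bfu^\star$ is horizontal, $\bfu^\star\cdot\bfN=-\bfu_\mathrm{h}^\star\cdot\nabla\eta$, so
$$\bfF\cdot\bfu^\star=\bfF_\mathrm{h}\cdot\bfu_\mathrm{h}^\star=\bfK\cdot\bfu_\mathrm{h}^\star-F_3\,\nabla\eta\cdot\bfu_\mathrm{h}^\star=\bfK\cdot\bfu_\mathrm{h}^\star+F_3\,(\bfu^\star\cdot\bfN)=\bfK\cdot\bfu_\mathrm{h}^\star-HF_3$$
once \eqref{eq:GZCS1} is imposed, and the two $HF_3$ terms cancel, yielding exactly \eqref{eq:GZCS2}. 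Replace your decomposition step by this substitution and carry out the cross-term cancellation explicitly rather than leaving it as an expectation; the rest of your argument (in particular citing \eqref{eq:compatibility} for $\bfK=(\curl\bfA)_\parallel$) can stand.
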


This proposition, which is established by an elementary calculation, shows that the mathematical problem reduces to
solving \eqref{eq:GZCS1} and \eqref{eq:GZCS2} for $\eta$ and $\Phi$ (with an arbitrary choice of $\bfgamma$); the velocity field $\bfv=\curl \bfA$ is recovered
by solving \eqref{Intro A BVP 1}--\eqref{Intro A BVP 5}. The method was first given in the context of solitary waves
(with a slightly different Hodge--Weyl decomposition for spatially extended functions) by Groves and Horn \cite{GrovesHorn20}; note however the
spurious extra term in the statement of the equations in that reference.

\begin{remark}
In the irrotational case $\alpha =0$ one finds that $\curl \bfA = \mathrm{grad}\,\varphi$, where $\varphi$ is the unique harmonic function such that $\varphi_n|_{z=-h}=0$ and $\varphi|_{z=\eta}=\Phi$,
so that $\bfgamma=\bfzero$ (because $(\Grad \phi)_\parallel = \nabla (\phi|_{z=\eta})$) and
\begin{align*}
H(\eta)(\bfzero,\Phi) &= \nabla \varphi \cdot \bfN|_{z=\eta} = G(\eta)\Phi,
\end{align*}
where $G(\eta)$ is the classical Dirichlet--Neumann operator. Furthermore, 
equations \eqref{eq:GZCS1}, \eqref{eq:GZCS2} reduce to
\begin{align*}
& G(\eta)\Phi +\bfc\cdot\nabla\eta=0, \\
& \tfrac{1}{2}|\nabla\Phi|^2 -\frac{(G(\eta)\Phi+\nabla\eta\cdot\nabla\Phi)^2}{2(1+|\nabla\eta|^2)} 
-\bfc\cdot\nabla \Phi
+g\eta- \beta\!\left(\frac{\eta_x}{(1+|\nabla\eta|^2)^\frac{1}{2}}\right)_{\!\!\!x}-\beta\!\left(\frac{\eta_z}{(1+|\nabla\eta|^2)^{1/2}\frac{1}{2}}\right)_{\!\!\!z}= 0,
\end{align*}
so that we recover the Zakharov--Craig--Sulem formulation of the steady water-wave problem (see Zakharov \cite{Zakharov68} and Craig and Sulem \cite{CraigSulem93}).
\end{remark}

We proceed by specialising to $\bfgamma=\bfzero$, writing $\bfc=\bfc_0+\bfmu$, where $\bfc_0=(c_{10},c_{20})^T$ is a reference wave velocity to be chosen later, so that
$$\bfu^\star = (c_{10}+\mu_1) \bfu^{(1)} +  (c_{20}+\mu_2) \bfu^{(2)},$$
and reducing equations \eqref{eq:GZCS1}, \eqref{eq:GZCS2} to a single equation for $\eta$ (see Oliveras and Vasan \cite{OliverasVasan13} for a derivation of the corresponding single-equation formulation for
irrotational water waves). Eliminating $\Phi$ from \eqref{eq:GZCS2} using \eqref{eq:GZCS1}, we find that
$$
J(\eta,\bfmu) \coloneqq   \frac{1}{2} |\bfT(\eta)|^2 - \frac{ ( -\underline{\bfu}^\star \cdot \bfN + \bfT(\eta) \cdot \nabla\eta)^2 }{2(1+|\nabla\eta|^2)}
+ \bfT(\eta) \cdot \underline{\bfu}_\mathrm{h}^\star + g\eta- \beta \left( \frac{\eta_x}{ (1+|\nabla\eta|^2)^{1/2} } \right)_{x} -\beta \left( \frac{\eta_y}{ (1+|\nabla\eta|^2)^{1/2} } \right)_{y}  = 0,
$$
where
$$
\bfT(\eta) \coloneqq  - \nabla \left( H(\eta)(\bfzero,\cdot)^{-1} (\underline{\bfu}^\star \cdot \bfN) \right) + \alpha \, \nablap \Delta^{-1} (\underline{\bfu}^\star \cdot \bfN)
$$
and the underscore denotes evaluation at $z=\eta$.

\begin{remark} \label{rem:symmetries}
Let $S_0$ be the reflection
\begin{equation*}
S_0\eta(\bfx)\coloneqq \eta(-\bfx),
\end{equation*}
and $T_{\bfv^\prime}$ be the translation
$$T_{\bfv^\prime}\eta(\bfx)\coloneqq \eta(\bfx+\bfv^\prime).$$
The mapping $ J$ is equivariant with respect to both $S_0$ and $T_{\bfv^\prime}$, that is
$$
  J(T_{\bfv^\prime}\eta,\bfmu)=T_{\bfv^\prime} J(\eta,\bfmu),\qquad
  J(S_0\eta,\bfmu)=S_0 J(\eta,\bfmu).
$$
\end{remark}

The operator $\bfT(\eta)$ can be defined more rigorously in terms of a boundary-value problem. Noting that\linebreak
$\underline{\bfu}^\star \cdot \bfN = \nablac \bfS(\eta)^\perp$, where
\begin{align}
\bfS(\eta) &\coloneqq  \frac{c_1}{\alpha} 
\begin{pmatrix}
\cos (\alpha \, \eta) -1 \\
-\sin (\alpha \, \eta)
\end{pmatrix}
+\frac{c_2}{\alpha} 
\begin{pmatrix}
\sin (\alpha \, \eta) \\
\cos (\alpha \, \eta) -1
\end{pmatrix},
\end{align}
we can define
$$
\bfT(\eta) \coloneqq  \bfM(\eta)(\bfzero, \bfS(\eta)), 
$$
where
$$
\bfM(\eta)(\bfgamma,\bfg) \coloneqq  -(\curl \bfB)_{\parallel}, \label{eq:MOpDef}
$$
and $\bfB$ solves the boundary-value problem
\begin{alignat}{2}
\curl \curl \bfB &= \alpha\curl \bfB & & \In D_\eta, \label{Intro B BVP 1} \\
\Div \bfB &= 0 & & \In D_\eta, \label{Intro B BVP 2} \\
\bfB \times \bfe_{3} &= \bfzero & & \at z=-h, \label{Intro B BVP 3} \\
\bfB \cdot \bfn &= 0 & & \at z=\eta, \label{Intro B BVP 4} \\
\nablac \bfB_{\parallel}^{\perp} &= \nablac \bfg^\perp & & \at z=\eta, \label{Intro B BVP 5} \\
\langle (\curl \bfB)_\parallel \rangle & = \bfgamma. \label{Intro B BVP 6}
\end{alignat}
Any solution to this boundary-value problem satisfies
$$(\curl \bfB)_\parallel = \bfgamma+\nabla \Phi - \alpha \nablap \Delta^{-1}(\nablac \bfB_\parallel^\perp)$$
 for some $\Phi$ (see equation \eqref{eq:compatibility}), so that $\Phi=H(\eta)(\bfgamma,\cdot)^{-1}\nablac \bfg^\perp$
and
$$-(\curl \bfB)_{\parallel}=-\bfgamma - \nabla (H(\eta)(\bfgamma,\cdot)^{-1}\nablac \bfg^\perp) + \alpha \nablap \Delta^{-1}(\nablac \bfg^\perp).$$

A rigorous treatment of the boundary-value problems \eqref{Intro A BVP 1}--\eqref{Intro A BVP 5} and
\eqref{Intro B BVP 1}--\eqref{Intro B BVP 6} is given in Section \ref{sec:WeakStrong} using a traditional weak/strong-solution approach.

\subsection{Analytical results for the operators $H$ and $\bfM$} \label{subsec:GenDNOp}

We write functions $f\colon {\mathbb R}^2/\Lambda \rightarrow {\mathbb R}$ as Fourier series
\begin{equation*}
f(\bfx)=\sum_{\bfk\in\Lambda^\prime}\hat{f}_{\bfk}\ee^{\ii\bfk\cdot\bfx},
\end{equation*}
where $\Lambda^\prime$ is the dual lattice to $\Lambda$; the Fourier coefficients $\hat{f}_\bfk$ are given by
\begin{align*}
\hat{f}_\bfk &= \frac{1}{|\Omega|} \int_\Omega f(\bfx) \ee^{-\ii\bfk \cdot \bfx} \; \mathrm{d}\bfx,
\end{align*}
where $\Omega$ is the parallelogram built with $\bflambda_1$, $\bflambda_2$. We write $\bfk=(k_1,k_2)^T$ and
work in the Sobolev spaces
$$H^s({\mathbb R}^2/\Lambda)
\coloneqq
\left\{f \in L^2({\mathbb R}^2/\Lambda)\colon
\| f \|_s^2 \coloneqq  \sum_{\bfk \in \Lambda^\prime} \left( 1 + |\bfk|^2 \right)^{s} |\hat f_\bfk|^2 < \infty\right\}, \qquad s \geq 0,
$$
and their subspaces
$$\mathring{H}^s({\mathbb R}^2/\Lambda)\coloneqq \{f \in H^s({\mathbb R}^2)\colon \hat{f}_{\bfzero}=0\}$$
of functions with zero mean, noting that the Hodge--Weyl decomposition \eqref{eq:HW decomp} of a function $\bff \in H^s({\mathbb R}^2/\Lambda)^2$ is given by
\begin{align*}
{\mathbb R}^2 \ni \bfgamma &= (\hat{f}_{1\bfzero},\hat{f}_{2\bfzero})^T, \\
 \mathring{H}^{s+1}({\mathbb R}^2/\Lambda)^2 \ni \Phi &= -\sum_{\bfk\in\Lambda^\prime \atop \bfk\neq \bfzero}
 \left(\frac{\ii k_1 \hat{f}_{1\bfk}+\ii k_2 \hat{f}_{2\bfk}}{|\bfk|^2}\right)\ee^{\ii\bfk\cdot\bfx}, \\
\mathring{H}^{s+1}({\mathbb R}^2/\Lambda)^2 \ni \Psi &= -\sum_{\bfk\in\Lambda^\prime \atop \bfk\neq \bfzero}
\left(\frac{\ii k_2 \hat{f}_{1\bfk} - \ii k_1 \hat{f}_{2\bfk}}{|\bfk|^2}\right)\ee^{\ii\bfk\cdot\bfx}.
\end{align*}

In Section \ref{Analflat} we show that the solutions to the boundary-value problems \eqref{Intro A BVP 1}--\eqref{Intro A BVP 5} and \eqref{Intro B BVP 1}--\eqref{Intro B BVP 6}
depend analytically upon $\eta$ and use this result to deduce that the same is true of $H(\eta)$ and $\bfM(\eta)$.
We proceed by `flattening' the fluid domain by means
of the transformation $\Sigma\colon D_0 \to D_\eta$ given by
$$
\Sigma\colon(\bfx,v) \mapsto (\bfx,v+\sigma(\bfx,v)), \qquad \sigma(\bfx,v)\coloneqq  \eta(\bfx)(1+v/h)
$$
which transforms the boundary-value problems for $\bfA$ and $\bfB$ into equivalent problems
for $\tilde{\bfA}\coloneqq \bfA \circ \Sigma$ and $\tilde{\bfB}\coloneqq  \bfB \circ \Sigma$ in the fixed domain $D_0$ (equations \eqref{Flattened A BVP 1}--\eqref{Flattened A BVP 5} and
\eqref{Flattened B BVP 1}--\eqref{Flattened B BVP 6} respectively).
The spatially extended version of the boundary-value problem for $\tilde{\bfA}$
was studied by Groves and Horn \cite[\S4]{GrovesHorn20} under the following non-resonance condition.
\begin{itemize}
\item[(NR)]
The restrictions
$$\begin{cases}
|\bfk| \neq |\alpha|, \\[2mm]
h \sqrt{\alpha^2-|\bfk|^2} \notin \frac{\pi}{2} \, \mathbb{N} , & \text{if} \; |\bfk| < |\alpha|, 
\end{cases}$$
hold for each $\bfk \in \Lambda^\prime$.
\end{itemize}

Their analysis in the present context leads to the first statement in the following theorem; the second is deduced from it.
Condition (NR) is a blanket hypothesis in Sections \ref{sec:diffNHOp}, \ref{sec:Taylor}, \ref{sec:asympexpNHOp} and \ref{sec:approx}, which rely upon these theorems.

\begin{theorem} \label{thm:analHM}
Suppose that $s \geq 2$, and assume that the non-resonance condition (NR) holds.
There exists an open neighbourhood $U$ of the origin in $H^{s+\frac{1}{2}}({\mathbb R}^2/\Lambda)$ such that
\begin{itemize}
\item[(i)]
the boundary-value problem \eqref{Flattened A BVP 1}--\eqref{Flattened A BVP 5}
has a unique solution $\tilde{\bfA}=\tilde{\bfA}(\eta,\bfgamma,\Phi)$ in $H^s(D_0/\Lambda)^3$ which depends analytically upon
$\eta \in U$, $\bfgamma \in {\mathbb R}^2$ and $\Phi \in \mathring{H}^{s-\frac{1}{2}}({\mathbb R}^2/\Lambda)$ (and linearly upon $(\bfgamma,\Phi)$);
\item[(ii)]
the boundary-value problem \eqref{Flattened B BVP 1}--\eqref{Flattened B BVP 6} has a unique solution
$\tilde{\bfB}=\tilde{\bfB}(\eta,\bfgamma,\bfg)$ in $H^s(D_0/\Lambda)^3$ which depends analytically upon
$\eta \in U$ and $\bfg \in H^{s-\frac{3}{2}}({\mathbb R}^2)^2$ (and linearly upon $(\bfgamma,\bfg)$).
\end{itemize}
\end{theorem}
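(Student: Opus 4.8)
The plan is to recast each of the two boundary-value problems, after the change of variables $\Sigma$, as a linear operator equation of the form $\mathcal L(\eta)\tilde{\bfA}=\ell_{\mathrm A}(\bfgamma,\Phi)$, respectively $\mathcal L_{\mathrm B}(\eta)\tilde{\bfB}=\ell_{\mathrm B}(\bfgamma,\bfg)$, posed between fixed Sobolev spaces over the fixed domain $D_0/\Lambda$, and then to invert $\mathcal L(\eta)$ (and $\mathcal L_{\mathrm B}(\eta)$) by perturbing off $\eta=0$. First I would write the flattened systems \eqref{Flattened A BVP 1}--\eqref{Flattened A BVP 5} and \eqref{Flattened B BVP 1}--\eqref{Flattened B BVP 6} out explicitly. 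Since the Jacobian of $\Sigma$ equals $1+\sigma_v=1+\eta/h$, which stays bounded away from zero for $\eta$ small, the pulled-back operators $\curl$, $\curl \curl$ and $\Div$ acquire coefficients that are rational in $\eta$, $\nabla\eta$ with denominators that are powers of $1+\eta/h$; the coefficients involving $\eta_{xx}$ arise only from commuting a derivative past a first-order coefficient, so they multiply first derivatives of $\tilde{\bfA}$ alone, which for $s\ge 2$ keeps them within the admissible range of the Sobolev multiplication estimates on $D_0/\Lambda$ (the estimates used in \cite{GrovesHorn20}). Assembling the interior equations, the (unchanged) bottom conditions and the pulled-back top conditions yields a bounded operator $\mathcal L(\eta)\colon H^s(D_0/\Lambda)^3\to Y$, where $Y$ is the corresponding product of an interior space with trace spaces, and the right-hand side $\ell_{\mathrm A}(\bfgamma,\Phi)$ is manifestly linear in $(\bfgamma,\Phi)$, with $\bfgamma\in\mathbb R^2$ entering as the $\bfk=\bfzero$ trace datum and $\nabla\Phi$, $\Phi\in\mathring H^{s-\frac{1}{2}}$, as the remaining trace datum; the fixed nonlocal operators $\nablap\Delta^{-1}\nablac$ and $\Delta^{-1}$ appearing in \eqref{Intro A BVP 5} are Fourier multipliers of order $\le 0$ and cause no difficulty.

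Next I would show that $\eta\mapsto\mathcal L(\eta)$ is analytic from a neighbourhood of $0$ in $H^{s+\frac{1}{2}}(\mathbb R^2/\Lambda)$ into $\mathcal B(H^s(D_0/\Lambda)^3,Y)$: the coefficient maps are finite compositions and products of the analytic scalar map $\tau\mapsto(1+\tau)^{-1}$ with multiplication operators, and $s\ge 2$ makes $H^{s+\frac{1}{2}}(\mathbb R^2/\Lambda)$ and $H^{s-\frac{1}{2}}(\mathbb R^2/\Lambda)$ act boundedly on the factors that occur, so the products of these coefficients with the fixed differential and multiplier operators depend analytically on $\eta$. The crux is then to prove that $\mathcal L(0)$ is an isomorphism. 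At $\eta=0$ the system is constant-coefficient and therefore decouples over $\bfk\in\Lambda^\prime$, reducing for each $\bfk$ to a two-point boundary-value problem on $(-h,0)$ for a linear ODE system in $v$; using $\curl \curl=-\Delta$ on solenoidal fields one finds that the solutions behave like $\ee^{\pm v\sqrt{|\bfk|^2-\alpha^2}}$ when $|\bfk|>|\alpha|$ and like $\ee^{\pm\ii v\sqrt{\alpha^2-|\bfk|^2}}$ when $|\bfk|<|\alpha|$, and the solvability of the two-point problem is governed by a $2\times2$ determinant that vanishes precisely in the excluded cases $|\bfk|=|\alpha|$ and $h\sqrt{\alpha^2-|\bfk|^2}\in\frac{\pi}{2}\, \mathbb{N}$. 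This is exactly the computation of Groves and Horn \cite[\S4]{GrovesHorn20}, now carried out on the discrete set $\Lambda^\prime$ rather than on $\mathbb R^2$; under (NR) each mode-wise problem has a unique solution, and I would track the $\bfk$-dependence of the resulting bounds carefully enough — attending to the finitely many modes with $|\bfk|<|\alpha|$ as well as to the large-$|\bfk|$ asymptotics — to sum them against the weights $(1+|\bfk|^2)^s$ and conclude $\mathcal L(0)^{-1}\in\mathcal B(Y,H^s(D_0/\Lambda)^3)$.

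Since operator inversion is analytic on the open set of isomorphisms, $\eta\mapsto\mathcal L(\eta)^{-1}$ is then analytic on a (possibly smaller) neighbourhood $U$ of the origin, so $\tilde{\bfA}(\eta,\bfgamma,\Phi)\coloneqq\mathcal L(\eta)^{-1}\ell_{\mathrm A}(\bfgamma,\Phi)$ is the unique solution in $H^s(D_0/\Lambda)^3$, is analytic in $\eta\in U$, and is linear in $(\bfgamma,\Phi)$ because $\ell_{\mathrm A}$ is (joint analyticity in $(\eta,\bfgamma,\Phi)$ following by composing the analytic map $\eta\mapsto\mathcal L(\eta)^{-1}$ with evaluation, which is bounded bilinear); uniqueness is just injectivity of $\mathcal L(\eta)$. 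This proves part (i). Part (ii) follows by running the same argument on \eqref{Flattened B BVP 1}--\eqref{Flattened B BVP 6}: the only structural difference is that the trace condition \eqref{Intro B BVP 5} now prescribes just the scalar quantity $\nablac\bfB_\parallel^\perp$, while the mean-value condition \eqref{Intro B BVP 6} supplies the complementary datum $\bfgamma$, so that together they remove the residual constant ambiguity of $\bfB$ in the $\bfk=\bfzero$ sector; one checks exactly as before that the resulting $\mathcal L_{\mathrm B}(0)$ is an isomorphism under (NR), with $Y$ adjusted accordingly. I expect the main obstacle to be precisely the mode-by-mode constant-coefficient analysis at $\eta=0$: solving the vector two-point problem uniformly in $\bfk\in\Lambda^\prime$ and extracting bounds that degrade only polynomially in $|\bfk|$, so that the Sobolev sums close, is the one genuinely technical ingredient — everything else being a routine analytic perturbation argument.
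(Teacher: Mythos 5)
Your argument for part (i) is essentially the paper's: the paper does not reprove this part but defers to the analysis of Groves and Horn \cite[\S4]{GrovesHorn20} (Theorem \ref{tildeAisanal}), which is exactly the scheme you describe — flatten, absorb the nonlocal trace term into the operator, observe that at $\eta=0$ the problem decouples over $\bfk\in\Lambda'$ into two-point ODE problems whose solvability is governed by the dichotomy $|\bfk|\gtrless|\alpha|$ (whence condition (NR) and the functions $\mathtt{c}$, $\mathtt{t}$), and conclude by analytic perturbation of the inverse. For part (ii), however, you take a genuinely different route. You propose to rerun the whole Lax--Milgram/perturbation machinery on the $\tilde\bfB$-problem with its mixed boundary data ($\nablac\tilde\bfB_\parallel^\perp$ prescribed on the trace of $\tilde\bfB$ itself, plus the mean-value condition on $(\curl^\sigma\tilde\bfB)_\parallel$). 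The paper instead \emph{derives} (ii) from (i): it forms $V(\eta)(\bfgamma,\Phi)=(\bfgamma,H(\eta)(\bfgamma,\Phi))$, checks that $V(0)=(\bfgamma, D^2\mathtt{t}(D)\Phi)$ is an isomorphism ${\mathbb R}^2\times\mathring H^{s-\frac12}\to{\mathbb R}^2\times\mathring H^{s-\frac32}$ because $|\bfk|^2\mathtt t(|\bfk|)\sim|\bfk|$, inverts $V(\eta)$ analytically for $\eta$ near $0$, and then sets $\tilde\bfB(\eta,\bfgamma,\bfg):=\tilde\bfA(\eta,\bfgamma,\Phi)$ with $\Phi=W_2(\eta)(\bfgamma,\nablac\bfg^\perp)$; uniqueness follows because the compatibility identity \eqref{eq:compatibility} forces the orthogonal-gradient part of $(\curl^\sigma\tilde\bfB)_\parallel$, so any solution of the $\tilde\bfB$-problem is of this form. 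Your approach would work but duplicates the hard mode-by-mode analysis for a second boundary-value problem; the paper's reduction buys, in one stroke, the solvability of the $\tilde\bfB$-problem, the invertibility of $(\bfgamma,\Phi)\mapsto(\bfgamma,H(\eta)(\bfgamma,\Phi))$ (which is needed anyway to define $\bfT(\eta)$ and the single-equation formulation), and the closed formula $\bfM(\eta)(\bfgamma,\bfg)=-\bfgamma-\nabla\Phi+\alpha\nablap\Delta^{-1}(\nablac\bfg^\perp)$ used throughout the later sections. If you pursue your direct route you should at least note that the invertibility of $H(\eta)(\bfgamma,\cdot)$ still has to be established separately for the later applications.
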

The analyticity of $H$, $\bfM$ and $\bfT$ follows from Theorem \ref{thm:analHM} and the facts that
\begin{equation}
H(\eta)(\bfgamma,\Phi)=\nablac \tilde\bfA_\parallel^\perp,
\qquad
\bfM(\eta)(\bfgamma,\bfg)=-(\curl^\sigma \tilde\bfB)_\parallel,
\quad
\label{Flat ops}
\end{equation}
and $\bfT(\eta)=\bfM(\eta)(\bfzero,\bfS(\eta))$, where
$$
\curl^\sigma \tilde{\bfB}(\bfx,v) \coloneqq (\curl \, \bfB)\circ\Sigma(\bfx,v).
$$

\begin{theorem} \label{thm:analGDNO}
Suppose that $s \geq 2$, and assume that the non-resonance condition (NR) holds.
There exists an open neighbourhood $U$ of the origin in $H^{s+\frac{1}{2}}(\mathbb{R}^2/\Lambda)$ such that
$\eta \mapsto H(\eta)$, $\eta \mapsto \bfM(\eta)$ and $\eta \mapsto \bfT(\eta)$ are analytic mappings $U \to L({\mathbb R}^2 \times \mathring{H}^{s-\frac{1}{2}}({\mathbb R}^2/\Lambda),\mathring{H}^{s-\frac{3}{2}}({\mathbb R}^2/\Lambda))$,
$U \to L({\mathbb R}^2 \times H^{s-\frac{1}{2}}(\mathbb{R}^2/\Lambda)^2, H^{s-\frac{3}{2}}(\mathbb{R}^2/\Lambda)^2)$ and
$U \to H^{s-\frac{3}{2}}(\mathbb{R}^2/\Lambda)^2$ respectively.
\end{theorem}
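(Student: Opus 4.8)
The plan is to deduce Theorem~\ref{thm:analGDNO} from Theorem~\ref{thm:analHM} by composing the analytic solution maps $\eta\mapsto\tilde\bfA$, $\eta\mapsto\tilde\bfB$ with fixed bounded linear operators, together with the (analytic) auxiliary map $\eta\mapsto\bfS(\eta)$. The key observation is that in the flattened coordinates the operators are given by the explicit formulas \eqref{Flat ops}, namely $H(\eta)(\bfgamma,\Phi)=\nablac\tilde\bfA_\parallel^\perp$ and $\bfM(\eta)(\bfgamma,\bfg)=-(\curl^\sigma\tilde\bfB)_\parallel$, where the trace at $z=\eta$ of the original problem corresponds to the trace at $v=0$ of the flattened problem. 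So the structure of the argument is: (1) establish the analyticity of each building block; (2) check the mapping properties (Sobolev indices); (3) compose.

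\medskip\noindent\textbf{Step 1: Analyticity of $\eta\mapsto H(\eta)$.} By Theorem~\ref{thm:analHM}(i), the map $(\eta,\bfgamma,\Phi)\mapsto\tilde\bfA(\eta,\bfgamma,\Phi)\in H^s(D_0/\Lambda)^3$ is analytic in $\eta\in U$ and linear in $(\bfgamma,\Phi)$. The operator $H(\eta)$ is obtained by applying to $\tilde\bfA$ the trace operator at $v=0$ (which maps $H^s(D_0/\Lambda)\to H^{s-1/2}({\mathbb R}^2/\Lambda)$ continuously for $s\ge 1$), followed by the first-order differential operator $\nablac(\cdot)^\perp$ in the horizontal variables (which maps $H^{s-1/2}\to H^{s-3/2}$), and the construction of $\tilde\bfA_\parallel$ from $\tilde\bfA|_{v=0}$ via $\bfF_\parallel=\bfF_{\rm h}+F_3\nabla\eta|_{z=\eta}$, which involves one more analytic dependence on $\eta$ through multiplication by $\nabla\eta$. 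Since composition of an analytic map with a fixed bounded linear map is analytic, and multiplication $H^{s-1/2}\times H^{s-1/2}\to H^{s-1/2}$ is bounded bilinear for $s-1/2>1$ (i.e.\ $s>3/2$, which holds since $s\ge 2$) hence analytic in $(\eta,\tilde\bfA)$ jointly, we conclude $\eta\mapsto H(\eta)\in L({\mathbb R}^2\times\mathring H^{s-1/2},\mathring H^{s-3/2})$ is analytic. One must also check that the output indeed has zero mean: this follows from the compatibility identity \eqref{eq:compatibility} and the fact that $\nablac$ of a periodic vector field has zero mean.

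\medskip\noindent\textbf{Step 2: Analyticity of $\eta\mapsto\bfM(\eta)$ and $\eta\mapsto\bfT(\eta)$.} The argument for $\bfM$ is identical, using Theorem~\ref{thm:analHM}(ii): $\eta\mapsto\tilde\bfB$ is analytic, $\curl^\sigma\tilde\bfB$ is obtained by applying to $\tilde\bfB$ a first-order differential operator whose coefficients depend analytically (indeed polynomially/rationally, staying in a bounded set for $\eta\in U$) on $\eta$ through $\sigma$, $\sigma_v=1+\eta/h$, and $\nabla\sigma$; then one restricts to $v=0$ and takes the tangential part $(\cdot)_\parallel$, again incurring one analytic multiplication by $\nabla\eta$. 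This lands in $H^{s-3/2}({\mathbb R}^2/\Lambda)^2$, giving analyticity of $\eta\mapsto\bfM(\eta)\in L({\mathbb R}^2\times H^{s-1/2}({\mathbb R}^2/\Lambda)^2, H^{s-3/2}({\mathbb R}^2/\Lambda)^2)$. Finally, $\bfT(\eta)=\bfM(\eta)(\bfzero,\bfS(\eta))$; since $\bfS(\eta)$ is obtained from $\eta$ by composition with the entire real-analytic functions $t\mapsto\cos(\alpha t),\sin(\alpha t)$, the substitution operator $\eta\mapsto\bfS(\eta)$ is analytic $H^{s+1/2}\to H^{s+1/2}$ (using that $H^{s+1/2}$ is a Banach algebra for $s+1/2>1$); composing with the bilinear analytic evaluation $(\bfM(\eta),\bfg)\mapsto\bfM(\eta)(\bfzero,\bfg)$ and noting $s+1/2\ge s-1/2$ gives analyticity of $\eta\mapsto\bfT(\eta)\in H^{s-3/2}({\mathbb R}^2/\Lambda)^2$.

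\medskip\noindent\textbf{Main obstacle.} The technical heart is bookkeeping the regularity/index shifts correctly and verifying that each nonlinear operation (the substitution $\eta\mapsto\bfS(\eta)$, the products with $\nabla\eta$, and the $\eta$-dependent coefficients in $\curl^\sigma$) is genuinely analytic — not merely smooth — as a map between the relevant Sobolev spaces, which requires the Banach-algebra property $H^\rho\cdot H^\rho\hookrightarrow H^\rho$ for $\rho>1$ and the standard fact that composition with an entire function preserves analyticity on such an algebra. The condition $s\ge 2$ is exactly what guarantees the lowest index in play, $s-\tfrac32\ge\tfrac12$, is still a function space on which traces and the algebra property behave well; I would isolate these as short lemmas (or cite standard references such as \cite{Lannes}) rather than reprove them, so that the proof of Theorem~\ref{thm:analGDNO} itself reduces to "analytic $\circ$ bounded linear $=$ analytic, and analytic $\circ$ analytic $=$ analytic," applied to the formulas \eqref{Flat ops}.
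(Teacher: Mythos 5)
Your argument is correct, and for $H(\eta)$ and $\bfT(\eta)$ it is essentially the route the paper itself indicates (compose the analytic solution map with the trace at $v=0$, the multiplication by $\nabla\eta$ in the tangential part, and a fixed first-order operator; then feed the analytic substitution $\eta\mapsto\bfS(\eta)$ into the bilinear evaluation map). Where you genuinely diverge is in the treatment of $\bfM(\eta)$: you take Theorem~\ref{thm:analHM}(ii) as an independent input and push $\tilde\bfB$ forward through $\curl^\sigma$, the trace and $(\cdot)_\parallel$. The paper instead \emph{derives} both the analyticity of $\bfM$ and the existence/analyticity of $\tilde\bfB$ (i.e.\ Theorem~\ref{thm:analHM}(ii) itself) from part (i): it shows that $V(\eta)\colon(\bfgamma,\Phi)\mapsto(\bfgamma,H(\eta)(\bfgamma,\Phi))$ is an isomorphism ${\mathbb R}^2\times\mathring H^{s-1/2}\to{\mathbb R}^2\times\mathring H^{s-3/2}$ depending analytically on $\eta$ (because $V(0)$ is invertible, since $|\bfk|^2\mathtt{t}(|\bfk|)\sim|\bfk|$, and invertibility is open), writes $V(\eta)^{-1}=(\mathbb{I}_2,W_2(\eta))$, sets $\tilde\bfB(\eta,\bfgamma,\bfg)\coloneqq\tilde\bfA(\eta,\bfgamma,W_2(\eta)(\bfgamma,\nablac\bfg^\perp))$, and reads off $\bfM(\eta)(\bfgamma,\bfg)=-\bfgamma-\nabla\Phi+\alpha\nablap\Delta^{-1}(\nablac\bfg^\perp)$ with $\Phi=W_2(\eta)(\bfgamma,\nablac\bfg^\perp)$, which makes the analyticity of $\bfM$ immediate without any product or trace estimates. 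The paper's route buys the proof of Theorem~\ref{thm:analHM}(ii) for free, which matters because only part (i) is established independently (via Groves--Horn); since the paper proves part (ii) only inside this very argument, your proof as written silently assumes a statement whose proof the paper folds into the theorem you are proving. Your route is not wrong — it is a legitimate deduction from the stated Theorem~\ref{thm:analHM} — but to make it self-contained you would either need to supply a proof of the solvability and analyticity of the $\tilde\bfB$-problem (e.g.\ by reproducing the $V(\eta)$-inversion) or adopt the paper's order of argument. A minor additional point in your favour: your direct computation of $\bfM$ from $\curl^\sigma\tilde\bfB$ does require the product estimates you flag (multiplying $H^{s-1/2}({\mathbb R}^2/\Lambda)$ coefficients against $H^{s-1}(D_0/\Lambda)$ derivatives and then $H^{s-1/2}\cdot H^{s-3/2}\subset H^{s-3/2}$ on the boundary), all of which hold for $s\ge 2$, whereas the paper's formula for $\bfM$ in terms of $W_2$ avoids them entirely.
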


In Section \ref{sec:diffNHOp} we turn to the differentials of $H(\eta)$ and $\bfM(\eta)$. Applying the operator $\mathrm{d}- \mathrm{d}\sigma \partial_v^\sigma$, where\linebreak $\partial_v^\sigma = (1+\partial_v\sigma)^{-1} \partial_v$,
to equations \eqref{Flat ops} shows that
\begin{align*}
\mathrm{d}H[\eta](\delta\eta)(\bfgamma,\Phi) &= \nablac \tilde{\bfC}_\parallel^\perp + \partial_v^\sigma \curl^\sigma \tilde{\bfA} \cdot \bfN|_{v=0}\delta\eta - (\curl^\sigma \tilde{\bfA})_\mathrm{h}\cdot\nabla \delta\eta, \\
\mathrm{d}\bfM[\eta](\delta\eta)(\bfgamma,\bfg) &= -(\curl^\sigma \tilde{\bfD})_\parallel -\delta\eta(\partial_v^\sigma \curl^\sigma \tilde{\bfB})_\parallel - (\curl^\sigma \tilde{\bfB})_3|_{v=0}\nabla\delta\eta,
\end{align*}
where $\tilde\bfC=(\mathrm{d} \tilde\bfA -\mathrm{d}\sigma \partial_v^\sigma\tilde\bfA)$ and $\tilde\bfD=(\mathrm{d} \tilde\bfB-\mathrm{d}\sigma  \partial_v^\sigma\tilde\bfB)$. Careful inspection of the boundary-value problems
for $\tilde\bfC$ and $\tilde\bfD$ (which are obtained by applying $\mathrm{d}- \mathrm{d}\sigma \partial_v^\sigma$ to the boundary-value problems for $\tilde\bfA$ and $\tilde{\bfB}$) yields the following result. Note the increased regularity requirement
due to the double application of $H(\eta)$ and $\bfM(\eta)$ in the formulae.

\begin{theorem} \label{thm:diffs}
Suppose that $s\geq3$.
\begin{itemize}
\item[(i)]
The differential of the operator
$H(\cdot)\colon U \to L({\mathbb R}^2 \times \mathring{H}^{s-\frac{1}{2}}({\mathbb R}^2/\Lambda),\mathring{H}^{s-\frac{3}{2}}({\mathbb R}^2/\Lambda))$ is given by
\begin{align*}
\mathrm{d}H&[\eta](\delta\eta)(\bfgamma,\Phi)\\
&=
H(\eta)\left(-\alpha\langle(\bfK(\eta)(\bfgamma,\Phi)-u\nabla\eta)^\perp\delta\eta\rangle,-\alpha\Delta^{-1}\nablac((\bfK(\eta)(\bfgamma,\Phi)-u\nabla\eta)^\perp\delta\eta)-u\delta\eta + \langle u \delta\eta\rangle\right)\\
& \qquad\quad\mbox{}
-\nablac((\bfK(\eta)(\bfgamma,\Phi)-u\nabla\eta)\delta\eta),
\end{align*}
where
$$u=\frac{\bfK(\eta)(\bfgamma,\Phi)\cdot \nabla \eta+H(\eta)(\bfgamma,\Phi)}{1+|\nabla \eta|^2}.$$
\item[(ii)]
The differential of the operator
$\bfM(\cdot)\colon U \to L({\mathbb R}^2 \times H^{s-\frac{1}{2}}(\mathbb{R}^2/\Lambda)^2, H^{s-\frac{3}{2}}(\mathbb{R}^2/\Lambda)^2)$ is given by
\begin{align*}
\mathrm{d}\bfM&[\eta](\delta\eta)(\bfgamma,\bfg) \\
&=
\bfM(\eta)\left(\alpha\langle(\bfM(\eta)(\bfgamma,\bfg)+u\nabla\eta)^\perp\delta\eta\rangle, (\bfM(\eta)(\bfgamma,\bfg)+u\nabla\eta)^\perp\delta\eta\right)-\nabla(u\delta\eta)
+\alpha(\bfM(\eta)(\bfgamma,\bfg)+u\delta\eta)^\perp\delta\eta,
\end{align*}
where
$$u = \frac{ \nablac \bfg^\perp - \bfM(\eta)(\bfgamma,\bfg) \cdot \nabla\eta }{1+|\nabla\eta|^2}.$$
\end{itemize}
\end{theorem}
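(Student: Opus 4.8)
The plan is to start from the expressions for $\mathrm dH[\eta](\delta\eta)(\bfgamma,\Phi)$ and $\mathrm d\bfM[\eta](\delta\eta)(\bfgamma,\bfg)$ displayed just above, in which the auxiliary fields $\tilde\bfC=(\mathrm d-\mathrm d\sigma\,\partial_v^\sigma)\tilde\bfA$ and $\tilde\bfD=(\mathrm d-\mathrm d\sigma\,\partial_v^\sigma)\tilde\bfB$ appear; analyticity, hence differentiability, of $H(\cdot)$ and $\bfM(\cdot)$ being already guaranteed by Theorem~\ref{thm:analGDNO}, the only task is to \emph{identify} the differential. Concretely, I would show that $\nablac\tilde\bfC_\parallel^\perp$ equals $H(\eta)$ evaluated at the modified data appearing in (i) up to a purely local term, and that this local term combines with the two remaining terms in the displayed formula to produce $-\nablac((\bfK(\eta)(\bfgamma,\Phi)-u\nabla\eta)\delta\eta)$; likewise for $-(\curl^\sigma\tilde\bfD)_\parallel$ and part~(ii). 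The engine is the identification of the boundary-value problems satisfied by $\tilde\bfC$ and $\tilde\bfD$.

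First I would record the structural fact that $\mathcal D:=\mathrm d-\mathrm d\sigma\,\partial_v^\sigma$ is nothing but the pull-back under $\Sigma$ of the ordinary ($\eta$-)differential of vector fields living over the moving domain $D_\eta$: if $\bfG$ is such a field and $\tilde\bfG=\bfG\circ\Sigma$, then $\mathcal D\tilde\bfG=(\mathrm d\bfG)\circ\Sigma$ (the vertical coordinate satisfies $\delta v=-\mathrm d\sigma/(1+\partial_v\sigma)$ when a physical point is held fixed). Hence $\mathcal D$ commutes with $\curl^\sigma$ and $\Div^\sigma$, so applying it to the flattened interior equations \eqref{Flattened A BVP 1}--\eqref{Flattened A BVP 2} (resp.\ \eqref{Flattened B BVP 1}--\eqref{Flattened B BVP 2}) shows that $\tilde\bfC$ (resp.\ $\tilde\bfD$) solves the same Beltrami and solenoidal equations, and applying it to the bottom condition (where $\mathrm d\sigma$ vanishes and $\bfe_3$ is constant) shows $\tilde\bfC\times\bfe_3=\bfzero$ (resp.\ $\tilde\bfD\times\bfe_3=\bfzero$) at $v=-h$.

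The heart of the proof is the differentiation of the free-surface conditions at $v=0$. Applying $\mathcal D$ to \eqref{Flattened A BVP 4}--\eqref{Flattened A BVP 5} produces, besides a field of the correct type carrying the modified mean-and-gradient-potential data claimed in (i) — these come from differentiating the nonlocal term $\alpha\nablap\Delta^{-1}(\nablac\tilde\bfA_\parallel^\perp)$ and the evaluation of $\nabla\Phi$ at the moving surface — an inhomogeneous normal-component term proportional to $\delta\eta$, arising from the motion of the surface and the $\eta$-dependence of $\bfn$ and $\bfN$. In reorganising these terms one uses the vector identity $\curl\bfF\cdot\bfN|_{z=\eta}=\nablac\bfF_\parallel^\perp$, the relation $\bfK(\eta)(\bfgamma,\Phi)=(\curl\bfA)_\parallel|_{z=\eta}$ furnished by \eqref{Intro A BVP 5}, the constraint $\bfA\cdot\bfN|_{z=\eta}=0$, and the identification
$$u=\frac{H(\eta)(\bfgamma,\Phi)+\bfK(\eta)(\bfgamma,\Phi)\cdot\nabla\eta}{1+|\nabla\eta|^2}=(\curl\bfA)_3\big|_{z=\eta},$$
which follows by splitting $(\curl\bfA)_\parallel=(\curl\bfA)_\mathrm h+(\curl\bfA)_3\nabla\eta$ and $\curl\bfA\cdot\bfN|_{z=\eta}=H(\eta)(\bfgamma,\Phi)$ into horizontal and vertical parts, and which gives $(\curl\bfA)_\mathrm h|_{z=\eta}=\bfK(\eta)(\bfgamma,\Phi)-u\nabla\eta$. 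One then substitutes $\nablac\tilde\bfC_\parallel^\perp$, re-expressed through $H(\eta)$ at the modified data, into the displayed formula and checks that the remaining terms $\partial_v^\sigma\curl^\sigma\tilde\bfA\cdot\bfN|_{v=0}\,\delta\eta-(\curl^\sigma\tilde\bfA)_\mathrm h\cdot\nabla\delta\eta$ collapse to $-\nablac((\bfK(\eta)(\bfgamma,\Phi)-u\nabla\eta)\delta\eta)$; here $(\curl^\sigma\tilde\bfA)_\mathrm h|_{v=0}=\bfK(\eta)(\bfgamma,\Phi)-u\nabla\eta$ and the normal-derivative term is reduced using $\Div\curl\bfA=0$ together with $\curl\curl\bfA=\alpha\curl\bfA$. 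Part~(ii) is the analogous computation for $\bfM$, with \eqref{Intro B BVP 5} and \eqref{Intro B BVP 6} playing the roles of \eqref{Intro A BVP 5} and of the mean condition, and with $u=(\nablac\bfg^\perp-\bfM(\eta)(\bfgamma,\bfg)\cdot\nabla\eta)/(1+|\nabla\eta|^2)=(\curl\bfB)_3|_{z=\eta}$.

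I expect the main obstacle to be precisely this free-surface bookkeeping: one must simultaneously track the three sources of $\eta$-dependence (the location of the surface, the geometric factors $\bfn$ and $\bfN$, and the nonlocal operator $\Delta^{-1}$), verify that the correction terms reorganise so that the modified potential $\Phi'$ (resp.\ the modified datum $\bfg'$) can be taken with zero mean and the missing mean is exactly supplied by the modified $\bfgamma'$ — that is, consistency with the Hodge--Weyl splitting \eqref{eq:HW decomp} and with the compatibility relation \eqref{eq:compatibility} — and confirm that the inhomogeneous normal-component contribution generated by differentiating \eqref{Flattened A BVP 4} (resp.\ \eqref{Flattened B BVP 4}) conspires with $\partial_v^\sigma\curl^\sigma\tilde\bfA\cdot\bfN|_{v=0}\delta\eta$ to leave only the stated local term. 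Once the formulae are established, the mapping-property and regularity claims (with $s\ge3$, the loss of one derivative relative to Theorem~\ref{thm:analGDNO} reflecting the double occurrence of $H(\eta)$, resp.\ $\bfM(\eta)$) are routine consequences of Theorems~\ref{thm:analHM}--\ref{thm:analGDNO}.
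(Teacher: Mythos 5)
Your proposal follows essentially the same route as the paper: apply Alinhac's good unknown $\mathcal D=\mathrm d-\mathrm d\sigma\,\partial_v^\sigma$ to the flattened boundary-value problem for $\tilde\bfA$ (resp.\ $\tilde\bfB$), identify $u=(\curl\bfA)_3|_{z=\eta}$ and $(\curl\bfA)_\mathrm{h}|_{z=\eta}=\bfK(\eta)(\bfgamma,\Phi)-u\nabla\eta$, recognise the modified Hodge--Weyl data in the differentiated tangential condition, and collapse the residual local terms using $\Div\curl\bfA=0$. The one device you leave implicit is how the inhomogeneous normal datum $\tilde\bfC\cdot\bfN=\nablac(\tilde\bfA_\mathrm{h}\,\delta\eta)$ is disposed of: the paper subtracts $\Grad^\sigma\varphi$ with $\varphi$ solving a scalar Neumann problem, which restores the homogeneous condition without altering $\nablac\tilde\bfC_\parallel^\perp$ (tangential parts of gradients are curl-free), so this contribution does not in fact combine with $\partial_v^\sigma\curl^\sigma\tilde\bfA\cdot\bfN|_{v=0}\,\delta\eta$ in the way you anticipate — the stated local term comes solely from the pair $\partial_v^\sigma\curl^\sigma\tilde\bfA\cdot\bfN|_{v=0}\,\delta\eta-(\curl^\sigma\tilde\bfA)_\mathrm{h}\cdot\nabla\delta\eta$.
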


In Section \ref{sec:Taylor} we show how to use recursion formulae to
compute the terms in the Taylor expansions
\begin{align} \label{eq:TaylorHMeta}
H(\eta) = \sum_{j=0}^\infty H_j(\eta), \qquad \bfM(\eta) = \sum_{j=0}^\infty \bfM_j(\eta)
\end{align}
of $H(\eta)$ and $\bfM(\eta)$ at $\eta=0$ systematically, 
where $H_j(\eta)$ and $\bfM_j(\eta)$ are homogeneous of degree $j$ in $\eta$
(compare with the recursion formulae for the Taylor expansion of the Dirichlet--Neumann
operator appearing in the irrotational case given by Craig and Sulem \cite{CraigSulem93}). The recursion formulae are derived by
substituting the expansions \eqref{eq:TaylorHMeta} into the expressions for $\mathrm{d}H[\eta](\eta)(\bfgamma,\Phi)$
and $\mathrm{d}\bfM[\eta](\eta)(\bfgamma,\bfg)$ given by Theorem \ref{thm:diffs}, and equating terms of equal homogeneity in $\eta$.
The individual terms in the series are computed as functions of $H_0$ and $\bfM_0$
using the recursion formulae, and straightforward calculations using Fourier series show that
$$H_0(\bfgamma,\Phi) = D^2 \, \mathtt{t}(D) \, \Phi, \qquad
\bfM_0(\bfgamma,\bfg) = -\bfgamma + \frac{1}{D^2} \, \left( \alpha \, \bfD^{\perp} + \bfD \, \mathtt{c}(D) \right) \, \bfD \cdot \bfg^{\perp},$$
where
$$
\mathtt{c}(|\bfk|) \coloneqq  
\begin{cases}
\sqrt{\alpha^2-|\bfk|^2} \, \cot(h\sqrt{\alpha^2-|\bfk|^2}),   & \mbox{if $|\bfk| < |\alpha|$,} \\
\sqrt{|\bfk|^2-\alpha^2} \, \coth(h\sqrt{|\bfk|^2-\alpha^2}), & \mbox{if $|\bfk| > |\alpha|$,}
\end{cases}
\qquad
\mathtt{t}(|\bfk|) \coloneqq  
\begin{cases}
\frac{ \tan(h \sqrt{\alpha^2-|\bfk|^2}) }{\sqrt{\alpha^2-|\bfk|^2}},   & \mbox{if $|\bfk| < |\alpha|$,} \\
\frac{ \tanh(h \sqrt{|\bfk|^2-\alpha^2}) }{\sqrt{|\bfk|^2-\alpha^2}},  & \mbox{if $|\bfk| > |\alpha|$.}
\end{cases}$$
and
$$\bfD=(D_1,D_2)^T=-\ii\nabla, \qquad D=|\bfD|.$$
Explicit formulae for $H_0$, $H_1$, $H_2$ and $\bfM_0$, $\bfM_1$, $\bfM_2$ are  are computed in Section \ref{sec:Taylor}.

\begin{remark}
This method leads to formulae involving ever more derivatives of $\eta$ in the individual terms in the formulae for
$H_j(\eta)$ and $\bfM_j(\eta)$; the overall validity of the formulae arises from subtle cancellations between the terms
(see Nicholls and Reitich \cite[\S2.2]{NichollsReitich01a} for a discussion of this phenomenon in the context of the classical Dirichlet--Neumann operator). 
\end{remark}

\subsection{Pseudodifferential calculus for the operators $H$ and $\bfM$}

In Section \ref{sec:asympexpNHOp} we fix $\eta \in C^\infty({\mathbb R}^2/\Lambda)$, prove that $H(\eta)(\bfzero,\cdot)$ and $\bfM(\eta)(\bfzero,\cdot)$ are smooth perturbations of properly supported pseudodifferential
operators, and compute their asymptotic expansions.

Following Alazard, Burq and Zuily \cite{AlazardBurqZuily11}, we begin by introducing a localising transform (which differs from the flattening transform
used in Section \ref{sec:operators}). Choose $\delta>0$ so that the fluid domain $D_\eta$ contains the strip
\begin{align*}
	\Omega_\delta&\coloneqq  \{ (\bfx,z) \in \mathbb{R}^2 \times \mathbb{R}\colon \eta(\bfx)-\delta  h \leq z < \eta(\bfx) \}
\end{align*}
for $\eta \in U$ and define $\hat\Sigma\colon D_0 \to \Omega_\delta$ by
$$
	\hat\Sigma\colon (\bfx,w) \mapsto (\bfx,\varrho(\bfx,w)), \qquad \varrho(\bfx,w)\coloneqq \delta  w + \eta(\bfx).
$$
This transform converts the equation
$$-\Delta \bfU = \alpha  \curl \bfU \In \Omega_\delta$$
into
$$
-\Delta^\varrho \hat{\bfU} - \alpha  \curl^\varrho \hat{\bfU} = \bfzero \In D_0,
$$
where
$$\curl^\varrho \hat{\bfU}(\bfx,w) =(\curl \, \bfU)\circ\hat\Sigma(\bfx,w), \qquad \Delta^\varrho \hat{\bfU} = (\Delta \, \bfU) \circ \hat\Sigma(\bfx,w),$$
which we write as
\begin{equation}
L\hat{\bfU} =\bfzero \label{eq:partiallyflattened}
\end{equation}
(the explicit formula for $L$ is given in Section \ref{subsec:Flatfac}). We proceed by implementing Treves's factorisation method
(Treves \cite[Ch.\ III, \S3]{Treves}) and examining its consequences for solutions of equation \eqref{eq:partiallyflattened}.

\begin{lemma} 
There are properly supported operators $M$, $N \in \Psi^1({\mathbb R}^2/\Lambda)$ such that
\begin{itemize}
\item[(i)]
$L-a(\partial_w I - N)(\partial_w I -M ) \in \Psi^{-\infty}({\mathbb R}^2/\Lambda)$, where $a=(1+|\nabla\eta|^2)/\delta^2$,
\item[(ii)]
the principal symbols $\mathtt{M}^{(1)}$, $\mathtt{N}^{(1)}$ of $M$, $N$ take the form $\mathtt{M}^{(1)} = \mathtt{m}^{(1)}{\mathbb I}_3$,
$\mathtt{N}^{(1)} = \mathtt{n}^{(1)}{\mathbb I}_3$, where the scalar-valued symbols $\mathtt{m}^{(1)}$, $-\mathtt{n}^{(1)} \in S^1({\mathbb R}^2/\Lambda)$
are strongly elliptic.
\end{itemize}
\end{lemma}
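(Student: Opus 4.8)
The plan is to carry out Treves's factorisation method (Treves \cite[Ch.\ III, \S3]{Treves}) at the level of symbols, the only genuinely new feature being that $L$ acts on ${\mathbb R}^3$-valued functions, so that the symbols involved are $3\times 3$-matrix-valued. First I would write $L$ as a quadratic polynomial in $\partial_w$,
\[
L = a\,\partial_w^2 + B_1(w)\,\partial_w + B_0(w),
\]
whose coefficients $B_1(w)$ and $B_0(w)$ are differential operators in $\bfx$ of orders one and two respectively, depending smoothly on $w$ as a parameter, with $a=(1+|\nabla\eta|^2)/\delta^2$. The crucial structural observation is that the matrix part of $L$ comes entirely from $\alpha\curl^\varrho$, which is only a first-order operator, and so sits in the lower-order terms: the principal symbol of $B_0$ is $\mathtt{b}_0^{(2)}\mathbb{I}_3$ with $\mathtt{b}_0^{(2)}$ real and elliptic of order two, while the order-one part of the symbol of $B_1$ is $\mathtt{b}_1^{(1)}\mathbb{I}_3$ with $\mathtt{b}_1^{(1)}$ purely imaginary and proportional to $\nabla\eta\cdot\xi$.

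Next I would seek $M$, $N\in\Psi^1({\mathbb R}^2/\Lambda)$, depending smoothly on $w$, with $a(\partial_w I - N)(\partial_w I - M)\equiv L$ modulo $\Psi^{-\infty}$. Expanding the product and matching the coefficients of $\partial_w^2$, $\partial_w$ and $\partial_w^0$ forces $M+N=-a^{-1}B_1$ and $NM-\partial_w M=a^{-1}B_0$; eliminating $N$ yields the Riccati-type equation
\[
M^2 + (a^{-1}B_1)\,M + \partial_w M + a^{-1}B_0 \equiv 0 \pmod{\Psi^{-\infty}}
\]
for $M$, which I would solve by constructing the homogeneous components $\mathtt{M}^{(j)}\in S^j$, $j\le 1$, of the symbol of $M$ order by order. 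At the top order the coefficients are scalar multiples of $\mathbb{I}_3$, so the equation becomes the scalar quadratic $(\mathtt{m}^{(1)})^2 + (a^{-1}\mathtt{b}_1^{(1)})\,\mathtt{m}^{(1)} + a^{-1}\mathtt{b}_0^{(2)}=0$ for the principal symbol $\mathtt{M}^{(1)}=\mathtt{m}^{(1)}\mathbb{I}_3$ of $M$. A short computation of its discriminant, using the Cauchy--Schwarz inequality $(\nabla\eta\cdot\xi)^2\le|\nabla\eta|^2|\xi|^2$, shows that the discriminant is real and bounded below by a positive multiple of $|\xi|^2$; the two roots are therefore well separated, one satisfying $\mathrm{Re}\,\mathtt{m}^{(1)}\gtrsim|\xi|$ and the other, which becomes $\mathtt{n}^{(1)}=-a^{-1}\mathtt{b}_1^{(1)}-\mathtt{m}^{(1)}$, satisfying $\mathrm{Re}\,\mathtt{n}^{(1)}\lesssim-|\xi|$ --- and this is precisely statement (ii). At each lower order $j\le 0$ the homogeneous symbol $\mathtt{M}^{(j)}$ is then obtained from a \emph{linear} equation whose coefficient of $\mathtt{M}^{(j)}$ is $2\mathtt{m}^{(1)}+a^{-1}\mathtt{b}_1^{(1)}=\mathtt{m}^{(1)}-\mathtt{n}^{(1)}$, elliptic of order one (hence invertible for $|\xi|$ large), and whose right-hand side involves only $a$, the symbols of $B_0$ and $B_1$, and the already-constructed $\mathtt{M}^{(1)},\dots,\mathtt{M}^{(j+1)}$ together with their $w$-derivatives; in particular $\partial_w$ never lands on the unknown symbol, so that no ordinary differential equation in $w$ has to be solved, and the matrix-valued nature of the sub-principal symbols is harmless. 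Choosing a properly supported $M=\Op(\mathtt{M})$ with $\mathtt{M}\sim\sum_{j\le1}\mathtt{M}^{(j)}$ (Borel's theorem) and setting $N:=-a^{-1}B_1-M\in\Psi^1$, one obtains, by construction, $L-a(\partial_w I - N)(\partial_w I - M)\in\Psi^{-\infty}$, which is (i).

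The step I expect to be the main obstacle is the structural analysis at the top order: one has to check carefully that the contribution of $\alpha\curl^\varrho$ is indeed of strictly lower order in both $B_0$ and $B_1$, so that the principal equation really is a scalar quadratic, and then establish the lower bound on its discriminant, since this is exactly what makes every step of the symbolic recursion solvable and what delivers the strong ellipticity in (ii). Everything else --- the symbolic recursion, the asymptotic summation and the passage to properly supported operators on the torus ${\mathbb R}^2/\Lambda$ (which is automatic on a compact manifold) --- follows the template of Treves \cite{Treves} and Alazard and M\'etivier \cite[\S2.4]{AlazardMetivier09}.
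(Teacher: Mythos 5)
Your proposal is correct and follows essentially the same route as the paper: Treves's factorisation at the symbol level, the observation that the matrix (i.e.\ $\alpha\curl^\varrho$) contributions enter only at sub-principal order so the top-order equation is the scalar quadratic with discriminant $\gtrsim|\bfk|^2$ (giving the two strongly elliptic roots $\mathtt{m}^{(1)}$, $-\mathtt{n}^{(1)}$), a linear recursion with elliptic coefficient $2a\mathtt{M}^{(1)}+L_1^{(1)}$ for the lower-order symbols, Borel summation, and $N\coloneqq-a^{-1}L_1-M$. The only (harmless) difference is that you allow the coefficients and $M$ to depend on $w$, whereas with the flattening $\varrho=\delta w+\eta(\bfx)$ they are $w$-independent and the $\partial_w M$ term vanishes.
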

\begin{lemma} \label{lem:regtobd intro}
Any function $\hat{\bfU} \in H^2(D_0 / \Lambda)^3$ with $L\hat{\bfU}=0$ in $D_0$ satisfies
$$\partial_w \hat{\bfU} = M\hat{\bfU} + R_\infty \hat{\bfU} \at w=0,$$
where the symbol $R_\infty$ denotes a linear function of its argument whose range lies in $C^\infty({\mathbb R^2}/\Lambda)^3$.
\end{lemma}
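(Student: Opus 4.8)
The plan is to exploit the factorisation from the previous lemma together with the standard Treves argument for splitting a solution of a second-order equation into "good" and "bad" parts. Write $L = a(\partial_w I - N)(\partial_w I - M) + R$ with $R \in \Psi^{-\infty}$ as in Lemma 1.2, and set $\bfV \coloneqq (\partial_w I - M)\hat{\bfU}$. Then $a(\partial_w I - N)\bfV = -R\hat{\bfU}$ in $D_0$, so $\bfV$ solves a first-order evolution equation (in $w$, going from $w=-1$ towards $w=0$) whose symbol $-\mathtt{n}^{(1)}$ is strongly elliptic; the forcing term $a^{-1}R\hat{\bfU}$ is smoothing in $\bfx$. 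The point is that this forward evolution is parabolic-smoothing in the $\bfx$-variable because of the strong ellipticity, so that, regardless of the (merely $H^{1/2}$) trace of $\bfV$ at $w=-1$, the trace $\bfV|_{w=0}$ lies in $C^\infty({\mathbb R}^2/\Lambda)^3$. This is exactly the content one needs: $\partial_w\hat{\bfU}|_{w=0} = M\hat{\bfU}|_{w=0} + \bfV|_{w=0}$, and writing $R_\infty\hat{\bfU} \coloneqq \bfV|_{w=0}$ gives the claimed formula, with $R_\infty$ linear in $\hat{\bfU}$ and smoothing.

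First I would make precise the regularity bookkeeping: since $\hat{\bfU}\in H^2(D_0/\Lambda)^3$ and $L\hat{\bfU}=0$, elliptic regularity for $L$ (which is a genuine second-order elliptic system away from $w=0$, being a flattened Laplacian-type operator) shows $\hat{\bfU}\in C^\infty$ in the interior $\{-1<w<0\}$; only the behaviour up to $w=0$ is delicate. Then I would solve the first-order problem for $\bfV$ by constructing a parametrix: because $-\mathtt{n}^{(1)}\in S^1$ is strongly elliptic, the operator $\partial_w I - N$ admits a forward fundamental solution $E(w,w')$ for $-1\le w'\le w\le 0$ that gains arbitrary smoothness in $\bfx$ as soon as $w-w'>0$, with only a $\Psi^{-\infty}$ error; this is the standard construction (Treves, Ch.\ III) for first-order systems with parabolic-type symbol. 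Applying $E$ to the smooth forcing $-a^{-1}R\hat{\bfU}$ and to the trace data at $w=-1$, and evaluating at $w=0$, produces $\bfV|_{w=0}\in C^\infty$. The linearity of $R_\infty$ in $\hat{\bfU}$ is manifest from the construction, since $\bfV$ depends linearly on $\hat{\bfU}$ through $(\partial_w I - M)\hat{\bfU}$ and through the forcing $R\hat{\bfU}$.

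The main obstacle I anticipate is handling the coefficients of $L$ that are merely as smooth as $\eta$ (not $C^\infty$), and the fact that $M$, $N$ are only \emph{properly supported} pseudodifferential operators rather than genuine convolution operators — so the factorisation of Lemma 1.2 is exact only modulo $\Psi^{-\infty}$, and one must track that all error terms remain smoothing when composed and when restricted to $w=0$. Here the hypothesis $\eta\in C^\infty({\mathbb R}^2/\Lambda)$ made at the start of Section 6 is essential: it guarantees the symbols of $L$, and hence of $M$, $N$ and of the parametrix $E$, are genuinely smooth in $\bfx$, so all the remainder terms are honest $\Psi^{-\infty}$ operators and the trace theorem (restriction $w=0$ maps $\Psi^{-\infty}$-forced solutions into $C^\infty$) applies cleanly. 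A secondary technical point is justifying that the interior smoothing of the parametrix survives up to the boundary $w=0$; this follows because the forward evolution is run on the \emph{closed} interval and the smoothing estimate $\|E(0,w')f\|_{s} \lesssim \|f\|_{-N}$ holds uniformly for $w'\le -\epsilon<0$, while the contribution from $w'\in(-\epsilon,0)$ is controlled using the smoothness of the forcing there (which we already established by interior elliptic regularity).
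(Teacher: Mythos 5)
Your overall strategy coincides with the paper's: factorise $L$ modulo $\Psi^{-\infty}$, set $\hat{\bfU}_1=(\partial_w I-M)\hat{\bfU}$ (your $\bfV$), observe that $(\partial_w I-N)\hat{\bfU}_1=-R_\infty\hat{\bfU}$ is a forward-parabolic evolution in $w$ because $-\mathtt{n}^{(1)}$ is strongly elliptic, and use the smoothing of this flow (the paper's abstract heat-equation lemma plays the role of your fundamental solution $E(w,w')$) to conclude $\hat{\bfU}_1|_{w=0}\in C^\infty({\mathbb R}^2/\Lambda)^3$. The paper sidesteps the trace at the bottom by starting the evolution at the interior point $w=-\tfrac12 h$, where $\hat{\bfU}_1$ is already smooth by interior regularity; your uniform smoothing estimate for $w'\le-\epsilon$ accomplishes the same thing.

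There is, however, one genuine gap, precisely at the point you dismiss as secondary: the regularity of the forcing $-R_\infty\hat{\bfU}$ for $w$ near $0$. You assert that it is ``already established by interior elliptic regularity'', but interior regularity gives smoothness of $\hat{\bfU}$ only on compact subsets of the open strip $-h<w<0$; near $w=0$ you only know $\hat{\bfU}\in H^2$, so you cannot yet assert that $R_\infty\hat{\bfU}$ is even continuous on $(-\epsilon,0]$ with values in $C^\infty({\mathbb R}^2/\Lambda)^3$, which is what the Duhamel/parametrix step needs on that final subinterval. The paper closes this with a separate Peetre-type bootstrapping lemma: exploiting that the coefficient of $\partial_w^2$ in $L$ is invertible while every other term contains at most one fewer $w$-derivative, one trades $w$-regularity against $\bfx$-regularity to show $\hat{\bfU}\in C^\infty([-h,0];{\mathscr D}^\prime({\mathbb R}^2/\Lambda)^3)$, i.e.\ smooth in $w$ up to and including the boundary with distributional values in $\bfx$; composing with the $\bfx$-smoothing operator $R_\infty$ then yields $R_\infty\hat{\bfU}\in C^\infty([-h,0];C^\infty({\mathbb R}^2/\Lambda)^3)$, after which the heat-equation lemma applies on $[-\tfrac12 h,0]$. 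Some argument of this kind --- a boundary-adapted bootstrap, or at minimum a trace argument giving $\hat{\bfU}\in C([-h,0];{\mathscr D}^\prime({\mathbb R}^2/\Lambda)^3)$ --- must be supplied; interior elliptic regularity alone does not deliver it.
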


Let $s \geq 2$, $\Phi \in \mathring{H}^{s-\frac{1}{2}}({\mathbb R}^2 / \Lambda)$ and $\tilde{\bfA} \in H^s(D_0/\Lambda)^3$ be
the function defining $H(\eta)(\bfzero,\Phi)$ (see equation \eqref{Flat ops}). The variable
$$
\hat{\bfA}(\bfx, w) \coloneqq \tilde{\bfA}(\bfx,v),\qquad w \coloneqq \frac{1}{\delta h}(h+\eta)v$$
satisfies \eqref{eq:partiallyflattened}
and hence
\begin{equation}
\partial_w \hat{\bfA}|_{w=0} = M\hat{\bfA}|_{w=0} + R_\infty\Phi \label{eq:partial flat 1}
\end{equation}
(see Lemma \ref{lem:regtobd intro}, noting that $\hat{\bfA}$ is a linear function of $\Phi$), together with
\begin{alignat}{2}
\hat{A}_3 &=\eta_x \hat{A}_1+\eta_y \hat{A}_2 & & \at  w=0, \label{eq:partial flat 2} \\
(\text{curl}^\varrho\hat{\bfA})_\parallel&=\nabla\Phi-\alpha\nablap\Delta^{-1}(\nablac\hat{\bfA}_\parallel^\perp) & & \at  w=0. \label{eq:partial flat 3}
\end{alignat}
Eliminating $\partial_w\hat{\bfA}$ using \eqref{eq:partial flat 1}, we find from \eqref{eq:partial flat 2}, \eqref{eq:partial flat 3} that
\begin{equation}
\hat{\bfA}|_{w=0} = Z \Phi + R_\infty\Phi, \label{eq:partial flat 4}
\end{equation}
where $\mathtt{Z} \in S^0({\mathbb R}^2 / \Lambda)$ and $Z=\Op\mathtt{Z}$.
Finally, inserting $\hat{\bfA}|_{w=0}$ and $\partial_w\hat{\bfA}|_{w=0}$ from \eqref{eq:partial flat 1}, \eqref{eq:partial flat 3}
into
$$
H(\eta)(\bfzero,\Phi) = \hat{A}_{2x}+\eta_y\hat{A}_{3x}-\hat{A}_{1y}-\eta_x\hat{A}_{3y}\big\vert_{w=0}
$$
shows that
$$H(\eta)(\bfzero,\Phi)=\Op\lambda_{\alpha}\Phi+R_\infty\Phi,$$
where $\lambda_{\alpha}\in S^1({\mathbb R}^2/\Lambda)$. The asymptotic expansions
$$\mathtt{Z} \sim \sum_{j \leq 0} \mathtt{Z}^{(j)}, \qquad \lambda_\alpha \sim\sum_{j \leq 1} \lambda_\alpha^{(j)}$$
can be determined recursively by substituting
$$
\hat{\bfA}|_{w=0} = Z \Phi + R_\infty\Phi, \qquad \partial_w\hat{\bfA}|_{w=0} = MZ\Phi + R_\infty\Phi$$
into
\eqref{eq:partial flat 2}, \eqref{eq:partial flat 3}. These calculations are performed in Section \ref{subsec:asympH} and summarised in the following theorem.

\begin{theorem} \label{thm:ExpHOp}
Suppose $s\geq2$, $\eta \in C^\infty({\mathbb R}^2/\Lambda)$ and $\Phi \in  \mathring{H}^{s-1/2}({\mathbb R}^2/\Lambda)$.
We have that 
$$
H(\eta)(\bfgamma,\Phi)=H(\eta)(\bfgamma,0)+H(\eta)(\bfzero,\Phi),
$$
the first term of which belongs to $C^\infty({\mathbb R}^2/\Lambda)$, and
$$H(\eta)(\bfzero,\Phi)=\Op\lambda_{\alpha}\Phi+R_\infty\Phi,$$
where $\lambda_{\alpha} \in S^1({\mathbb R}^2/\Lambda)$
and $R_\infty \Phi \in C^\infty({\mathbb R}^2/\Lambda)$. The symbol $\lambda_\alpha$ admits the asymptotic expansion
\begin{align*}
\lambda_\alpha &\sim \lambda^{(1)}_{\alpha} + \lambda^{(0)}_{\alpha} + \cdots,
\end{align*}
in which $\lambda^{(j)}_{\alpha}(\bfx,\bfk)$ is homogeneous of degree $j$ in $\bfk$. Moreover
\begin{align*}% \label{eq:lambda1Symb}
\lambda^{(1)}_{\alpha}(\bfx,\bfk) &= \lambda^{(1)}(\bfx,\bfk) , \\
\lambda^{(0)}_{\alpha}(\bfx,\bfk) &\coloneqq  \lambda^{(0)}(\bfx,\bfk) + \alpha\frac{(\bfk\cdot\nabla\eta)(\bfk\cdot\nablap\eta)}{|\bfk|^2},
\end{align*}
where
\begin{align*}
 \lambda^{(1)}(\bfx,\bfk) &\coloneqq  \sqrt{ (1+|\nabla\eta|^2) |\bfk|^2 - (\bfk \cdot \nabla\eta)^2 },\\
\lambda^{(0)}(\bfx,\bfk) &\coloneqq  \frac{1+|\nabla\eta|^2}{2\lambda^{(1)}} \left(\nablac (\mathtt{m}^{(1)} \, \nabla\eta )+ \ii \nabla_{\bfk} \lambda^{(1)}\cdot \nabla \mathtt{m}^{(1)}  \right), \qquad \mathtt{m}^{(1)}(\bfx,\bfk) \coloneqq  \frac{ \ii  \bfk \cdot \nabla\eta + \lambda^{(1)} }{ 1+|\nabla\eta|^2 }, 
\end{align*}
are the principal and sub-principal symbols of the classical Dirichlet--Neumann operator.
\end{theorem}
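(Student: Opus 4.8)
The plan is to follow the pseudodifferential approach of Alazard, Burq and Zuily \cite{AlazardBurqZuily11} and Alazard and M\'etivier \cite{AlazardMetivier09}: reduce the boundary-value problem for the localised variable $\hat{\bfA}$ to an elliptic first-order pseudodifferential equation in the tangential variables, solve it to write the trace $\hat{\bfA}|_{w=0}$ as a $\Psi^0$ operator acting on $\Phi$, and then apply the first-order operator occurring in the formula for $H(\eta)(\bfzero,\Phi)$ to read off $\lambda_\alpha\in S^1$ and its two leading symbols by symbolic calculus (all symbols being smooth in the base variable since $\eta\in C^\infty$). Since $H(\eta)$ is linear in $(\bfgamma,\Phi)$ by Theorem \ref{thm:analHM}(i) we may split $H(\eta)(\bfgamma,\Phi)=H(\eta)(\bfgamma,0)+H(\eta)(\bfzero,\Phi)$; for the first summand the boundary data in the flattened problem \eqref{Flattened A BVP 1}--\eqref{Flattened A BVP 5} is constant, so a standard elliptic bootstrap (the lower-order nonlocal term in the boundary condition being absorbed at each step) and the trace theorem give $\tilde{\bfA}(\eta,\bfgamma,0)\in C^\infty(D_0/\Lambda)^3$ and hence $H(\eta)(\bfgamma,0)=\nablac\tilde{\bfA}_\parallel^\perp\in C^\infty({\mathbb R}^2/\Lambda)$. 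This reduces everything to analysing $H(\eta)(\bfzero,\Phi)$, and the terms written $R_\infty\Phi$ below are smoothing by the very definition of $R_\infty$ in Lemma \ref{lem:regtobd intro}.

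Next I would carry out the reduction to a boundary equation. Combining the factorisation consequence $\partial_w\hat{\bfA}|_{w=0}=M\hat{\bfA}|_{w=0}+R_\infty\Phi$ of Lemma \ref{lem:regtobd intro}, in which $M\in\Psi^1$ is strongly elliptic with principal symbol $\mathtt{m}^{(1)}{\mathbb I}_3$, with the boundary conditions \eqref{eq:partial flat 2}, \eqref{eq:partial flat 3} (and the transformed solenoidal condition), and using that $\curl^\varrho$ contains a $\partial_w$, I would rewrite \eqref{eq:partial flat 2}--\eqref{eq:partial flat 3} as a square linear pseudodifferential system for the trace $\hat{\bfA}|_{w=0}$ with source $\Phi$. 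Its principal symbol is invertible thanks to the ellipticity of $\mathtt{m}^{(1)}$ and the algebraic structure of $\curl$ (the constraint \eqref{eq:partial flat 2} eliminating $\hat{A}_3$), so the system can be solved modulo $\Psi^{-\infty}$, which is \eqref{eq:partial flat 4} with $\mathtt{Z}\in S^0$. Substituting $\hat{\bfA}|_{w=0}=Z\Phi+R_\infty\Phi$ and $\partial_w\hat{\bfA}|_{w=0}=MZ\Phi+R_\infty\Phi$ into
$$H(\eta)(\bfzero,\Phi)=\hat{A}_{2x}+\eta_y\hat{A}_{3x}-\hat{A}_{1y}-\eta_x\hat{A}_{3y}\big|_{w=0}$$
then gives $H(\eta)(\bfzero,\Phi)=\Op\lambda_\alpha\Phi+R_\infty\Phi$ with $\lambda_\alpha\in S^1$, and the expansions $\mathtt{Z}\sim\sum_{j\leq0}\mathtt{Z}^{(j)}$, $\lambda_\alpha\sim\sum_{j\leq1}\lambda_\alpha^{(j)}$ are determined recursively by inserting these forms back into \eqref{eq:partial flat 2}, \eqref{eq:partial flat 3} and equating terms of equal homogeneity in $\bfk$ via the composition rule for symbols.

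Finally I would extract the two leading symbols. At order $1$ the term $\alpha\curl^\varrho$ in $L$ is subprincipal (the principal part of $L$ being $-\Delta^\varrho{\mathbb I}_3$) and the term $-\alpha\nablap\Delta^{-1}(\nablac\hat{\bfA}_\parallel^\perp)$ in \eqref{eq:partial flat 3} is of order $0$ in $\Phi$ (since $\hat{\bfA}_\parallel^\perp$ is of order $0$), so $\mathtt{m}^{(1)}$, $\mathtt{Z}^{(0)}$ and $\lambda_\alpha^{(1)}$ coincide with their irrotational counterparts, giving $\lambda_\alpha^{(1)}=\lambda^{(1)}=\sqrt{(1+|\nabla\eta|^2)|\bfk|^2-(\bfk\cdot\nabla\eta)^2}$ and $\mathtt{m}^{(1)}=(\ii\bfk\cdot\nabla\eta+\lambda^{(1)})/(1+|\nabla\eta|^2)$. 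At order $0$ the classical part $\lambda^{(0)}$ arises exactly as in \cite{AlazardMetivier09}, and the only new contribution comes from $-\alpha\nablap\Delta^{-1}(\nablac\hat{\bfA}_\parallel^\perp)=-\alpha\nablap\Delta^{-1}(H(\eta)(\bfzero,\Phi))$, whose symbol equals $\ii\alpha\bfk^\perp\lambda_\alpha^{(1)}/|\bfk|^2$ at leading order; propagating this through the inversion producing $\mathtt{Z}^{(-1)}$ and then through the final differentiation yields the extra term $\alpha(\bfk\cdot\nabla\eta)(\bfk\cdot\nablap\eta)/|\bfk|^2$, so that $\lambda_\alpha^{(0)}=\lambda^{(0)}+\alpha(\bfk\cdot\nabla\eta)(\bfk\cdot\nablap\eta)/|\bfk|^2$.

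I expect the main obstacle to be precisely this order-$0$ bookkeeping. Because $\hat{\bfA}$, $M$ and $Z$ are $3\times3$ matrix-valued, one must carefully propagate the vector structure of $\curl^\varrho$ and of the nonlocal operator $\nablap\Delta^{-1}\nablac$ through the order-$0$ matching and verify all cancellations: in particular that the $\alpha$-dependent off-diagonal part of the subprincipal symbol $\mathtt{m}^{(0)}$ of $M$ induced by $\alpha\curl^\varrho$ leaves no residual contribution to $\lambda_\alpha^{(0)}$ beyond the stated scalar correction, and that the self-referential appearance of $H(\eta)(\bfzero,\Phi)$ inside \eqref{eq:partial flat 3} can be resolved order by order without obstruction. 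Checking that the net effect is exactly $\alpha(\bfk\cdot\nabla\eta)(\bfk\cdot\nablap\eta)/|\bfk|^2$ and nothing more is the crux of the computation.
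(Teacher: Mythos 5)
Your proposal follows essentially the same route as the paper: partial flattening near the free surface, Treves factorisation of $L$ into $a(\partial_w I-N)(\partial_w I-M)$, reduction of the Neumann data to the boundary via Lemma \ref{lem:regtobd intro}, elimination of $\hat A_3$ and inversion of the resulting elliptic $\Psi^1$ trace system by a parametrix to obtain $\hat{\bfA}|_{w=0}=Z\Phi+R_\infty\Phi$, and recursive extraction of $\lambda_\alpha^{(1)}$, $\lambda_\alpha^{(0)}$ by matching homogeneities. One small correction to your bookkeeping: in the paper the $\alpha$-correction to $\lambda_\alpha^{(0)}$ arises from \emph{two} sources --- the off-diagonal matrix part $\mathtt{M}_1^{(0)}$ of the subprincipal symbol of $M$ (induced by $\alpha\curl^\varrho$) \emph{and} the nonlocal term $\alpha\Delta^{-1}(\cdots)$ in the boundary conditions --- whose combined contributions (the terms $\mathtt{F}_3$, $\mathtt{F}_4$) produce exactly $\alpha(\bfk\cdot\nabla\eta)(\bfk\cdot\nablap\eta)/|\bfk|^2$, so the $\mathtt{M}_1^{(0)}$ piece you flag does not cancel but is an essential part of the stated answer.
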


The corresponding result for $M(\eta)$ is obtained in a similar fashion in Section \ref{subsec:asympM}.

\begin{theorem} \label{thm:ExpMOp}
Suppose $s\geq2$, $\eta \in C^\infty({\mathbb R}^2/\Lambda)$ and $\bfg \in H^{s-1/2}(\mathbb{R}^2/\Lambda)^2$.
We have that 
$$
\bfM(\eta)(\bfgamma,\bfg)=\bfM(\eta)(\bfgamma,\bfzero)+\bfM(\eta)(\bfzero,\bfg),
$$
the first term of which belongs to $C^\infty({\mathbb R}^2/\Lambda)^2$, and
$$\bfM(\eta)(\bfzero,\bfg)=\Op\nu_{\alpha}\bfg+R_\infty\bfg,$$
where $\nu_{\alpha} \in S^1({\mathbb R}^2/\Lambda)$
and $R_\infty \bfg \in C^\infty({\mathbb R}^2/\Lambda)^2$. The symbol $\nu_\alpha$ admits the asymptotic expansion
\begin{align*}
\nu_\alpha &\sim \nu^{(1)}_{\alpha} + \nu^{(0)}_{\alpha} + \cdots,
\end{align*}
in which $\nu^{(j)}_{\alpha}(\bfx,\bfk)$ is homogeneous of degree $j$ in $\bfk$.
Moreover
$$
\nu_\alpha^{(1)}(\bfx,\bfk)\bfg=\bfk\frac{(\bfk\cdot\bfg^\perp)}{\lambda^{(1)}}, \qquad
\nu_{\alpha}^{(0)}(\bfx,\bfk)\bfg=\begin{pmatrix}
\zeta_1(\bfx,\bfk)\\
\zeta_2(\bfx,\bfk)
\end{pmatrix}(\bfk\cdot\bfg^\perp),
$$
where 
\begin{align*}
\zeta_1(\bfx,\bfk)&=\frac{\ii}{2(\lambda^{(1)})^5}\bigg( k_1^2(-1+2\eta_y^2)\eta_x- k_1 k_2\eta_y(3+4\eta_x^2)+2 k_2^2\eta_x(1+\eta_x^2)+\ii k_1\lambda^{(1)}\bigg)\\
&\qquad \times \bigg( k_1^2\eta_{yy}-2 k_1 k_2\eta_{xy}+ k_2^2\eta_{xx}\bigg)+\frac{\alpha}{(\lambda^{(1)})^2}\left( k_2(1+\eta_x^2)- k_1\eta_x\eta_y\right),\\
\zeta_2(\bfx,\bfk)&=\frac{\ii}{2(\lambda^{(1)})^5}\bigg(2 k_1^2\eta_y(1+\eta_y^2)- k_1 k_2\eta_x(3+4\eta_y^2)+ k_2^2\eta_y(-1+2\eta_x^2)+\ii k_2\lambda^{(1)}\bigg)\\
&\qquad \times \bigg( k_1^2\eta_{yy}-2 k_1 k_2\eta_{xy}+ k_2^2\eta_{xx}\bigg)+\frac{\alpha}{(\lambda^{(1)})^2}\left(- k_1(1+\eta_y^2)+ k_2\eta_x\eta_y\right).
\end{align*}
\end{theorem}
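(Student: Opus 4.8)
The plan is to follow exactly the scheme already carried out for $H(\eta)$ in Section \ref{subsec:asympH}, adapting it to the operator $\bfM(\eta)$. First I would handle the splitting $\bfM(\eta)(\bfgamma,\bfg) = \bfM(\eta)(\bfgamma,\bfzero) + \bfM(\eta)(\bfzero,\bfg)$: this is immediate from the linearity of $\bfM(\eta)$ in $(\bfgamma,\bfg)$ established in Theorem \ref{thm:analGDNO}, and the smoothness of the first term follows because when $\bfg=\bfzero$ the boundary data $\nablac\bfg^\perp$ in \eqref{Intro B BVP 5} vanishes, so $\tilde{\bfB}$ solves a boundary-value problem whose only nonzero datum is the constant $\bfgamma$; elliptic regularity then gives $\tilde{\bfB} \in C^\infty$ and hence $(\curl^\sigma\tilde{\bfB})_\parallel \in C^\infty({\mathbb R}^2/\Lambda)^2$. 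From now on I set $\bfgamma=\bfzero$ and work with $\bfM(\eta)(\bfzero,\bfg)$.

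Next I would set up the localising transform $\hat\Sigma$ and the factorisation, as already done for $H$. Let $\bfB$ solve \eqref{Intro B BVP 1}--\eqref{Intro B BVP 6}; since $\curl\curl\bfB = \alpha\curl\bfB$ and $\Div\bfB=0$, the field $\bfB$ satisfies $-\Delta\bfB = \alpha\curl\bfB$ in $\Omega_\delta$, so the partially flattened variable $\hat{\bfB}(\bfx,w) := \tilde{\bfB}(\bfx,v)$ with $w := (h+\eta)v/(\delta h)$ satisfies \eqref{eq:partiallyflattened}, $L\hat{\bfB}=\bfzero$. Lemma \ref{lem:regtobd intro} then yields $\partial_w\hat{\bfB}|_{w=0} = M\hat{\bfB}|_{w=0} + R_\infty\bfg$. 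The transformed boundary conditions are the analogues of \eqref{eq:partial flat 2}, \eqref{eq:partial flat 3}: from \eqref{Intro B BVP 4} one gets $\hat{B}_3 = \eta_x\hat{B}_1 + \eta_y\hat{B}_2$ at $w=0$, and from \eqref{Intro B BVP 5} one gets $\nablac(\curl^\varrho\hat{\bfB})_\parallel^\perp = \nablac\bfg^\perp$ at $w=0$, equivalently $(\curl^\varrho\hat{\bfB})_\parallel = \bfgamma + \nabla\Phi - \alpha\nablap\Delta^{-1}(\nablac\hat{\bfB}_\parallel^\perp)$ with $\bfgamma=\bfzero$ and $\Phi = \Delta^{-1}\nablac\bfg^\perp$ (using \eqref{eq:compatibility}). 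Eliminating $\partial_w\hat{\bfB}$ between these and $\partial_w\hat{\bfB}|_{w=0} = M\hat{\bfB}|_{w=0} + R_\infty\bfg$ expresses $\hat{\bfB}|_{w=0}$ as $\tilde{Z}\bfg + R_\infty\bfg$ for an order-zero symbol $\tilde{Z}$, and then substituting $\hat{\bfB}|_{w=0}$ and $\partial_w\hat{\bfB}|_{w=0}$ into the defining formula $\bfM(\eta)(\bfzero,\bfg) = -(\curl^\varrho\hat{\bfB})_\parallel|_{w=0}$ yields $\bfM(\eta)(\bfzero,\bfg) = \Op\nu_\alpha\bfg + R_\infty\bfg$ with $\nu_\alpha \in S^1$, together with a recursion for the homogeneous components $\nu_\alpha^{(1)}, \nu_\alpha^{(0)},\dots$ obtained by inserting $\hat{\bfB}|_{w=0}=Z\bfg+R_\infty\bfg$, $\partial_w\hat{\bfB}|_{w=0}=MZ\bfg+R_\infty\bfg$ into the transformed boundary conditions and matching orders of homogeneity in $\bfk$. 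The principal symbol $\mathtt{m}^{(1)}$ of $M$ is the same strongly elliptic symbol from Lemma (i)--(ii) as in the $H$ case, since $L$ is unchanged.

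The remaining work is to extract the explicit formulae for $\nu_\alpha^{(1)}$ and $\nu_\alpha^{(0)}$. For the principal symbol, $(\curl^\varrho\hat{\bfB})_\parallel$ at leading order involves replacing $\partial_w$ by $M \sim \Op\mathtt{m}^{(1)}$ and $\nabla$ by $\ii\bfk$ in the curl; the constraint $\nablac(\cdot)_\parallel^\perp = \nablac\bfg^\perp$ together with the divergence-free condition pins down the leading behaviour, and since $H_0(\bfgamma,\Phi) = D^2\mathtt{t}(D)\Phi$ and $\bfM_0(\bfgamma,\bfg) = -\bfgamma + D^{-2}(\alpha\bfD^\perp + \bfD\,\mathtt{c}(D))\bfD\cdot\bfg^\perp$ are already known, these flat-domain formulae fix $\nu_\alpha^{(1)}(\bfx,\bfk)\bfg = \bfk(\bfk\cdot\bfg^\perp)/\lambda^{(1)}$ once one accounts for the metric factor $1+|\nabla\eta|^2$ entering $\lambda^{(1)} = \sqrt{(1+|\nabla\eta|^2)|\bfk|^2 - (\bfk\cdot\nabla\eta)^2}$. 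The sub-principal symbol $\nu_\alpha^{(0)} = (\zeta_1,\zeta_2)^T(\bfk\cdot\bfg^\perp)$ comes from the next order in the recursion: the $\ii\nabla_\bfk$-corrections in the composition of pseudodifferential operators, the contribution of the sub-principal symbol $\mathtt{m}^{(0)}$ of $M$, and — this is the genuinely new feature compared with the irrotational Dirichlet--Neumann calculation — the $\alpha$-term coming from the $-\alpha\nablap\Delta^{-1}(\nablac\hat{\bfB}_\parallel^\perp)$ piece in the boundary condition, which contributes the terms $\frac{\alpha}{(\lambda^{(1)})^2}(k_2(1+\eta_x^2)-k_1\eta_x\eta_y)$ and $\frac{\alpha}{(\lambda^{(1)})^2}(-k_1(1+\eta_y^2)+k_2\eta_x\eta_y)$ to $\zeta_1$, $\zeta_2$ respectively. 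I expect the main obstacle to be precisely this bookkeeping of the sub-principal symbol: keeping track of all the symbol-composition correction terms, the metric-induced factors, and the curl structure simultaneously is a long and error-prone computation, and the vector-valued nature of $\bfM$ (as opposed to the scalar $H$) means one must carry two components through every step and verify the divergence-free constraint $\bfk\cdot\nu_\alpha^{(1)} + (\text{lower order}) = 0$ is respected at each order as a consistency check.
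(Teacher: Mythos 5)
Your overall scheme (partial flattening, Treves factorisation, elimination of $\partial_w\hat{\bfB}|_{w=0}$ via $\partial_w\hat{\bfB}=M\hat{\bfB}+R_\infty\bfg$, substitution into $-(\curl^\varrho\hat{\bfB})_\parallel$) is the one the paper follows, and your treatment of the splitting and of the smoothness of $\bfM(\eta)(\bfgamma,\bfzero)$ is fine. The gap lies in the boundary data you feed into this scheme. Condition \eqref{Intro B BVP 5} reads $\nablac\bfB_\parallel^\perp=\nablac\bfg^\perp$ --- a condition on $\bfB$ itself --- not $\nablac(\curl^\varrho\hat{\bfB})_\parallel^\perp=\nablac\bfg^\perp$ as you write (in fact $\nablac(\curl\bfB)_\parallel^\perp=\curl\curl\bfB\cdot\bfN=\alpha\,\nablac\bfg^\perp$). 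Your subsequent identification $\Phi=\Delta^{-1}\nablac\bfg^\perp$ is also wrong: by construction $\Phi=H(\eta)(\bfzero,\cdot)^{-1}(\nablac\bfg^\perp)$, and replacing the order-$(-1)$ parametrix of $H(\eta)(\bfzero,\cdot)$ by $\Delta^{-1}\nablac$ changes the order of $-\nabla\Phi$ from $1$ to $0$, so it cannot reproduce $\nu_\alpha^{(1)}\bfg=\bfk(\bfk\cdot\bfg^\perp)/\lambda^{(1)}$. (Computing $\nu_\alpha$ from $\bfM(\eta)(\bfzero,\bfg)=-\nabla\bigl(H(\eta)(\bfzero,\cdot)^{-1}(\nablac\bfg^\perp)\bigr)+\alpha\nablap\Delta^{-1}(\nablac\bfg^\perp)$ together with a parametrix for $\Op\lambda_\alpha$ \emph{is} a legitimate alternative route, recorded as a remark in the paper, but that is not what you have set up.)

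The direct argument requires three conditions at $w=0$: the normal condition $\hat{B}_3=\eta_x\hat{B}_1+\eta_y\hat{B}_2$ from \eqref{eq:bdry2_B}, the tangential condition $(\hat{B}_2+\hat{B}_3\eta_y)_x-(\hat{B}_1+\hat{B}_3\eta_x)_y=g_{2x}-g_{1y}$ from \eqref{eq:bdry3_B} (which contains no $w$-derivatives), and, crucially, the trace of the divergence-free condition $\Div^\varrho\hat{\bfB}=0$ at $w=0$, which is the only place $\partial_w\hat{\bfB}$ enters and hence the only place the factorisation lemma is used. Your proposal omits the divergence condition entirely; after eliminating $\hat{B}_3$ you are then left with a single scalar equation for the two unknowns $\hat{B}_1|_{w=0},\hat{B}_2|_{w=0}$, so the $2\times2$ elliptic system $P$ whose parametrix yields $\hat{\bfB}|_{w=0}=Z\bfg+R_\infty\bfg$ cannot be assembled, and the recursion for $\mathtt{Z}^{(0)},\mathtt{Z}^{(-1)},\dots$ and hence for $\nu_\alpha^{(1)},\nu_\alpha^{(0)}$ never gets off the ground. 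As a minor further point, the consistency check $\bfk\cdot\nu_\alpha^{(1)}+\cdots=0$ you propose is not available: $\bfM(\eta)(\bfzero,\bfg)$ is not divergence-free (its gradient part is $-\nabla\Phi$), and indeed $\bfk\cdot\nu_\alpha^{(1)}\bfg=|\bfk|^2(\bfk\cdot\bfg^\perp)/\lambda^{(1)}\neq0$.
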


\subsection{Construction of approximate solutions} \label{subsec:ApprSol}
In Section \ref{sec:approx} we construct approximate solutions of
\begin{equation}
J(\eta,\bfmu)=0 \label{eq:hydroeq with mu - intro}
\end{equation}
for $\beta\geq 0$ in the form of power series and moreover prove their convergence for
$\beta>0$. The solutions have wave velocity $\bfc$ close to a reference value $\bfc_0$ chosen such that the following transversality condition holds;
we refer to Lokharu, Seth and Wahl\'{e}n \cite{LokharuSethWahlen20} for a detailed geometrical investigation of this condition (see in particular condition (3.7) and Proposition 3.3 in that
reference).
\begin{itemize}
\item[(T)] The only solutions $\bfk \in \Lambda^\prime$ of the \emph{dispersion relation}
\begin{align*} %\label{eq:DispRel}
	\rho(\bfk,\bfc,\beta) &\coloneqq  \left[ g + \beta \; |\bfk|^2 - \frac{\alpha}{ |\bfk|^2 } \; (\bfc \cdot \bfk)(\bfk^\perp \cdot \bfc) \right] |\bfk|^2\mathtt{t}(|\bfk|) -  \; (\bfc \cdot \bfk)^2=0
\end{align*}
 are $\bfk=\bfzero$, $\pm \bfk_1$, $\pm \bfk_2$, where $\bfk_1$ and $\bfk_2$ are the generators of the lattice $\Lambda^\prime$. Furthermore,
the vectors $\nabla_{\!\bfc}\,\rho(\bfk_1,\bfc_0,\beta)$ and $\nabla_{\!\bfc}\,\rho(\bfk_2,\bfc_0,\beta)$ are linearly independent.
\end{itemize}

We consider $ J$ as a locally analytic mapping $X_s^\beta \times {\mathbb R}^2 \rightarrow H^s({\mathbb R}^2/\Lambda)$ for a sufficiently large value of $s$, where
\begin{align*}
 X_s^\beta&\coloneqq\begin{cases}
H^{s+2}(\mathbb{R}^2/\Lambda), &  \mbox{if $\beta>0$,}\\
H^{s+1}(\mathbb{R}^2/\Lambda),  &  \mbox{if $\beta=0$,}
\end{cases}
\end{align*}
and proceed to investigate the kernel and range of 
$$J_{10}(\eta)\coloneqq \mathrm{d}_1J[0,\bfzero](\eta)=\bfT_1(\eta)\cdot\bfc_0+g\eta-\beta\Delta\eta.$$
Writing
\begin{equation*}
\eta(\bfx)=\sum_{\bfk\in\Lambda^\prime}\hat{\eta}_{\bfk}\ee^{\ii\bfk\cdot\bfx},
\end{equation*}
so that
$$(J_{10}\eta)(\bfx) =g\hat\eta_{\bfzero}+\sum_{\bfk\in\Lambda^\prime\setminus\{\bfzero\}}\frac{\mathtt{c}(|\bfk|)}{|\bfk|^2}\rho(\bfk,\bfc_0,\beta)\hat{\eta}_{\bfk}\ee^{\ii\bfk\cdot\bfx},$$
we find that
\begin{equation*}
\ker(J_{10})=\{A\ee^{\ii\bfk_1\cdot\bfx}+B\ee^{\ii\bfk_2\cdot\bfx}+\bar{A}\ee^{-\ii\bfk_1\cdot\bfx}+\bar{B}\ee^{-\ii\bfk_2\cdot\bfx}\colon A, B \in {\mathbb C}\},
\end{equation*}
since $\rho(\bfk,\bfc_0,\beta)=0$ if and only if $\bfk=\bfzero,\pm\bfk_1,\pm\bfk_2$. The operator $J_{10}$ is formally invertible
if $\hat{f}_{\pm\bfk_1}=\hat{f}_{\pm\bfk_2}=0$ with formal inverse given by
$$(J_{10}^{-1} f)(\bfx)=\frac{1}{g}\hat{f}_{\bfzero}+\hspace{-0.5cm}\sum_{\bfk\in\Lambda^\prime \atop \bfk\neq \bfzero,\pm\bfk_1,\pm\bfk_2}\hspace{-5mm}
\frac{|\bfk|^2}{\mathtt{c}(|\bfk|)\rho(\bfk,\bfc_0,\beta)}\hat{f}_{\bfk}
\ee^{\ii\bfk\cdot\bfx}.
$$
For $\beta>0$ we find that $\rho(\bfk,\bfc_0,\beta)\gtrsim |\bfk|^3$ for sufficiently large $|\bfk|$, so that the above series
converges in $H^{s+2}(\mathbb{R}^2/\Lambda)$ for $f \in H^{s+2}(\mathbb{R}^2/\Lambda)$; it follows that
$J_{10}\colon H^{s+2}(\mathbb{R}^2/\Lambda) \rightarrow H^s(\mathbb{R}^2/\Lambda) $ is Fredholm with index $0$.
In contrast $\rho(\bfk,\bfc_0,0)$ is not bounded from below as $|\bfk|\rightarrow \infty$, so that the above formula does not define a bounded operator from $H^s(\mathbb{R}^2/\Lambda)$ to $H^{s+1}(\mathbb{R}^2/\Lambda)$
for any $s$. We therefore proceed formally, noting that the procedure is rigorously valid for $\beta>0$.

To apply the Lyapunov--Schmidt reduction to equation \eqref{eq:hydroeq with mu - intro} we write $\eta=\eta_1+\eta_2$, where
\begin{equation*}
\eta_1=A\ee^{\ii\bfk_1\cdot\bfx}+B\ee^{\ii\bfk_2\cdot\bfx}+\bar{A}\ee^{-\ii\bfk_1\cdot\bfx}+\bar{B}\ee^{-\ii\bfk_2\cdot\bfx}
\end{equation*}
and $\eta_2 \in \ker(J_{10})^\perp$.
Noting that $S_0$ and $T_{\bfv^\prime}$ act on the coordinates $(A,B,\bar{A},\bar{B})$ as 
$$S_0(A,B,\bar{A},\bar{B})=(\bar{A},\bar{B},A,B), \qquad T_{\bfv^\prime}(A,B,\bar{A},\bar{B})=(A\ee^{\ii\bfk_1\cdot\bfv^\prime},B\ee^{\ii\bfk_2\cdot\bfv^\prime},\bar{A}\ee^{-\ii\bfk_1\cdot\bfv^\prime},\bar{B}\ee^{-\ii\bfk_2\cdot\bfv^\prime}),$$
and that the reduced equation remains equivariant under these symmetries, we show that \eqref{eq:hydroeq with mu - intro} is locally equivalent to
\begin{align*}
Ag_1(|A|^2,|B|^2,\bfmu)=0, \\
Bg_2(|A|^2,|B|^2,\bfmu)=0,
\end{align*}
where $g_1,g_2$ are real-valued locally analytic functions which vanish at the origin. The following result is obtained from the analytic implicit-function theorem and the
transversality condition (T).

\begin{proposition}
There exist $\varepsilon>0$ and analytic functions $\mu_i\colon B_\varepsilon ({\bf 0},\mathbb{R}^2)\rightarrow {\mathbb R}$, $i=1,2$ such that $\mu_i(0,0)=0$ and $(|A|^2,|B|^2,\mu_1(|A|^2,|B|^2),\mu_2(|A|^2,|B|^2))$ is the unique local solution of $g_i(|A|^2,|B|^2,\bfmu)=0$, $i=1,2$.
\end{proposition}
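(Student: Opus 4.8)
The plan is to recover $\bfmu$ from the reduced equations $g_1(|A|^2,|B|^2,\bfmu)=0$, $g_2(|A|^2,|B|^2,\bfmu)=0$ by means of the analytic implicit-function theorem. Write $a\coloneqq|A|^2$, $b\coloneqq|B|^2$ and regard $G\coloneqq(g_1,g_2)$ as a locally analytic mapping of a neighbourhood of the origin in $\mathbb{R}^2\times\mathbb{R}^2$ (in the variables $(a,b)$ and $\bfmu$) into $\mathbb{R}^2$. Since $g_1$ and $g_2$ vanish at the origin, $G(0,0,\bfzero)=\bfzero$, so it suffices to prove that $\mathrm{d}_\bfmu G[0,0,\bfzero]$ is invertible as a linear map $\mathbb{R}^2\to\mathbb{R}^2$; the theorem then furnishes $\varepsilon>0$ and a unique analytic map $\bfmu=(\mu_1,\mu_2)\colon B_\varepsilon(\bfzero,\mathbb{R}^2)\to\mathbb{R}^2$ with $\bfmu(0,0)=\bfzero$ and $G(a,b,\bfmu(a,b))=\bfzero$, which upon substituting $(a,b)=(|A|^2,|B|^2)$ is exactly the asserted solution branch, together with its local uniqueness. (The equivariance recorded in Remark~\ref{rem:symmetries}, already used to put the bifurcation equations in the form $Ag_1=Bg_2=0$, then also shows that the branch is compatible with the reflection and translation symmetries.)

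Everything therefore reduces to computing $\mathrm{d}_\bfmu G[0,0,\bfzero]$. Recall that the reduced equations arise by writing $\eta=\eta_1+\eta_2(A,B,\bfmu)$ with $\eta_2\in\ker(J_{10})^\perp$ the solution of the complementary equation $(I-P)J(\eta,\bfmu)=0$, and reading off the Fourier coefficients of $J(\eta,\bfmu)$ in the modes $\ee^{\pm\ii\bfk_1\cdot\bfx},\ee^{\pm\ii\bfk_2\cdot\bfx}$ spanning $\ker(J_{10})$; the coefficient of $\ee^{\ii\bfk_1\cdot\bfx}$ is $Ag_1(|A|^2,|B|^2,\bfmu)$, so $\partial_{\mu_j}g_1(0,0,\bfzero)$ is the $\ee^{\ii\bfk_1\cdot\bfx}$-Fourier coefficient of $\partial_A\partial_{\mu_j}J(\eta_1+\eta_2,\bfmu)$ evaluated at $A=B=0$, $\bfmu=\bfzero$, and similarly for $g_2$ with $\bfk_1$ replaced by $\bfk_2$. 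Because $\eta=0$ solves $J(\cdot,\bfmu)=0$ for every $\bfmu$ we have $\eta_2(0,0,\bfmu)=0$, and because $\mathrm{d}_1J[0,\bfmu]$ is a Fourier multiplier preserving $\ker(J_{10})$ the complementary equation also forces $\partial_A\eta_2|_{A=B=0}=0$ for all $\bfmu$; hence all terms involving a derivative of $\eta_2$ drop out and the mixed derivative above is the $\ee^{\ii\bfk_1\cdot\bfx}$-Fourier coefficient of $\partial_{\mu_j}(\mathrm{d}_1J[0,\bfmu]\,\ee^{\ii\bfk_1\cdot\bfx})|_{\bfmu=\bfzero}$. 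Running the computation that produced the symbol of $J_{10}$, but with $\bfc_0$ replaced throughout by $\bfc_0+\bfmu$, shows that $\mathrm{d}_1J[0,\bfmu]$ acts on the nonzero modes through the symbol $\mathtt{c}(|\bfk|)|\bfk|^{-2}\rho(\bfk,\bfc_0+\bfmu,\beta)$, so that
$$
\partial_{\mu_j}g_i(0,0,\bfzero)=\frac{\mathtt{c}(|\bfk_i|)}{|\bfk_i|^2}\,\partial_{c_j}\rho(\bfk_i,\bfc_0,\beta),\qquad i,j=1,2.
$$

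Thus the rows of $\mathrm{d}_\bfmu G[0,0,\bfzero]$ are $\mathtt{c}(|\bfk_i|)|\bfk_i|^{-2}\nabla_{\!\bfc}\,\rho(\bfk_i,\bfc_0,\beta)$, $i=1,2$. Under the non-resonance condition (NR) the scalar factors $\mathtt{c}(|\bfk_i|)/|\bfk_i|^2$ are finite and nonzero — (NR) excludes both the poles and the zeros of the cotangent occurring in the definition of $\mathtt{c}$ — so $\mathrm{d}_\bfmu G[0,0,\bfzero]$ is invertible precisely because $\nabla_{\!\bfc}\,\rho(\bfk_1,\bfc_0,\beta)$ and $\nabla_{\!\bfc}\,\rho(\bfk_2,\bfc_0,\beta)$ are linearly independent, which is the second half of the transversality condition (T). The analytic implicit-function theorem then applies and yields the proposition.

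I expect the real work to lie in the middle paragraph: the careful bookkeeping of the Lyapunov--Schmidt reduction that isolates the exact contribution of $\bfmu$ to the linearised reduced map and matches it with $\nabla_{\!\bfc}\,\rho(\bfk_i,\bfc_0,\beta)$ — in particular the verification that the $\eta_2$-dependent terms vanish to the order that matters and that the prefactors $\mathtt{c}(|\bfk_i|)/|\bfk_i|^2$ do not vanish. For $\beta=0$ the operator $J_{10}$ has no bounded inverse on $\ker(J_{10})^\perp$, so the Lyapunov--Schmidt reduction, and hence the present proposition, is only formal in that case; but the algebraic content of the argument, and in particular the identification of the Jacobian with the transversality matrix, is unchanged.
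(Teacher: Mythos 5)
Your proposal is correct and follows essentially the same route as the paper: both apply the analytic implicit-function theorem to $(g_1,g_2)$ in $\bfmu$, identify the Jacobian entries $\partial_{\mu_j}g_i(0,0,\bfzero)$ with $\tfrac{\mathtt{c}(|\bfk_i|)}{|\bfk_i|^2}\,\partial_{c_j}\rho(\bfk_i,\bfc_0,\beta)$ via the linearisation $\partial_{\mu_j}\mathrm{d}_1J[0,\bfmu]|_{\bfmu=\bfzero}$, and invoke the linear independence of $\nabla_{\!\bfc}\,\rho(\bfk_1,\bfc_0,\beta)$ and $\nabla_{\!\bfc}\,\rho(\bfk_2,\bfc_0,\beta)$ from condition (T). Your explicit remarks that the $\eta_2$-derivatives drop out (the paper records this as $\eta_2=\mathcal{O}(|(\eta_1,\bfmu)||\eta_1|)$) and that (NR) guarantees $\mathtt{c}(|\bfk_i|)\neq 0$ are correct refinements of points the paper leaves implicit.
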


Our main result now follows by substituting $\bfmu = \bfmu(|A|^2,|B|^2)$ into $\eta=\eta_1+\eta_2(\eta_1,\bfmu)$.

\begin{theorem}
Suppose that $\beta>0$.
There exist $\varepsilon>0$, a neighbourhood $V$ of the origin in $X_s^\beta \times {\mathbb R}^2$ and analytic functions
$\mu_1,\mu_2 \colon B_\varepsilon ({\bf 0},{\mathbb R}^2)\rightarrow {\mathbb R}$ and $\eta \colon B_\varepsilon ({\bf 0},{\mathbb C}^4) \rightarrow X_s^\beta$ such that
$$\{(\eta,\bfmu) \in X_s^\beta \times {\mathbb R}^2 \colon  J(\eta,\bfmu)=0,\ \eta \neq 0\} \cap V = \{ (\eta(A,B,\bar{A},\bar{B}),\bfmu(|A|^2,|B|^2)) \colon (A,B,\bar{A},\bar{B}) \in B_\varepsilon^\prime ({\bf 0},{\mathbb C}^4)\};$$
furthermore $\bfmu(0,0)=\bfzero$ and
$$\eta(\bfx)=A\ee^{\ii\bfk_1\cdot\bfx}+B\ee^{\ii\bfk_2\cdot\bfx}+\bar{A}\ee^{-\ii\bfk_1\cdot\bfx}+\bar{B}\ee^{-\ii\bfk_2\cdot\bfx}
+O(|(A,B,\bar{A},\bar{B})|^2).$$
\end{theorem}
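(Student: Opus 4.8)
The plan is a standard Lyapunov--Schmidt reduction, several ingredients of which have already been assembled in the discussion above; what remains for the theorem is to combine them with the preceding proposition and to verify the precise description of the solution set.

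\emph{Step 1: the reduction.} Since $J(0,\bfmu)=0$ for every $\bfmu$ (the trivial laminar flow solves the problem for any wave velocity), the partial differential $\mathrm{d}_2 J[0,\bfzero]$ vanishes and the bifurcation is `vertical' over $\bfmu=\bfzero$. For $\beta>0$ the operator $J_{10}$ is the Fourier multiplier with real, even symbol $\mathtt{c}(|\bfk|)\rho(\bfk,\bfc_0,\beta)/|\bfk|^2$ (equal to $g$ at $\bfk=\bfzero$), and the growth $\rho(\bfk,\bfc_0,\beta)\gtrsim|\bfk|^3$ together with the first half of~(T) shows that $J_{10}\colon H^{s+2}({\mathbb R}^2/\Lambda)\to H^s({\mathbb R}^2/\Lambda)$ is self-adjoint and Fredholm of index $0$, with four-dimensional kernel $\ker(J_{10})=\mathrm{span}\{\ee^{\pm\ii\bfk_1\cdot\bfx},\ee^{\pm\ii\bfk_2\cdot\bfx}\}$ and $\ran(J_{10})=\ker(J_{10})^\perp\cap H^s({\mathbb R}^2/\Lambda)$. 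Writing $\eta=\eta_1+\eta_2$ with $\eta_1\in\ker(J_{10})$, $\eta_2\in\ker(J_{10})^\perp$, and $P$, $Q=I-P$ the associated orthogonal projections, I would solve $QJ(\eta_1+\eta_2,\bfmu)=0$ for $\eta_2=\eta_2(\eta_1,\bfmu)$ by the analytic implicit-function theorem: $J$ is analytic by Theorem~\ref{thm:analGDNO}, $QJ(0,\bfzero)=0$, and the partial derivative of $\eta_2\mapsto QJ(\eta_1+\eta_2,\bfmu)$ at $\eta_2=0$ equals $QJ_{10}|_{\ker(J_{10})^\perp}$, an isomorphism onto $\ker(J_{10})^\perp\cap H^s$. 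Uniqueness forces $\eta_2(0,\bfmu)=0$, and differentiating $QJ(\eta_1+\eta_2(\eta_1,\bfmu),\bfmu)=0$ at the origin (using $\mathrm{d}_2 J[0,\bfzero]=0$ and injectivity of $QJ_{10}$ on $\ker(J_{10})^\perp$) gives $\mathrm{d}_{\eta_1}\eta_2[0,\bfzero]=0$ and $\mathrm{d}_\bfmu\eta_2[0,\bfzero]=0$, so $\eta_2$ vanishes to second order at the origin.

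\emph{Step 2: the reduced equations.} The symmetries $S_0$, $T_{\bfv^\prime}$ of Remark~\ref{rem:symmetries} commute with $J_{10}$, hence with $P$ and $Q$, and leave $\ker(J_{10})$ and its complement invariant; by uniqueness $\eta_2$ is equivariant, so the reduced map $\Phi(\eta_1,\bfmu)\coloneqq PJ(\eta_1+\eta_2(\eta_1,\bfmu),\bfmu)$ is too. Because $\bfk_1$, $\bfk_2$ are linearly independent, $T_{\bfv^\prime}$ realises the full torus action $(A,B)\mapsto(\ee^{\ii\bfk_1\cdot\bfv^\prime}A,\ee^{\ii\bfk_2\cdot\bfv^\prime}B)$ on the coordinates of $\ker(J_{10})$, which forces the $\ee^{\ii\bfk_1\cdot\bfx}$- and $\ee^{\ii\bfk_2\cdot\bfx}$-components of $\Phi$ to have the form $Ag_1(|A|^2,|B|^2,\bfmu)$ and $Bg_2(|A|^2,|B|^2,\bfmu)$; $S_0$-equivariance makes $g_1$, $g_2$ real-valued, and they vanish at the origin since $\Phi$ carries no term linear in $\eta_1$ (as $PJ_{10}\eta_1=0$ on $\ker(J_{10})$) and none independent of $\eta_1$ (as $\Phi(0,\bfmu)=PJ(0,\bfmu)=0$). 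This reproduces the reduction of \eqref{eq:hydroeq with mu - intro} to $Ag_1=0$, $Bg_2=0$ stated above.

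\emph{Step 3: transversality and conclusion.} One checks that $\mathrm{d}_1 J[0,\bfmu]$ is the Fourier multiplier with symbol $\mathtt{c}(|\bfk|)\rho(\bfk,\bfc_0+\bfmu,\beta)/|\bfk|^2$, whence, the $\eta_2$-corrections being of higher order, $\nabla_{\!\bfmu}g_i(0,0,\bfzero)=\bigl(\mathtt{c}(|\bfk_i|)/|\bfk_i|^2\bigr)\nabla_{\!\bfc}\,\rho(\bfk_i,\bfc_0,\beta)$ for $i=1,2$; since $\mathtt{c}(|\bfk_i|)\neq0$ by~(NR), the linear independence of $\nabla_{\!\bfc}\,\rho(\bfk_1,\bfc_0,\beta)$ and $\nabla_{\!\bfc}\,\rho(\bfk_2,\bfc_0,\beta)$ in~(T) is exactly invertibility of $\partial(g_1,g_2)/\partial(\mu_1,\mu_2)$ at the origin. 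The analytic implicit-function theorem then gives analytic $\bfmu=\bfmu(|A|^2,|B|^2)$ with $\bfmu(0,0)=\bfzero$ solving $g_1=g_2=0$ uniquely near $\bfmu=\bfzero$ (the preceding proposition), and $\eta=\eta_1+\eta_2(\eta_1,\bfmu(|A|^2,|B|^2))$ produces the asserted family; the expansion $\eta(\bfx)=A\ee^{\ii\bfk_1\cdot\bfx}+B\ee^{\ii\bfk_2\cdot\bfx}+\bar A\ee^{-\ii\bfk_1\cdot\bfx}+\bar B\ee^{-\ii\bfk_2\cdot\bfx}+O(|(A,B,\bar A,\bar B)|^2)$ follows from $\bfmu=O(|(A,B)|^2)$ and the second-order vanishing of $\eta_2$. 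For the reverse inclusion, a small nontrivial solution $(\eta,\bfmu)\in V$ decomposes as $\eta=\eta_1+\eta_2$ with $\eta_2=\eta_2(\eta_1,\bfmu)$ by uniqueness in Step~1 and $\eta_1\neq0$; then $Ag_1=Bg_2=0$, and uniqueness in the proposition pins down $\bfmu=\bfmu(|A|^2,|B|^2)$ once both $g_i$ vanish, the pure-mode strata $A=0$ and $B=0$ being dealt with after shrinking $V$.

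\emph{Main obstacle.} The one non-routine computation is the identification $\nabla_{\!\bfmu}g_i(0,0,\bfzero)\parallel\nabla_{\!\bfc}\,\rho(\bfk_i,\bfc_0,\beta)$, i.e.\ that the leading $\bfmu$-dependence of the reduced bifurcation map is governed precisely by the $\bfc$-gradient of the dispersion relation; this rests on the explicit Fourier-multiplier form of $\mathrm{d}_1J[0,\bfmu]$, which in turn draws on the formula for $J_{10}$ recorded above and on the structure of $\bfT$ coming from Theorem~\ref{thm:diffs} and the Taylor expansion of Section~\ref{sec:Taylor}. Ensuring the claimed set equality remains valid on the pure-mode strata $A=0$, $B=0$ is a secondary point that also requires care.
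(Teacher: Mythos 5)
Your proposal is correct and follows essentially the same route as the paper: the same Lyapunov--Schmidt splitting along $\ker(J_{10})$ using the Fredholm property of the Fourier multiplier $J_{10}$ for $\beta>0$, the same $S_0$- and $T_{\bfv^\prime}$-equivariance argument reducing to $Ag_1=Bg_2=0$, and the same identification of $\nabla_{\!\bfmu}g_i(0,0,\bfzero)$ with $\bigl(\mathtt{c}(|\bfk_i|)/|\bfk_i|^2\bigr)\nabla_{\!\bfc}\,\rho(\bfk_i,\bfc_0,\beta)$ that underlies Proposition \ref{implicit_prop}. The only point you flag beyond the paper --- care on the pure-mode strata $A=0$ or $B=0$ in the set equality --- is a genuine subtlety, but one the paper itself leaves implicit rather than something your argument handles differently.
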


The terms in the expansions
$$\eta=A\ee^{\ii\bfk_1\cdot\bfx}+B\ee^{\ii\bfk_2\cdot\bfx}+\bar{A}\ee^{-\ii\bfk_1\cdot\bfx}+\bar{B}\ee^{-\ii\bfk_2\cdot\bfx}
+\hspace{-4mm}\sum_{i+j+k+l \geq 2}\hspace{-2mm}\eta_{ijkl}A^iB^j\bar{A}^k\bar{B}^l
$$
and
$$\mu_i = \sum_{j+k \geq 1} \mu_{i,jk} |A|^{2j} |B|^{2k}, \qquad i=1,2,$$
can be determined recursively by substituting these expressions into \eqref{eq:hydroeq with mu - intro}
and equating monomials in $(A,B,\bar{A},\bar{B})$. Note that the series can be computed to any order for $\beta \geq 0$
but their convergence has been established only for $\beta>0$.
The coefficients $\eta_{ijkl}$ for $i+j+k+\ell=2$ and $\mu_{1,jk}$, $\mu_{2,jk}$ for $j+k=1$ are computed in Section \ref{sec:approx}.

\section{The operators $H(\eta)$ and $\bfM(\eta)$} \label{sec:operators}
In this section we study the operators $H(\eta)$ and $\bfM(\eta)$ defined by
\begin{equation}
H(\eta)(\bfgamma,\Phi) \coloneqq  \nablac\bfA_\parallel^\perp, \qquad
\bfM(\eta)(\bfgamma,\bfg) \coloneqq  -(\curl \bfB)_{\parallel},\label{eq:alt defn of H and M}
\end{equation}
where $\bfA$ and $\bfB$ are the solutions to the boundary-value problems
\begin{alignat}{2}
\curl \curl \bfA &= \alpha  \curl \bfA & & \In D_\eta, \label{A BVP 1} \\
\Div \bfA &= 0 & & \In D_\eta, \label{A BVP 2} \\
\bfA \times \bfe_3 &= \bfzero & & \at z=-h, \label{A BVP 3} \\
\bfA \cdot \bfn &= 0 & & \at z=\eta, \label{A BVP 4} \\
(\curl \bfA)_\parallel &= \bfgamma+\nabla\Phi - \alpha \nablap\Delta^{-1}(\nablac \bfA_\parallel^\perp) & & \at z=\eta \label{A BVP 5}
\end{alignat}
and
\begin{alignat}{2}
\curl \curl \bfB &= \alpha  \curl \bfB & & \In D_\eta, \label{B BVP 1}\\
\Div \bfB &= 0 & & \In D_\eta, \label{B BVP 2}\\
\bfB \times \bfe_{3} &= \bfzero & & \at z=-h, \label{B BVP 3}\\
\bfB \cdot \bfn &= 0 & & \at z=\eta, \label{B BVP 4}\\
\nablac \bfB_{\parallel}^{\perp} &= \nablac \bfg^\perp, \label{B BVP 5} \\
\langle (\curl \bfB)_\parallel \rangle &= \bfgamma. \label{B BVP 6}
\end{alignat}

\subsection{Weak and strong solutions} \label{sec:WeakStrong}
We first suppose that $\eta$ is a fixed function in $W^{2,\infty}({\mathbb R}^2/\Lambda)$ with $\inf \eta > -h$ and present a
traditional weak/strong-solution approach to the
boundary-value problems \eqref{A BVP 1}--\eqref{A BVP 5} and \eqref{B BVP 1}--\eqref{B BVP 6}, working with the standard 
spaces ${\mathscr D}(D_\eta/\Lambda)^3$ and ${\mathscr D}(\overline{D_\eta/\Lambda})^3$ of periodic test functions, the
Sobolev spaces $L^2(D_\eta/\Lambda)^3$ and $H^1(D_\eta/\Lambda)^3$, and the closed subspace
\begin{align*}
{\mathcal X}_\eta & = \{\bfF \in H^1(D_\eta/\Lambda)^3\colon \bfF \times \bfe_3 |_{z=-h}= \mathbf{0},\, \bfF\cdot\bfn|_{z=\eta} = 0\}
%{\mathcal Y}_\eta & = \{\bfF \in H^1(D_\eta/\Lambda)^3\colon \bfF\cdot\bfe_3|_{z=-h}=0,\, \bfF_\parallel^\perp = \bfzero\}
\end{align*}
of $H^1(D_\eta/\Lambda)^3$.

\begin{definition}$ $
\begin{itemize}
\item[(i)]
A \underline{weak solution} of \eqref{A BVP 1}--\eqref{A BVP 5} is a function
$\bfA \in {\mathcal X}_\eta$ such that
\begin{equation}
\int_\Omega \int_{-h}^\eta (\curl \bfA\cdot\curl \bfC -\alpha \curl \bfA \cdot\bfC + \Div \bfA\, \Div \bfC)
-\alpha\int_\Omega \nabla \Delta^{-1}( \nabla \cdot \bfA^{\!\perp}_\parallel) \cdot \bfC_\parallel
=
\int_\Omega (\bfgamma^\perp+\nablap \Phi)\cdot\bfC_\parallel
\label{Weak solution A}
\end{equation}
for all $\bfC \in {\mathcal X}_\eta$,
while a \underline{strong solution} has the additional regularity requirement that $\bfA \in H^2(D_\eta/\Lambda)^3$, is solenoidal
and satisfies
\eqref{A BVP 1} in $L^2(D_\eta/\Lambda)^3$ and \eqref{A BVP 5} in $H^\frac{1}{2}({\mathbb R}^2/\Lambda)^2$.
\item[(ii)]
A \underline{weak solution} of \eqref{B BVP 1}--\eqref{B BVP 6} is a function
$\bfB \in {\mathcal X}_\eta$ which satisfies \eqref{B BVP 5} and
\begin{equation}
\int_\Omega \int_{-h}^\eta (\curl \bfB\cdot\curl \bfD -\alpha \curl \bfB \cdot\bfD + \Div \bfB\, \Div \bfD)
=\int_\Omega (\bfgamma^\perp+\alpha\nabla \Delta^{-1}( \nablac \bfg^\perp) )\cdot \bfD_\parallel
\label{Weak solution B}
\end{equation}
for all $\bfD \in {\mathcal X}_\eta^0$, where
$${\mathcal X}_\eta^0\coloneqq \{\bfF \in {\mathcal X}_\eta\colon \nablac \bfF_{\parallel}^{\perp} =0\},$$
while a \underline{strong solution} has the additional regularity requirement that $\bfB$ lies in $H^2(D_\eta/\Lambda)^3$, is solenoidal,
satisfies \eqref{B BVP 6} and satisfies
\eqref{B BVP 1} in $L^2(D_\eta/\Lambda)^3$.
\end{itemize}
\end{definition}
 The existence of weak and strong solutions is established in Lemmata \ref{weak exist} and \ref{strong exist} below,
whose proofs rely upon the following technical results (see Groves and Horn \cite[\S4(b)]{GrovesHorn20}).

\begin{proposition} \label{About spaces}
$ $
\begin{itemize}
\item[(i)]
The space ${\mathcal X}_\eta$ coincides with
\begin{align*}
& \{\bfF \in L^2(D_\eta/\Lambda)^3\colon \curl \bfF\in L^2(D_\eta/\Lambda)^3,\, \Div \bfF \in L^2(D_\eta/\Lambda),\,  \bfF \times \bfe_3 |_{z=-h}= \mathbf{0},\, \bfF\cdot\bfn|_{z=\eta}  = 0\}
%& \{\bfF \in L^2(D_\eta/\Lambda)^3\colon \curl \bfF\in L^2(D_\eta/\Lambda)^3,\, \Div \bfF \in L^2(D_\eta/\Lambda),\, \bfF\cdot\bfe_3|_{z=-h}=0,\, \bfF_\parallel^\perp = {\bf 0}\},
\end{align*}
and the function
$\bfF \mapsto (\|\curl \bfF\|_{L^2(D_\eta/\Lambda)^3}^2+\|\Div \bfF\|_{L^2(D_\eta/\Lambda)}^2)^\frac{1}{2}$ is equivalent to its usual norm.

\item[(ii)]
The spaces
\begin{align*}
& \{\bfF \!\in\! L^2(D_\eta/\Lambda)^3\colon \curl \bfF\!\in\! L^2(D_\eta/\Lambda)^3,\, \Div \bfF \!\in\! L^2(D_\eta/\Lambda),\, \bfF\times \bfe_3|_{z=-h} \!\in\! H^\frac{1}{2}({\mathbb R}^2/\Lambda)^3,\, \bfF\cdot\bfN|_{z=\eta} \!\in\! H^\frac{1}{2}({\mathbb R}^2/\Lambda)\}, \\
& \{\bfF \!\in\! L^2(D_\eta/\Lambda)^3\colon \curl \bfF\!\in\! L^2(D_\eta/\Lambda)^3,\, \Div \bfF \!\in\! L^2(D_\eta/\Lambda),\, \bfF\times \bfe_3|_{z=-h} \!\in\! H^\frac{1}{2}({\mathbb R}^2/\Lambda)^3,\, \bfF_\parallel^\perp \!\in\! H^\frac{1}{2}({\mathbb R}^2/\Lambda)^2\}, \\
& \{\bfF \!\in\! L^2(D_\eta/\Lambda)^3\colon \curl \bfF\!\in\! L^2(D_\eta/\Lambda)^3,\, \Div \bfF \!\in\! L^2(D_\eta/\Lambda),\, \bfF\cdot\bfe_3|_{z=-h} \!\in\! H^\frac{1}{2}({\mathbb R}^2/\Lambda),\, \bfF\cdot\bfN|_{z=\eta} \!\in\! H^\frac{1}{2}({\mathbb R}^2/\Lambda)\}, \\
& \{\bfF \!\in\! L^2(D_\eta/\Lambda)^3\colon \curl \bfF\!\in\! L^2(D_\eta/\Lambda)^3,\, \Div \bfF \!\in\! L^2(D_\eta/\Lambda),\, \bfF\cdot\bfe_3|_{z=-h} \!\in\! H^\frac{1}{2}({\mathbb R}^2/\Lambda),\, \bfF_\parallel^\perp \!\in\! H^\frac{1}{2}({\mathbb R}^2/\Lambda)^2\}
\end{align*}
coincide with $H^1(D_\eta/\Lambda)^3$.
\item[(iii)]
The space
$$\{\bfF \in L^2(D_\eta/\Lambda)^3\colon \curl \bfF\in H^1(D_\eta/\Lambda)^3,\, \Div \bfF \in H^1(D_\eta/\Lambda),\, \bfF \times \bfe_3 |_{z=-h}= \mathbf{0},\, \bfF\cdot\bfn|_{z=\eta} = 0\}$$
coincides with $\{\bfF \in H^2(D_\eta/\Lambda)^3\colon \bfF \times \bfe_3 |_{z=-h}= \mathbf{0},\, \bfF\cdot\bfn|_{z=\eta} = 0\}$.
\end{itemize}
\end{proposition}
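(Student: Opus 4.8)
The three assertions are all forms of $H^1$ (and, in part~(iii), $H^2$) elliptic regularity for the first-order overdetermined system $\bfF\mapsto(\Div\bfF,\curl\bfF)$ on $D_\eta/\Lambda$, supplemented by the mixed boundary conditions $\bfF\times\bfe_3|_{z=-h}$ and $\bfF\cdot\bfn|_{z=\eta}$ (or the variants listed in~(ii); all boundary conditions being understood via the standard $H^{-1/2}$ traces available for fields whose $\curl$ and $\Div$ lie in $L^2$). The engine of the proof is the a priori estimate
$$
\|\bfF\|_{H^1(D_\eta/\Lambda)^3}\lesssim\|\curl\bfF\|_{L^2(D_\eta/\Lambda)^3}+\|\Div\bfF\|_{L^2(D_\eta/\Lambda)},\qquad \bfF\in\mathcal{X}_\eta,
$$
with the implied constant depending only on $h$, $\inf\eta$ and $\|\eta\|_{W^{2,\infty}}$. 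The plan is: (a) prove this estimate; (b) deduce~(i) by a density/closedness argument; (c) deduce~(ii) from the analogous non-homogeneous estimates, obtained by reduction to the flat strip; (d) deduce~(iii) by differentiating the equations.

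For~(a) I would first flatten the fluid domain by the diffeomorphism $\Sigma\colon D_0\to D_\eta$, $(\bfx,v)\mapsto(\bfx,v+\sigma(\bfx,v))$ introduced in Section~\ref{Analflat}; since $\eta\in W^{2,\infty}$ with $\inf\eta>-h$, this map is bi-Lipschitz of class $C^{1,1}$, so it preserves $H^1$ and $H^2$ with norm constants controlled by $\|\eta\|_{W^{2,\infty}}$. The field $\tilde\bfF=\bfF\circ\Sigma$ then satisfies a div--curl system with $L^\infty$ coefficients which reduces to the constant-coefficient system on the flat strip when $\eta=0$, together with $\tilde F_1=\tilde F_2=0$ at $v=-h$ and $\tilde F_3=\eta_x\tilde F_1+\eta_y\tilde F_2$ at $v=0$. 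I would then combine the standard integration-by-parts identity
$$
\int_{D_0}|\nabla\tilde\bfF|^2=\int_{D_0}\bigl(|\curl\tilde\bfF|^2+|\Div\tilde\bfF|^2\bigr)+\int_{\partial D_0}(\text{quadratic boundary terms})
$$
with control of the boundary contributions: on the flat bottom the boundary integrand vanishes once the homogeneous condition $\tilde F_1=\tilde F_2=0$ is used, while on the top it is a quadratic form in the trace of $\tilde\bfF$ whose coefficients involve the curvature of the graph and are hence bounded by $\|\eta\|_{W^{2,\infty}}$; a trace and interpolation inequality bounds it (and the lower-order perturbation terms coming from the variable coefficients) by $\epsilon\|\tilde\bfF\|_{H^1}^2+C_\epsilon\|\tilde\bfF\|_{L^2}^2$, and absorption yields $\|\tilde\bfF\|_{H^1}^2\lesssim\|\curl\tilde\bfF\|_{L^2}^2+\|\Div\tilde\bfF\|_{L^2}^2+\|\tilde\bfF\|_{L^2}^2$. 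To remove the last term I would note that a field in $\mathcal{X}_\eta$ with $\curl\bfF=\Div\bfF=0$ is a gradient $\nabla\psi$ with $\psi$ harmonic, constant on the bottom and with $\partial_n\psi=0$ on the top, hence constant by uniqueness for the scalar mixed Dirichlet--Neumann Laplace problem; since $\mathcal{X}_\eta\hookrightarrow L^2$ compactly, a standard contradiction argument upgrades the estimate to the clean form claimed.

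Given the estimate, assertion~(i) follows: the inclusion $\mathcal{X}_\eta\subseteq\{\cdots\}$ is immediate, and for the reverse one equips the right-hand space $Y$ with its graph norm, checks that $\mathcal{X}_\eta$ is dense in $Y$ (by the flattening together with Fourier truncation in the horizontal variable, or by mollification adapted to $D_\eta$), and observes that the estimate makes the $Y$-norm dominate the $H^1$-norm on $\mathcal{X}_\eta$, so $\mathcal{X}_\eta$ is closed in $Y$; hence $\mathcal{X}_\eta=Y$ with equivalent norms. For~(ii) I would run the same scheme on the flat strip, where each of the four listed combinations of boundary data (a ``normal'' scalar or a ``tangential'' part prescribed at each of $z=-h$ and $z=\eta$) can be resolved explicitly by expanding $\tilde\bfF$ in a Fourier series in the horizontal variable and solving the resulting second-order ODEs in $v$; this shows that each combination constitutes a complementing (Lopatinskii--Shapiro) boundary condition for the div--curl system and gives the $H^1$ bound in terms of the $L^2$ norms of $\Div\bfF$, $\curl\bfF$ and the $H^{1/2}$ norms of the prescribed traces. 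Transferring back to $D_\eta$ and arguing by density identifies each of the four spaces with $H^1(D_\eta/\Lambda)^3$. Finally,~(iii) is the ``one derivative higher'' version: assuming $\curl\bfF,\Div\bfF\in H^1$ and the homogeneous boundary conditions, I would apply difference quotients in the directions tangent to the boundary, observe that $\partial_\tau\bfF$ solves a div--curl system with $L^2$ right-hand side and homogeneous boundary conditions (the commutators with the $W^{1,\infty}$ flattening coefficients being harmless), conclude $\partial_\tau\bfF\in H^1$ from~(i), and then recover the remaining second normal derivative algebraically from the $H^1$ identities for $\Div\bfF$ and $\curl\bfF$, the relevant coefficient matrix being invertible with $L^\infty$ inverse; the regularity $\eta\in W^{2,\infty}$ is exactly what is needed for these steps.

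I expect the main obstacle to be the boundary analysis in~(a) and~(ii): verifying that the mixed conditions are genuinely elliptic for the div--curl system and controlling the associated boundary quadratic forms uniformly for $\eta$ merely of class $W^{2,\infty}$. Everything ultimately reduces to uniqueness and regularity for the scalar Laplace equation with mixed Dirichlet/Neumann data on $D_\eta$, for which $W^{2,\infty}$ regularity of the free surface is precisely the threshold making the classical arguments work; I would follow Groves and Horn \cite[\S4(b)]{GrovesHorn20} for the technical details.
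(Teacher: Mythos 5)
The paper does not actually prove this proposition: it is imported, adapted to the doubly periodic setting, from Groves and Horn \cite[\S4(b)]{GrovesHorn20}, so there is no in-paper argument to compare yours against. Your outline is the standard route to such div--curl lemmas --- a Friedrichs/Gaffney-type identity giving the a priori bound $\|\bfF\|_{H^1}\lesssim\|\curl\bfF\|_{L^2}+\|\Div\bfF\|_{L^2}+\|\bfF\|_{L^2}$, removal of the lower-order term by uniqueness plus compactness, density arguments for (i) and (ii), and tangential difference quotients for (iii) --- and it is consistent with what the cited reference does, to which you also ultimately defer.

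Two places deserve more care than your sketch gives them. First, on $D_\eta/\Lambda$ a curl-free field is \emph{not} automatically a gradient: the periodic fluid domain retracts onto a two-torus, so its first de Rham cohomology is two-dimensional and a curl-free $\bfF$ only decomposes as $\bfF=\bfgamma+\Grad\psi$ with $\bfgamma$ a constant horizontal vector and $\psi$ periodic. You must first use $\bfF\times\bfe_3|_{z=-h}=\bfzero$ together with the periodicity of $\psi$ (integrate the horizontal components over a period cell of the bottom) to conclude $\bfgamma=\bfzero$; only then does the mixed Dirichlet--Neumann uniqueness argument for $\psi$ apply. As written, the claim ``is a gradient $\nabla\psi$'' is false on the quotient domain. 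Second, the density of $\mathcal{X}_\eta$ in the graph-norm space of part (i) --- equivalently, the approximation of an $L^2$ field with $L^2$ curl and divergence and the stated traces by $H^1$ fields satisfying the same boundary conditions --- is the genuinely technical step for a boundary that is only a $W^{2,\infty}$ graph, and is where the real work in the cited reference lies; it should not be compressed into a parenthetical remark. With those two points supplied, your plan is sound.
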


\begin{proposition} \label{prop:traces} $ $ 
\begin{itemize}
\item[(i)]
It follows from the formula
$$\int_\Omega \int_{-h}^\eta (\bfF\cdot\curl\bfG-\curl\bfF\cdot\bfG) = \int_\Omega \bfF_\parallel^\perp\cdot\bfG_\parallel,
\qquad \bfF \in {\mathscr D}(\overline{D_\eta/\Lambda})^3,\ \bfG \in H^1(D_\eta/\Lambda)^3, \bfG|_{z=-h}=\bfzero$$
that
the mapping $\bfF \mapsto \bfF_\parallel^\perp$ defined on ${\mathscr D}(\overline{D_\eta/\Lambda})^3$
extends to a continuous linear mapping\linebreak $\{\bfF \in L^2(D_\eta/\Lambda)^3\colon \curl \bfF \in L^2(D_\eta/\Lambda)^3\} \rightarrow H^{-\frac{1}{2}}({\mathbb R}^2/\Lambda)^2$,
where the former space is equipped with the norm\linebreak $\bfF \mapsto (\|\bfF\|_{L^2(D_\eta/\Lambda)^3}^2+\|\curl \bfF\|_{L^2(D_\eta/\Lambda)^3}^2)^\frac{1}{2}$.
\item[(ii)]
It follows from the formula
$$\int_\Omega \int_{-h}^\eta \curl\bfF \cdot \Grad \phi = \int_\Omega \curl \bfF\cdot\bfN \phi|_{z=\eta},
\qquad \bfF \in {\mathscr D}(\overline{D_\eta/\Lambda})^3,\ \phi \in H^1(D_\eta/\Lambda), \phi|_{z=-h}=0$$
that the mapping $\bfF \mapsto \curl \bfF\cdot\bfN|_{z=\eta}$ defined on ${\mathscr D}(\overline{D_\eta/\Lambda})^3$
extends to a continuous linear mapping\linebreak $\{\bfF \in L^2(D_\eta)^3\colon \Div \bfF \in L^2(D_\eta)\} \rightarrow H^{-\frac{1}{2}}({\mathbb R}^2)^2$,
where the former space is equipped with the norm\linebreak $\bfF \mapsto (\|\bfF\|_{L^2(D_\eta)^3}^2+\|\Div \bfF\|_{L^2(D_\eta)}^2)^\frac{1}{2}$.
\end{itemize}
\end{proposition}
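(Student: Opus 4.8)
The plan is to deduce both statements from the respective integration-by-parts identities by the standard duality-and-lifting argument that underlies the trace theory of $H(\curl)$ and $H(\Div)$ on a Lipschitz domain, here transplanted to the periodic cell $D_\eta/\Lambda$ (whose boundary is Lipschitz, indeed $C^{1,1}$, because $\eta\in W^{2,\infty}({\mathbb R}^2/\Lambda)$). In each case one bounds the surface term against an $H^{1/2}$-norm of an arbitrarily prescribed, boundedly lifted boundary datum, obtains an a priori estimate for smooth fields, and then extends by density.

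For part (i), fix $\bfF\in{\mathscr D}(\overline{D_\eta/\Lambda})^3$ and $\bfg\in H^{1/2}({\mathbb R}^2/\Lambda)^2$. I would choose the lift $\bfG=(\bfG_\mathrm{h},0)\in H^1(D_\eta/\Lambda)^3$, where $\bfG_\mathrm{h}$ arises by applying to the boundary datum $(\bfg,\bfzero)$ a bounded right inverse of the trace operator $H^1(D_\eta/\Lambda)\to H^{1/2}(\{z=\eta\})\times H^{1/2}(\{z=-h\})$; then $\bfG|_{z=-h}=\bfzero$, $\bfG_\parallel|_{z=\eta}=\bfG_\mathrm{h}|_{z=\eta}=\bfg$ and $\|\bfG\|_{H^1(D_\eta/\Lambda)}\lesssim\|\bfg\|_{H^{1/2}}$ (the graph map identifies $H^{1/2}(\{z=\eta\})$ with $H^{1/2}({\mathbb R}^2/\Lambda)$ since $\eta\in W^{2,\infty}$). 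The stated identity then gives, with all norms taken over $D_\eta/\Lambda$,
\[
\Big|\int_\Omega\bfF_\parallel^\perp\cdot\bfg\Big|=\Big|\int_\Omega\int_{-h}^\eta(\bfF\cdot\curl\bfG-\curl\bfF\cdot\bfG)\Big|\lesssim\big(\|\bfF\|_{L^2}^2+\|\curl\bfF\|_{L^2}^2\big)^{1/2}\|\bfg\|_{H^{1/2}},
\]
and taking the supremum over $\|\bfg\|_{H^{1/2}}\leq1$ bounds $\|\bfF_\parallel^\perp\|_{H^{-1/2}}$ by the graph norm of $\bfF$. Since ${\mathscr D}(\overline{D_\eta/\Lambda})^3$ is dense in $\{\bfF\in L^2(D_\eta/\Lambda)^3\colon\curl\bfF\in L^2(D_\eta/\Lambda)^3\}$ for this norm, the map $\bfF\mapsto\bfF_\parallel^\perp$ extends by continuity to the whole space; the extension is well defined because the estimate applied to differences makes the traces of any graph-norm Cauchy sequence Cauchy in $H^{-1/2}$, and it is linear as a limit of linear maps.

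For part (ii) I would first reduce to the normal-trace theorem for $H(\Div)$. Since $\Div\curl\bfF=0$ identically, the field $\curl\bfF$ lies in $H(\Div,D_\eta/\Lambda)=\{\bfw\in L^2\colon\Div\bfw\in L^2\}$ whenever $\curl\bfF\in L^2$, and the displayed identity is precisely the divergence theorem $\int_\Omega\int_{-h}^\eta(\bfw\cdot\Grad\phi+\phi\,\Div\bfw)=\int_\Omega(\bfw\cdot\bfN)\,\phi|_{z=\eta}$ specialised to $\bfw=\curl\bfF$ (for which the $\Div$-term vanishes) and $\phi|_{z=-h}=0$. So it suffices to show that $\bfw\mapsto\bfw\cdot\bfN|_{z=\eta}$ extends continuously from smooth fields to $H(\Div,D_\eta/\Lambda)\to H^{-1/2}({\mathbb R}^2/\Lambda)$, the claim then following by composition with $\bfF\mapsto\curl\bfF$. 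This is the verbatim analogue of part (i): given a scalar $g\in H^{1/2}({\mathbb R}^2/\Lambda)$ one lifts it to $\phi\in H^1(D_\eta/\Lambda)$ with $\phi|_{z=-h}=0$, $\phi|_{z=\eta}=g$ and $\|\phi\|_{H^1}\lesssim\|g\|_{H^{1/2}}$ by the same bounded-right-inverse construction, estimates $|\int_\Omega(\bfw\cdot\bfN)g|=|\int_\Omega\int_{-h}^\eta(\bfw\cdot\Grad\phi+\phi\,\Div\bfw)|\lesssim(\|\bfw\|_{L^2}^2+\|\Div\bfw\|_{L^2}^2)^{1/2}\|g\|_{H^{1/2}}$, and extends by density of smooth fields in $H(\Div,D_\eta/\Lambda)$.

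No step is genuinely hard; the whole content is the Lipschitz-domain trace calculus for $H(\curl)$ and $H(\Div)$ in the periodic setting. The two ingredients deserving a careful citation are the density of ${\mathscr D}(\overline{D_\eta/\Lambda})^3$ in $H(\curl,D_\eta/\Lambda)$ and in $H(\Div,D_\eta/\Lambda)$ for their graph norms, and the existence of a bounded trace-lifting operator into $H^1(D_\eta/\Lambda)$ realising the homogeneous datum on the flat bottom $\{z=-h\}$; both follow from $D_\eta/\Lambda$ being a slab with $C^{1,1}$, hence Lipschitz, boundary consisting of two disjoint components, and this is the only place where the hypothesis $\eta\in W^{2,\infty}$ is needed.
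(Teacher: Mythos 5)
Your argument is correct and is exactly the standard duality-and-lifting proof that the paper leaves implicit (the proposition is presented as "following from" the two Green-type identities, with the functional-analytic prerequisites delegated to Groves and Horn \cite{GrovesHorn20}), so there is nothing to add on part (i). Your reinterpretation of part (ii) as the $H(\Div)$ normal-trace theorem applied to $\bfw=\curl\bfF$ is moreover the correct reading of a misprinted statement: as printed, the domain and norm in (ii) (involving $\|\bfF\|_{L^2}+\|\Div \bfF\|_{L^2}$ rather than the $\curl$-graph norm) cannot control $\curl\bfF\cdot\bfN|_{z=\eta}=\nablac\bfF_\parallel^\perp$, whereas the version you prove — continuity of $\bfw\mapsto\bfw\cdot\bfN|_{z=\eta}$ on $\{\bfw\in L^2\colon \Div\bfw\in L^2\}$, composed with $\bfF\mapsto\curl\bfF$ — is precisely what is invoked in Lemma~\ref{weak exist}.
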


\begin{proposition} \label{Solve scalar BVPs}
 The boundary-value problem
\begin{alignat*}{2}
\Delta \phi &=F & & \In D_\eta, \\
\partial_n \phi & = f & & \at z=\eta, \\
\phi &=0 & & \at z=-h
\end{alignat*}
has a unique solution $\phi \in H^2(D_\eta/\Lambda)$ for each $F \in L^2(D_\eta/\Lambda)$ and $f \in H^\frac{1}{2}(S_\eta/\Lambda)$.
\end{proposition}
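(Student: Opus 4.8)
\emph{Proof proposal.}
The plan is to treat this as a textbook mixed Dirichlet--Neumann problem, exploiting the fact that the periodic domain $D_\eta/\Lambda$ has \emph{disconnected} boundary $\{z=-h\}\cup(S_\eta/\Lambda)$, so that the Dirichlet part (the flat bottom) and the Neumann part (the free surface) are separated by a positive distance; in particular no mixed-boundary corner singularity can occur and full $H^2$ regularity is available.

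First I would pass to a weak formulation. Set $V\coloneqq\{\psi\in H^1(D_\eta/\Lambda)\colon \psi|_{z=-h}=0\}$, a closed subspace of $H^1(D_\eta/\Lambda)$, and define the bilinear form $a(\phi,\psi)\coloneqq\int_{D_\eta/\Lambda}\nabla\phi\cdot\nabla\psi$ together with the functional $\ell(\psi)\coloneqq -\int_{D_\eta/\Lambda}F\psi+\langle f,\psi\rangle_{S_\eta/\Lambda}$, the last term being the natural $H^{1/2}$--$H^{-1/2}$ pairing over $S_\eta/\Lambda$. Every $\psi\in V$ satisfies $\psi(\bfx,z)=\int_{-h}^{z}\partial_t\psi(\bfx,t)\,\mathrm{d}t$, which gives the Poincar\'e inequality $\|\psi\|_{L^2(D_\eta/\Lambda)}\leq C\|\nabla\psi\|_{L^2(D_\eta/\Lambda)}$ and hence coercivity of $a$ on $V$; boundedness of $a$ is trivial, and boundedness of $\ell$ follows from the Cauchy--Schwarz inequality together with the trace estimate $\|\psi\|_{H^{1/2}(S_\eta/\Lambda)}\leq C\|\psi\|_{H^1(D_\eta/\Lambda)}$, which holds because $\inf\eta>-h$ and $\eta\in W^{2,\infty}$, so $S_\eta/\Lambda$ is a $C^{1,1}$ hypersurface. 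The Lax--Milgram lemma then yields a unique $\phi\in V$ with $a(\phi,\psi)=\ell(\psi)$ for all $\psi\in V$; integrating by parts shows that this is exactly the weak form of the stated boundary-value problem, the Dirichlet condition being encoded in $V$ and the Neumann condition appearing as the natural one. Uniqueness at the level of $H^1$, hence $H^2$, solutions is immediate: the difference $w$ of two solutions lies in $V$ and satisfies $a(w,\psi)=0$ for all $\psi\in V$; taking $\psi=w$ gives $\nabla w=0$, and then $w=0$ by the Poincar\'e inequality.

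It remains to upgrade the weak solution to $H^2(D_\eta/\Lambda)$. Interior regularity is classical. Near the flat bottom $\{z=-h\}$, where the condition is homogeneous Dirichlet and the operator is the ordinary Laplacian, an odd reflection across $z=-h$ reduces matters to interior estimates, so $\phi\in H^2$ in a collar of the bottom. Near the top I would flatten $S_\eta$ by the change of variables $\Sigma\colon(\bfx,v)\mapsto(\bfx,v+\eta(\bfx)(1+v/h))$ already used in the paper: since $\eta\in W^{2,\infty}$, the equation transforms into a uniformly elliptic divergence-form equation on $D_0/\Lambda$ with $W^{1,\infty}$ coefficients and right-hand side in $L^2$, subject to a conormal-derivative condition with data in $H^{1/2}$, and the standard difference-quotient argument in the horizontal (hence tangential) directions, followed by recovering $\partial_v^2(\phi\circ\Sigma)$ from the equation, gives $\phi\circ\Sigma\in H^2$ near $\{v=0\}$. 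Because the two boundary components are disjoint, these local estimates patch together via a partition of unity subordinate to a finite cover of $\overline{D_\eta/\Lambda}$ by interior balls and boundary collars, yielding $\phi\in H^2(D_\eta/\Lambda)$. The only genuinely technical point is this last step near $S_\eta$, where merely $W^{2,\infty}$ smoothness of $\eta$ is available; but this is precisely the setting handled by the difference-quotient method, which requires only Lipschitz coefficients, so it is routine.
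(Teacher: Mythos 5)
The paper does not actually prove this proposition: it is listed among the ``technical results'' for which the reader is referred to Groves and Horn [\S 4(b)], so there is no in-text argument to compare yours against. On its own merits your proof is correct and is the argument one would expect. The variational setup is right: with $V=\{\psi\in H^1(D_\eta/\Lambda)\colon\psi|_{z=-h}=0\}$ the form $\int\nabla\phi\cdot\nabla\psi$ is coercive by the Poincar\'e inequality you state, the functional $\ell$ is bounded (though since $f\in H^{1/2}(S_\eta/\Lambda)$ the boundary term is simply an $L^2(S_\eta/\Lambda)$ integral rather than a genuine $H^{1/2}$--$H^{-1/2}$ duality; this costs nothing), and Lax--Milgram gives the unique weak solution with the Dirichlet condition essential and the Neumann condition natural. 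Your observation that the two boundary components of $D_\eta/\Lambda$ are disjoint, so no mixed Dirichlet--Neumann corner arises and full $H^2$ regularity is attainable, is exactly the right structural point. The regularity upgrade is also correctly organised: interior estimates, odd reflection of $\phi$ and $F$ across the flat bottom, and near $S_\eta$ a flattening to a divergence-form conormal problem with Lipschitz coefficients followed by horizontal difference quotients and recovery of $\partial_v^2$ from the equation --- this is the standard route and needs only the $W^{2,\infty}$ (hence $C^{1,1}$) regularity of the surface that is assumed in this section. One should also confirm, as you implicitly do, that $\phi\in H^2$ makes the Neumann trace $\bfn\cdot\nabla\phi$ land in $H^{1/2}(S_\eta/\Lambda)$, which holds because $\bfn\in W^{1,\infty}$ multiplies $H^{1/2}$ boundedly. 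In short, your proof supplies a self-contained argument where the paper merely cites one.
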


\begin{lemma} \label{weak exist} $ $
\begin{itemize}
\item[(i)]
For all sufficiently small values of $|\alpha|$ the
boundary-value problem \eqref{A BVP 1}--\eqref{A BVP 5} admits a unique weak solution
for each $\bfgamma \in {\mathbb R}^2$ and $\Phi \in \mathring{H}^\frac{1}{2}({\mathbb R}^2/\Lambda)$. The weak solution is solenoidal and
satisfies \eqref{A BVP 1} in the sense of distributions and \eqref{A BVP 5} in $H^{-\frac{1}{2}}({\mathbb R}^2/\Lambda)^2$.
\item[(ii)]
For all sufficiently small values of $|\alpha|$ the
boundary-value problem \eqref{B BVP 1}--\eqref{B BVP 6} admits a unique weak solution
for each $\bfgamma \in {\mathbb R}^2$ and $\bfg \in H^\frac{1}{2}({\mathbb R}^2/\Lambda)^2$. The weak solution is solenoidal and
satisfies \eqref{B BVP 1} in the sense of distributions.
\end{itemize}
\end{lemma}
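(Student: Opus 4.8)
The plan is to solve both variational problems \eqref{Weak solution A} and \eqref{Weak solution B} by the Lax--Milgram lemma, built around the bilinear form
$$a(\bfF,\bfG)\coloneqq\int_\Omega\int_{-h}^\eta\bigl(\curl\bfF\cdot\curl\bfG+\Div\bfF\,\Div\bfG\bigr)$$
on ${\mathcal X}_\eta$, which by Proposition \ref{About spaces}(i) is bounded and coercive --- it is literally the square of an equivalent norm on ${\mathcal X}_\eta$. The remaining $\alpha$-dependent terms in \eqref{Weak solution A}, \eqref{Weak solution B} will be absorbed as small perturbations, and this is where the hypothesis on $|\alpha|$ enters.

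For part~(i) I would split the left-hand side of \eqref{Weak solution A} as $a(\bfA,\bfC)+b_\alpha(\bfA,\bfC)$, where $b_\alpha$ collects the two $\alpha$-terms. Since ${\mathcal X}_\eta\subset H^1(D_\eta/\Lambda)^3$ the trace theorem gives $\bfC_\parallel|_{z=\eta}\in H^{1/2}({\mathbb R}^2/\Lambda)^2$, so $\nabla\Delta^{-1}(\nablac\bfC_\parallel^\perp)\in L^2$ (the operator $\nabla\Delta^{-1}\nablac$ is bounded on $L^2$); together with the trivial estimate for $\int\curl\bfA\cdot\bfC$ this gives $|b_\alpha(\bfA,\bfC)|\lesssim|\alpha|\,\|\bfA\|_{{\mathcal X}_\eta}\|\bfC\|_{{\mathcal X}_\eta}$, so $a+b_\alpha$ stays bounded and coercive once $|\alpha|$ is small. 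The right-hand side is a bounded linear functional on ${\mathcal X}_\eta$ (pair $\nablap\Phi\in H^{-1/2}$ against $\bfC_\parallel\in H^{1/2}$, and $\bfgamma\in{\mathbb R}^2$), so Lax--Milgram produces a unique weak solution $\bfA$; uniqueness of two weak solutions follows from coercivity applied to their difference. To verify that $\bfA$ is solenoidal I would test \eqref{Weak solution A} with $\bfC=\nabla\psi$, where $\psi\in H^2(D_\eta/\Lambda)$ solves $\Delta\psi=\Div\bfA$ in $D_\eta$, $\psi|_{z=-h}=0$, $\partial_n\psi|_{z=\eta}=0$ (Proposition \ref{Solve scalar BVPs}); then $\nabla\psi\in{\mathcal X}_\eta$, $\curl\nabla\psi=\bfzero$ and $(\nabla\psi)_\parallel=\nabla p$ with $p\coloneqq\psi|_{z=\eta}$. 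The right-hand side vanishes because $\int_\Omega\nabla p=\bfzero$ and $\int_\Omega\nablap\Phi\cdot\nabla p=0$, while the two $\alpha$-terms cancel against each other: integration by parts turns $\int_{D_\eta}\curl\bfA\cdot\nabla\psi$ into $\int_\Omega(\curl\bfA\cdot\bfN)|_{z=\eta}\,p=\int_\Omega(\nablac\bfA_\parallel^\perp)\,p$ via the identity $\curl\bfF\cdot\bfN|_{z=\eta}=\nablac\bfF_\parallel^\perp$, and the same expression with the opposite sign comes out of $-\alpha\int_\Omega\nabla\Delta^{-1}(\nablac\bfA_\parallel^\perp)\cdot\nabla p$ after one further integration by parts, leaving $\int_{D_\eta}(\Div\bfA)^2=0$.

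For part~(ii) the same form is used, but now on the closed subspace ${\mathcal X}_\eta^0$ and after reducing the affine constraint \eqref{B BVP 5} to a homogeneous one. I would first construct a lifting $\bfB_0\in{\mathcal X}_\eta$ with $\nablac(\bfB_0)_\parallel^\perp=\nablac\bfg^\perp$ at $z=\eta$ by prescribing the trace of $\bfB_0$ on $z=\eta$: solve pointwise the $3\times3$ linear system expressing that its $\parallel$-part equals the divergence-free part $\bfg-\langle\bfg\rangle-\nabla\Delta^{-1}(\nablac\bfg)$ of $\bfg$ and that $\bfB_0\cdot\bfN|_{z=\eta}=0$, then extend into $D_\eta$ with vanishing trace on $z=-h$ via the $H^1$-trace theorem for the Lipschitz domain $D_\eta$. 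Writing $\bfB=\bfB_0+\bfB_1$ with $\bfB_1\in{\mathcal X}_\eta^0$, the equation for $\bfB_1$ is a Lax--Milgram problem on ${\mathcal X}_\eta^0$ with right-hand side $\bfD\mapsto\int_\Omega(\bfgamma^\perp+\alpha\nabla\Delta^{-1}(\nablac\bfg^\perp))\cdot\bfD_\parallel-a(\bfB_0,\bfD)-b_\alpha(\bfB_0,\bfD)$, uniquely solvable for $|\alpha|$ small; solenoidality of $\bfB$ follows exactly as in part~(i) by testing with $\bfC=\nabla\psi\in{\mathcal X}_\eta^0$ (the extra constraint $\nablac(\nabla\psi)_\parallel^\perp=0$ being automatic from $\curl\nabla\psi=\bfzero$), the cancellation now invoking \eqref{B BVP 5} to replace $(\curl\bfB\cdot\bfN)|_{z=\eta}$ by $\nablac\bfg^\perp$. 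In both parts, testing with $\bfC\in{\mathscr D}(D_\eta/\Lambda)^3$ (so $\bfC_\parallel=\bfzero$) yields \eqref{A BVP 1}, respectively \eqref{B BVP 1}, in the sense of distributions, and testing with a general $\bfC\in{\mathcal X}_\eta$ and integrating by parts --- interpreting $\bfA_\parallel^\perp$ as an element of $H^{-1/2}$ via Proposition \ref{prop:traces}(i) --- recovers \eqref{A BVP 5} in $H^{-1/2}({\mathbb R}^2/\Lambda)^2$.

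The functional-analytic core is routine once Proposition \ref{About spaces}(i) is available; the point requiring care --- and the one I expect to be the main obstacle --- is the bookkeeping that makes the weak solutions genuinely divergence-free. The $\Div\bfF\,\Div\bfG$ penalty term only does its job because the nonlocal term in \eqref{A BVP 5} (respectively the precise data in \eqref{B BVP 5}) is exactly what is needed for the $\alpha$-contributions to cancel when one tests against a gradient, and checking this rests on the identity $\curl\bfF\cdot\bfN|_{z=\eta}=\nablac\bfF_\parallel^\perp$ together with the compatibility computation \eqref{eq:compatibility}. A secondary technical issue is building the lifting $\bfB_0$ over the Lipschitz graph $z=\eta$ and justifying the boundary-trace identities through Proposition \ref{prop:traces}.
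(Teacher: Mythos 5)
Your proposal is correct and follows essentially the same route as the paper: Lax--Milgram for the perturbed coercive form with the $\alpha$-terms absorbed for small $|\alpha|$, solenoidality obtained by testing against $\Grad \phi$ with $\phi$ supplied by Proposition \ref{Solve scalar BVPs} and exploiting the cancellation of the nonlocal term via the identity $\curl \bfF\cdot\bfN|_{z=\eta}=\nablac \bfF^\perp_\parallel$, and recovery of \eqref{A BVP 1} and \eqref{A BVP 5} by testing with elements of ${\mathscr D}(D_\eta/\Lambda)^3$ and of ${\mathcal X}_\eta$ respectively. The only point of divergence is the lifting in part (ii): you solve a pointwise algebraic system on $z=\eta$ and extend by the trace theorem, whereas the paper takes an arbitrary $H^1$ extension $\bfF$ of $\bfg$ and subtracts $\Grad\phi_\bfF$ (again via Proposition \ref{Solve scalar BVPs}) to make it tangential and solenoidal without altering $\nablac(\cdot)_\parallel^\perp$ --- both constructions are valid.
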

\begin{proof} $ $
\begin{itemize}
\item[(i)]
The estimates
\begin{align*}
\left| \int_\Omega \int_{-h}^\eta \curl \bfA \cdot\bfC \right| & \lesssim \|\bfA\|_{H^1(D_\eta/\Lambda)^3} \|\bfC\|_{H^1(D_\eta/\Lambda)^3}, \\
\left| \int_\Omega \nabla \Delta^{-1}( \nabla \cdot \bfA^{\!\perp}_\parallel) \cdot \bfC_\parallel \right|
& \lesssim \|\bfA_\parallel\|_0 \|\bfC_\parallel\|_0
\lesssim \|\bfA|_{z=\eta}\|_{\frac{1}{2}} \|\bfC|_{z=\eta}\|_{\frac{1}{2}}
\lesssim   \|\bfA\|_{H^1(D_\eta/\Lambda)^3} \|\bfC\|_{H^1(D_\eta/\Lambda)^3}
\end{align*}
and Proposition \ref{About spaces}(i) imply that for sufficiently small values of $|\alpha|$
the left-hand side of \eqref{Weak solution A}
is a continuous, coercive, bilinear form $\mathcal{X}_\eta \times \mathcal{X}_\eta \rightarrow {\mathbb R}$,
while the estimate
$$
\left| \int_\Omega (\bfgamma^\perp+\nablap \Phi)\cdot\bfC_\parallel \right| \lesssim
(|\bfgamma^\perp|+ \|\nablap \Phi\|_{-\frac{1}{2}})\|\bfC|_{z=\eta}\|_{\frac{1}{2}} \lesssim
(|\bfgamma|+ \|\Phi\|_{\frac{1}{2}}) \|\bfC\|_{H^1(D_\eta/\Lambda)^3}
$$
shows that its right-hand side is a continuous, bilinear form $({\mathbb R}^2 \times \mathring{H}^\frac{1}{2}({\mathbb R}^2/\Lambda)) \times {\mathcal X}_\eta
\rightarrow {\mathbb R}$. The existence of a unique solution $\bfA \in {\mathcal X}_\eta$
now follows from the Lax-Milgram lemma.

Let $\phi_\bfA \in H^2(D_\eta/\Lambda)$ be the unique function satisfying
$\Delta \phi_\bfA = \Div \bfA$ in $D_\eta$ with boundary conditions\linebreak
$\partial_n \phi_\bfA|_{y=\eta} =0$, $\phi_\bfA|_{z=-h}=0$ (see Proposition \ref{Solve scalar BVPs}).
It follows that $\bfC=\Grad \phi_\bfA \in {\mathcal X}_\eta$ and hence
$$
\int_\Omega \int_{-h}^\eta \big(-\alpha \curl \bfA \cdot \Grad \phi_\bfA + (\Div \bfA)^2\big)
-\alpha\int_\Omega \nabla \Delta^{-1}( \nabla \cdot \bfA^{\!\perp}_\parallel) \cdot \nabla (\phi_\bfA|_{z=\eta}) \\
=
0
$$
(because $\bfC_\parallel = \nabla ( \phi_\bfA|_{z=\eta})$, which is orthogonal
to $\bfgamma^\perp$ and $\nablap \Phi$). Since
$$
\int_\Omega \int_{-h}^\eta \!\!\curl \bfA \cdot \Grad \phi_\bfA
= \int_\Omega \curl \bfA \cdot \bfN\, \phi_\bfA|_{z=\eta}
=\int_\Omega \!\nabla \cdot \bfA^{\!\perp}_\parallel\, \phi_\bfA|_{z=\eta}
= - \int_\Omega \nabla \Delta^{-1}(\nabla \cdot \bfA^{\!\perp}_\parallel)\cdot \nabla (\phi_\bfA|_{z=\eta})
$$
(see Proposition \ref{prop:traces}(ii)), one concludes that $\Div \bfA=0$.

Choosing $\bfC \in {\mathscr D}(D_\eta/\Lambda)^3$, one finds that $\bfA$ solves \eqref{A BVP 1}
in the sense of distributions and hence that\linebreak
$\curl \curl \bfA \in L^2(D_\eta)^3$. It follows that
$(\curl \bfA)_\parallel^\perp \in H^{-\frac{1}{2}}({\mathbb R}^2)^2$ (Proposition \ref{prop:traces}(i)) and
$$
\int_\Omega \int_{-h}^\eta (\curl \curl \bfA -\alpha \curl \bfA) \cdot\bfC
+\int_\Omega \left((\curl \bfA)^\perp_\parallel -\bfgamma^\perp -\nablap \Phi
-\alpha\nabla \Delta^{-1}( \nabla \cdot \bfA^{\!\perp}_\parallel)\right) \cdot \bfC_\parallel
=0.
$$
One concludes that \eqref{A BVP 5} holds in $H^{-\frac{1}{2}}({\mathbb R}^2/\Lambda)^2$.
\item[(ii)]
Let $\bfF \in H^1(D_\eta/\Lambda)^3$ be a function such that $\bfF_\parallel = \bfg$ and $\bfF \times \bfe_3|_{z=-h}=\bfzero$, and let
$\phi_\bfF \in H^2(D_\eta/\Lambda)$ be the unique function satisfying
$\Delta \phi_\bfF = \Div \bfF$ in $D_\eta$ with boundary conditions
$\partial_n \phi_\bfF|_{z=\eta} =\bfF\cdot\bfn$, $\phi_\bfF|_{z=-h}=0$ (see Proposition \ref{Solve scalar BVPs}).
It follows that $\bfG\coloneqq \bfF-\Grad\phi_\bfF$ satisfies $\Div \bfG=0$,
$\bfG\cdot\bfn|_{z=\eta}=0$ and
$\nablac \bfG_\parallel^\perp = \nablac\bfg^\perp$ because $\nablac(\Grad \phi_\bfF)_\parallel^\perp=\nablac\nabla ( \phi_\bfF|_{z=\eta})^\perp=0$.
We accordingly seek $\bfC \in {\mathcal X}_\eta^0$ such that
\begin{align}
\int_\Omega \int_{-h}^\eta (\curl \bfC\cdot&\curl \bfD  -\alpha \curl \bfC \cdot\bfD + \Div \bfC\, \Div \bfD) \nonumber\\
&=-\int_\Omega \int_{-h}^\eta (\curl \bfG\cdot\curl \bfD -\alpha \curl \bfG \cdot\bfD)
+\int_\Omega (\bfgamma^\perp+\alpha\nabla \Delta^{-1}( \nablac \bfG^\perp) )\cdot \bfD_\parallel
\label{Weak solution mod}
\end{align}
 for all $\bfD \in {\mathcal X}_\eta^0$, so that $\bfB=\bf C+\bfG$ is a weak solution of \eqref{B BVP 1}--\eqref{B BVP 6}.

For sufficiently small values of $|\alpha|$
the left-hand side of \eqref{Weak solution mod}
is a continuous, coercive, bilinear form\linebreak
$\mathcal{X}_\eta^0 \times \mathcal{X}_\eta^0 \rightarrow {\mathbb R}$,
while the right-hand side is a continuous, bilinear form $({\mathbb R}^2 \times \mathcal{X}_\eta) \times \mathcal{X}_\eta^0 \rightarrow {\mathbb R}$.
The existence of a unique function $\bfC \in {\mathcal X}_\eta^0$ satisfying \eqref{Weak solution mod} for all $\bfD \in {\mathcal X}_\eta^0$
now follows from the Lax-Milgram lemma, and the corresponding weak solution $\bfB$ to \eqref{B BVP 1}--\eqref{B BVP 6}
is unique since the difference between two weak solutions satisfies \eqref{Weak solution mod} with $\bfgamma=\bfzero$ and
$\bfg=\bfzero$ for all
$\bfD \in {\mathcal X}_\eta^0$ and is therefore zero.

Let $\phi_\bfB \in H^2(D_\eta/\Lambda)$ be the unique function satisfying
$\Delta \phi_\bfB = \Div \bfB$ in $D_\eta$ with boundary conditions\linebreak
$\partial_n \phi_\bfB|_{z=\eta} =0$, $\phi_\bfB|_{z=-h}=0$
(see Proposition \ref{Solve scalar BVPs}). Substituting $\bfD=\Grad\phi_\bfB \in {\mathcal X}_\eta^0$ into
\eqref{Weak solution B}, we find that
$$
\int_\Omega \int_{-h}^\eta \big(-\alpha \curl \bfB \cdot \Grad \phi_\bfB + (\Div \bfB)^2\big)
=\alpha\int_\Omega \nabla \Delta^{-1}( \nabla \cdot \bfB^{\!\perp}_\parallel) \cdot \nabla (\phi_\bfB|_{z=\eta}),
$$
and since
$$
-\int_\Omega \int_{-h}^\eta \!\!\curl \bfB \cdot \Grad \phi_\bfB
=-\int_\Omega \!\nabla \cdot \bfB^{\!\perp}_\parallel\, \phi_\bfB|_{z=\eta}
=  \int_\Omega \nabla \Delta^{-1}(\nabla \cdot \bfB^{\!\perp}_\parallel)\cdot \nabla (\phi_\bfB|_{z=\eta}),
$$
one concludes that $\Div \bfB=0$.

Finally, taking $\bfD \in {\mathscr D}(D_\eta/\Lambda)^3$ in \eqref{Weak solution B}, we find that $\bfB$ satisfies \eqref{B BVP 1} in the sense of distributions.\qedhere
\end{itemize}
\end{proof}

\begin{lemma} $ $\label{strong exist}
\begin{itemize}
\item[(i)]
Suppose that $\bfgamma \in {\mathbb R}^2$ and $\Phi \in \mathring{H}^\frac{3}{2}({\mathbb R}^2/\Lambda)$. Any weak solution $\bfA$ of
\eqref{A BVP 1}--\eqref{A BVP 5} is in fact a strong solution.
\item[(ii)]
Suppose that $\bfgamma \in {\mathbb R}^2$ and $\bfg \in H^\frac{3}{2}({\mathbb R}^2/\Lambda)^2$. Any weak solution $\bfB$ of
\eqref{B BVP 1}--\eqref{B BVP 6} is in fact a strong solution.
\end{itemize}
\end{lemma}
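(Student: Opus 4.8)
The plan is to run an elliptic-regularity bootstrap whose structural heart is Proposition~\ref{About spaces}: part~(iii) tells us that to obtain $\bfA\in H^2(D_\eta/\Lambda)^3$ it is enough to show $\curl\bfA\in H^1(D_\eta/\Lambda)^3$, because $\Div\bfA=0\in H^1$ and the constraints $\bfA\times\bfe_3|_{z=-h}=\bfzero$, $\bfA\cdot\bfn|_{z=\eta}=0$ already come with membership in $\mathcal{X}_\eta$; and part~(ii) in turn reduces \emph{that} to an $L^2$-bound on the curl and divergence of $\bfw\coloneqq\curl\bfA$ together with $H^{\frac12}$-regularity of two of its boundary traces. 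So the only real work is trace bookkeeping at the two boundaries, and this is exactly where the hypotheses $\Phi\in\mathring{H}^{\frac32}$ and $\bfg\in H^{\frac32}$ will be consumed; the $\bfB$-problem is handled by the same scheme.

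For (i) I would take a weak solution $\bfA$, which by Lemma~\ref{weak exist}(i) is solenoidal, satisfies \eqref{A BVP 1} distributionally and \eqref{A BVP 5} in $H^{-\frac12}({\mathbb R}^2/\Lambda)^2$. Setting $\bfw=\curl\bfA\in L^2$, we have $\curl\bfw=\alpha\bfw\in L^2$ from \eqref{A BVP 1} and $\Div\bfw=0$. Since $A_1=A_2=0$ identically on $\{z=-h\}$ by \eqref{A BVP 3}, their horizontal derivatives vanish there, so $\bfw\cdot\bfe_3|_{z=-h}=(\partial_x A_2-\partial_y A_1)|_{z=-h}=0\in H^{\frac12}$. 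On $\{z=\eta\}$, \eqref{A BVP 5} gives $\bfw_\parallel=\bfgamma+\nabla\Phi-\alpha\nablap\Delta^{-1}(\nablac\bfA_\parallel^\perp)$; the $H^1$-trace $\bfA|_{z=\eta}$ lies in $H^{\frac12}$ and $\nabla\eta\in W^{1,\infty}$ (from $\eta\in W^{2,\infty}$) is a multiplier on $H^{\frac12}$, so $\bfA_\parallel^\perp\in H^{\frac12}$ and hence $\nablap\Delta^{-1}(\nablac\bfA_\parallel^\perp)\in H^{\frac12}$ by the mapping properties of $\Delta^{-1}$, $\nabla$, $\nablap$, while $\nabla\Phi\in H^{\frac12}$ since $\Phi\in\mathring{H}^{\frac32}$. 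Thus $\bfw_\parallel^\perp|_{z=\eta}\in H^{\frac12}$, Proposition~\ref{About spaces}(ii) yields $\curl\bfA\in H^1$, and then Proposition~\ref{About spaces}(iii) yields $\bfA\in H^2$. The remaining requirements for a strong solution follow at once: $\bfA$ is solenoidal, \eqref{A BVP 1} holds in $L^2$ because $\curl\curl\bfA=\alpha\curl\bfA\in L^2$, and \eqref{A BVP 5} holds in $H^{\frac12}$ because both sides now lie there and already agree in $H^{-\frac12}$.

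For (ii) the same computation with $\bfw=\curl\bfB$ gives $\curl\bfw=\alpha\bfw\in L^2$, $\Div\bfw=0$ and $\bfw\cdot\bfe_3|_{z=-h}=0$, while at the top $\bfw\cdot\bfN|_{z=\eta}=\nablac\bfB_\parallel^\perp=\nablac\bfg^\perp\in H^{\frac12}$ by \eqref{B BVP 5} and $\bfg\in H^{\frac32}$; Proposition~\ref{About spaces}(ii),(iii) then give $\bfB\in H^2$, and \eqref{B BVP 1} holds in $L^2$ as before. The one extra point for the $\bfB$-problem is the mean-value condition \eqref{B BVP 6}, which is not part of the weak formulation: I would recover it by testing \eqref{Weak solution B} against fields $\bfD\in\mathcal{X}_\eta^0$ with $\langle\bfD_\parallel\rangle$ prescribed, integrating by parts using the now-established regularity, and invoking \eqref{B BVP 1} together with the compatibility identity \eqref{eq:compatibility} (which pins the $\nablap$-potential part of $(\curl\bfB)_\parallel$ to $-\alpha\Delta^{-1}(\nablac\bfg^\perp)$); after the gradient and $\nablap$ parts are absorbed what survives is $(\langle(\curl\bfB)_\parallel\rangle-\bfgamma)^\perp\cdot\langle\bfD_\parallel\rangle=0$ for all such $\bfD$, giving $\langle(\curl\bfB)_\parallel\rangle=\bfgamma$.

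The hard part is not any single estimate — Proposition~\ref{About spaces} already encapsulates the $\curl$--$\Div$ elliptic theory — but getting the regularity accounting at the curved free surface exactly right, that is, confirming that the right-hand sides of \eqref{A BVP 5} and \eqref{B BVP 5}, and with them the relevant traces of $\curl\bfA$ and $\curl\bfB$, genuinely improve from $H^{-\frac12}$ to $H^{\frac12}$. This is precisely what the extra regularity of the data buys, through $\Delta^{-1}\colon\mathring{H}^{-\frac12}\to\mathring{H}^{\frac32}$, $\nabla,\nablap\colon\mathring{H}^{\frac32}\to H^{\frac12}$ and the multiplier property of $\nabla\eta\in W^{1,\infty}$ on $H^{\frac12}({\mathbb R}^2/\Lambda)$; for $\bfB$ one must additionally recover \eqref{B BVP 6} as just indicated.
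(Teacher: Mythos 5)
Your proposal is correct and follows essentially the same route as the paper: identify the improved $H^{\frac12}$ regularity of the relevant traces of $\curl\bfA$ (via the right-hand side of \eqref{A BVP 5}) and of $\curl\bfB$ (via $\nablac\bfg^\perp$), invoke Proposition~\ref{About spaces}(ii) and then (iii) to conclude $H^2$ regularity, and recover \eqref{B BVP 6} by testing the weak formulation against $\bfD\in\mathcal{X}_\eta^0$ after integrating by parts. The only cosmetic difference is that you spell out the multiplier/trace bookkeeping slightly more explicitly than the paper does.
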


\begin{proof}$ $
\begin{itemize}
\item[(i)]
Recall that $\curl \curl \bfA \in L^2(D_\eta/\Lambda)^3$ and
$$
(\curl \bfA)^\perp_\parallel =\bfgamma^\perp+\nablap \Phi
+\alpha\nabla \Delta^{-1}( \nabla \cdot \bfA^{\!\perp}_\parallel)
$$
holds in $H^{-\frac{1}{2}}({\mathbb R}^2/\Lambda)^2$; hence
$(\curl \bfA)_\parallel^\perp \in H^{\frac{1}{2}}({\mathbb R}^2/\Lambda)^2$
(because the right-hand side of this equation belongs to $H^{\frac{1}{2}}({\mathbb R}^2/\Lambda)^2$).
Since $0=\Div \curl \bfA \in L^2(D_\eta/\Lambda)$
and $\curl \bfA\cdot\bfe_3|_{z=-h} =0$ it follows that $\curl \bfA \in H^1(D_\eta/\Lambda)^3$ (Proposition
\ref{About spaces}(ii)), and furthermore
$\curl \bfA \in H^1(D_\eta/\Lambda)^3$, $0=\Div \bfA \in H^1(D_\eta/\Lambda)$ with
$\bfA \times \bfe_3 |_{z=-h}= \mathbf{0}$, $\bfA\cdot\bfn|_{z=\eta} = 0$ imply that $\bfA \in H^2(D_\eta/\Lambda)^3$
(Proposition \ref{About spaces}(iii)). Finally note that \eqref{A BVP 1} holds in $L^2(D_\eta/\Lambda)^3$ because
it holds in the sense of distributions and $\bfA \in H^2(D_\eta/\Lambda)^3$.
\item[(ii)]
Clearly $0=\Div \curl \bfB \in L^2(D_\eta/\Lambda)$
and
$\curl \curl \bfB \in L^2(D_\eta/\Lambda)^3$ because \eqref{B BVP 1} is satisfied in the sense of distributions
and $\curl \bfB \in L^2(D_\eta/\Lambda)^3$; furthermore
$$\curl \bfB \cdot \bfN|_{z=\eta} = \nablac \bfB_\parallel^\perp=\nablac \bfg^\perp \in \mathring{H}^\frac{1}{2}({\mathbb R}^2/\Lambda),
\qquad
\curl \bfB \cdot \bfe_3|_{z=-h}=0,$$
so that $\curl \bfB \in H^1(D_\eta / \Lambda)^3$ by Proposition \ref{About spaces}(ii).
Next note that
$\curl \bfB \in H^1(D_\eta/\Lambda)^3$, $0=\Div \bfB \in H^1(D_\eta/\Lambda)$ with
$\bfB \times \bfe_3 |_{z=-h}= \bfzero$, $\bfB\cdot\bfn|_{z=\eta} = 0$ implies that $\bfB \in H^2(D_\eta/\Lambda)^3$
by Proposition \ref{About spaces}(iii), and
\eqref{B BVP 1} holds in $L^2(D_\eta/\Lambda)^3$ because
it holds in the sense of distributions and $\bfB \in H^2(D_\eta/\Lambda)^3$. Finally
$$
\underbrace{\int_\Omega \int_{-h}^\eta (\curl \curl \bfB -\alpha \curl \bfB) \cdot\bfD}_{\displaystyle =0}
+\int_\Omega \left((\curl \bfB)^\perp_\parallel -\bfgamma^\perp
-\alpha\nabla \Delta^{-1}( \nabla \cdot \bfB^{\!\perp}_\parallel)\right) \cdot \bfD_\parallel
=0
$$
for all $\bfD \in {\mathcal X}_\eta^0$,
which implies that $(\curl \bfB)^\perp_\parallel =\bfgamma^\perp + \nablap \Phi
+\alpha\nabla \Delta^{-1}( \nabla \cdot \bfB^{\!\perp}_\parallel)$ for some
$\Phi \in \mathring{H}^\frac{3}{2}({\mathbb R}^2/\Lambda)$ and in particular that \eqref{B BVP 6} holds.
\qedhere
\end{itemize}
\end{proof}

\begin{remark} \label{rem:orthpart}
Suppose that $\bfB \in H^2(D_\eta/\Lambda)^3$ satisfies \eqref{B BVP 1}--\eqref{B BVP 6}.
The orthogonal gradient part of $(\curl \bf B)_\parallel$ is equal to
$-\alpha \, \nablap \Delta^{-1} (\nablac \bfB_\parallel^\perp)$.
\end{remark}

\begin{corollary}
The formulae \eqref{eq:alt defn of H and M} define linear operators
$H(\eta)\colon {\mathbb R}^2 \times \mathring{H}^\frac{3}{2}({\mathbb R}^2/\Lambda) \rightarrow \mathring{H}^\frac{1}{2}({\mathbb R}^2/\Lambda)$
and\linebreak
$\bfM(\eta)\colon {\mathbb R}^2 \times H^\frac{3}{2}({\mathbb R}^2/\Lambda)^2 \rightarrow H^\frac{1}{2}({\mathbb R}^2/\Lambda)^2$.
\end{corollary}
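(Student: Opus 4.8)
The plan is to read this off directly from the weak/strong solution theory just established. First I would invoke Lemmata \ref{weak exist}(i) and \ref{strong exist}(i): for fixed $\bfgamma \in {\mathbb R}^2$ and $\Phi \in \mathring{H}^{3/2}({\mathbb R}^2/\Lambda)$ they provide a unique weak solution of \eqref{A BVP 1}--\eqref{A BVP 5} which is in fact a strong solution $\bfA \in H^2(D_\eta/\Lambda)^3$; symmetrically, Lemmata \ref{weak exist}(ii) and \ref{strong exist}(ii) provide a unique strong solution $\bfB \in H^2(D_\eta/\Lambda)^3$ for fixed $\bfgamma \in {\mathbb R}^2$ and $\bfg \in H^{3/2}({\mathbb R}^2/\Lambda)^2$. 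It is precisely \emph{uniqueness} that turns $(\bfgamma,\Phi) \mapsto \bfA$ and $(\bfgamma,\bfg) \mapsto \bfB$ into genuine (single-valued) maps, and hence makes $H(\eta)(\bfgamma,\Phi)$ and $\bfM(\eta)(\bfgamma,\bfg)$ well defined through \eqref{eq:alt defn of H and M}.

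Next I would identify the target spaces. For $\bfM$ this is short: the proof of Lemma \ref{strong exist}(ii) already gives $\curl\bfB \in H^1(D_\eta/\Lambda)^3$, so (the domain $D_\eta/\Lambda$ being Lipschitz) the trace of $\curl\bfB$ on $z=\eta$ lies in $H^{1/2}({\mathbb R}^2/\Lambda)^3$, and writing $(\curl\bfB)_\parallel = (\curl\bfB)_\mathrm{h} + (\curl\bfB)_3\,\nabla\eta|_{z=\eta}$ together with the fact that multiplication by $\nabla\eta \in W^{1,\infty}$ is bounded on $H^{1/2}({\mathbb R}^2/\Lambda)$ gives $\bfM(\eta)(\bfgamma,\bfg) \in H^{1/2}({\mathbb R}^2/\Lambda)^2$. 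For $H$ I would first rewrite $H(\eta)(\bfgamma,\Phi) = \nablac\bfA_\parallel^\perp = \curl\bfA\cdot\bfN|_{z=\eta}$ using the vector identity recorded after \eqref{eq:compatibility}, then argue exactly as for $\bfM$ (using $\curl\bfA \in H^1(D_\eta/\Lambda)^3$ from the proof of Lemma \ref{strong exist}(i) and $\bfN = (-\eta_x,-\eta_y,1)^T \in W^{1,\infty}({\mathbb R}^2/\Lambda)^3$), obtaining $H(\eta)(\bfgamma,\Phi) \in H^{1/2}({\mathbb R}^2/\Lambda)$. The zero-mean condition follows because $\nablac\bfA_\parallel^\perp$ is a divergence — equivalently, by applying the divergence theorem to $\Div\curl\bfA = 0$ over $D_\eta/\Lambda$ and noting that $\curl\bfA\cdot\bfe_3|_{z=-h}= 0$ (a consequence of $\bfA\times\bfe_3|_{z=-h}=\bfzero$) — so that $H(\eta)(\bfgamma,\Phi) \in \mathring{H}^{1/2}({\mathbb R}^2/\Lambda)$; no such condition is imposed on $\bfM$.

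Linearity I would obtain from uniqueness once more: equations \eqref{A BVP 1}--\eqref{A BVP 5} and their weak form \eqref{Weak solution A} are linear and homogeneous in $(\bfgamma,\Phi,\bfA)$, so any linear combination of solutions solves the problem with the corresponding linear combination of the data, and uniqueness forces $(\bfgamma,\Phi)\mapsto\bfA$ to be linear; composing with the (fixed-$\eta$, linear) maps $\bfA\mapsto\curl\bfA\cdot\bfN|_{z=\eta}$ and $\bfB\mapsto-(\curl\bfB)_\parallel$ then yields linearity of $H(\eta)$ and $\bfM(\eta)$, the argument for $\bfM$ using \eqref{Weak solution B} in place of \eqref{Weak solution A}.

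The one step that is not purely routine is the regularity bookkeeping in the second paragraph. Expanded naively, $\nablac\bfA_\parallel^\perp$ appears to involve second derivatives of $\eta$ and to lie only in $L^2$; the resolution is that the offending term cancels and is absorbed into a trace of $\curl\bfA$, which is the ``subtle cancellation'' flagged in the introduction. Working throughout with $\curl\bfA\cdot\bfN|_{z=\eta}$ instead of $\nablac\bfA_\parallel^\perp$ (and likewise presenting $\bfM$ directly in terms of $\curl\bfB$) circumvents this, after which the corollary is a short consequence of the two preceding lemmata. Continuity of $H(\eta)$ and $\bfM(\eta)$ is not asserted here; it follows a fortiori from the analyticity established in Theorem \ref{thm:analGDNO}.
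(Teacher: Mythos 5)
Your proposal is correct and follows essentially the route the paper intends: the corollary is stated without proof precisely because it is an immediate consequence of Lemmata \ref{weak exist} and \ref{strong exist}, with uniqueness giving well-definedness and linearity, the $H^1$ regularity of $\curl\bfA$ and $\curl\bfB$ together with trace theory giving the target spaces, and the identity $\nablac\bfA_\parallel^\perp=\curl\bfA\cdot\bfN|_{z=\eta}$ handling both the apparent loss of regularity and the zero-mean condition. Nothing further is needed.
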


\subsection{Analyticity} \label{Analflat}
In this section we show that improved regularity of $\eta$, $\Phi$ and $\bfg$ leads to improved regularity of the solution to the boundary-value problems \eqref{A BVP 1}--\eqref{A BVP 5} and \eqref{B BVP 1}--\eqref{B BVP 6} and use this result to deduce that $H(\eta)$ and $\bfM(\eta)$ depend analytically upon $\eta$ (see Theorem \ref{thm:analinvGDNO}(i) below for a precise statement).
We proceed by transforming \eqref{A BVP 1}--\eqref{A BVP 5} and \eqref{B BVP 1}--\eqref{B BVP 6} into equivalent boundary-value problems in the fixed domain $D_0$ by means
of the following flattening transformation. Define $\Sigma\colon D_0 \to D_\eta$ by
$$
\Sigma\colon(\bfx,v) \mapsto (\bfx,v+\sigma(\bfx,v)), \qquad \sigma(\bfx,v)\coloneqq  \eta(\bfx)(1+v/h),
$$
and for $f\colon D_\eta \rightarrow {\mathbb R}$ and $\bfF\colon D_\eta \rightarrow {\mathbb R}^3$ write $\tilde{f}=f \circ \Sigma$, $\tilde{\bfF} = \bfF \circ \Sigma$
and use the notation
\begin{align*}
\mathrm{grad}^\sigma \tilde{f}(\bfx,v) &\coloneqq (\mathrm{grad} \, f)\circ\Sigma(\bfx,v), \\
\Div^\sigma \tilde{f}(\bfx,v) &\coloneqq  (\Div \, f) \circ \Sigma(\bfx,v),% \label{eq:divsigma}
\\
\curl^\sigma \tilde{\bfF}(\bfx,v) &\coloneqq (\curl \, \bfF)\circ\Sigma(\bfx,v),%\label{eq:curlsigma} 
\\
\Delta^\sigma \tilde{f}(\bfx,v) &\coloneqq  (\Delta \, f) \circ \Sigma(\bfx,v)
\end{align*}
and more generally
$$
\partial_x^\sigma \coloneqq  \partial_x - \frac{ \partial_x\sigma }{1+\partial_v\sigma} \partial_v, \quad
\partial_y^\sigma \coloneqq  \partial_y - \frac{ \partial_y\sigma }{1+\partial_v\sigma} \partial_v, \quad
\partial_v^\sigma \coloneqq  \frac{ \partial_v }{1+\partial_v\sigma}.
$$

\begin{remark}% \label{rem:flatOp}
The flattened versions of the operators $\curl$, $\Div$, $\mathrm{grad}$ and $\Delta$ applied to $\tilde\bfF(x,y,v)=\bfF(x,y,z)$ and to $\tilde{f}(x,y,v)=f(x,y,z)$ are given explicitly by
\begin{align*}
\curl^\sigma \tilde\bfF &= \curl \tilde\bfF - \frac{\eta}{\eta+h} (-\partial_v \tilde{F}_2,\partial_v \tilde{F}_1,0)^T - \frac{h+v}{\eta+h} (\eta_y \; \partial_v\tilde{F}_3, -\eta_x \; \partial_v\tilde{F}_3,\eta_x \; \partial_v \tilde{F}_2-\eta_y \; \partial_v \tilde{F}_1)^T, \\
\Div^\sigma \tilde\bfF &= \Div \tilde\bfF - \frac{h+v}{\eta+h} (\eta_x \; \partial_v\tilde{F}_1 + \eta_y \; \partial_v\tilde{F}_2) - \frac{\eta}{\eta+h} \partial_v\tilde{F}_3, \\
\mathrm{grad}^\sigma \tilde{f} &= \mathrm{grad} \; \tilde{f} - \frac{h+v}{\eta+h} (\eta_x \; \partial_v\tilde{f}, \eta_y \; \partial_v\tilde{f},0)^T - \frac{\eta}{\eta+h} (0,0,\partial_v\tilde{f})^T, \\
\Delta^\sigma \tilde{f} &= \Delta \tilde{f} - 2 \frac{h+v}{\eta+h}( \eta_x \; \partial_{vx}^2\tilde{f} + \eta_y \; \partial_{vy}^2\tilde{f} ) - \frac{h+v}{\eta+h} (\eta_{xx}+\eta_{yy}) \partial_v\tilde{f}  \\
&\qquad\mbox{}+ 2 \frac{h+v}{(\eta+h)^2} ( \, (\eta_x)^2+(\eta_y)^2 \, ) \partial_v\tilde{f} + \left( \frac{h+v}{\eta+h} \right)^2 \, ( \, (\eta_x)^2+(\eta_y)^2 \, ) \partial_v^2\tilde{f} - \frac{\eta^2+2h\eta}{(\eta+h)^2} \partial_v^2\tilde{f}.
\end{align*}
\end{remark}

Equations
\eqref{A BVP 1}--\eqref{A BVP 5} are equivalent to the flattened boundary-value problem
\begin{alignat}{2}
\curl^\sigma \curl^\sigma \tilde\bfA - \alpha  \curl^\sigma \tilde\bfA &= \bfzero & & \In D_0, \label{Flattened A BVP 1} \\
\Div^\sigma \bfA &= 0 & & \In D_0, \label{Flattened A BVP 2} \\
\tilde\bfA \times \bfe_3 &= \bfzero & & \at v=-h, \label{Flattened A BVP 3} \\
\tilde\bfA \cdot \bfN &= 0 & & \at v=0, \label{Flattened A BVP 4} \\
(\curl^\sigma \tilde\bfA)_\parallel &= \bfgamma+\nabla\Phi - \alpha \nablap\Delta^{-1}(\nablac \tilde{\bfA}_\parallel^\perp) & & \at v=0, \label{Flattened A BVP 5}
\end{alignat}
in terms of which
$$
H(\eta)(\bfgamma,\Phi)=\nablac \tilde\bfA_\parallel^\perp,
$$
while equations \eqref{B BVP 1}--\eqref{B BVP 6} are equivalent to the flattened boundary-value problem
\begin{alignat}{2}
\curl^\sigma \curl^\sigma \tilde\bfB - \alpha  \curl^\sigma \tilde\bfB &= \bfzero & & \In D_0, \label{Flattened B BVP 1} \\
\Div^\sigma \bfB &= 0 & & \In D_0, \label{Flattened B BVP 2} \\
\tilde\bfB \times \bfe_3 &= \bfzero & & \at v=-h, \label{Flattened B BVP 3} \\
\tilde\bfB \cdot \bfN &= 0 & & \at v=0, \label{Flattened B BVP 4} \\
\nablac \tilde{\bfB}_{\parallel}^{\perp} &= \nablac \bfg^\perp, \label{Flattened B BVP 5} \\
\langle (\curl^\sigma \tilde\bfB)_\parallel \rangle &= \bfgamma, \label{Flattened B BVP 6}
\end{alignat}
in terms of which
$$
\bfM(\eta)(\bfgamma,\bfg)=-(\curl^\sigma \tilde\bfB)_\parallel;
$$
note that the
orthogonal gradient part of $(\curl^\sigma \tilde{\bfB})_\parallel$ is equal to
$-\alpha \, \nablap \Delta^{-1} (\nablac \tilde{\bfB}_\parallel^\perp)$ for
any solution $\tilde{\bfB} \in H^2(D_0/\Lambda)^3$ of \eqref{Flattened B BVP 1}--\eqref{Flattened B BVP 5}. The spatially extended version of
the first of the above boundary-value problems
was studied by Groves and Horn \cite[\S4]{GrovesHorn20}, whose analysis in particular leads to the following result in the present context.

\begin{theorem} \label{tildeAisanal}
Suppose that $s \geq 2$, and assume that the non-resonance condition (NR) holds.
There exists an open neighbourhood $U$ of the origin in $H^{s+\frac{1}{2}}({\mathbb R}^2/\Lambda)$ such that
the boundary-value problem \eqref{Flattened A BVP 1}--\eqref{Flattened A BVP 5}
has a unique solution $\tilde{\bfA}=\tilde{\bfA}(\eta,\bfgamma,\Phi)$ in $H^s(D_0/\Lambda)^3$ which depends analytically upon
$\eta \in U$, $\bfgamma \in {\mathbb R}^2$ and\linebreak
$\Phi \in \mathring{H}^{s-\frac{1}{2}}({\mathbb R}^2/\Lambda)$ (and linearly upon $(\bfgamma,\Phi)$).
\end{theorem}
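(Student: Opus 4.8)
The plan is to recast the flattened boundary-value problem \eqref{Flattened A BVP 1}--\eqref{Flattened A BVP 5} as a linear operator equation $\mathcal{L}(\eta)\tilde\bfA=\mathcal{R}(\eta)(\bfgamma,\Phi)$, in which $\mathcal{L}(\eta)$ is a bounded linear operator from $H^s(D_0/\Lambda)^3$ into a suitable product of interior and boundary Sobolev spaces, $\mathcal{R}(\eta)$ is linear in $(\bfgamma,\Phi)$, and both depend analytically upon $\eta$ near the origin. One then shows that $\mathcal{L}(0)$ is an isomorphism and concludes by a perturbation argument.

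First I would record that the coefficients of the flattened operators $\curl^\sigma$ and $\Div^\sigma$ are polynomial in $\eta$, $\eta_x$, $\eta_y$, $v$ and the function $(\eta+h)^{-1}$. Since $\inf\eta>-h$ and $H^{s+\frac12}(\mathbb{R}^2/\Lambda)$ is a Banach algebra (as $s+\tfrac12>1$), the map $\eta\mapsto(\eta+h)^{-1}$ is analytic from a neighbourhood of the origin in $H^{s+\frac12}(\mathbb{R}^2/\Lambda)$ into itself, and hence so are $\eta\mapsto\mathcal{L}(\eta)$ and $\eta\mapsto\mathcal{R}(\eta)$; the half-derivative loss in $\eta$ relative to $\tilde\bfA$ is dictated by the two $\tilde\bfA$-derivatives in \eqref{Flattened A BVP 1} against the single one in the boundary condition \eqref{Flattened A BVP 5}.

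The heart of the matter is the invertibility of $\mathcal{L}(0)$. When $\eta=0$ the transformation $\Sigma$ is the identity and, after Fourier expansion in $\bfx$, the system decouples into mode-$\bfk$ ordinary differential boundary-value problems on $(-h,0)$ whose fundamental solutions are trigonometric or hyperbolic in $v$ according to whether $|\bfk|<|\alpha|$ or $|\bfk|>|\alpha|$. Condition (NR) ensures $|\bfk|\neq|\alpha|$ and, in the former regime, the nonvanishing of the relevant determinant --- an expression in $\cos(h\sqrt{\alpha^2-|\bfk|^2})$ and $\sin(h\sqrt{\alpha^2-|\bfk|^2})$ --- so that each mode-$\bfk$ problem is uniquely solvable. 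Uniform bounds on the mode-$\bfk$ solution operators, with the correct $|\bfk|$-decay, together with the treatment of the solenoidal constraint via the auxiliary scalar potential used in Lemmata \ref{weak exist} and \ref{strong exist}, then show that $\mathcal{L}(0)$ is an isomorphism; this reproduces, in the periodic setting, the analysis of Groves and Horn \cite[\S4]{GrovesHorn20}.

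Since the set of isomorphisms is open and inversion is analytic, $\mathcal{L}(\eta)$ remains invertible for $\eta$ in a neighbourhood $U$ of the origin in $H^{s+\frac12}(\mathbb{R}^2/\Lambda)$, with $\eta\mapsto\mathcal{L}(\eta)^{-1}$ analytic; composing with $\mathcal{R}$ shows that $\tilde\bfA(\eta,\bfgamma,\Phi)=\mathcal{L}(\eta)^{-1}\mathcal{R}(\eta)(\bfgamma,\Phi)\in H^s(D_0/\Lambda)^3$ is the unique solution, is analytic in $\eta\in U$, and is jointly linear in $(\bfgamma,\Phi)$. I expect the principal obstacle to be the invertibility of $\mathcal{L}(0)$: because $\curl\curl$ annihilates all gradients one must work systematically in the divergence-free gauge, carry the nonlocal term $\alpha\nablap\Delta^{-1}(\nablac\tilde\bfA_\parallel^\perp)$ in \eqref{Flattened A BVP 5} through the Fourier analysis, and verify that the mode-by-mode solution operators assemble into a bounded operator on the correct scale of Sobolev spaces --- exactly the step where the non-resonance condition (NR) is indispensable.
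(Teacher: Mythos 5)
Your outline follows essentially the same route as the paper, which in fact gives no proof of Theorem \ref{tildeAisanal} at all: it states only that the analysis of Groves and Horn \cite[\S4]{GrovesHorn20} for the spatially extended problem ``leads to the following result in the present context''. The ingredients you list --- analyticity of the flattened coefficients in $\eta$ via the Banach-algebra property of $H^{s+\frac12}({\mathbb R}^2/\Lambda)$, mode-by-mode solvability at $\eta=0$ under (NR) (this is where the excluded values $h\sqrt{\alpha^2-|\bfk|^2}\in\frac{\pi}{2}\mathbb{N}$ correspond to poles of $\mathtt{t}(|\bfk|)$ and $\mathtt{c}(|\bfk|)$), and analytic perturbation of the inverse --- are exactly the ingredients of that analysis transplanted to the periodic setting.

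There is, however, one step that would fail as literally written. The system \eqref{Flattened A BVP 1}--\eqref{Flattened A BVP 5} comprises four interior equations and five scalar boundary conditions for a three-component unknown, so $\mathcal{L}(0)$ cannot be an isomorphism onto the \emph{full} product of interior and boundary Sobolev spaces: its range is a proper closed subspace cut out by compatibility conditions (the divergence of \eqref{Flattened A BVP 1} vanishes identically, and the boundary data must be compatible with \eqref{Flattened A BVP 2}), and for $\eta\neq 0$ these conditions move with $\eta$. Consequently ``the set of isomorphisms is open'' does not apply verbatim: nearness of $\mathcal{L}(\eta)$ to $\mathcal{L}(0)$ in $L(H^s(D_0/\Lambda)^3,Y)$ yields injectivity and closed range, but not that $\mathcal{R}(\eta)(\bfgamma,\Phi)$ lies in that range. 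The standard repair --- and what \cite{GrovesHorn20} and the weak/strong framework of Section \ref{sec:WeakStrong} actually implement --- is to replace the system by a genuinely square, well-posed one (the Euler--Lagrange operator of the bilinear form in \eqref{Weak solution A}, effectively $\curl\curl-\Grad\Div-\alpha\curl$ with the tangential-curl condition entering as a natural boundary condition) and then to recover $\Div^\sigma\tilde\bfA=0$ and the remaining constraint a posteriori by testing with $\Grad^\sigma\phi$ for a suitable scalar potential. You gesture at this (``divergence-free gauge'', the auxiliary potential of Lemmata \ref{weak exist} and \ref{strong exist}), but the operator equation must be set up for the modified system from the outset for your perturbation argument to close.
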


The corresponding result for the boundary-value problem \eqref{Flattened B BVP 1}--\eqref{Flattened B BVP 6},
together with the analyticity of the operators $H$ and $M$, is now readily deduced.

\begin{theorem} \label{thm:analinvGDNO}
Suppose that $s \geq 2$, and assume that the non-resonance condition (NR) holds.
for each $\bfk \in \Lambda^\prime$.
There exists an open neighbourhood $U$ of the origin in $H^{s+\frac{1}{2}}(\mathbb{R}^2/\Lambda)$ such that
\begin{itemize}
\item[(i)] $\eta \mapsto H(\eta)$ and $\eta \mapsto \bfM(\eta)$ are analytic mappings $U \to L({\mathbb R}^2 \times \mathring{H}^{s-\frac{1}{2}}({\mathbb R}^2/\Lambda),\mathring{H}^{s-\frac{3}{2}}({\mathbb R}^2/\Lambda))$ and\linebreak
$U \to L({\mathbb R}^2 \times H^{s-\frac{1}{2}}(\mathbb{R}^2/\Lambda)^2, H^{s-\frac{3}{2}}(\mathbb{R}^2/\Lambda)^2)$ respectively;
\item[(ii)] the boundary-value problem \eqref{Flattened B BVP 1}--\eqref{Flattened B BVP 6} has a unique solution
$\tilde{\bfB}=\tilde{\bfB}(\eta,\bfgamma,\bfg)$ in $H^s(D_0/\Lambda)^3$ which depends analytically upon
$\eta \in U$ and $\bfg \in H^{s-\frac{3}{2}}({\mathbb R}^2)^2$ (and linearly upon $(\bfgamma,\bfg)$).
\end{itemize}
\end{theorem}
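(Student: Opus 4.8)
The plan is to derive the whole statement from Theorem \ref{tildeAisanal}, treating the analyticity of $H$ first, then part (ii), and finally the analyticity of $\bfM$.

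\emph{Analyticity of $H$.} By Theorem \ref{tildeAisanal} the map $(\eta,\bfgamma,\Phi)\mapsto\tilde\bfA(\eta,\bfgamma,\Phi)$ is analytic from $U\times{\mathbb R}^2\times\mathring{H}^{s-\frac12}({\mathbb R}^2/\Lambda)$ to $H^s(D_0/\Lambda)^3$ and linear in $(\bfgamma,\Phi)$. Composing with the ($\eta$-independent) trace operator onto $v=0$, forming $\tilde\bfA_\parallel=\tilde\bfA_\mathrm{h}|_{v=0}+\tilde A_3|_{v=0}\nabla\eta$ (a product of two $H^{s-\frac12}$ functions, hence bilinear and analytic since $s\geq2$), and applying the bounded operator $\nablac(\cdot)^\perp$ into $\mathring{H}^{s-\frac32}({\mathbb R}^2/\Lambda)$, one reads off from $H(\eta)(\bfgamma,\Phi)=\nablac\tilde\bfA_\parallel^\perp$ that $\eta\mapsto H(\eta)$ is analytic $U\to L({\mathbb R}^2\times\mathring{H}^{s-\frac12}({\mathbb R}^2/\Lambda),\mathring{H}^{s-\frac32}({\mathbb R}^2/\Lambda))$.

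\emph{Part (ii).} I would solve \eqref{Flattened B BVP 1}--\eqref{Flattened B BVP 6} by reducing it to \eqref{Flattened A BVP 1}--\eqref{Flattened A BVP 5}. If $\tilde\bfB\in H^s(D_0/\Lambda)^3$ solves \eqref{Flattened B BVP 1}--\eqref{Flattened B BVP 6}, then by the compatibility relation \eqref{eq:compatibility} (with $\curl\bfB$ in place of $\curl\bfA$) and \eqref{Flattened B BVP 6} one has $(\curl^\sigma\tilde\bfB)_\parallel=\bfgamma+\nabla\Phi-\alpha\nablap\Delta^{-1}(\nablac\tilde\bfB_\parallel^\perp)$ with $\Phi:=\Delta^{-1}(\nablac(\curl^\sigma\tilde\bfB)_\parallel)\in\mathring{H}^{s-\frac12}({\mathbb R}^2/\Lambda)$; thus $\tilde\bfB$ solves \eqref{Flattened A BVP 1}--\eqref{Flattened A BVP 5} with this $\bfgamma$ and $\Phi$, so $\tilde\bfB=\tilde\bfA(\eta,\bfgamma,\Phi)$ by Theorem \ref{tildeAisanal}, and \eqref{Flattened B BVP 5} forces $H(\eta)(\bfgamma,\Phi)=\nablac\tilde\bfB_\parallel^\perp=\nablac\bfg^\perp$. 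Conversely, for any $\Phi$ with $H(\eta)(\bfgamma,\Phi)=\nablac\bfg^\perp$ the function $\tilde\bfB:=\tilde\bfA(\eta,\bfgamma,\Phi)$ satisfies \eqref{Flattened B BVP 1}--\eqref{Flattened B BVP 5}, and \eqref{Flattened B BVP 6} holds because $\nabla\Phi$ and $\nablap\Delta^{-1}(\nablac\tilde\bfB_\parallel^\perp)$ have zero mean. It therefore suffices to show that $\Phi=\Phi(\eta,\bfgamma,\bfg):=H(\eta)(\bfzero,\cdot)^{-1}\bigl(\nablac\bfg^\perp-H(\eta)(\bfgamma,0)\bigr)$ is well defined, analytic in $\eta$, and linear in $(\bfgamma,\bfg)$, for then $\tilde\bfB(\eta,\bfgamma,\bfg)=\tilde\bfA(\eta,\bfgamma,\Phi(\eta,\bfgamma,\bfg))$ is the unique solution of \eqref{Flattened B BVP 1}--\eqref{Flattened B BVP 6} with the asserted dependence. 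This in turn reduces to the invertibility of $H(\eta)(\bfzero,\cdot)\colon\mathring{H}^{s-\frac12}({\mathbb R}^2/\Lambda)\to\mathring{H}^{s-\frac32}({\mathbb R}^2/\Lambda)$ for $\eta$ near the origin. At $\eta=0$ the flattened problem becomes a constant-coefficient problem on the slab $D_0$ which is solved explicitly by Fourier series (the computation is carried out in Section \ref{sec:Taylor}) and gives $H(0)(\bfzero,\cdot)=D^2\,\mathtt{t}(D)$, whose Fourier multiplier $|\bfk|^2\mathtt{t}(|\bfk|)$ is nonzero for every $\bfk\neq\bfzero$ by condition (NR) and grows like $|\bfk|$ as $|\bfk|\to\infty$; hence $H(0)(\bfzero,\cdot)$ is an isomorphism between the two spaces. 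Since $\eta\mapsto H(\eta)(\bfzero,\cdot)$ is analytic into $L(\mathring{H}^{s-\frac12}({\mathbb R}^2/\Lambda),\mathring{H}^{s-\frac32}({\mathbb R}^2/\Lambda))$ by the previous step, and since the invertible operators form an open set on which inversion is analytic, $H(\eta)(\bfzero,\cdot)$ remains an isomorphism with analytically $\eta$-dependent inverse after shrinking $U$; together with the analyticity of $\eta\mapsto H(\eta)(\bfgamma,0)$ and the boundedness of $\bfg\mapsto\nablac\bfg^\perp$ this gives the required properties of $\Phi$.

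\emph{Analyticity of $\bfM$, and the main obstacle.} Finally $\bfM(\eta)(\bfgamma,\bfg)=-(\curl^\sigma\tilde\bfB)_\parallel$ is obtained from the analytic map $\eta\mapsto\tilde\bfB$ by applying $\curl^\sigma$ (a first-order operator whose coefficients depend analytically on $\eta$ since $\inf\eta>-h$ on $U$), taking the trace at $v=0$, and forming the tangential part with the bilinear factor $\nabla\eta$; exactly as for $H$ this shows that $\eta\mapsto\bfM(\eta)$ is analytic $U\to L({\mathbb R}^2\times H^{s-\frac12}({\mathbb R}^2/\Lambda)^2,H^{s-\frac32}({\mathbb R}^2/\Lambda)^2)$. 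The only step that is not routine bookkeeping is the invertibility of $H(\eta)(\bfzero,\cdot)$: it rests on identifying $H(0)(\bfzero,\cdot)=D^2\mathtt{t}(D)$ and on condition (NR), which prevents $|\bfk|^2\mathtt{t}(|\bfk|)$ from vanishing for any $\bfk\neq\bfzero$ and, among the finitely many $\bfk$ with $|\bfk|<|\alpha|$, from meeting a pole of the tangent; continuity of $\eta\mapsto H(\eta)$ then propagates the isomorphism property to a neighbourhood of $\eta=0$.
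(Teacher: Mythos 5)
Your proposal is correct and follows essentially the same route as the paper: both reduce the $\tilde\bfB$ boundary-value problem to the $\tilde\bfA$ problem by inverting $\Phi\mapsto H(\eta)(\bfgamma,\Phi)$, using the explicit multiplier $H(0)(\bfzero,\cdot)=D^2\mathtt{t}(D)$ (nonvanishing by (NR) and asymptotically of order $|\bfk|$) together with analyticity of inversion on the open set of isomorphisms, and then read off the analyticity of $\bfM$ from the resulting formula. The only cosmetic difference is that the paper packages the inverse through the augmented operator $V(\eta)(\bfgamma,\Phi)=(\bfgamma,H(\eta)(\bfgamma,\Phi))$ and its second component $W_2(\eta)$, whereas you invert $H(\eta)(\bfzero,\cdot)$ directly after splitting off $H(\eta)(\bfgamma,0)$ — these are the same computation.
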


\begin{proof}
$ $
The analyticity of $H(\cdot)\colon U \to L({\mathbb R}^2 \times \mathring{H}^{s-\frac{1}{2}}({\mathbb R}^2/\Lambda),\mathring{H}^{s-\frac{3}{2}}({\mathbb R}^2/\Lambda))$ follows from Theorem \ref{tildeAisanal} and equation \eqref{eq:alt defn of H and M}, and it follows that
the formula
$$V(\eta)\begin{pmatrix} \bfgamma \\ \Phi \end{pmatrix} = \begin{pmatrix} \bfgamma \\ H(\eta)(\bfgamma,\Phi) \end{pmatrix}$$
defines an analytic function $V\colon U \to L({\mathbb R}^2 \times \mathring{H}^{s-\frac{1}{2}}({\mathbb R}^2/\Lambda),{\mathbb R}^2 \times \mathring{H}^{s-\frac{3}{2}}({\mathbb R}^2/\Lambda))$. A straightforward calculation shows that
$$V(0)\begin{pmatrix} \bfgamma \\ \Phi \end{pmatrix}  = \begin{pmatrix} \bfgamma \\ D^2\mathtt{t}(D) \end{pmatrix},$$
and 
$V(0) \in L({\mathbb R}^2 \times \mathring{H}^{s-\frac{1}{2}}({\mathbb R}^2/\Lambda),{\mathbb R}^2 \times \mathring{H}^{s-\frac{3}{2}}({\mathbb R}^2/\Lambda))$ is an isomorphism because
$$\lim_{|\bfk| \rightarrow \infty} \frac{|\bfk|}{|\bfk|^2 \mathtt{t}(|\bfk|)} = 1.$$
One concludes that $V(\eta) \in L({\mathbb R}^2 \times \mathring{H}^{s-\frac{1}{2}}({\mathbb R}^2/\Lambda),{\mathbb R}^2 \times \mathring{H}^{s-\frac{3}{2}}({\mathbb R}^2/\Lambda))$ is an isomorphism for each $\eta \in U$ and that $V(\eta)^{-1} \in
L({\mathbb R}^2 \times \mathring{H}^{s-\frac{3}{2}}({\mathbb R}^2/\Lambda),{\mathbb R}^2 \times \mathring{H}^{s-\frac{1}{2}}({\mathbb R}^2/\Lambda))$
also depends analytically upon $\eta \in U$. Clearly
$$V(\eta)^{-1} = \begin{pmatrix} \mathbb{I}_2 \\ W_2(\eta) \end{pmatrix}$$
for some analytic function $W_2\colon U \rightarrow L({\mathbb R}^2 \times \mathring{H}^{s-\frac{3}{2}}({\mathbb R}^2/\Lambda),\mathring{H}^{s-\frac{1}{2}}({\mathbb R}^2/\Lambda))$.

Observe that $\tilde{\bfB}(\eta,\bfgamma,\bfg)\coloneqq \tilde\bfA(\eta,\bfgamma,\Phi)$ with $\Phi=W_2(\eta)(\bfgamma,\nablac \bfg^\perp)$ 
depends analytically upon $\eta$, $\bfgamma$ and $\bfg$, and solves \eqref{Flattened B BVP 1}--\eqref{Flattened B BVP 6} 
since by construction
$$\nablac \bfg^\perp = H(\eta)(\bfgamma,\Phi) = \nablac \tilde\bfA(\eta,\bfgamma,\Phi)_\parallel^\perp= \nablac \tilde\bfB(\eta,\bfgamma,\Phi)_\parallel^\perp.$$
The uniqueness of this solution follows by noting that any other solution
$\tilde{\bfB}(\eta,\bfgamma,\bfg)$ is equal to $\tilde\bfA(\eta,\bfgamma,\Phi)$ with $\Phi=\Delta^{-1}\nablac (\curl^\sigma \bfB)_\parallel$, so that 
$$H(\eta)(\bfgamma,\Phi)=\nablac \tilde{\bfA}(\eta,\bfgamma,\Phi)_\parallel^\perp=\nablac \tilde{\bfB}(\eta,\bfgamma,\bfg)_\parallel^\perp = \nablac \bfg^\perp,$$
 that is $\Phi=W_2(\eta)(\bfgamma,\nablac \bfg^\perp)$. Finally, the analyticity of $\bfM$ follows from the calculation
\begin{align*}
\bfM(\eta)(\bfgamma,\bfg) &=-(\curl^\sigma \tilde{\bfB}(\eta,\bfgamma,\bfg))_\parallel \\
& = -(\curl^\sigma \tilde{\bfA}(\eta,\bfgamma,\Phi))_\parallel  \\
& = -\bfgamma - \nabla \Phi + \alpha \, \nablap \Delta^{-1} (\nablac \bfg^\perp)
\end{align*}
with $\Phi=W_2(\eta)(\bfgamma,\nablac \bfg^\perp)$.\qedhere
\end{proof}

\begin{remark} \label{rem:explicit linearisations}
It follows from the proof of Theorem \ref{thm:analinvGDNO} that
\begin{align*}
H(0)(\bfgamma,\Phi) & = D^2 \, \mathtt{t}(D) \, \Phi, \\
\bfM(0)(\bfgamma,\bfg) & = -\bfgamma - \nabla \left( \frac{1}{ D^2 \mathtt{t}(D) } \, \nabla \cdot \bfg^{\perp} \right) + \alpha \, \nablap \Delta^{-1} \left( \nabla \cdot \bfg^{\perp} \right) \\
& = - \bfgamma + \frac{1}{D^2}\left( \alpha \, \mathbf{D}^{\perp} + \mathbf{D} \, \mathtt{c}(D) \right) \bfD \cdot \bfg^\perp.
\end{align*}
\end{remark}

We conclude this section by recording the following flattened version of Proposition \ref{Solve scalar BVPs}, which is established by the methods used by Groves and Horn \cite[\S4(c)]{GrovesHorn20}.

\begin{proposition} \label{Solve flattened scalar BVPs}
Suppose that $s \geq 2$. There exists an open neighbourhood $U$ of the origin in $H^{s+1/2}(\mathbb{R}^2/\Lambda)$ such that
the boundary-value problem
\begin{alignat*}{2}
\Delta^\sigma \phi &=F & & \In D_0, \\
\Grad^\sigma u\cdot\bfN & = f & & \at v=0, \\
\phi &=0 & & \at v=-h
\end{alignat*}
has a unique solution $\phi \in H^s(D_0/\Lambda)$ which depends analytically upon $\eta \in U$, 
$F \in H^{s-2}(D_\eta/\Lambda)$ and\linebreak $F \in H^{s-\frac{3}{2}}({\mathbb R}^2/\Lambda)$ (and linearly upon $F$ and $f$).
\end{proposition}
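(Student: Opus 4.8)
The plan is to recast the mixed boundary-value problem as the invertibility of a single $\eta$-dependent linear operator and then argue by analytic perturbation off $\eta=0$, exactly as in Groves and Horn \cite[\S4(c)]{GrovesHorn20}. Shrinking $U$ so that $\inf\eta>-h$ on $U$, I would set $X^s\coloneqq\{\phi\in H^s(D_0/\Lambda)\colon\phi|_{v=-h}=0\}$ and consider the bounded linear operator $T(\eta)\colon X^s\to H^{s-2}(D_0/\Lambda)\times H^{s-\frac32}(\mathbb R^2/\Lambda)$ defined by $T(\eta)\phi\coloneqq(\Delta^\sigma\phi,\ \Grad^\sigma\phi\cdot\bfN|_{v=0})$. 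There is no mean-value or compatibility obstruction here because the Dirichlet condition at $v=-h$ already determines $\phi$, so the proposition is equivalent to the claim that $T(\eta)$ is an isomorphism for all $\eta$ in a (smaller) neighbourhood $U$ of the origin and that $\eta\mapsto T(\eta)^{-1}$ is analytic; the unique solution is then $\phi=T(\eta)^{-1}(F,f)$, which is linear in $(F,f)$, and analytic in $(\eta,F,f)$ once one further composes a datum $F$ prescribed on $D_\eta$ with the analytic family of flattening diffeomorphisms $\Sigma=\Sigma(\eta)$.

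The first step is to show that $\eta\mapsto T(\eta)$ is analytic into $L(X^s,H^{s-2}(D_0/\Lambda)\times H^{s-\frac32}(\mathbb R^2/\Lambda))$. By the explicit formulae for $\Delta^\sigma$ and $\Grad^\sigma$ recorded above, the coefficients of $T(\eta)$ are rational functions of $\eta$, its first and second derivatives, and $v$, with denominators powers of $\eta+h$ that stay bounded away from zero on $U$; each coefficient is thus analytic in $\eta\in H^{s+\frac12}(\mathbb R^2/\Lambda)$, the least regular one (the one carrying $\nabla^2\eta$) taking values in $H^{s-\frac32}(\mathbb R^2/\Lambda)$, and the hypothesis $s\geq2$ is exactly what makes the associated products land in $H^{s-2}(D_0/\Lambda)$ and $H^{s-\frac32}(\mathbb R^2/\Lambda)$ (using that the $\eta$-dependent factors depend only on the horizontal variables, together with the trace theorem for the boundary term). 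The second step is that $T(0)$ is an isomorphism: at $\eta=0$ one has $\sigma\equiv0$, hence $\Delta^\sigma=\Delta$, $\bfN=\bfe_3$ and $T(0)\phi=(\Delta\phi,\partial_v\phi|_{v=0})$, the classical mixed Dirichlet--Neumann problem on the flat slab. Expanding in Fourier series in the horizontal variables reduces this to the two-point problems $\hat\phi_{\bm k}''-|\bm k|^2\hat\phi_{\bm k}=\hat F_{\bm k}$ on $(-h,0)$ with $\hat\phi_{\bm k}'(0)=\hat f_{\bm k}$ and $\hat\phi_{\bm k}(-h)=0$, which are uniquely solvable; assembling the solutions and keeping track of the $\bm k$-dependence gives the standard elliptic estimate for the flat mixed problem, i.e.\ $T(0)^{-1}\in L(H^{s-2}(D_0/\Lambda)\times H^{s-\frac32}(\mathbb R^2/\Lambda),X^s)$. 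Since the isomorphisms form an open subset of the operator space on which inversion is analytic, these two steps together complete the argument, after a final shrinking of $U$.

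The only genuinely substantive analytic input is the uniform-in-$\bm k$ slab estimate for $T(0)$ in the second step, and even this is entirely classical; the rest of the work is the Sobolev multiplication bookkeeping of the first step, which is precisely where the hypothesis $s\geq2$ is spent so that $H^{s-\frac32}(\mathbb R^2/\Lambda)$ is a good enough multiplier space, together with the mild point that the composition operator $F\mapsto F\circ\Sigma(\eta)$ is bounded and analytic in $\eta$. So I expect no real obstacle: the proposition is the doubly periodic counterpart of \cite[\S4(c)]{GrovesHorn20}, with Fourier series replacing the Fourier transform and a compact horizontal domain replacing $\mathbb R^2$, and the adaptation should be essentially notational.
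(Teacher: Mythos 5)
Your proposal is correct and is essentially the argument the paper intends: the paper gives no proof of this proposition beyond citing the method of Groves and Horn \cite[\S4(c)]{GrovesHorn20}, which is precisely the perturbative scheme you describe — analyticity of the coefficients of the flattened operator in $\eta$, invertibility of the flat-slab mixed Dirichlet--Neumann problem at $\eta=0$ via Fourier series, and analyticity of inversion on the open set of isomorphisms. Your Sobolev bookkeeping for $s\geq 2$ and your reading of the (typographical) data spaces $F\in H^{s-2}(D_0/\Lambda)$, $f\in H^{s-\frac{3}{2}}(\mathbb{R}^2/\Lambda)$ are both sound.
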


\subsection{Differentials} \label{sec:diffNHOp}

In this section we derive useful formulae for the differentials $\mathrm{d}H[\eta](\delta\eta)(\bfgamma,\Phi)$ and\linebreak
$\mathrm{d}\bfM[\eta](\delta\eta)(\bfgamma,\bfg)$, where  $\eta \in U$, $\bfgamma \in {\mathbb R}^2$,
$\Phi \in H^{s-3/2}(\mathbb{R}^2/\Lambda)$
and $\bfg \in H^{s-1/2}(\mathbb{R}^2/\Lambda)^2$, so that $\tilde{\bfA}$, $\tilde\bfB \in H^s(D_0/\Lambda)^3$ (in the notation of Section \ref{Analflat}),
working under the stronger condition $s \geq 3$ and again assuming the non-resonance condition (NR).
Recall the identity
\begin{equation}
\mathrm{d}(\partial_x^\sigma f) = \partial_x^\sigma(\mathrm{d}f - \mathrm{d}\sigma  \partial_v^\sigma f) + \mathrm{d}\sigma  \partial_v^\sigma\partial_x^\sigma f,
\label{eq:Alinhac}
\end{equation}
where $\partial_x$ can be replaced by $\partial_y$ or $\partial_v$ and
$\mathrm{d}$ can be any linearisation operator (see Castro and Lannes \cite[Eq.\ (3.41)]{CastroLannes15}); the quantity $\mathrm{d}f - \mathrm{d}\sigma  \partial_v^\sigma f$
is called \emph{Alinhac's good unknown}.

We proceed by finding a boundary-value problem for
$\tilde\bfC\coloneqq(\mathrm{d} \tilde\bfA -\mathrm{d}\sigma  \partial_v^\sigma\tilde\bfA) \in H^{s-1}(D_0/\Lambda)^3$, where $\mathrm{d} = \partial_\eta$, observing that
$H(\cdot)\colon U \to L({\mathbb R}^2 \times \mathring{H}^{s-\frac{1}{2}}({\mathbb R}^2/\Lambda),\mathring{H}^{s-\frac{3}{2}}({\mathbb R}^2/\Lambda))$
and $\tilde{\bfA}\colon U \rightarrow L({\mathbb R}^2 \times \mathring{H}^{s-\frac{1}{2}}({\mathbb R}^2/\Lambda,H^s(D_0/\Lambda)^3)$ are analytic.
Applying \eqref{eq:Alinhac} with $\mathrm{d} = \partial_\eta$ to
$$
H(\eta)(\bfgamma,\Phi)=\nablac \tilde\bfA_\parallel^\perp,
$$
and to equations \eqref{Flattened A BVP 1}--\eqref{Flattened A BVP 5}, we find that
$$
\mathrm{d}H[\eta](\delta\eta)(\bfgamma,\Phi) = \nablac \tilde{\bfC}_\parallel^\perp + \partial_v^\sigma \curl^\sigma \tilde{\bfA} \cdot \bfN|_{v=0}\delta\eta - (\curl^\sigma \tilde{\bfA})_\mathrm{h}\cdot\nabla \delta\eta
$$
where
\begin{alignat}{2}
 \curl^\sigma \curl^\sigma \tilde\bfC - \alpha  \curl^\sigma \tilde\bfC &= \bfzero & & \In D_0, \label{eq:differential A 1} \\
\Div^\sigma \tilde\bfC &= 0 & & \In D_0, \label{eq:differential A 2} \\
\tilde\bfC \times \bfe_3 &= \bfzero & & \at v=-h, \label{eq:differential A 3}\\
\tilde\bfC \cdot \bfN &= \nabla \delta \eta \cdot \tilde\bfA_\mathrm{h} - \delta\eta \; \partial_v^\sigma \tilde\bfA \cdot \bfN & & \at v=0, \label{eq:differential A 4}
\end{alignat}
and
\begin{align}
(\curl^\sigma \tilde\bfC)_\parallel^\perp & = -\delta\eta \partial_v^\sigma (\curl^\sigma \tilde{\bfA})_\mathrm{h}-\delta\eta \partial_v^\sigma (\curl^\sigma \tilde{\bfA})_3|_{v=0}\nabla\eta \nonumber \\
& \quad\qquad\mbox{}- (\curl^\sigma \tilde{\bfA})_3|_{v=0}\nabla \delta\eta
-\alpha\nablap\Delta^{-1} (\nablac \tilde{\bfC}_\parallel^\perp - \nablac((\curl^\sigma \tilde{\bfA})_\mathrm{h})\delta\eta).
 \label{eq:differential A 5}
\end{align}
(Equation \eqref{eq:differential A 4} can be rewritten as
$$\tilde\bfC \cdot  \bfN = \nablac ( \tilde\bfA_\mathrm{h}\delta \eta)$$
because $\Div^\sigma \tilde\bfA|_{v=0}=0$ implies that 
$$\partial_v^\sigma \tilde\bfA \cdot \bfN|_{v=0} = - \nablac \tilde\bfA_\mathrm{h}.\ )$$

Using the relation
$$-(\partial_v^\sigma \curl \tilde{\bfA})_\parallel = -\nabla(\curl \tilde{\bfA})_3|_{v=0} - \alpha (\curl^\sigma \tilde{\bfA})_\mathrm{h}^\perp$$
we can rewrite equation \eqref{eq:differential A 5} as
\begin{align*}
(\curl^\sigma \tilde\bfC)_\parallel^\perp & = -\nabla((\curl^\sigma \tilde{\bfA})_3|_{v=0}\delta\eta)-\alpha(\curl^\sigma \tilde{\bfA})_\mathrm{h}^\perp \delta \eta \\
& \quad\qquad -\alpha\nablap\Delta^{-1} \nablac \tilde{\bfC}_\parallel^\perp + \alpha \nablap \Delta^{-1} \nablapc ((\curl^\sigma \tilde{\bfA})_\mathrm{h}^\perp\delta\eta)\\
& = -\alpha \langle (\curl^\sigma \tilde{\bfA})_\mathrm{h}^\perp \delta\eta \rangle
- \alpha \nabla \Delta^{-1} \nabla\cdot ((\curl^\sigma \tilde{\bfA})_\mathrm{h}^\perp\delta\eta)
-\nabla((\curl^\sigma \tilde{\bfA})_3|_{v=0}\delta\eta)-\alpha\nablap\Delta^{-1} \nablac \tilde{\bfC}_\parallel^\perp,
\end{align*}
and writing $\tilde\bfC=\tilde\bfC^\prime +\Grad^\sigma \varphi$, where $\varphi \in H^s(D_0)$ is the unique solution of the boundary-value problem
\begin{alignat*}{2}
\Delta^\sigma \varphi&=0 & & \In D_0,\\
\Grad^\sigma\varphi\cdot\bfN &=\nablac ( \tilde\bfA_\mathrm{h}\delta \eta) & & \at v=0,\\
\varphi &=0 & & \at v=-h
\end{alignat*}
(see Proposition \ref{Solve flattened scalar BVPs}), one finds that
$$
\mathrm{d}H[\eta](\delta\eta)(\bfgamma,\Phi) = \nablac \tilde{\bfC}_\parallel^{\prime\perp} + \partial_v^\sigma \curl^\sigma \tilde{\bfA} \cdot \bfN|_{v=0}\delta\eta - (\curl^\sigma \tilde{\bfA})_\mathrm{h}\cdot\nabla \delta\eta,
$$
where
\begin{alignat*}{2}
\curl^\sigma \curl^\sigma \tilde\bfC^\prime - \alpha  \curl^\sigma \tilde\bfC^\prime &= \bfzero & & \In D_0, \\
\Div^\sigma \tilde\bfC^\prime &= 0 & & \In D_0, \\
\tilde\bfC^\prime \times \bfe_3 &= \bfzero & & \at v=-h, \\
\tilde\bfC^\prime \cdot \bfN &= 0 & & \at v=0,
\end{alignat*}
and 
\begin{align*}
(\curl^\sigma \tilde\bfC^\prime)_\parallel^\perp & =
-\alpha \langle (\curl^\sigma \tilde{\bfA})_\mathrm{h}^\perp \delta\eta \rangle - \alpha \nabla \Delta^{-1} \nabla\cdot ((\curl^\sigma \tilde{\bfA})_\mathrm{h}^\perp\delta\eta)\\
& \quad\qquad\mbox{}
-\nabla((\curl^\sigma \tilde{\bfA})_3|_{v=0}\delta\eta)-\alpha\nablap\Delta^{-1} \nablac \tilde\bfC_\parallel^{\prime\perp}.
\end{align*}

It follows that
\begin{align*}
\mathrm{d}H&[\eta](\delta\eta)(\bfgamma,\Phi)\\
& = H(\eta)\!\left(\!-\alpha \langle(\curl^\sigma \tilde{\bfA})_\mathrm{h}^\perp\delta\eta\rangle,
- \alpha\Delta^{-1} \nablac((\curl^\sigma \tilde{\bfA})_\mathrm{h}^\perp\delta\eta)-(\curl^\sigma \tilde{\bfA})_3|_{v=0}\delta\eta
+\langle (\curl^\sigma \tilde{\bfA})_3|_{v=0}\delta\eta \rangle\!\right) \\
& \qquad\quad\mbox{}-\nablac((\curl^\sigma \tilde{\bfA})_\mathrm{h} \delta\eta),
\end{align*}
and we obtain our final theorem by setting
$u\coloneqq (\curl^\sigma \tilde{\bfA})_{3}|_{v=0}$.

\begin{theorem} \label{diff of H}
Suppose that $s\geq3$ and that the non-resonance condition (NR) holds. The differential of the operator
$H(\cdot)\colon U \to L({\mathbb R}^2 \times \mathring{H}^{s-\frac{1}{2}}({\mathbb R}^2/\Lambda),\mathring{H}^{s-\frac{3}{2}}({\mathbb R}^2/\Lambda))$ is given by
\begin{align*}
\mathrm{d}H&[\eta](\delta\eta)(\bfgamma,\Phi)\\
&=
H(\eta)\left(-\alpha\langle(\bfK(\eta)(\bfgamma,\Phi)-u\nabla\eta)^\perp\delta\eta\rangle,-\alpha\Delta^{-1}\nablac((\bfK(\eta)(\bfgamma,\Phi)-u\nabla\eta)^\perp\delta\eta)-u\delta\eta + \langle u \delta\eta\rangle\right)\\
& \qquad\quad\mbox{}
-\nablac((\bfK(\eta)(\bfgamma,\Phi)-u\nabla\eta)\delta\eta),
\end{align*}
where
$$
\bfK(\eta)(\bfgamma,\Phi) = \bfgamma + \nabla\Phi - \alpha \nablap \Delta^{-1} H(\eta)(\bfgamma,\Phi), \qquad
u=\frac{\bfK(\eta)(\bfgamma,\Phi)\cdot \nabla \eta+H(\eta)(\gamma,\Phi)}{1+|\nabla \eta|^2}.$$
\end{theorem}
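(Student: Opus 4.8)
The plan is to finish the calculation set up in this section: the linearisation has already been brought, via Alinhac's good unknown $\tilde\bfC=\mathrm{d}\tilde\bfA-\mathrm{d}\sigma\,\partial_v^\sigma\tilde\bfA$ and the commutator identity \eqref{eq:Alinhac}, to the form
$$\mathrm{d}H[\eta](\delta\eta)(\bfgamma,\Phi)=\nablac\tilde\bfC_\parallel^\perp+\partial_v^\sigma\curl^\sigma\tilde\bfA\cdot\bfN|_{v=0}\,\delta\eta-(\curl^\sigma\tilde\bfA)_\mathrm{h}\cdot\nabla\delta\eta,$$
where, after splitting off the gradient $\Grad^\sigma\varphi$ to restore the homogeneous condition (so that $\tilde\bfC^\prime=\tilde\bfC-\Grad^\sigma\varphi$ has $\tilde\bfC^\prime\cdot\bfN|_{v=0}=0$, cf.\ Proposition \ref{Solve flattened scalar BVPs}), the field $\tilde\bfC^\prime$ solves the flattened boundary-value problem \eqref{Flattened A BVP 1}--\eqref{Flattened A BVP 5} with data $\bfgamma^\prime=-\alpha\langle(\curl^\sigma\tilde\bfA)_\mathrm{h}^\perp\delta\eta\rangle$ and $\Phi^\prime=-\alpha\Delta^{-1}\nablac((\curl^\sigma\tilde\bfA)_\mathrm{h}^\perp\delta\eta)-(\curl^\sigma\tilde\bfA)_3|_{v=0}\,\delta\eta+\langle(\curl^\sigma\tilde\bfA)_3|_{v=0}\,\delta\eta\rangle$; since $\nablac(\Grad^\sigma\varphi)_\parallel^\perp=0$ this yields $\nablac\tilde\bfC_\parallel^\perp=\nablac\tilde\bfC_\parallel^{\prime\perp}=H(\eta)(\bfgamma^\prime,\Phi^\prime)$ by the very definition of $H$. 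What remains is to repackage these formulae in the notation of the statement.

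To do that I would set $u:=(\curl^\sigma\tilde\bfA)_3|_{v=0}$ and observe that the Hodge--Weyl form of \eqref{Flattened A BVP 5} together with the definition of $\bfK$ gives $(\curl^\sigma\tilde\bfA)_\parallel=\bfK(\eta)(\bfgamma,\Phi)$, whence $(\curl^\sigma\tilde\bfA)_\mathrm{h}=(\curl^\sigma\tilde\bfA)_\parallel-u\nabla\eta=\bfK(\eta)(\bfgamma,\Phi)-u\nabla\eta$; substituting this into $\bfgamma^\prime$ and $\Phi^\prime$ produces exactly the two arguments of $H(\eta)$ appearing in the theorem. For the remaining two terms I would use $\partial_v^\sigma\curl^\sigma\tilde\bfA\cdot\bfN|_{v=0}=-\nablac(\curl^\sigma\tilde\bfA)_\mathrm{h}$, which follows from $\Div^\sigma\curl^\sigma\tilde\bfA=0$ evaluated at $v=0$ (the same device used above for $\tilde\bfA$), so that $\partial_v^\sigma\curl^\sigma\tilde\bfA\cdot\bfN|_{v=0}\,\delta\eta-(\curl^\sigma\tilde\bfA)_\mathrm{h}\cdot\nabla\delta\eta=-\nablac((\bfK(\eta)(\bfgamma,\Phi)-u\nabla\eta)\,\delta\eta)$ by the Leibniz rule. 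Finally, writing $H(\eta)(\bfgamma,\Phi)=\curl^\sigma\tilde\bfA\cdot\bfN|_{v=0}=-(\curl^\sigma\tilde\bfA)_\mathrm{h}\cdot\nabla\eta+u$ and inserting $(\curl^\sigma\tilde\bfA)_\mathrm{h}=\bfK(\eta)(\bfgamma,\Phi)-u\nabla\eta$ gives $H(\eta)(\bfgamma,\Phi)=-\bfK(\eta)(\bfgamma,\Phi)\cdot\nabla\eta+u(1+|\nabla\eta|^2)$, that is, the stated formula for $u$.

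The one genuinely delicate point is the identification of the fifth boundary condition for $\tilde\bfC^\prime$ with the template \eqref{Flattened A BVP 5}: this requires applying \eqref{eq:Alinhac} to \eqref{Flattened A BVP 5}, using the relation $-(\partial_v^\sigma\curl^\sigma\tilde\bfA)_\parallel=-\nabla(\curl^\sigma\tilde\bfA)_3|_{v=0}-\alpha(\curl^\sigma\tilde\bfA)_\mathrm{h}^\perp$ (itself a consequence of $\curl^\sigma\curl^\sigma\tilde\bfA=\alpha\curl^\sigma\tilde\bfA$ and $\Div^\sigma\curl^\sigma\tilde\bfA=0$), and then peeling off the mean-value, gradient and skew-gradient parts, keeping careful track of the $\perp$-rotations and fixing the mean-value term so that $\Phi^\prime$ has zero mean. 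The only other thing to verify is that the good unknown leaves the interior curl--curl equations, the solenoidality condition and the bottom condition unchanged, so that $\tilde\bfC$ (hence $\tilde\bfC^\prime$) indeed solves an $H(\eta)$-type problem; everything beyond that is the routine manipulation already recorded above.
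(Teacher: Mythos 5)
Your proposal is correct and follows essentially the same route as the paper: Alinhac's good unknown $\tilde\bfC=\mathrm{d}\tilde\bfA-\mathrm{d}\sigma\,\partial_v^\sigma\tilde\bfA$, the rewriting of the linearised fifth boundary condition via $-(\partial_v^\sigma\curl^\sigma\tilde\bfA)_\parallel=-\nabla(\curl^\sigma\tilde\bfA)_3|_{v=0}-\alpha(\curl^\sigma\tilde\bfA)_\mathrm{h}^\perp$, the subtraction of $\Grad^\sigma\varphi$ (Proposition \ref{Solve flattened scalar BVPs}) to restore the homogeneous normal condition, and the final identifications $(\curl^\sigma\tilde\bfA)_\mathrm{h}=\bfK(\eta)(\bfgamma,\Phi)-u\nabla\eta$ and $u=(\bfK\cdot\nabla\eta+H)/(1+|\nabla\eta|^2)$. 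No gaps.
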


The corresponding result for $\bfM(\eta)$ is obtained in a similar fashion.

\begin{theorem} \label{diff of M}
Suppose that $s\geq3$ and that the non-resonance condition (NR) holds. The differential of the operator
$\bfM(\cdot)\colon U \to L({\mathbb R}^2 \times H^{s-\frac{1}{2}}(\mathbb{R}^2/\Lambda)^2, H^{s-\frac{3}{2}}(\mathbb{R}^2/\Lambda)^2)$ is given by
\begin{align*}
\mathrm{d}\bfM&[\eta](\delta\eta)(\bfgamma,\bfg) \\
&=
\bfM(\eta)\left(\alpha\langle(\bfM(\eta)(\bfgamma,\bfg)+u\nabla\eta)^\perp\delta\eta\rangle, (\bfM(\eta)(\bfgamma,\bfg)+u\nabla\eta)^\perp\delta\eta\right)-\nabla(u\delta\eta)
+\alpha(\bfM(\eta)(\bfgamma,\bfg)+u\delta\eta)^\perp\delta\eta,
\end{align*}
where
\begin{align*}
u &= \frac{ \nablac \bfg^\perp - \bfM(\eta)(\bfgamma,\bfg) \cdot \nabla\eta }{1+|\nabla\eta|^2} .
\end{align*}
\end{theorem}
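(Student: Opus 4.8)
The plan is to mirror the derivation of Theorem \ref{diff of H} step for step, working with the flattened boundary-value problem \eqref{Flattened B BVP 1}--\eqref{Flattened B BVP 6} for $\tilde{\bfB}=\tilde{\bfB}(\eta,\bfgamma,\bfg)$, in terms of which $\bfM(\eta)(\bfgamma,\bfg)=-(\curl^\sigma\tilde{\bfB})_\parallel$. Since $s\geq3$ the solution $\tilde{\bfB}$ lies in $H^s(D_0/\Lambda)^3$, and $\tilde{\bfB}$ together with $\bfM(\cdot)$ depends analytically upon $\eta$ by Theorem \ref{thm:analinvGDNO}; the good unknown $\tilde{\bfD}\coloneqq\mathrm{d}\tilde{\bfB}-\mathrm{d}\sigma\,\partial_v^\sigma\tilde{\bfB}$ (with $\mathrm{d}=\partial_\eta$) then lies in $H^{s-1}(D_0/\Lambda)^3$. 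First I would apply the identity \eqref{eq:Alinhac} to $\bfM(\eta)(\bfgamma,\bfg)=-(\curl^\sigma\tilde{\bfB})_\parallel$ and to \eqref{Flattened B BVP 1}--\eqref{Flattened B BVP 6}, obtaining
$$\mathrm{d}\bfM[\eta](\delta\eta)(\bfgamma,\bfg)=-(\curl^\sigma\tilde{\bfD})_\parallel-\delta\eta\,(\partial_v^\sigma\curl^\sigma\tilde{\bfB})_\parallel-(\curl^\sigma\tilde{\bfB})_3|_{v=0}\nabla\delta\eta$$
together with a boundary-value problem for $\tilde{\bfD}$: the interior equations $\curl^\sigma\curl^\sigma\tilde{\bfD}-\alpha\curl^\sigma\tilde{\bfD}=\bfzero$, $\Div^\sigma\tilde{\bfD}=0$ in $D_0$, the condition $\tilde{\bfD}\times\bfe_3=\bfzero$ at $v=-h$, the condition $\tilde{\bfD}\cdot\bfN=\nablac(\tilde{\bfB}_\mathrm{h}\,\delta\eta)$ at $v=0$ (using $\Div^\sigma\tilde{\bfB}|_{v=0}=0$, i.e.\ $\partial_v^\sigma\tilde{\bfB}\cdot\bfN|_{v=0}=-\nablac\tilde{\bfB}_\mathrm{h}$, exactly as for $\tilde{\bfC}$), and conditions on $\nablac\tilde{\bfD}_\parallel^\perp$ and on $\langle(\curl^\sigma\tilde{\bfD})_\parallel\rangle$ at $v=0$ coming from \eqref{Flattened B BVP 5}, \eqref{Flattened B BVP 6}, whose right-hand sides are $\eta$-independent, so that only the correction terms survive.

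Next I would homogenise the Neumann-type condition by writing $\tilde{\bfD}=\tilde{\bfD}^\prime+\Grad^\sigma\psi$, where $\psi\in H^s(D_0/\Lambda)$ solves $\Delta^\sigma\psi=0$ in $D_0$, $\Grad^\sigma\psi\cdot\bfN=\nablac(\tilde{\bfB}_\mathrm{h}\,\delta\eta)$ at $v=0$ and $\psi=0$ at $v=-h$ (Proposition \ref{Solve flattened scalar BVPs}). Since $\Grad^\sigma\psi$ contributes nothing under $\curl^\sigma$, the field $\tilde{\bfD}^\prime$ then solves a boundary-value problem of exactly the form \eqref{Flattened B BVP 1}--\eqref{Flattened B BVP 6} with $\tilde{\bfD}^\prime\cdot\bfN=0$ at $v=0$ and with $\bfg$, $\bfgamma$ replaced by data $\bfg^\prime$, $\bfgamma^\prime$ read off from the differentiated forms of \eqref{Flattened B BVP 5}, \eqref{Flattened B BVP 6}; by the uniqueness part of Theorem \ref{thm:analinvGDNO}(ii) it follows that $\tilde{\bfD}^\prime=\tilde{\bfB}(\eta,\bfgamma^\prime,\bfg^\prime)$, hence $-(\curl^\sigma\tilde{\bfD})_\parallel=-(\curl^\sigma\tilde{\bfD}^\prime)_\parallel=\bfM(\eta)(\bfgamma^\prime,\bfg^\prime)$.

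For the simplification I would use the relation $(\partial_v^\sigma\curl^\sigma\tilde{\bfB})_\parallel=\nabla(\curl^\sigma\tilde{\bfB})_3|_{v=0}+\alpha(\curl^\sigma\tilde{\bfB})_\mathrm{h}^\perp$ (the analogue of that used for $\tilde{\bfA}$, reflecting that $\curl^\sigma\tilde{\bfB}$ is again a Beltrami field), the Hodge--Weyl identity $\nablap\Delta^{-1}\nablapc=\mathbb{I}-\langle\,\cdot\,\rangle-\nabla\Delta^{-1}\nablac$, and the key algebraic fact
$$u\coloneqq(\curl^\sigma\tilde{\bfB})_3|_{v=0}=\frac{\nablac\bfg^\perp-\bfM(\eta)(\bfgamma,\bfg)\cdot\nabla\eta}{1+|\nabla\eta|^2},$$
which follows by decomposing $\curl^\sigma\tilde{\bfB}\cdot\bfN|_{v=0}=\nablac\bfg^\perp$ along $\bfN$ and $\nabla\eta$ via $(\curl^\sigma\tilde{\bfB})_\mathrm{h}=-\bfM(\eta)(\bfgamma,\bfg)-u\nabla\eta$. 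With these, the differentiated version of \eqref{Flattened B BVP 5} collapses to $\bfg^\prime=(\bfM(\eta)(\bfgamma,\bfg)+u\nabla\eta)^\perp\delta\eta$ and that of \eqref{Flattened B BVP 6} to $\bfgamma^\prime=\alpha\langle\bfg^\prime\rangle$, while the two remaining terms $-\delta\eta\,(\partial_v^\sigma\curl^\sigma\tilde{\bfB})_\parallel-(\curl^\sigma\tilde{\bfB})_3|_{v=0}\nabla\delta\eta$ combine — again using the Beltrami relation and $(\curl^\sigma\tilde{\bfB})_\mathrm{h}=-\bfM(\eta)(\bfgamma,\bfg)-u\nabla\eta$ — into $-\nabla(u\,\delta\eta)+\alpha(\bfM(\eta)(\bfgamma,\bfg)+u\nabla\eta)^\perp\delta\eta$, which yields the stated formula. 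I expect the main obstacle to be purely computational: tracking the correction terms produced when $\mathrm{d}-\mathrm{d}\sigma\,\partial_v^\sigma$ is commuted through $\curl^\sigma$ and through evaluation at $v=0$, and carrying out the ensuing algebra — with the one genuinely new wrinkle, absent in the $H$ case, that one must check the derived data $(\bfgamma^\prime,\bfg^\prime)$ to be consistent with the mean-value constraint \eqref{Flattened B BVP 6}, equivalently that $\langle\mathrm{d}\bfM[\eta](\delta\eta)(\bfgamma,\bfg)\rangle=0$, which holds because $\langle\bfM(\eta)(\bfgamma,\bfg)\rangle=-\bfgamma$ is independent of $\eta$.
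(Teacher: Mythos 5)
Your proposal is correct and follows the same route the paper intends: the paper proves Theorem \ref{diff of H} in detail via Alinhac's good unknown, homogenisation of the normal trace by a $\Grad^\sigma$-potential, and identification of the residual field with the solution of the original boundary-value problem for new data $(\bfgamma^\prime,\bfg^\prime)$, and then simply states that the result for $\bfM(\eta)$ ``is obtained in a similar fashion'' --- which is precisely the computation you lay out, including the correct identities for $u$, for $(\partial_v^\sigma\curl^\sigma\tilde{\bfB})_\parallel$, and for the derived data. Note also that your final term $\alpha(\bfM(\eta)(\bfgamma,\bfg)+u\nabla\eta)^\perp\delta\eta$ is the right one (it agrees with the version of the formula used in Section \ref{sec:Taylor}); the $u\delta\eta$ inside the perp in the statement of Theorem \ref{diff of M} is a typo.
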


\subsection{Taylor expansions}\label{sec:Taylor}
The terms in the expansion
\begin{equation}
H(\eta)=\sum_{k=0}^\infty H_k(\eta), \label{eq:Hexpansion}
\end{equation}
where
$H_k(\eta)$
is homogeneous of degree $k$ in $\eta$, can be calculated recursively from the equation
\begin{align*}
\mathrm{d}H[\eta](\eta)(\bfgamma,\Phi) 
&= H(\eta) \bigg( - \alpha \, \langle \left( \bfK(\eta)(\bfgamma,\Phi) - u \, \nabla\eta \right)^{\perp}\eta \rangle , -\alpha \, \Delta^{-1} \nabla \cdot \left( \bfK(\eta)(\bfgamma,\Phi) - u \, \nabla\eta \right)^{\perp}\eta - u \, \eta + \langle u \, \eta \rangle \bigg)\\
& \qquad\quad\mbox{}- \nabla \cdot \left( ( \bfK(\eta)(\bfgamma,\Phi) - u \, \nabla\eta) \, \eta \right),
\end{align*}
(see Theorem \ref{diff of H}), and the explicit formula
$$
H_0(\bfgamma,\Phi) = D^2 \, \mathtt{t}(D) \, \Phi$$
(see Remark \ref{rem:explicit linearisations}); these results hold under the non-resonance condition (NR). (Note that we
suppress the argument in the $\eta$-independent terms in Taylor series of this kind).

Expanding
\begin{equation}
\bfK(\eta)(\bfgamma,\Phi) = \sum_{k =0}^\infty \bfK_k(\eta)(\bfgamma,\Phi) , \qquad u(\eta)(\bfgamma,\Phi) = \sum_{k=0}^\infty u_k(\eta)(\bfgamma,\Phi), \label{eq:Kuexpansions}
\end{equation}
we find that
\begin{align*}
\bfK_0(\bfgamma,\Phi) &= \bfgamma + \nabla\Phi - \alpha \, \nablap \, \Delta^{-1} H_0(\bfgamma,\Phi) , \\
u_0(\bfgamma,\Phi) &=  H_0(\bfgamma,\Phi) , \\
\intertext{and}
\;\; \bfK_k(\eta)(\bfgamma,\Phi) &= - \alpha \, \nablap \, \Delta^{-1} H_k(\eta)(\bfgamma,\Phi), \\
u_k(\eta)(\bfgamma,\Phi) &\!=\!
\begin{cases}
\displaystyle(-1)^{k/2}|\nabla\eta|^{k} \, H_0(\bfgamma,\Phi) +\!\!\! \sum_{i+2j=k} \!\!\left(  \bfK_{i-1}(\eta)(\bfgamma,\Phi) \cdot \nabla\eta + H_i(\eta)(\bfgamma,\Phi) \right) \, (-1)^j |\nabla\eta|^{2j},
& \mbox{if $k \in 2 \mathbb{N}$,} \\
\displaystyle \sum_{i+2j=k} \left(  \bfK_{i-1}(\eta)(\bfgamma,\Phi) \cdot \nabla\eta + H_i(\eta)(\bfgamma,\Phi) \right) \, (-1)^j |\nabla\eta|^{2j},
& \mbox{if $k \not\in 2 \mathbb{N}$,} \\
\end{cases}
\end{align*}
for $k \geq 1$, and inserting the expansions \eqref{eq:Hexpansion} and \eqref{eq:Kuexpansions} into the formula
for $\mathrm{d}H[\eta](\eta)$ yields
\begin{align*}
\sum_{k \geq 0} & k H_k(\eta)(\bfgamma,\Phi) \\
&= \sum_{k \geq 0} H_k(\eta) \left( - \alpha \, \langle \bfK_0(\bfgamma,\Phi)^{\perp}\eta \rangle , -\alpha \, \Delta^{-1} \nabla \cdot \big( \bfK_0(\bfgamma,\Phi)^{\perp}\eta \big) - H_0(\bfgamma,\Phi)\eta + \langle H_0(\bfgamma,\Phi)\eta \rangle \right) \\
&\qquad\mbox{} + \sum_{k \geq 1} \sum_{j=1}^k H_{k-j}(\eta)
\bigg(\!\! -\alpha \, \langle \left( \bfK_j(\eta)(\bfgamma,\Phi) - u_{j-1}(\eta)(\bfgamma,\Phi) \, \nabla\eta \right)^{\perp} \, \eta \rangle , \\
&\hspace{3.6cm}\mbox{} -\alpha \, \Delta^{-1} \nabla \cdot \big( \left( \bfK_j(\eta)(\bfgamma,\Phi) - u_{j-1}(\eta)(\bfgamma,\Phi) \nabla\eta \right)^{\perp}\eta \big) - u_j(\eta)(\bfgamma,\Phi)\eta + \langle u_j(\eta)(\bfgamma,\Phi)\eta \rangle   \bigg) \\
&\qquad\mbox{}  - \nabla \cdot \big( \bfK_0(\bfgamma,\Phi)\eta \big) - \sum_{k \geq 1} \nabla \cdot \big( \left( \bfK_k(\eta)(\bfgamma,\Phi) - u_{k-1}(\eta)(\bfgamma,\Phi) \, \nabla\eta \right) \, \eta \big) ,
\end{align*}
so that
\begin{align*}
H_1(&\eta)(\bfgamma,\Phi) \\
&= H_0 \bigg( - \alpha \, \langle \bfK_0(\bfgamma,\Phi)^{\perp}\eta \rangle , -\alpha \, \Delta^{-1} \nabla \cdot \bfK_0(\bfgamma,\Phi)^{\perp}\eta - H_0(\bfgamma,\Phi) \, \eta + \langle H_0(\bfgamma,\Phi) \, \eta \rangle \bigg)  - \nabla \cdot \left( \bfK_0(\bfgamma,\Phi)\eta \right) , 
\end{align*}\pagebreak
and
\begin{align*}
H_k&(\eta)(\bfgamma,\Phi) \\
&= \frac{1}{k} \, \bigg\{ H_{k-1}(\eta) \left( - \alpha \, \langle \bfK_0(\bfgamma,\Phi)^{\perp}\eta \rangle , -\alpha \, \Delta^{-1} \nabla \cdot \big( \bfK_0(\bfgamma,\Phi)^{\perp}\eta \big) - H_0(\bfgamma,\Phi)\eta + \langle H_0(\bfgamma,\Phi)\eta \rangle \right) \\
&\quad\qquad\mbox{}+ \sum_{j=0}^{k-2} H_j(\eta) \bigg( -\alpha \, \langle \left( \bfK_{k-1-j}(\eta)(\bfgamma,\Phi) - u_{k-2-j}(\eta)(\bfgamma,\Phi) \, \nabla\eta \right)^{\perp} \, \eta \rangle , \\
&\hspace{3.2cm}\mbox{} -\alpha \, \Delta^{-1} \nabla \cdot \big( \left( \bfK_{k-1-j}(\eta)(\bfgamma,\Phi) - u_{k-2-j}(\eta)(\bfgamma,\Phi) \nabla\eta \right)^{\perp}\eta \big) \\
&\hspace{3.75cm}\mbox{} - u_{k-1-j}(\eta)(\bfgamma,\Phi)\eta + \langle u_{k-1-j}(\eta)(\bfgamma,\Phi)\eta \rangle   \bigg) \\
&\quad\qquad\mbox{}- \nabla \cdot \big( \left( \bfK_{k-1}(\eta)(\bfgamma,\Phi) - u_{k-2}(\eta)(\bfgamma,\Phi) \, \nabla\eta \right) \, \eta \big) \; \bigg\} .
\end{align*}
for $k \geq 2$.

In particular, we find that
\begin{align*}
H_0(\bfgamma,\Phi) &= H_0 \Phi , \\
H_1(\eta)(\bfgamma,\Phi) &= -\alpha \, H_0 \Delta^{-1} \nabla \cdot \bfK_0(\bfgamma,\Phi)^{\perp}\eta - H_0 ( \eta \, H_0\Phi )  - \nabla \cdot \left( \bfK_0(\bfgamma,\Phi)\eta \right) , 
\end{align*}
where $H_0 = D^2 \mathtt{t}(D)$ and $\bfK_0(\bfgamma,\Phi) = \bfgamma + \nabla\Phi - \alpha \, \nablap \Delta^{-1} \, H_0\Phi$, and that
\begin{align*}
H_2&(\eta)(\bfgamma,\Phi) \\
&= \frac{1}{2} \, \bigg\{ H_{1}(\eta) \left( - \alpha \, \langle \bfK_0(\bfgamma,\Phi)^{\perp}\eta \rangle , -\alpha \, \Delta^{-1} \nabla \cdot \left[ \bfK_0(\bfgamma,\Phi)^{\perp}\eta \right] - \eta H_0\Phi + \langle \eta H_0\Phi\rangle \right) \\
&\hspace{11mm}\mbox{}+ H_0 \bigg(   -\alpha \, \Delta^{-1} \nabla \cdot \big( \left( \bfK_{1}(\eta)(\bfgamma,\Phi) - H_0\Phi \nabla\eta \right)^{\perp}\eta \big) - u_{1}(\eta)(\bfgamma,\Phi)\eta  \bigg) \\
&\hspace{11mm}\mbox{}- \nabla \cdot \big( \left( \bfK_{1}(\eta)(\bfgamma,\Phi) - H_0\Phi \, \nabla\eta \right) \, \eta \big)\bigg\} \\
&= \!\frac{1}{2}\bigg\{\!\! -\alpha \, H_0 \Delta^{-1} \nabla \cdot \bigg( \bfK_0 \bigg( - \alpha \, \langle \bfK_0(\bfgamma,\Phi)^{\perp}\eta \rangle , -\alpha \, \Delta^{-1} \nabla \cdot \left( \bfK_0(\bfgamma,\Phi)^{\perp}\eta \right)  - \eta \, H_0\Phi  + \langle \eta \, H_0\Phi  \rangle \bigg)^{\perp} \eta \bigg) \\
&\hspace{11mm}\mbox{} + \alpha \, H_0 \left( \eta \, H_0 \Delta^{-1} \nabla \cdot ( \bfK_0(\bfgamma,\Phi)^{\perp}\eta ) \right) + H_0 ( \eta \, H_0 (\eta \, H_0\Phi) ) \\
&\hspace{11mm}\mbox{} - \nabla \cdot \bigg(  \bfK_0 \left( - \alpha \, \langle \bfK_0(\bfgamma,\Phi)^{\perp}\eta \rangle ,   -\alpha \, \Delta^{-1} \nabla \cdot \left( \bfK_0(\bfgamma,\Phi)^{\perp}\eta \right) - H_0(\bfgamma,\Phi)\eta  \right) \, \eta \bigg) \\
&\hspace{11mm}\mbox{} - \alpha  H_0 \Delta^{-1} \nabla\! \cdot\! \left(\! \eta\!  \left(\! - \alpha  \nablap\!  \Delta^{-1}\! \left( \!-\alpha  H_0 \Delta^{-1} \nabla \cdot \bfK_0(\bfgamma,\Phi)^{\perp}\eta \!-\! H_0 ( \eta  H_0\Phi )  \!-\! \nabla\! \cdot\! \left( \bfK_0(\bfgamma,\Phi)\eta \right) \right) \!-\! H_0\Phi  \nabla\eta  \right)^{\!\perp} \! \right) \\
&\hspace{11mm}\mbox{} - H_0 \left( \eta \, \bfK_0(\bfgamma,\Phi) \cdot \nabla\eta -\alpha\eta \, H_0\Delta^{-1} \nabla \cdot \left( \bfK_0(\bfgamma,\Phi)^{\perp} \eta \right) - \eta \, H_0( \eta \, H_0\Phi) - \eta \, \nabla \cdot \left( \bfK_0(\bfgamma,\Phi)\eta  \right) \right) \\
&\hspace{11mm}\mbox{} +\nabla \cdot \left( \left( \alpha \nablap \Delta^{-1} \left( -\alpha \, H_0 \Delta^{-1} \nabla \cdot \bfK_0(\bfgamma,\Phi)^{\perp}\eta - H_0 ( \eta \, H_0\Phi )  - \nabla \cdot \left( \bfK_0(\bfgamma,\Phi)\eta \right) \right) + H_0\Phi \, \nabla\eta  \right) \, \eta \right) \bigg\} .
\end{align*}

\begin{remark} \label{rem:IrrotCase}

For $\alpha = 0$ we recover the formulae for the classical Dirichlet--Neumann operator, in particular
\begin{align*}
H_0(\bfgamma,\Phi) & = H_0\Phi, \\
H_1(\eta)(\bfgamma,\Phi) &= - H_0( \eta \, H_0\Phi ) - \nabla \cdot (\eta \, \nabla\Phi ), \\
H_2(\eta)(\bfgamma,\Phi) &= H_0( \eta \, H_0( \eta \, H_0\Phi ) ) + \frac{1}{2} \,  H_0(\eta^2 \, \Delta\Phi) + \frac{1}{2} \, \Delta (\eta^2 \, H_0\Phi),
\end{align*}
where $H_0=D \tanh(h D)$.
\end{remark}

Similarly, the terms in the expansion
\begin{equation}
\bfM(\eta)=\sum_{k=0}^\infty \bfM_k(\eta), \label{eq:Mexpansion}
\end{equation}
where $\bfM_k(\eta)$
is homogeneous of degree $k$ in $\eta$, can be calculated recursively from the equation
\begin{align*}
\mathrm{d}\bfM[\eta](\eta)(\bfgamma,\bfg) &= \bfM(\eta) \bigg( \alpha \, \langle  \left( \bfM(\eta)(\bfgamma,\bfg)+ u \, \nabla\eta \right)^{\perp} \, \eta \rangle ,  \left( \bfM(\eta)(\bfgamma,\bfg)+ u \, \nabla\eta \right)^{\perp} \, \eta  \bigg) \\
&\qquad\quad\mbox{}- \nabla  ( u \, \eta) + \alpha \, \left( \bfM(\eta)(\bfgamma,\bfg)+ u \, \nabla\eta \right)^{\perp} \, \eta
\end{align*}
(see Theorem \ref{diff of M}) and the explicit formula
$$
\bfM_0(\bfgamma,\bfg) = -\bfgamma + \frac{1}{D^2} \, \left( \alpha \, \bfD^{\perp} + \bfD \, \mathtt{c}(D) \right) \, \bfD \cdot \bfg^{\perp}
$$
(see Remark \ref{rem:explicit linearisations}).

Expanding
\begin{equation}
u(\bfgamma,\bfg) = \sum_{k=0}^\infty u_k(\eta)(\bfgamma,\bfg), \label{eq:uexpansion}
\end{equation}
we find that
\begin{align*}
u_0(\bfgamma,\bfg) &= \nabla \cdot \bfg^{\perp} \\
\intertext{and}
u_k(\eta)(\bfgamma,\bfg) &=
\begin{cases}
\displaystyle (-1)^{k/2}|\nabla\eta|^{k} \, (\nabla \cdot \bfg^{\perp}) -\!\!\! \sum_{i+2j=k-1} \left(  M_{i}(\eta)(\bfgamma,\bfg) \cdot \nabla\eta \right) \, (-1)^j |\nabla\eta|^{2j},  & \mbox{if $k \in 2 \mathbb{N}$,} \\
\displaystyle- \!\!\!\sum_{i+2j=k-1} \left(  M_{i}(\eta)(\bfgamma,\bfg) \cdot \nabla\eta \right) \, (-1)^j |\nabla\eta|^{2j} , & \mbox{if $k \not\in 2 \mathbb{N}$,} \\\end{cases}
\end{align*}
for $k \geq 1$, and inserting the expansions \eqref{eq:Mexpansion} and \eqref{eq:uexpansion} into the formula
for $\mathrm{d}\bfM[\eta](\eta)$ yields
\begin{align*}
\sum_{k \geq 0} & k \, \bfM_k(\eta)(\bfgamma,\bfg) \\
&= \sum_{k \geq 0} \bfM_k(\eta) \left(  \alpha \, \langle \bfM_0(\bfgamma,\bfg)^{\perp}\eta \rangle ,  \bfM_0(\bfgamma,\bfg)^{\perp}\eta \right) \\
&\quad\qquad\mbox{}+ \sum_{k \geq 1} \, \sum_{j=1}^k \bfM_{k-j}(\eta) \bigg( \alpha \, \langle \left( \bfM_j(\eta)(\bfgamma,\bfg) + u_{j-1}(\eta)(\bfgamma,\bfg) \, \nabla\eta \right)^{\perp} \, \eta \rangle ,    \left(  \bfM_j(\eta)(\bfgamma,\bfg) + u_{j-1}(\eta)(\bfgamma,\bfg) \, \nabla\eta \right)^{\perp} \, \eta   \bigg) \\
&\quad\qquad\mbox{} - \sum_{k \geq 0} \nabla \left( u_k(\eta)(\bfgamma,\bfg)\eta \right) + \alpha \, \bfM_0(\bfgamma,\bfg)^{\perp}\eta + \alpha \, \sum_{k \geq 1}  \left(  \bfM_k(\eta)(\bfgamma,\bfg) + u_{k-1}(\eta)(\bfgamma,\bfg) \, \nabla\eta \right)^{\perp} \, \eta,
\end{align*}
so that
$$
\bfM_1(\eta)(\bfgamma,\bfg) = \bfM_0 \bigg( \alpha \, \langle  \bfM_0(\bfgamma,\bfg)^{\perp} \, \eta \rangle , \bfM_0(\bfgamma,\bfg)^{\perp} \, \eta  \bigg) - \nabla  ( (\nabla \cdot \bfg^{\perp} ) \, \eta) + \alpha \, \bfM_0(\bfgamma,\bfg)^{\perp} \, \eta ,
$$
and
\begin{align*}
& \bfM_k(\eta)(\bfgamma,\bfg) \\
&= \frac{1}{k} \, \bigg\{
\bfM_{k-1}(\eta) \left(  \alpha \, \langle \bfM_0(\bfgamma,\bfg)^{\perp}\eta \rangle ,  \bfM_0(\bfgamma,\bfg)^{\perp}\eta \right) \\
&\quad\qquad\mbox{}+ \sum_{j=0}^{k-2} \bfM_j(\eta) \bigg( \alpha \, \langle \left( \bfM_{k-1-j}(\eta)(\bfgamma,\bfg) + u_{k-2-j}(\eta)(\bfgamma,\bfg) \, \nabla\eta \right)^{\perp} \, \eta \rangle ,    \left(  \bfM_{k-1-j}(\eta)(\bfgamma,\bfg) + u_{k-2-j}(\eta)(\bfgamma,\bfg) \, \nabla\eta \right)^{\perp} \, \eta   \bigg) \\
&\quad\qquad\mbox{}- \nabla \left( u_{k-1}(\eta)(\bfgamma,\bfg)\eta \right) + \alpha \, \left(  \bfM_{k-1}(\eta)(\bfgamma,\bfg) + u_{k-2}(\eta)(\bfgamma,\bfg) \, \nabla\eta \right)^{\perp} \, \eta \bigg\} .
\end{align*}
In particular, we find that
\begin{align*}
&\bfM_2(\eta)(\bfgamma,\bfg) \\
&= \frac{1}{2}  \bigg\{ \bfM_0 \left( \alpha  \langle  \bfM_0( \alpha  \langle \bfM_0(\bfgamma,\bfg)^{\perp}\eta \rangle , \bfM_0(\bfgamma,\bfg)^{\perp}\eta )^{\perp}  \eta \rangle ,  \bfM_0( \alpha  \langle \bfM_0(\bfgamma,\bfg)^{\perp}\eta \rangle , \bfM_0(\bfgamma,\bfg)^{\perp}\eta )^{\perp}  \eta  \right)\\
&\hspace{11mm}+  \nabla  ( \eta  \nabla \cdot ( \bfM_0(\bfgamma,\bfg)\eta ) ) + \alpha  \bfM_0 \left(  \alpha  \langle \bfM_0(\bfgamma,\bfg)^{\perp}\eta \rangle , \bfM_0(\bfgamma,\bfg)^{\perp}\eta \right)^{\!\perp} \! \eta \\
&\hspace{11mm}+ \bfM_0\bigg( \alpha  \left\langle  \eta  \bfM_0 \left( \alpha  \langle  \bfM_0(\bfgamma,\bfg)^{\perp}  \eta \rangle , \bfM_0(\bfgamma,\bfg)^{\perp}  \eta  \right)^{\!\perp} \!\!\! - \eta  \nablap  ( (\nabla \cdot \bfg^{\perp} )  \eta) - \alpha  \bfM_0(\bfgamma,\bfg)  \eta^2 + \eta  ( \nabla \cdot \bfg^{\perp} )  \nablap\eta \right\rangle\! , \\
&\hspace{25mm}\eta  \bfM_0 \left( \alpha  \langle  \bfM_0(\bfgamma,\bfg)^{\perp}  \eta \rangle , \bfM_0(\bfgamma,\bfg)^{\perp}  \eta  \right)^{\!\perp} \!\!\!- \eta  \nablap  ( (\nabla \cdot \bfg^{\perp} )  \eta) - \alpha  \bfM_0(\bfgamma,\bfg)  \eta^2 + \eta  ( \nabla \cdot \bfg^{\perp} )  \nablap\eta   \bigg) \\
&\hspace{11mm}+ \nabla ( \eta  \bfM_0(\bfgamma,\bfg) \cdot \nabla\eta ) \\
&\hspace{11mm}+ \alpha \eta  \bfM_0 \left( \alpha  \langle  \bfM_0(\bfgamma,\bfg)^{\perp}  \eta \rangle , \bfM_0(\bfgamma,\bfg)^{\perp}  \eta  \right)^{\!\perp} \!\!\!- \alpha \eta  \nablap  ( (\nabla \cdot \bfg^{\perp} )  \eta) - \alpha^2  \bfM_0(\bfgamma,\bfg)  \eta^2 + \alpha \eta  ( \nabla \cdot \bfg^{\perp} )  \nablap\eta \bigg\} .
\end{align*}

Finally, the terms in the Taylor expansion
\begin{align*}
\bfT(\eta) &= \sum_{k=0}^\infty \bfT_k(\eta),%  \label{eq:TaylorNOp}
\end{align*}
where
$\bfT_k(\eta)$
is homogeneous of degree $k$ in $\eta$, can be computed from the formula
$\bfT(\eta)=\bfM(\eta)(\bfzero,\bfS(\eta))$ using the expansion of $\bfM(\eta)$ derived above and
the corresponding expansion
$$
\bfS(\eta) = \sum_{k=1}^\infty \bfS_k(\eta)
$$
of $\bfS(\eta)$, where
$$
\bfS_k(\eta)\coloneqq \begin{cases}
\displaystyle(-1)^{\frac{k}{2}} \, \frac{\alpha^{k-1} \, \eta^{k}}{k!} \bfc,
& \mbox{if $k\in 2\mathbb{N}$,} \\[3mm]
\displaystyle(-1)^{\frac{k-1}{2}} \, \frac{\alpha^{k-1} \, \eta^{k}}{k!} \bfc^\perp,
& \mbox{if $k \not\in 2\mathbb{N}$.}
\end{cases}
$$

Clearly $\bfT_0=\bfzero$, $\bfT_k(\eta) = \sum\limits_{j=1}^{k} \bfM_{k-j}(\eta) (\bfzero,\bfS_{j}(\eta))$ for $k \geq 1$, and because
$$\bfS_1(\eta)=\eta \bfc^\perp, \quad
\bfS_2(\eta) =  -\dfrac{\alpha}{2}\eta^2  \bfc, \quad \bfS_3(\eta) =  - \dfrac{\alpha^2}{6}\eta^3 \bfc^\perp,$$
and
\begin{align*}
\bfM_0(\bfzero,\bfg) &= \bfL_1\bfg,\\
\bfM_1(\eta)(\bfzero,\bfg) &= - \alpha \, \langle  \eta \,  \bfL_2  \rangle + \bfL_1 \big( \eta \, (\bfL_2\bfg) \big) -\eta \, \bfL \bfg - \nabla\eta \,  \nabla \cdot \bfg^{\perp}, \\
\bfM_2(\eta)(\bfzero,\bfg)
&= \frac{1}{2}  \bigg\{ 2 \alpha  \bfL_1 \big( \langle \eta  \bfL_1\bfg \rangle  \eta  \big) + 2 \alpha^2  \langle \eta  \bfL_1\bfg \rangle  \eta  + 2 \bfL_1 \big( \eta  \bfL_2 (\eta   \bfL_2\bfg )  \big) \\
&\hspace{11mm}-\eta  \bfL ( \eta \bfL_2\bfg) + \nabla\eta  \nablac (  \eta \bfL_1\bfg)   - \bfL_1 \big(  \eta^2  (\bfL \bfg)^\perp   \big) \\
&\hspace{11mm}+ \nabla \big( \eta  \bfL_1\bfg \cdot \nabla \eta \big)  + \alpha  \eta\Big(   \bfL_2 ( \eta  \bfL_2\bfg )  -\eta  (\bfL \bfg)^{\perp}  \Big)  \\
&\hspace{11mm}- \alpha  \Big\langle \Big(   2 \alpha  \langle  \eta  \bfL_1\, \bfg \rangle + 2  \bfL_2 ( \eta  \bfL_2\bfg)  -\eta  (\bfL \bfg)^{\perp}  \Big)  \eta \Big\rangle \bigg\},
\end{align*}
where
\begin{align*}
\bfL \bfg & \coloneqq  \frac{1}{D^2} \, \left( (\alpha^2-D^2) \bfD - \alpha \, \mathtt{c}(D) \, \bfD^{\perp} \right) \, \bfD \cdot \bfg^{\perp}, \\
\bfL_1\bfg &\coloneqq  \frac{1}{D^2} \, \left( \alpha \, \bfD^{\perp} + \bfD \, \mathtt{c}(D) \right) \, \bfD \cdot \bfg^{\perp},\\
\bfL_2\bfg &\coloneqq  \frac{1}{D^2} \, \left( -\alpha \, \bfD +\bfD^\perp \mathtt{c}(D) \right) \, \bfD \cdot \bfg^{\perp},
\end{align*}
we find in particular that
\begin{align*}
\bfT_1(\eta) &= \bfM_0( \bfzero, \bfS_1(\eta) )\\
&=  \bfL_1 ( \eta \, \bfc^{\perp}), \\[4mm]
\bfT_2(\eta) &= \bfM_1(\eta)(\bfzero, \bfS_0(\eta)) + \bfM_0(\eta)(\bfzero, \bfS_1(\eta)) \\
&= - \tfrac{1}{2} \alpha \bfL_1 ( \eta^2 \, \bfc) + \bfL_1 \big( \eta \, \bfL_2 (\eta \bfc^{\perp} )\big) -\eta \, \bfL (\eta \bfc^{\perp}) + \nabla\eta \, \nablac(\eta \bfc) - \alpha \big\langle \eta \, \bfL_2 (\eta \bfc^{\perp})  \big\rangle, \\[4mm]
\bfT_3(\eta) &= \bfM_2(\eta)(\bfzero, \bfS_1(\eta)) + \bfM_1(\eta)(\bfzero, \bfS_2(\eta)) + \bfM_0(\bfzero, \bfS_3(\eta)) \\
&= \frac{1}{2}  \bigg\{ 2 \alpha  \bfL_1 \big( \langle \eta  \bfL_1(\eta\,\bfc^\perp) \rangle  \eta  \big) + 2 \alpha^2  \langle \eta  \bfL_1(\eta\,\bfc^\perp) \rangle  \eta  + 2 \bfL_1 \big( \eta  \bfL_2 (\eta   \bfL_2(\eta\,\bfc^\perp) )  \big) \\
&\hspace{11mm}-\eta  \bfL ( \eta \bfL_2(\eta\,\bfc^\perp)) + \nabla\eta  \nablac (  \eta \bfL_1(\eta\,\bfc^\perp))   - \bfL_1 \big(  \eta^2  (\bfL (\eta\,\bfc^\perp))^\perp   \big) \\
&\hspace{11mm}+ \nabla \big( \eta  \bfL_1(\eta\,\bfc^\perp) \cdot \nabla \eta \big)  + \alpha  \eta\Big(   \bfL_2 ( \eta  \bfL_2(\eta\,\bfc^\perp) )  -\eta  (\bfL (\eta\,\bfc^\perp))^{\perp}  \Big)  \\
&\hspace{11mm}- \alpha  \Big\langle \Big(   2 \alpha  \langle  \eta  \bfL_1\, (\eta\,\bfc^\perp) \rangle + 2  \bfL_2 ( \eta  \bfL_2(\eta\,\bfc^\perp))  -\eta  (\bfL (\eta\,\bfc^\perp))^{\perp}  \Big)  \eta \Big\rangle \bigg\}\\
&\qquad\mbox{}  + \frac{\alpha^2}{2} \, \langle  \eta \bfL_2( \eta^2 \, \bfc )\rangle - \frac{\alpha}{2} \,  \bfL_1( \eta  \bfL_2 (\eta^2 \, \bfc )) +\frac{\alpha}{2} \eta \bfL (\eta^2 \bfc) + \frac{\alpha}{2} \, \nabla\eta \nablac (\eta^2 \bfc)^\perp - \frac{\alpha^2}{6} \,  \bfL_1 (\eta^3 \bfc^{\perp}).
\end{align*}

\section{Description of $H(\eta)$ and $\bfM(\eta)$ as pseudodifferential operators} \label{sec:asympexpNHOp}

\subsection{Flattening and factorisation} \label{subsec:Flatfac}
Choose $\eta \in C^\infty({\mathbb R}^2 / \Lambda)$.
In this section we prove that $H(\eta)$ and $\bfM(\eta)$ are smooth perturbations of properly supported pseudodifferential operators and compute their asymptotic expansions, working under the non-resonance condition (NR).
We begin by introducing a flattening transform (which differs from that used in Section \ref{sec:operators}). Choose $\delta>0$ so that the fluid domain $D_\eta$ contains the strip
\begin{align*}
	\Omega_\delta&\coloneqq  \{ (\bfx,z) \in \mathbb{R}^2 \times \mathbb{R}\colon \eta(\bfx)-\delta  h \leq z < \eta(\bfx) \}
\end{align*}
for $\eta \in U$ and define $\hat\Sigma\colon D_0 \to \Omega_\delta$ by
\begin{align*} %\label{eq:tildeSigma}
	\hat\Sigma\colon (\bfx,w) \mapsto (\bfx,\varrho(\bfx,w)), &\; \; \varrho(\bfx,w)\coloneqq \delta  w + \eta(\bfx).
\end{align*}
For $f\colon D_\eta \rightarrow {\mathbb R}$ and $\bfF\colon D_\eta \rightarrow {\mathbb R}^3$ we write $\hat{f}=f \circ \hat\Sigma$, $\hat{\bfF} = \bfF \circ \hat\Sigma$
and use the notation
\begin{align*}
\mathrm{grad}^\varrho \hat{f}(\bfx,w) &\coloneqq (\mathrm{grad} \, f)\circ\hat\Sigma(\bfx,w), \\
\Div^\varrho  \hat{f}(\bfx,w) &\coloneqq  (\Div \, f) \circ \hat\Sigma(\bfx,w),% \label{eq:divsigma}
\\
\curl^\varrho \hat{\bfF}(\bfx,w) &\coloneqq (\curl \, \bfF)\circ\hat\Sigma(\bfx,w),%\label{eq:curlsigma} 
\\
\Delta^\varrho \hat{f} &\coloneqq  (\Delta \, f) \circ \hat\Sigma(\bfx,w)
\end{align*}
and more generally
$$\partial_x^\varrho \coloneqq  \partial_x - \frac{ \partial_x\eta }{\delta } \partial_w, \quad
\partial_y^\varrho \coloneqq  \partial_y - \frac{ \partial_y\eta }{\delta } \partial_w, \quad
\partial_w^\varrho \coloneqq  \frac{ \partial_w }{\delta }. $$

\begin{remark} \label{rem:flatOprho}

The flattened versions of the operators $\curl$, $\Div$ and $\Delta$ applied to $\hat\bfF(x,y,w)=\bfF(x,y,z)$ and to $\hat{f}(x,y,w)=f(x,y,z)$ are given explicitly by
\begin{align*}
\curl^\varrho \hat\bfF &= (\partial_y^\varrho \hat{F}_3 - \partial_w^\varrho \hat{F}_2, -\partial_x^\varrho \hat{F}_3+\partial_w^\varrho \hat{F}_1,\partial_x^\varrho \hat{F}_2-\partial_y^\varrho \hat{F}_1)^T  \\
&=\curl \hat\bfF - \frac{1}{\delta } (\eta_y \; \partial_w\hat{F}_3,-\eta_x \; \partial_w\hat{F}_3, \eta_x \; \partial_w\hat{F}_2 - \eta_y \; \partial_w\hat{F}_1)^T - \left( \frac{1}{\delta } -1 \right) (\partial_w\hat{F}_2, - \partial_w\hat{F}_1,0)^T,\\ %\label{eq:FlatCurlrho} \\
\Div^\varrho \hat\bfF &= \partial_x^\varrho \hat{F}_1 + \partial_y^\varrho \hat{F}_2 + \partial_w^\varrho \hat{F}_3 \\
& = \, \partial_x \hat{F}_1 + \partial_y\hat{F}_2 + \frac{1}{\delta} \, \left( -\eta_x \, \partial_w\hat{F}_1  -\eta_y \, \partial_w\hat{F}_2 + \partial_w\hat{F}_3 \right) ,\\ %\label{eq:FlatDivrho} \\
-\Delta^\varrho \hat{f} &= -(\partial_x^\varrho)^2 \hat{f} - (\partial_y^\varrho)^2 \hat{f} - (\partial_w^\varrho)^2 \hat{f}\\
&= -\Delta \hat{f} + \frac{2}{\delta } \left( \eta_x \; \partial_{xw}^2\hat{f} + \eta_y \; \partial_{yw}^2\hat{f} \right) - \left( \frac{1}{\delta^2} - 1\right) \partial_w^2\hat{f} + \frac{1}{\delta } ( \eta_{xx}  + \eta_{yy} ) \, \partial_w\hat{f}
- \frac{1}{\delta^2} \; ( \eta_x^2  + \eta_y^2 ) \, \partial_w^2\hat{f} .\\% \label{eq:FlatLaprho}
\end{align*}
\end{remark}

The flattening transform converts the equation
$$-\Delta \bfF = \alpha  \curl \bfF \In \Omega_\delta$$
into
$$
-\Delta^\varrho \hat{\bfF} - \alpha  \curl^\varrho \hat{\bfF} = \bfzero \In D_0,
$$
which is equivalent to the system
\begin{align}
L\hat{\bfF} &=\bfzero,\label{eq:matrix_eq}
\end{align}
where
$L\coloneqq a I \partial_w^2+L_1 \partial_w + L_0$ with
$$L_1\coloneqq \begin{pmatrix} \bfb\cdot\nabla -c & -\frac{\alpha}{\delta} & -\frac{\alpha}{\delta}\eta_y \\
\frac{\alpha}{\delta} & \bfb\cdot\nabla -c & \frac{\alpha}{\delta}\eta_x \\
\frac{\alpha}{\delta}\eta_y & -\frac{\alpha}{\delta}\eta_x &  \bfb\cdot\nabla -c \end{pmatrix}, \qquad
L_0\coloneqq \begin{pmatrix} \Delta & 0 & \alpha \partial_y \\
0 & \Delta & - \alpha \partial_x \\
-\alpha \partial_y & \alpha \partial_x & \Delta \end{pmatrix}
$$
and
\begin{equation*}
a\coloneqq \frac{1+\vert\nabla\eta\vert^2}{\delta^2},\quad \bfb\coloneqq -\frac{2\nabla\eta}{\delta},\quad c\coloneqq \frac{\Delta\eta}{\delta}.
\end{equation*}

\begin{lemma} \label{lem:factorisation}
There are properly supported operators $M$, $N \in \Psi^1({\mathbb R}^2/\Lambda)$ such that
\begin{itemize}
\item[(i)]
$L-a(\partial_w I - N)(\partial_w I -M ) \in \Psi^{-\infty}({\mathbb R}^2/\Lambda)$,
\item[(ii)]
the principal symbols $\mathtt{M}^{(1)}$, $\mathtt{N}^{(1)}$ of $M$, $N$ take the form $\mathtt{M}^{(1)} = \mathtt{m}^{(1)}{\mathbb I}_3$,
$\mathtt{N}^{(1)} = \mathtt{n}^{(1)}{\mathbb I}_3$, where the scalar-valued symbols $\mathtt{m}^{(1)}$, $-\mathtt{n}^{(1)} \in S^1({\mathbb R}^2/\Lambda)$
are strongly elliptic.
\end{itemize}
\end{lemma}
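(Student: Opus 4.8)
The plan is to apply Treves's factorisation method (Treves \cite[Ch.\ III, \S3]{Treves}) to the matrix operator $L = aI\partial_w^2 + L_1\partial_w + L_0$, making essential use of the fact that the coefficients $a$, $L_1$, $L_0$ depend on the tangential variables but \emph{not} on $w$. First I would divide through by the smooth positive function $a = (1+|\nabla\eta|^2)/\delta^2$ and look for $3\times3$-matrix symbols with asymptotic expansions $\mathtt{M} \sim \sum_{j\leq 1}\mathtt{M}^{(j)}$, $\mathtt{N} \sim \sum_{j\leq 1}\mathtt{N}^{(j)}$, the components being homogeneous of degree $j$ in $\bfk$. Since $M = \Op\mathtt{M}$ and $N = \Op\mathtt{N}$ act only in the tangential variables and hence commute with $\partial_w$, the product expands \emph{exactly} as $a(\partial_w I - N)(\partial_w I - M) = a\partial_w^2 I - a(M + N)\partial_w + aNM$, so property (i) is equivalent to the two symbolic identities
\[
M + N \equiv -a^{-1}L_1 \pmod{\Psi^{-\infty}({\mathbb R}^2/\Lambda)}, \qquad NM \equiv a^{-1}L_0 \pmod{\Psi^{-\infty}({\mathbb R}^2/\Lambda)}.
\]
Eliminating $N = -a^{-1}L_1 - M$ turns this into the operator Riccati equation $M^2 + a^{-1}L_1 M + a^{-1}L_0 \equiv 0$, which I would solve recursively, symbol by symbol.

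At the level of principal symbols the Riccati equation reads $(\mathtt{M}^{(1)})^2 + \mathtt{q}_1^{(1)}\mathtt{M}^{(1)} + \mathtt{q}_0^{(2)} = 0$, where the principal symbol of $a^{-1}L_1$ is $\mathtt{q}_1^{(1)} = a^{-1}\,\ii\,\bfb\cdot\bfk\,{\mathbb I}_3 = -\frac{2\ii\delta\,(\bfk\cdot\nabla\eta)}{1+|\nabla\eta|^2}\,{\mathbb I}_3$ and the degree-two part of the symbol of $a^{-1}L_0$ is $\mathtt{q}_0^{(2)} = -a^{-1}|\bfk|^2\,{\mathbb I}_3 = -\frac{\delta^2|\bfk|^2}{1+|\nabla\eta|^2}\,{\mathbb I}_3$. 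Since both are scalar multiples of the identity I would seek a scalar solution $\mathtt{M}^{(1)} = \mathtt{m}^{(1)}{\mathbb I}_3$; the scalar quadratic for $\mathtt{m}^{(1)}$ has discriminant proportional to
\[
(1+|\nabla\eta|^2)|\bfk|^2 - (\bfk\cdot\nabla\eta)^2 = (\lambda^{(1)})^2 \geq |\bfk|^2 > 0 \qquad (\bfk\neq\bfzero)
\]
by the Cauchy--Schwarz inequality, so it has two roots whose real parts are $\pm\delta\lambda^{(1)}/(1+|\nabla\eta|^2)$. Taking the root of positive real part gives $\mathtt{m}^{(1)} = \delta\,\frac{\ii\,\bfk\cdot\nabla\eta + \lambda^{(1)}}{1+|\nabla\eta|^2}$, and then $\mathtt{n}^{(1)} = -\mathtt{q}_1^{(1)}/{\mathbb I}_3 - \mathtt{m}^{(1)} = \delta\,\frac{\ii\,\bfk\cdot\nabla\eta - \lambda^{(1)}}{1+|\nabla\eta|^2}$, so that $\Re\,\mathtt{m}^{(1)} = \Re(-\mathtt{n}^{(1)}) = \delta\lambda^{(1)}/(1+|\nabla\eta|^2) \gtrsim |\bfk|$; this is exactly (ii), once a neighbourhood of $\bfk=\bfzero$ is excised (its contribution is smoothing on the torus).

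For the remaining components I would run the standard recursion: collecting the terms of homogeneity $m\leq1$ in $M^2 + a^{-1}L_1 M + a^{-1}L_0$ and retaining only the highest unknown, one obtains a linear equation which, because $\mathtt{M}^{(1)}$ and $\mathtt{q}_1^{(1)}$ are scalar, reduces to $(\mathtt{m}^{(1)} - \mathtt{n}^{(1)})\,\mathtt{M}^{(m-1)} = -R_m$, where $R_m$ is a known expression in $\mathtt{M}^{(m)},\dots,\mathtt{M}^{(1)}$, the symbols of $a^{-1}L_1$, $a^{-1}L_0$ and their $\bfk$-derivatives; since $\mathtt{m}^{(1)} - \mathtt{n}^{(1)} = 2\delta\lambda^{(1)}/(1+|\nabla\eta|^2)$ is invertible for $\bfk\neq\bfzero$ this determines $\mathtt{M}^{(m-1)}$, hence $\mathtt{N}^{(m-1)} = -\mathtt{q}_1^{(m-1)} - \mathtt{M}^{(m-1)}$. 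Borel's lemma then assembles the homogeneous components into genuine symbols $\mathtt{M}, \mathtt{N} \in S^1({\mathbb R}^2/\Lambda)$, and setting $M = \Op\mathtt{M}$, $N = \Op\mathtt{N}$ (automatically properly supported on the compact manifold ${\mathbb R}^2/\Lambda$) gives $L - a(\partial_w I - N)(\partial_w I - M) \in \Psi^{-\infty}({\mathbb R}^2/\Lambda)$ by construction. The one genuinely substantive step — everything else being the routine bookkeeping of the symbolic calculus, rendered exact here by the $w$-independence of the coefficients of $L$ — is the solvability of the principal Riccati equation by a root of definite sign; this is precisely what the Cauchy--Schwarz positivity of $(\lambda^{(1)})^2$ delivers, and it is also what forces $\mathtt{M}^{(1)}$ and $\mathtt{N}^{(1)}$ to be scalar rather than genuinely matrix-valued.
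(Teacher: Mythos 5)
Your proposal is correct and follows essentially the same route as the paper: eliminate $N=-a^{-1}L_1-M$, solve the resulting Riccati equation for $\mathtt{M}$ term by term starting from the scalar quadratic for the principal symbol (whose discriminant is positive by Cauchy--Schwarz, yielding $\mathtt{m}^{(1)}$ with $\Re\,\mathtt{m}^{(1)}\gtrsim\langle\bfk\rangle$), then invert $2\lambda^{(1)}$ (up to normalisation) at each lower order and assemble the expansion by Borel's lemma. The only differences are cosmetic (you normalise by $a^{-1}$ before solving, and the paper's recursion coefficient $2a\delta\mathtt{m}^{(1)}+\ii\bfb\cdot\bfk=2\lambda^{(1)}/\delta$ matches your $\mathtt{m}^{(1)}-\mathtt{n}^{(1)}$ up to the factor $a$).
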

\begin{proof}
Because
$$L-a(\partial_w I - N)(\partial_w I -M )=(L_1+a(M+N))\partial_w + (L_0-aNM)$$
we set
\begin{equation}
N=-a^{-1}L_1-M \label{eq:Defn of N}
\end{equation}
and seek $M$ with
$$L_0+L_1M+aM^2=0$$
by constructing a symbol $\mathtt{M} \in S^1({\mathbb R}^2/\Lambda)$ such that
$$\begin{pmatrix}
-|\bfk|^2 & 0 & \alpha \ii  k_2 \\ 0 & -|\bfk|^2 & -\alpha \ii  k_1 \\ -\alpha \ii  k_2 & \alpha \ii  k_1 & -|\bfk|^2
\end{pmatrix}
+\begin{pmatrix}
\ii \bfb\cdot\bfk - c & - \frac{\alpha}{\delta} & - \frac{\alpha}{\delta}\eta_y \\
\frac{\alpha}{\delta} & \ii \bfb\cdot\bfk - c & \frac{\alpha}{\delta}\eta_x \\
\frac{\alpha}{\delta}\eta_y & - \frac{\alpha}{\delta}\eta_x & \ii \bfb \cdot \bfk - c
\end{pmatrix}\mathtt{M}+(\bfb\cdot\nabla)\mathtt{M}+a \sum_{\bfalpha \in {\mathbb N}_0^2}\partial_{ k_1}^{\alpha_1}\partial_{ k_2}^{\alpha_2}\mathtt{M}\,D_1^{\alpha_1}D_2^{\alpha_2}\mathtt{M} \sim 0$$
and
$$\mathtt{M} \sim \sum_{j \leq 1} \mathtt{M}^{(j)},$$
where $\mathtt{M}^{(j)} \in S^j({\mathbb R}^2/\Lambda)$.

We proceed by computing the terms in the aysmptotic expansion of $\mathtt{M}$ inductively.
\begin{itemize}
\item
Obviously
$$-|\bfk|^2 {\mathbb I}_3 + \ii \bfb \cdot \bfk \mathtt{M}^{(1)} + a (\mathtt{M}^{(1)})^2 = 0,$$
so that
\begin{equation*}
\mathtt{M}^{(1)}=\delta \mathtt{m}^{(1)} \mathbb{I}_3,
\end{equation*}
where 
\begin{align*}
\mathtt{m}^{(1)}(\bfx,\bfk)\coloneqq \frac{ \ii \bfk \cdot \nabla\eta +\lambda^{(1)} }{1+|\nabla\eta|^2},\qquad \lambda^{(1)}(\bfx,\bfk)\coloneqq  \sqrt{(1+|\nabla\eta|^2)|\bfk|^2-(\bfk \cdot \nabla\eta)^2}.
\end{align*}
Note that $\lambda^{(1)}$ is the leading order symbol of the classical Dirichlet--Neumann operator.
\item
The subprincipal symbol of $\mathtt{M}$ is found from the equation
\begin{align*}
\alpha \begin{pmatrix}
0 & 0& \ii k_2\\
0&0&-\ii k_1\\
-\ii k_2& \ii k_1& 0
\end{pmatrix}
&-
\left(c \mathbb{I}_3 + \frac{\alpha}{\delta}\left(\begin{array}{ccc}
0 & 1 &\eta_y\\
-1 & 0& -\eta_x\\
-\eta_y& \eta_x& 0
\end{array}\right)\right)\mathtt{M}^{(1)} \\
&\hspace{-1cm}\mbox{}+\ii \bfb\cdot\bfk \mathtt{M}^{(0)} + a \mathtt{M}^{(0)}\mathtt{M}^{(1)} + a \mathtt{M}^{(1)}\mathtt{M}^{(0)} \\
&\hspace{-1cm}\mbox{}+(\bfb\cdot\nabla)\mathtt{M}^{(1)}-\ii a \partial_{ k_1} \mathtt{M}^{(1)} \partial_x \mathtt{M}^{(1)}-\ii a \partial_{ k_2} \mathtt{M}^{(1)} \partial_y \mathtt{M}^{(1)} = 0,
\end{align*}
which yields
$$
\mathtt{M}^{(0)}=\delta \mathtt{m}^{(0)}\mathbb{I}_3+\delta\mathtt{M}_1^{(0)},
$$
where
\begin{align*}
\mathtt{m}^{(0)}(\bfx,\bfk) &\coloneqq  \frac{1}{2\lambda^{(1)}}\left(\nablac(\mathtt{m}^{(1)}\nabla\eta)+\ii\nabla_{\bfk}\lambda^{(1)}\cdot \nabla \mathtt{m}^{(1)}\right), \\
\mathtt{M}_1^{(0)}(\bfx,\bfk) &\coloneqq  \frac{\alpha}{2\lambda^{(1)}}\left(\begin{array}{ccc}
0 & \mathtt{m}^{(1)} & -\ii k_2+\mathtt{m}^{(1)}\eta_y\\
- \mathtt{m}^{(1)} & 0 & \ii k_1-\mathtt{m}^{(1)}\eta_x \\
\ii k_2 - \mathtt{m}^{(1)}\eta_y & -\ii k_1 + \mathtt{m}^{(1)}\eta_x & 0
\end{array}\right).
\end{align*}
\item
Suppose that $\mathtt{M}^{(j)}$ has been calculated for $j=1,0,\ldots -j_0$ for some $j_0\geq 0$. The term $\mathtt{M}^{(-j_0-1)}$ can be found from the equation
$$(2a\delta \mathtt{m}^{(1)} + \ii \bfb \cdot \bfk)\mathtt{M}^{(-j_0-1)}=\tilde{\mathtt{M}}^{(-j_0)},$$
where $\tilde{\mathtt{M}}^{(-j_0)} \in S^{(-j_0)}({\mathbb R}^2 / \Lambda)$ is given by
\begin{align*}
\tilde{\mathtt{M}}^{(-j_0)}(\bfx,\bfk) & = \left(c \mathbb{I}_3 + \frac{\alpha}{\delta}\left(\begin{array}{ccc}
0 & 1 &\eta_y\\
-1 & 0& -\eta_x\\
-\eta_y& \eta_x& 0
\end{array}\right)\right)\mathtt{M}^{(-j_0)} - \bfb\cdot\nabla \mathtt{M}^{(-j_0)} \\
& \qquad \mbox{}-a \sum_{j_1,j_2 \leq 0 \atop |\bfalpha|+j_1+j_2=j_0} \hspace{-3mm} \partial_{ k_1}^{\alpha_1}\partial_{ k_2}^{\alpha_2}\mathtt{M}^{(-j_1)}D_1^{\alpha_1}D_2^{\alpha_2}\mathtt{M}^{(-j_2)}
-a \sum_{|\bfalpha|=j_0+2} \partial_{ k_1}^{\alpha_1}\partial_{ k_2}^{\alpha_2}\mathtt{M}^{(1)}D_1^{\alpha_1}D_2^{\alpha_2}\mathtt{M}^{(1)}\\
& \qquad \mbox{}-a \sum_{|\bfalpha|=j_0+1} \partial_{ k_1}^{\alpha_1}\partial_{ k_2}^{\alpha_2}\mathtt{M}^{(1)}D_1^{\alpha_1}D_2^{\alpha_2}\mathtt{M}^{(0)}
-a \sum_{|\bfalpha|=j_0+1} \partial_{ k_1}^{\alpha_1}\partial_{ k_2}^{\alpha_2}\mathtt{M}^{(0)}D_1^{\alpha_1}D_2^{\alpha_2}\mathtt{M}^{(1)},
\end{align*}
so that
$$\mathtt{M}^{(-j_0-1)} = \frac{\delta}{2\lambda^{(1)}}\tilde{\mathtt{M}}^{(-j_0)}.$$
\end{itemize}
The construction is completed by noting that there exists a symbol $\mathtt{M} \in S^1({\mathbb R}^2/\Lambda)$ such that
$$\mathtt{M} \sim \sum_{j \leq 1} \mathtt{M}^{(j)}$$
(see Shubin \cite[\S3.3]{Shubin}).

Defining $M=\Op\mathtt{M}$ and $N$ by equation \eqref{eq:Defn of N}, we find that $M$, $N \in \Psi^1({\mathbb R}^2/\Lambda)$.
The terms in the asymptotic expansion
$$\mathtt{N} \sim \sum_{j \leq 1} \mathtt{N}^{(j)}$$
of $N$ are readily computed using \eqref{eq:Defn of N}; in particular we find that
$$\mathtt{N}^{(1)} = \delta \mathtt{n}^{(1)} \mathbb{I}_3, \qquad 
\mathtt{n}^{(1)}(\bfx,\bfk)\coloneqq \frac{ \ii \bfk \cdot \nabla\eta - \lambda^{(1)} }{1+|\nabla\eta|^2}.
$$
Finally, note that
$$\mathrm{Re}\, \mathtt{m}^{(1)}(\bfx,\bfk) = - \mathrm{Re}\, \mathtt{n}^{(1)}(\bfx,\bfk) = \frac{\delta \lambda^{(1)}}{1+|\nabla\eta|^2}
\geq \frac{\delta |\bfk|}{1+\max |\nabla\eta|^2} \gtrsim \langle \bfk \rangle$$
for sufficiently large $|\bfk|$, so that $\mathtt{m}^{(1)}$, $-\mathtt{n}^{(1)}$
are strongly elliptic.
\end{proof}

Theorem \ref{lem:regtobd} below gives information on the Neumann boundary data of a solution to \eqref{eq:matrix_eq}. It is proved using
Lemmata \ref{lem:heat} and \ref{lem:Peetre} below, the former of which is an existence result for an abstract heat equation (see Treves \cite[Ch.\ III \S1]{Treves} for a more general theory).

\begin{lemma} \label{lem:heat}
Suppose that $T>0$, $\Gamma$ is a full rank lattice in ${\mathbb R}^{n-1}$ and $A \in \Psi^m({\mathbb R}^{n-1}/\Gamma)$ for some $m \in {\mathbb N}$
is a properly supported pseudodifferential operator
whose principal symbol $\mathtt{A}^{(m)}$ takes the form $\mathtt{A}^{(m)} = \mathtt{a}^{(m)}{\mathbb I}_n$,
where the scalar-valued symbol $\mathtt{a}^{(m)} \in S^m({\mathbb R}^{n-1}/\Gamma)$ is strongly elliptic.

There is a properly supported pseudodifferential operator $P \in \Psi^{0,m}([T_0,T_0+T];{\mathbb R}^{n-1}/\Gamma)$ which satisfies
\begin{align*}
& \partial_tP+ AP \in \Psi^{-\infty}([T_0,T_0+T];{\mathbb R}^{n-1}/\Gamma), \\
& P|_{t=T_0} = I.
\end{align*}
In particular, any solution of the initial-value problem
\begin{align*}
& \partial_t\hat{\bfU} + A\hat{\bfU} = \hat{\bfF}, \qquad t \in [T_0,T_0+T], \\
& \hat{\bfU}|_{t=T_0} = \hat{\bfU}_0,
\end{align*}
where $\hat{\bfF} \in C^\infty([T_0,T_0+T];C^\infty({\mathbb R}^{n-1}/\Gamma)^n)$ and $\hat{\bfU}_0 \in C^\infty({\mathbb R}^{n-1}/\Gamma)^n$,
belongs to $C^\infty([T_0,T_0+T];C^\infty({\mathbb R}^{n-1}/\Gamma)^n)$.
\end{lemma}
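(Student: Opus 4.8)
The plan is to construct a parametrix $P$ for the heat operator $\partial_t + A$ by the symbolic method of Treves \cite[Ch.\ III \S1]{Treves} and then to deduce the stated regularity of solutions from it.

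First I would seek $P = \Op \mathtt{p}$, where the matrix-valued symbol $\mathtt{p} = \mathtt{p}(t,\bfx,\bfk)$, $t \in [T_0,T_0+T]$, admits an asymptotic expansion $\mathtt{p} \sim \sum_{j \geq 0} \mathtt{p}_j$ with $\mathtt{p}_j$ of strictly decreasing order, in the parabolic symbol classes underlying $\Psi^{0,m}$ (in which $\partial_t$ is assigned the weight of $m$ space derivatives, i.e.\ $t \sim \langle\bfk\rangle^{-m}$). Substituting this ansatz into the requirement $\partial_t \mathtt{p} + \mathtt{a} \# \mathtt{p} \sim 0$, where $\mathtt{a} \# \mathtt{p} \sim \sum_{\bfalpha \in {\mathbb N}_0^{n-1}} \frac{1}{\bfalpha!}\, \partial_\bfk^\bfalpha \mathtt{a}\, D_\bfx^\bfalpha \mathtt{p}$ is the symbol of $A \circ \Op\mathtt{p}$, and equating contributions of equal order, the leading equation is
\[
\partial_t \mathtt{p}_0 + \mathtt{a}^{(m)}(\bfx,\bfk)\,\mathtt{p}_0 = 0, \qquad \mathtt{p}_0|_{t=T_0} = {\mathbb I}_n,
\]
whose solution is $\mathtt{p}_0(t,\bfx,\bfk) = \ee^{-(t-T_0)\mathtt{a}^{(m)}(\bfx,\bfk)}\,{\mathbb I}_n$; this is well defined and scalar precisely because $\mathtt{a}^{(m)} = \mathtt{a}^{(m)}{\mathbb I}_n$ is scalar-valued. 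The lower-order terms satisfy linear equations $\partial_t \mathtt{p}_j + \mathtt{a}^{(m)}\mathtt{p}_j = \mathtt{f}_j$, $\mathtt{p}_j|_{t=T_0} = \bfzero$, where $\mathtt{f}_j$ is an explicit expression in $\mathtt{p}_0,\dots,\mathtt{p}_{j-1}$ and the symbols of $A$ of order less than $m$, and are solved by Duhamel's formula $\mathtt{p}_j(t) = \int_{T_0}^t \ee^{-(t-s)\mathtt{a}^{(m)}}\mathtt{f}_j(s)\,ds$.

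The main technical point — and the step I expect to be the principal obstacle — is to verify that each $\mathtt{p}_j$ lies in the appropriate parabolic symbol class uniformly on $[T_0,T_0+T]$, with the correct gain of order, and that the resulting asymptotic series can be summed. This is where the strong ellipticity of $\mathtt{a}^{(m)}$ is indispensable: the lower bound $\mathrm{Re}\,\mathtt{a}^{(m)}(\bfx,\bfk) \geq c\langle\bfk\rangle^m$ for large $|\bfk|$ gives $|\ee^{-\tau\mathtt{a}^{(m)}(\bfx,\bfk)}| \leq \ee^{-c\tau\langle\bfk\rangle^m}$ for $\tau \geq 0$, and each derivative in $\bfx$ or $\bfk$ produces factors such as $\tau\langle\bfk\rangle^m$ or $\tau\langle\bfk\rangle^{m-1}$, which are harmless because $\sup_{\tau\geq 0}\tau^N\ee^{-c\tau} < \infty$ for every $N$; in particular $\mathtt{p}_0$ is a symbol of order $0$ uniformly in $t$, and the Duhamel integral improves the order by $m$ at each step. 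Borel summation then produces a genuine symbol $\mathtt{p}$, and after a cut-off making $P$ properly supported one obtains $P \in \Psi^{0,m}([T_0,T_0+T];{\mathbb R}^{n-1}/\Gamma)$ with $\partial_t P + AP \in \Psi^{-\infty}([T_0,T_0+T];{\mathbb R}^{n-1}/\Gamma)$ and $P|_{t=T_0} = I$ (a residual smoothing error in the initial condition is removed by a further $\Psi^{-\infty}$ correction). Since the construction is uniform in the initial time, the analogous parametrix $\tilde P(t,s)$ with initial time $s \in [T_0,T_0+T]$ is likewise available.

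Finally, for the regularity statement, given smooth data $\hat{\bfU}_0$ and $\hat{\bfF}$, set
\[
\bfV(t) := P(t)\hat{\bfU}_0 + \int_{T_0}^t \tilde P(t,s)\hat{\bfF}(s)\,ds,
\]
so that $\bfV \in C^\infty([T_0,T_0+T];C^\infty({\mathbb R}^{n-1}/\Gamma)^n)$, $\bfV|_{t=T_0} = \hat{\bfU}_0$ and $(\partial_t + A)\bfV = \hat{\bfF} + \bfR$ with $\bfR$ smooth in $(t,\bfx)$. The difference $\bfW := \hat{\bfU} - \bfV$ then solves $(\partial_t + A)\bfW = -\bfR$ with $\bfW|_{t=T_0} = \bfzero$ and smooth right-hand side; the G{\aa}rding-type inequality $\mathrm{Re}\,\langle A\bfW,\bfW\rangle \geq c\|\bfW\|_{m/2}^2 - C\|\bfW\|_0^2$ and the identity $\tfrac{d}{dt}\|\bfW\|_0^2 = 2\,\mathrm{Re}\,\langle \partial_t\bfW,\bfW\rangle$ yield, via Gr\"onwall, the a priori bound $\sup_{t}\|\bfW(t)\|_0^2 + \int_{T_0}^{T_0+T}\|\bfW(t)\|_{m/2}^2\,dt \lesssim \int_{T_0}^{T_0+T}\|\bfR(t)\|_0^2\,dt$, and the analogous estimates obtained after commuting the equation with powers of $\langle D\rangle$ and with $\partial_t$ bootstrap this to $\bfW \in C^\infty([T_0,T_0+T];C^\infty({\mathbb R}^{n-1}/\Gamma)^n)$. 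Hence $\hat{\bfU} = \bfV + \bfW$ is smooth, as claimed.
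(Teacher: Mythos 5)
The paper offers no proof of this lemma, delegating it to the cited reference (Treves, Ch.\ III \S1), and your proposal carries out precisely that standard parabolic-parametrix construction — leading symbol $\ee^{-(t-T_0)\mathtt{a}^{(m)}(\bfx,\bfk)}$, Duhamel iteration for the lower-order terms with the gain of $m$ orders per step, strong ellipticity to control the symbol seminorms uniformly in $t$, Borel summation and a smoothing correction of the initial condition — so it is correct and is essentially the intended argument. The one place you deviate is the regularity deduction: the usual route obtains it from the representation formula furnished by the two-parameter parametrix, whereas you subtract the parametrix solution and close with a G{\aa}rding/Gr\"onwall energy argument on the remainder; this also works, with the small proviso (satisfied in the paper's application, where the solution is already smooth in the open interval with smooth data at the initial time) that the solution lies in a class in which the energy pairings and the vanishing of the remainder at $t=T_0$ are justified.
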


\begin{lemma} \label{lem:Peetre}
Suppose that $T>0$, $\Gamma$ is a full rank lattice in ${\mathbb R}^{n-1}$ and $\mathcal{P}$ is a linear differential operator of order $m$ in the variables
$(\bfz,t) \in {\mathbb R}^{n}$ of the form
$$\mathcal{P}= {\mathbb I}_n \partial_t^m + \sum_{|\bfalpha| \leq m \atop \alpha_n \leq m-1} A_{\bfalpha}(\bfz) \partial^{\bfalpha},$$
where $\bfalpha \in {\mathbb N}_0^n$, $\partial^{\bfalpha}=\partial_{z_1}^{\alpha_1},\ldots,\partial_{z_{n-1}}^{\alpha_{n-1}}\partial_t^{\alpha_n}$ and
the coefficients of the matrix $A_{\bfalpha}(\bfz)$ are functions of $\bfz$ of class
$C^\infty({\mathbb R}^{n-1}/\Gamma)$.
Any solution $\hat{\bfU} \in H^{m-1}({\mathbb R}^{n-1}/\Gamma \times (T_0,T_0+T))^n$ of 
$\mathcal{P}\hat{\bfU}=\bfzero$ lies in $C^\infty([T_0,T_0+T]; {\mathscr D}^\prime({\mathbb R}^{n-1}/\Gamma)^n)$.
\end{lemma}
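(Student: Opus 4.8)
\textbf{Proof proposal for Lemma \ref{lem:Peetre}.}

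The plan is to treat this as a standard interior/up-to-the-boundary smoothness-in-time statement for a differential operator that is non-characteristic with respect to the hyperplanes $t=\text{const}$. The key structural feature of $\mathcal{P}$ is that the top-order-in-$t$ term is $\mathbb{I}_n\partial_t^m$ with constant (indeed identity) coefficient, so the equation $\mathcal{P}\hat{\bfU}=\bfzero$ can be solved for $\partial_t^m\hat{\bfU}$:
$$\partial_t^m\hat{\bfU} = -\sum_{|\bfalpha|\le m,\ \alpha_n\le m-1} A_{\bfalpha}(\bfz)\,\partial^{\bfalpha}\hat{\bfU}.$$
Because every term on the right involves at most $m-1$ derivatives in $t$, this is a genuine expression of the highest $t$-derivative in terms of lower ones together with spatial derivatives. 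First I would pass to Fourier series in the periodic variable $\bfz\in{\mathbb R}^{n-1}/\Gamma$, writing $\hat{\bfU}(\bfz,t)=\sum_{\bfxi\in\Gamma'}\hat{\bfU}_{\bfxi}(t)\,\ee^{\ii\bfxi\cdot\bfz}$; since $\hat{\bfU}\in H^{m-1}$, for each fixed $\bfxi$ the coefficient $\hat{\bfU}_{\bfxi}$ lies in $H^{m-1}((T_0,T_0+T))^n$, hence in $C^{m-2}([T_0,T_0+T])^n$ by Sobolev embedding in one dimension (and in $C^0$ when $m=1$). The point is that the spatial coefficients $A_{\bfalpha}$ are smooth and periodic, so multiplication by them and spatial differentiation act boundedly on the scale of periodic distributions; thus for each fixed $\bfxi$ we obtain an identity $\partial_t^m\hat{\bfU}_{\bfxi}(t)=\bfG_{\bfxi}(t)$, where $\bfG_{\bfxi}$ is built from $\hat{\bfU}_{\bfxi}$ and finitely many other modes through the (finite-order) spatial operators, and involves only $t$-derivatives of order $\le m-1$.

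Next I would run a bootstrap in the $t$-variable. Starting from $\hat{\bfU}(\cdot,t)\in{\mathscr D}'({\mathbb R}^{n-1}/\Gamma)^n$ with $\partial_t^j\hat{\bfU}\in L^2$-type regularity for $j\le m-1$ coming from the hypothesis, the identity for $\partial_t^m\hat{\bfU}$ shows $\partial_t^m\hat{\bfU}\in C^0([T_0,T_0+T];{\mathscr D}'({\mathbb R}^{n-1}/\Gamma)^n)$ after integrating, because its right-hand side is a continuous function of $t$ with values in periodic distributions (the spatial operators $A_{\bfalpha}\partial^{\bfalpha}$ map ${\mathscr D}'$ to ${\mathscr D}'$ continuously and commute with the $t$-regularity already established). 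Differentiating the identity $\mathcal{P}\hat{\bfU}=\bfzero$ repeatedly in $t$ then expresses $\partial_t^{m+k}\hat{\bfU}$ for every $k\ge0$ in terms of $t$-derivatives of $\hat{\bfU}$ of order $\le m+k-1$ together with spatial derivatives; an induction on $k$ shows $\partial_t^{m+k}\hat{\bfU}\in C^0([T_0,T_0+T];{\mathscr D}'({\mathbb R}^{n-1}/\Gamma)^n)$ for all $k$. Since every $t$-derivative of $\hat{\bfU}$ of every order is thus continuous on the closed interval with values in ${\mathscr D}'({\mathbb R}^{n-1}/\Gamma)^n$, we conclude $\hat{\bfU}\in C^\infty([T_0,T_0+T];{\mathscr D}'({\mathbb R}^{n-1}/\Gamma)^n)$, which is precisely the assertion.

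The main obstacle, and the step requiring care, is making rigorous the claim that the right-hand side of the expression for $\partial_t^m\hat{\bfU}$ depends \emph{continuously} on $t$ with values in periodic distributions starting from only the $H^{m-1}$ hypothesis on $\hat{\bfU}$: one must check that the spatial operators $A_{\bfalpha}\partial^{\bfalpha}$, applied to $\partial_t^j\hat{\bfU}\in L^2((T_0,T_0+T);{\mathscr D}'({\mathbb R}^{n-1}/\Gamma)^n)$ for $j\le m-1$, still produce something that, after the single $t$-integration afforded by the identity, lands in $C^0$ in $t$. This is handled by the trace/embedding $H^{m-1}((T_0,T_0+T))\hookrightarrow C^{m-2}([T_0,T_0+T])$ applied modewise together with a uniform-in-$\bfxi$ bound with at most polynomial loss in $|\bfxi|$ (which is harmless at the level of distributions), exactly as in Treves's treatment \cite[Ch.\ III]{Treves}; no genuinely new idea is needed beyond bookkeeping of which $t$-derivatives and how many spatial derivatives appear at each stage of the bootstrap.
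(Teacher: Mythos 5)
Your strategy is the same as the paper's: solve the equation for $\partial_t^m\hat{\bfU}$, observe that the spatial operators $A_{\bfalpha}\partial^{\bfalpha}$ cost nothing at the level of periodic distributions, and bootstrap in $t$. One step, however, does not work as written. At the first pass the right-hand side of $\partial_t^m\hat{\bfU}=-\sum A_{\bfalpha}\partial^{\bfalpha}\hat{\bfU}$ is \emph{not} continuous in $t$ with values in ${\mathscr D}^\prime({\mathbb R}^{n-1}/\Gamma)^n$: it contains $\partial_t^{m-1}\hat{\bfU}$, which under the hypothesis $\hat{\bfU}\in H^{m-1}$ is only $L^2$ in $t$ (your own mode-wise Sobolev embedding gives $C^{m-2}$, which controls $\partial_t^j$ only for $j\le m-2$). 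The correct gain per iteration is therefore one Sobolev derivative in $t$, not an immediate jump to $C^0$: one gets $\partial_t^m\hat{\bfU}\in L^2$ in $t$ valued in $H^{-m}$ in space, hence $\hat{\bfU}\in H^m$ in $t$ with values there, and so on; the induction as you state it (each $\partial_t^{m+k}\hat{\bfU}$ continuous) has no valid base case. The paper makes the iteration precise via $\hat{\bfU}\in H^{m-1+k}((T_0,T_0+T);H^{-km}({\mathbb R}^{n-1}/\Gamma)^n)$ for all $k\in{\mathbb N}_0$, followed by the embedding $H^{\ell+1}\hookrightarrow C^\ell$ in the $t$-variable. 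Your conclusion survives once the bootstrap is repaired in this way; the Fourier-series decomposition is then an unnecessary detour, since the mapping properties of $A_{\bfalpha}\partial^{\bfalpha}$ between the spaces $H^{\ell_1}((T_0,T_0+T);H^{\ell_2}({\mathbb R}^{n-1}/\Gamma))$ can be established directly, as the paper does.
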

\begin{proof}
Suppose that $\mathcal{P}\hat{\bfU}=\bfzero$. A straightforward argument using Fubini's theorem shows that $\hat{\bfU}$ lies in\linebreak
$H^{m-1}((T_0,T_0+T); L^2({\mathbb R}^{n-1}/\Gamma)^n)$, and the next step
is to show inductively that $\hat{\bfU}$ in fact lies in\linebreak
$H^{m-1+k}((T_0,T_0+T); H^{-km}({\mathbb R}^{n-1}/\Gamma)^n)$ for every $k \in {\mathbb N}_0$.

To this end let $\ell_1 \in {\mathbb N}_0$, ${\ell_2} \in {\mathbb Z}$ and $a \in C^\infty({\mathbb R}^{n-1}/\Gamma)$, and observe that 
the mappings $w \mapsto \partial_t w$ and  $w \mapsto \partial_{z_j} w$\linebreak induce continuous linear operators $H^{\ell_1}((T_0,T_0+T);H^{\ell_2}({\mathbb R}^{n-1}/\Gamma)) \rightarrow H^{{\ell_1}-1}((T_0,T_0+T);H^{\ell_2}({\mathbb R}^{n-1}/\Gamma))$ 
and $H^{\ell_1}((T_0,T_0+T); H^{{\ell_2}}({\mathbb R}^{n-1}/\Gamma)) \rightarrow H^{\ell_1}((T_0,T_0+T); H^{{\ell_2}-1}({\mathbb R}^{n-1}/\Gamma))$ respectively, while the mapping
$w \mapsto a w$ induces a continuous linear operator $H^{\ell_1}((T_0,T_0+T); H^{{\ell_2}}({\mathbb R}^{n-1}/\Gamma)) \rightarrow H^{\ell_1}((T_0,T_0+T); H^{{\ell_2}}({\mathbb R}^{n-1}/\Gamma))$. It follows that the formula
$$\mathcal{Q}\hat{\bfU} = -\sum_{|\bfalpha| \leq m \atop \alpha_n \leq m-1} A_{\bfalpha}(\bfz) \partial^{\bfalpha}\hat{\bfU}$$
defines a continuous linear operator
$H^{m-1+k}((T_0,T_0+T);H^{-km}({\mathbb R}^{n-1}\!/\Gamma)^n)\! \rightarrow\! H^{k}((T_0,T_0+T);H^{-(k+1)m}({\mathbb R}^{n-1}\!/\Gamma)^n)$
for each $k \in {\mathbb N}_0$.

Returning to the induction, let $k \in {\mathbb N}_0$ and suppose that $\hat{\bfU} \in H^{m-1+k}((T_0,T_0+T); H^{-km}({\mathbb R}^{n-1}/\Gamma)^n)$ satisfies $P\hat{\bfU}=0$,
so that $\partial_t^m \hat{\bfU} = \mathcal{Q}\hat{\bfU}$ in ${\mathscr D}^\prime((T_0,T_0+T);H^{-(k+1)m}({\mathbb R}^{n-1}/\Gamma)^n)$. 
The above argument implies that\linebreak
$\partial_t^m \hat{\bfU} \in H^k((T_0,T_0+T);H^{-(k-1)m}({\mathbb R}^{n-1}/\Gamma)^n)$,
and since
$$\hat{\bfU} \in H^{m-1+k}((T_0,T_0+T);H^{-km}({\mathbb R}^{n-1}/\Gamma)^n) \subseteq H^{m-1+k}((T_0,T_0+T);H^{-(k+1)m}({\mathbb R}^{n-1}/\Gamma)^n),$$
we conclude that $\hat{\bfU} \in H^{m+k}((T_0,T_0+T);H^{-(k+1)m}({\mathbb R}^{n-1}/\Gamma)^n)$.

Finally, choose $\ell \in {\mathbb N}$ with $\ell \geq m-2$ and set $k=\ell-m+2$, so that
\begin{align*}
\hat{\bfU} & \in H^{\ell+1}((T_0,T_0+T); H^{-(\ell-m+2)m}({\mathbb R}^{n-1}/\Gamma)^n)\\
&\subseteq C^\ell([T_0,T_0+T]; H^{-(\ell-m+2)m}({\mathbb R}^{n-1}/\Gamma)^n) \\
& \subseteq C^\ell([T_0,T_0+T]; {\mathscr D}^\prime({\mathbb R}^{n-1}/\Gamma)^n).
\end{align*}
However this result holds for arbitrarily large $\ell \in {\mathbb N}$, so that $\hat{\bfU} \in C^\infty([T_0,T_0+T]; {\mathscr D}^\prime({\mathbb R}^{n-1}/\Gamma)^n)$.
\end{proof}

\begin{theorem} \label{lem:regtobd}
Any function $\hat{\bfU} \in H^2(D_0 / \Lambda)^3$ with $L\hat{\bfU}=0$ in $D_0$ satisfies
$$\partial_w \hat{\bfU} = M\hat{\bfU} + R_\infty \hat{\bfU}\at w=0,$$
where the symbol $R_\infty$ denotes a linear function of its argument whose range lies in $C^\infty({\mathbb R^2}/\Lambda)^3$.
\end{theorem}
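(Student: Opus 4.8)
The plan is to use the microlocal factorisation of Lemma~\ref{lem:factorisation} to turn the second-order equation $L\hat{\bfU}=0$ into a first-order \emph{forward-parabolic} evolution equation in the transverse variable $w$ for the ``good unknown'' $\hat{\bfW}:=(\partial_w I - M)\hat{\bfU}$, and then to exploit the smoothing effect of that evolution to control the trace $\hat{\bfW}|_{w=0}$.

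First I would record some regularity facts. Since $\hat{\bfU}\in H^2(D_0/\Lambda)^3$ and $M\in\Psi^1({\mathbb R}^2/\Lambda)$ acts only in the horizontal variables, $\hat{\bfW}=\partial_w\hat{\bfU}-M\hat{\bfU}$ lies in $H^1(D_0/\Lambda)^3$, hence has a trace $\hat{\bfW}|_{w=-h}\in H^{\frac{1}{2}}({\mathbb R}^2/\Lambda)^3$. Moreover, applying Lemma~\ref{lem:Peetre} to $a^{-1}L$ — which has the form required there with $n=3$ and $m=2$ — shows that $\hat{\bfU}\in C^\infty([-h,0];{\mathscr D}^\prime({\mathbb R}^2/\Lambda)^3)$, so $w$ may be treated as an evolution variable and, crucially, the right-hand side below is smooth. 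Applying $(\partial_w I - N)$ to $\hat{\bfW}$ and using Lemma~\ref{lem:factorisation}(i) together with $L\hat{\bfU}=0$ gives
$$(\partial_w I - N)\hat{\bfW}=(\partial_w I - N)(\partial_w I - M)\hat{\bfU}=a^{-1}\bigl(L\hat{\bfU}-R_{-\infty}\hat{\bfU}\bigr)=-a^{-1}R_{-\infty}\hat{\bfU}=:\hat{\bfG},$$
where $R_{-\infty}:=L-a(\partial_w I - N)(\partial_w I - M)\in\Psi^{-\infty}({\mathbb R}^2/\Lambda)$, $a^{-1}\in C^\infty$ is bounded away from zero, and therefore (using $\hat{\bfU}\in C^\infty_w({\mathscr D}^\prime_{\bfx})$ and that $R_{-\infty}$ maps ${\mathscr D}^\prime({\mathbb R}^2/\Lambda)$ continuously into $C^\infty({\mathbb R}^2/\Lambda)$) one has $\hat{\bfG}\in C^\infty([-h,0];C^\infty({\mathbb R}^2/\Lambda)^3)$, depending linearly and continuously on $\hat{\bfU}$.

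Next I would run the parabolic argument. Write the equation for $\hat{\bfW}$ as $\partial_w\hat{\bfW}+A\hat{\bfW}=\hat{\bfG}$ with $A:=-N\in\Psi^1({\mathbb R}^2/\Lambda)$; by Lemma~\ref{lem:factorisation}(ii) its principal symbol is $-\mathtt{n}^{(1)}{\mathbb I}_3$ with $-\mathtt{n}^{(1)}$ strongly elliptic, so the equation is forward-parabolic for $w$ increasing from $-h$ to $0$. Let $P\in\Psi^{0,1}([-h,0];{\mathbb R}^2/\Lambda)$ be the solution operator of Lemma~\ref{lem:heat} (with $T_0=-h$, $T=h$, $m=1$), so that $P|_{w=-h}=I$ and $\partial_w P+AP=:\tilde R\in\Psi^{-\infty}$; as is built into the class $\Psi^{0,1}$ (the heat-semigroup parametrix of a strongly elliptic operator smooths for positive ``time''), $P(w)\in\Psi^{-\infty}({\mathbb R}^2/\Lambda)$ for every $w\in(-h,0]$. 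Setting $\hat{\bfV}:=\hat{\bfW}-P(\cdot)\hat{\bfW}|_{w=-h}$, we obtain $\hat{\bfV}|_{w=-h}=0$ and $\partial_w\hat{\bfV}+A\hat{\bfV}=\hat{\bfG}-\tilde R\,\hat{\bfW}|_{w=-h}$, whose right-hand side belongs to $C^\infty([-h,0];C^\infty({\mathbb R}^2/\Lambda)^3)$ (since $\hat{\bfG}$ is smooth and $\tilde R$ smooths in $\bfx$ while depending smoothly on $w$); the regularity statement in Lemma~\ref{lem:heat} then gives $\hat{\bfV}\in C^\infty([-h,0];C^\infty({\mathbb R}^2/\Lambda)^3)$, in particular $\hat{\bfV}|_{w=0}\in C^\infty({\mathbb R}^2/\Lambda)^3$. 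Since $P(0)$ is smoothing and $\hat{\bfW}|_{w=-h}\in H^{\frac{1}{2}}$, it follows that
$$\hat{\bfW}|_{w=0}=P(0)\hat{\bfW}|_{w=-h}+\hat{\bfV}|_{w=0}\in C^\infty({\mathbb R}^2/\Lambda)^3.$$
Finally $\partial_w\hat{\bfU}|_{w=0}=M\hat{\bfU}|_{w=0}+\hat{\bfW}|_{w=0}$, so the theorem holds with $R_\infty\hat{\bfU}:=\hat{\bfW}|_{w=0}$, which is linear in $\hat{\bfU}$ and, by the construction above, maps $H^2(D_0/\Lambda)^3$ boundedly into $H^t({\mathbb R}^2/\Lambda)^3$ for every $t\in{\mathbb R}$ (as required for the later applications, where $\hat{\bfU}$ is linear in $\Phi$).

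The real work is front-loaded into Lemmata~\ref{lem:factorisation} and~\ref{lem:heat}; in the present argument the points that need care are identifying the correct factor — so that $\hat{\bfW}=(\partial_w I - M)\hat{\bfU}$ satisfies the equation governed by $N$, whose symbol $-\mathtt{n}^{(1)}$ (not $\mathtt{m}^{(1)}$) is strongly elliptic and makes the $w$-evolution well posed towards $w=0$ — and checking that $\hat{\bfG}$ inherits $C^\infty$-regularity in $w$ from Lemma~\ref{lem:Peetre} and that $P(0)$ is smoothing, so that Lemma~\ref{lem:heat} applies without a separate maximal-parabolic-regularity argument. The remaining ingredients (mapping properties of $M$, the trace bounds, the precise form of $\hat{\bfG}$) are routine.
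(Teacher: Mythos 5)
Your proposal follows the same strategy as the paper: factorise $L$ via Lemma \ref{lem:factorisation}, observe that the good unknown $\hat{\bfW}=(\partial_w I-M)\hat{\bfU}$ (the paper's $\hat{\bfU}_1$) solves a forward-parabolic equation governed by $N$ with smooth right-hand side (smoothness of the forcing being supplied, exactly as in the paper, by Lemma \ref{lem:Peetre} applied to the full equation), and then invoke Lemma \ref{lem:heat} to conclude that $\hat{\bfW}|_{w=0}$ is smooth. The substance is right, and you correctly identify that it is $-\mathtt{n}^{(1)}$, not $\mathtt{m}^{(1)}$, whose strong ellipticity makes the evolution well posed towards $w=0$.

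The one place where you outrun the stated toolkit is the treatment of the initial data. You launch the evolution at $w=-h$, where the trace $\hat{\bfW}|_{w=-h}$ is only $H^{1/2}$, and you compensate by asserting that $P(w)\in\Psi^{-\infty}({\mathbb R}^2/\Lambda)$ for every $w>-h$, i.e.\ that the parabolic solution operator smooths instantaneously for positive ``time''. That property is true for Treves's construction, but it is not part of Lemma \ref{lem:heat} as stated (which only gives $P\in\Psi^{0,m}$, $P|_{t=T_0}=I$, $\partial_tP+AP\in\Psi^{-\infty}$, and propagation of $C^\infty$ data), so as written your argument rests on an unproved auxiliary fact and would need a separate justification. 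The paper avoids this entirely: interior elliptic regularity gives $\hat{\bfU}\in C^\infty((-h,0);C^\infty({\mathbb R}^2/\Lambda)^3)$, hence $\hat{\bfW}|_{w=-h/2}\in C^\infty({\mathbb R}^2/\Lambda)^3$, and Lemma \ref{lem:heat} is then applied on $[-\tfrac{1}{2}h,0]$ with smooth initial data, for which its final regularity statement applies verbatim. Shifting your initial point inward in this way removes the only genuine gap in your write-up; everything else is sound.
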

\begin{proof}
The equation
\begin{equation}
L \hat{\bfU} = \bfzero \label{eq:Full eqn}
\end{equation}
is equivalent to the coupled equations
\begin{align}
(\partial_w I - M) \hat{\bfU} & = \hat{\bfU}_1, \label{eq:fac1} \\
(\partial_w I - N) \hat{\bfU}_1 & = -R_\infty \hat{\bf U} \label{eq:fac2}
\end{align}
(the smoothing operator in equation \eqref{eq:fac2} in fact lies in $\Psi^{-\infty}({\mathbb R}^2/\Lambda)$).

By elliptic regularity theory $\hat{\bfU} \in C^\infty(D_0/\Lambda)^3 \cong C^\infty((-h,0);C^\infty({\mathbb R}^2/\Lambda)^3)$, and it follows from equation \eqref{eq:fac1} that
$\hat{\bfU}_1 \in C^\infty((-h,0);C^\infty({\mathbb R}^2/\Lambda)^3)$; in particular $\hat{\bfU}_1|_{w=-\frac{1}{2}h} \in C^\infty({\mathbb R}^2/\Lambda)^3$. Furthermore, applying Lemma \ref{lem:Peetre} to \eqref{eq:Full eqn} shows that
$\hat{\bfU} \in C^\infty([-h,0]; {\mathscr D}^\prime({\mathbb R}^2/\Lambda)^3)$, so that $R_\infty\hat{\bfU} \in C^\infty([-h,0];C^\infty({\mathbb R}^2/\Lambda)^3)$.
Applying Lemma \ref{lem:heat} to equation \eqref{eq:fac2} for $w\in[-\frac{1}{2}h,0]$, we thus find that
$\hat{\bfU}_1 \in C^\infty([-\frac{1}{2}h,0]; C^\infty({\mathbb R}^2/\Lambda)^3)$. Finally, equation \eqref{eq:fac1} shows that
$$\partial_w \hat{\bfU} = M\hat{\bfU} + \hat{\bfU}_1\at w=0$$
because $\hat{\bfU}_1$ is a linear function of $\hat{\bfU}$.
\end{proof}

\subsection{The operator $H(\eta)$} \label{subsec:asympH}
Let $s \geq 2$, $\Phi \in \mathring{H}^{s-\frac{1}{2}}({\mathbb R}^2 / \Lambda)$ and $\tilde{\bfA} \in H^s(D_0/\Lambda)^3$ be
the unique solution of the boundary-value problem \eqref{Flattened A BVP 1}--\eqref{Flattened A BVP 5} with $\bfgamma=\bfzero$. The variable
$$\hat{\bfA}(\bfx, w) \coloneqq \tilde{\bfA}(\bfx,v),\qquad w \coloneqq \frac{1}{\delta h}(h+\eta)v$$
satisfies 
$$
L\hat{\bfA}= \bfzero \In D_0,
$$
and
\begin{alignat*}{2}
\hat{\bfA}\cdot \bfN&=0 & & \at  w=0, \\
(\text{curl}^\varrho\hat{\bfA})_\parallel&=\nabla\Phi-\alpha\nablap\Delta^{-1}(\nablac\hat{\bfA}_\parallel^\perp) & & \at  w=0,
\end{alignat*}
which can be written explicitly as
\begin{alignat}{2}
&\eta_x\hat{A}_1+\eta_y\hat{A}_2-\hat{A}_3=0 & & \at  w=0, \label{eq:bdry1}\\
&\hat{A}_{3y}-\frac{\eta_y}{\delta}\partial_w\hat{A}_3-\frac{1}{\delta}\partial_w\hat{A}_2+\eta_x\left(\hat{A}_{2x}-\hat{A}_{1y}-\frac{1}{\delta}\eta_x\partial_w\hat{A}_2+\frac{1}{\delta}\eta_y\partial_w\hat{A}_1\right) \label{eq:bdry2}\\
&\qquad\mbox{}+\alpha\Delta^{-1}(\hat{A}_{2xy}+\eta_{yy}\hat{A}_{3x}+\eta_y\hat{A}_{3xy}-\hat{A}_{1yy}-\eta_{xy}\hat{A}_{3y}-\eta_x\hat{A}_{3yy})=\Phi_x & & \at  w=0, \nonumber\\
&-\hat{A}_{3x}+\frac{\eta_x}{\delta}\partial_w\hat{A}_3+\frac{1}{\delta}\partial_w\hat{A}_1+\eta_y\left(\hat{A}_{2x}-\hat{A}_{1y}-\frac{1}{\delta}\eta_x\partial_w\hat{A}_2+\frac{1}{\delta}\eta_y\partial_w\hat{A}_1\right)  \label{eq:bdry3} \\
&\qquad\mbox{}+\alpha\Delta^{-1}(-\hat{A}_{2xx}-\eta_{xy}\hat{A}_{3x}-\eta_y\hat{A}_{3xx}+\hat{A}_{1xy}+\eta_{xx}\hat{A}_{3y}+\eta_x\hat{A}_{3xy})=\Phi_y & & \at  w=0.\nonumber
\end{alignat}

Substituting
\begin{equation}
\hat{A}_3=\eta_x\hat{A}_1+\eta_y\hat{A}_2 \label{eq:eqn for A3}
\end{equation}
(see equation \eqref{eq:bdry1}) and
\begin{equation}
\partial_w \hat{\bfA}|_{w=0} = M\hat{\bfA}|_{w=0} + R_\infty\Phi \label{eq:Hpseudo 1}
\end{equation}
(see Lemma \ref{lem:regtobd}, noting that $\hat{\bfA}$ is a linear function of $\Phi$) into equations \eqref{eq:bdry2}, \eqref{eq:bdry3}, we find that
\begin{equation}
P\begin{pmatrix} \hat{A}_1|_{w=0} \\ \hat{A}_2|_{w=0} \end{pmatrix} = \begin{pmatrix}  \Phi_x \\ \Phi_y \end{pmatrix} + R_\infty\Phi,
\label{eq:eqn for P (A)}
\end{equation}
% -\frac{1}{\delta}\eta_y \hat{a}_3 - \frac{1}{\delta}(1+\eta_x^2)\hat{a}_2 + \frac{1}{\delta}\eta_x\eta_y\hat{a}_1 \\ -\frac{1}{\delta}\eta_x \hat{a}_3 - \frac{1}{\delta}(1+\eta_y^2)\hat{a}_1 + \frac{1}{\delta}\eta_x\eta_y\hat{a}_2 \end{pmatrix},
where $P \in \Psi^1({\mathbb R}^2 / \Lambda)$ is a properly supported pseudodifferential operator with principal symbol
$$\mathtt{P}^{(1)}(\bfx,\bfk) = \begin{pmatrix}
0 & -(1+|\nabla\eta|^2)\mathtt{m}^{(1)} +\ii \bfk \cdot \nabla\eta  \\
(1+|\nabla\eta|^2)\mathtt{m}^{(1)} - \ii \bfk \cdot \nabla\eta & 0
\end{pmatrix}.$$
%and
%\begin{align*}
%D_1 &= \frac{1}{\delta}\eta_y \hat{a}_3 + \frac{1}{\delta}(1+\eta_x^2)\hat{a}_2 - \frac{1}{\delta}\eta_x\eta_y\hat{a}_1, \\
%D_2 &= -\frac{1}{\delta}\eta_x \hat{a}_3 - \frac{1}{\delta}(1+\eta_y^2)\hat{a}_1 + \frac{1}{\delta}\eta_x\eta_y\hat{a}_2.
%\end{align*}
Observe that $\mathtt{P}^{(1)}$ is invertible for $|\bfk| \neq 0$,
so that $P$ is elliptic and hence admits a parametrix $Q \in \Psi^{-1}({\mathbb R}^2/\Lambda)$ such that $PQ-I \in \Psi^{-\infty}({\mathbb R}^2/\Lambda)$
(see Grubb \cite[Theorem 7.18]{Grubb}).
We thus find from equation \eqref{eq:eqn for P (A)} that
$$
\begin{pmatrix} \hat{A}_1|_{w=0} \\ \hat{A}_2|_{w=0} \end{pmatrix} = Q\begin{pmatrix} \Phi_x \\ \Phi_y \end{pmatrix} + R_\infty\Phi,
$$
and appending \eqref{eq:eqn for A3} to this equation yields
\begin{equation}
\hat{\bfA}|_{w=0} = Z \Phi + R_\infty\Phi, \label{eq:Hpseudo 2}
\end{equation}
where $\mathtt{Z} \in S^0({\mathbb R}^2 / \Lambda)$ and $Z=\Op \mathtt{Z}$.

We have that 
$$
H(\eta)(\bfgamma,\Phi)=\underbrace{H(\eta)(\bfgamma,0)}_{\displaystyle \in C^\infty({\mathbb R}^2/\Lambda)}+H(\eta)(\bfzero,\Phi),
$$
and in the new coordinates 
\begin{equation}
H(\eta)(\bfzero,\Phi) = \hat{A}_{2x}+\eta_y\hat{A}_{3x}-\hat{A}_{1y}-\eta_x\hat{A}_{3y}\big\vert_{w=0}. \label{eq:def_GDNO}
\end{equation}
Inserting $\hat{\bfA}|_{w=0}$ and $\partial_w\hat{\bfA}|_{w=0}$ from \eqref{eq:Hpseudo 1}, \eqref{eq:Hpseudo 2}
into this formula shows that
$$H(\eta)(\bfzero,\Phi)=\Op\lambda_{\alpha}\Phi+R_\infty\Phi,$$
where $\lambda_{\alpha} \in S^1({\mathbb R}^2/\Lambda)$. The asymptotic expansions
$$\mathtt{Z} \sim \sum_{j \leq 0} \mathtt{Z}^{(j)}, \qquad \lambda_\alpha \sim\sum_{j \leq 1} \lambda_\alpha^{(j)}$$
can be determined recursively by substituting
$$\hat{\bfA}|_{w=0} = Z \Phi + R_\infty\Phi, \qquad \partial_w\hat{\bfA}|_{w=0} = M Z \Phi + R_\infty\Phi$$
into
\eqref{eq:bdry1}--\eqref{eq:bdry3}.

\subsubsection{Principal symbol} \label{subsubsec:leadingGDNOp}
Equating the order $0$ terms in \eqref{eq:bdry1} and order $1$ terms in \eqref{eq:bdry2}, \eqref{eq:bdry3} yields the equations
\begin{align}
\eta_x\mathtt{Z}^{(0)}_1+\eta_y\mathtt{Z}^{(0)}_2-\mathtt{Z}^{(0)}_3&=0, \label{eq:bdry11}\\
(\ii k_2-\eta_y\mathtt{m}^{(1)})\mathtt{Z}^{(0)}_3+(-\mathtt{m}^{(1)}+\ii k_1\eta_x-\eta_x^2\mathtt{m}^{(1)})\mathtt{Z}^{(0)}_2+(-\ii k_2\eta_x+\eta_x\eta_y\mathtt{m}^{(1)})\mathtt{Z}^{(0)}_1&=\ii k_1, \label{eq:bdry12}\\
(-\ii k_1+\eta_x\mathtt{m}^{(1)})\mathtt{Z}^{(0)}_3+(\mathtt{m}^{(1)}-\ii k_2\eta_y+\eta_y^2\mathtt{m}^{(1)})\mathtt{Z}^{(0)}_1+(\ii k_1\eta_y-\eta_x\eta_y\mathtt{m}^{(1)})\mathtt{Z}^{(0)}_2&=\ii k_2. \label{eq:bdry13}
\end{align}
Substituting for $\mathtt{Z}^{(0)}_3$ from \eqref{eq:bdry11} into \eqref{eq:bdry12}, one finds that
$$
(\ii k_2-\eta_y\mathtt{m}^{(1)})(\eta_x\mathtt{Z}^{(0)}_1+\eta_y\mathtt{Z}^{(0)}_2)+(-\mathtt{m}^{(1)}+\ii k_1\eta_x-\eta_x^2\mathtt{m}^{(1)})\mathtt{Z}^{(0)}_2+(-\ii k_2\eta_x+\eta_x\eta_y\mathtt{m}^{(1)})\mathtt{Z}^{(0)}_1=\ii k_1,
$$
so that
$$
\underbrace{(-\mathtt{m}^{(1)}(1+|\nabla\eta|^2)+\ii\bfk \cdot \nabla\eta)}_{\displaystyle=-\lambda^{(1)}}\mathtt{Z}^{(0)}_2=\ii k_1
$$
and hence
\begin{align*}
\mathtt{Z}^{(0)}_2(\bfx,\bfk) &= -\frac{\ii k_1}{\lambda^{(1)}}.
\end{align*}
Similarly, substituting for $\mathtt{Z}^{(0)}_3$ from \eqref{eq:bdry11} into \eqref{eq:bdry13} yields
\begin{align*}
\mathtt{Z}^{(0)}_1(\bfx,\bfk) &= \frac{\ii k_2}{\lambda^{(1)}},
\end{align*}
and it follows from \eqref{eq:bdry11} that
\begin{align*}
\mathtt{Z}^{(0)}_3(\bfx,\bfk) &= -\frac{\ii(\bfk \cdot \nabla^{\perp}\eta)}{\lambda^{(1)}}.% \label{eq:d30}
\end{align*}

Equating terms of order $1$ in equation \eqref{eq:def_GDNO}, we find that
\begin{align*}
\lambda_\alpha^{(1)}(\bfx,\bfk) &=
\ii k_1\mathtt{Z}^{(0)}_2+\eta_y\ii k_1\mathtt{Z}^{(0)}_3-\ii k_2\mathtt{Z}^{(0)}_1-\eta_x\ii k_2\mathtt{Z}^{(0)}_3 \\
&= \ii(\bfk \cdot \nabla^{\perp}\eta)\mathtt{Z}^{(0)}_3+\ii k_1\mathtt{Z}^{(0)}_2-\ii k_2\mathtt{Z}^{(0)}_1 \\
&= \frac{1}{\lambda^{(1)}}\underbrace{((\bfk \cdot \nabla^{\perp}\eta)^2+|\bfk|^2)}_{\displaystyle=(\lambda^{(1)})^2} \\
&= \lambda^{(1)};
\end{align*}
the principal symbol of the generalised Dirichlet--Neumann operator is thus the same as the principal symbol of the classical Dirichlet--Neumann operator.

\subsubsection{Sub-principal symbol} \label{subsubsec:subleadingGDNOp}
Equating the order $-1$ terms in \eqref{eq:bdry1} and the order $0$ terms in \eqref{eq:bdry2}, \eqref{eq:bdry3} yields the equations
%We insert the solution \eqref{eq:solution-formula} into the boundary conditions and consider the subleading order terms of \eqref{eq:bdry1}-\eqref{eq:bdry3} by using the expansions $d_i\sim\sum_{i\leq 0} d_{ij}$, $i=1,2,3$. This yields
\begin{align}
\eta_x\mathtt{Z}^{(-1)}_1+\eta_y\mathtt{Z}^{(-1)}_2-\mathtt{Z}^{(-1)}_3&=0, \label{eq:bdry10}\\
(\ii k_2-\eta_y\mathtt{m}^{(1)})\mathtt{Z}^{(-1)}_3+(-\mathtt{m}^{(1)}+\ii k_1\eta_x-\eta_x^2\mathtt{m}^{(1)})\mathtt{Z}^{(-1)}_2+(-\ii k_2\eta_x+\eta_x\eta_y\mathtt{m}^{(1)})\mathtt{Z}^{(-1)}_1&=-\mathtt{F}_1-\alpha \mathtt{F}_3, \label{eq:bdry20} \\
(-\ii k_1+\eta_x\mathtt{m}^{(1)})\mathtt{Z}^{(-1)}_3+(\mathtt{m}^{(1)}-\ii k_2\eta_y+\eta_y^2\mathtt{m}^{(1)})\mathtt{Z}^{(-1)}_1+(\ii k_1\eta_y-\eta_x\eta_y\mathtt{m}^{(1)})\mathtt{Z}^{(-1)}_2&=-\mathtt{F}_2-\alpha \mathtt{F}_4, \label{eq:bdry30}
\end{align}
where
\begin{align*}
\mathtt{F}_1(\bfx,\bfk)&=\eta_{xy}\mathtt{Z}^{(0)}_1+\eta_{yy}\mathtt{Z}^{(0)}_2+\nabla \mathtt{Z}^{(0)}_2\cdot\nabla\eta+[\eta_y(\mathtt{Z}^{(0)}_1\nabla\eta_x+\mathtt{Z}^{(0)}_2\nabla\eta_y)+(1+|\nabla\eta|^2)\nabla \mathtt{Z}^{(0)}_2]\cdot\ii\nabla_{\bfk}\mathtt{m}^{(1)}\\
&\quad -(1+|\nabla\eta|^2)\mathtt{Z}^{(0)}_2m^{(0)},\\
\mathtt{F}_2(\bfx,\bfk)&=-\eta_{xx}\mathtt{Z}^{(0)}_1-\eta_{xy}\mathtt{Z}^{(0)}_2-\nabla\eta\cdot\nabla \mathtt{Z}^{(0)}_1-[\eta_x(\mathtt{Z}^{(0)}_1\nabla\eta_x+\mathtt{Z}^{(0)}_2\nabla\eta_y)+(1+|\nabla\eta|^2)\nabla \mathtt{Z}^{(0)}_1]\cdot\ii\nabla_{\bfk}\mathtt{m}^{(1)}\\
&\quad+(1+|\nabla\eta|^2)\mathtt{Z}^{(0)}_1m^{(0)},\\
\mathtt{F}_3(\bfx,\bfk)&=\frac{1}{2}\left((1+\eta_x^2)\mathtt{Z}_1^{(0)}+\eta_x\eta_y\mathtt{Z}_2^{(0)}\right)-\frac{ k_2}{|\bfk|^2}\left(\left( k_2-\eta_x(\bfk\cdot\nabla\eta^\perp)\right)\mathtt{Z}_1^{(0)}
-\left( k_1+\eta_y(\bfk\cdot\nabla\eta^\perp)\right)\mathtt{Z}_2^{(0)}\right),\\
\mathtt{F}_4(\bfx,\bfk)&=\frac{1}{2}\left(\eta_x\eta_y\mathtt{Z}_1^{(0)}+(1+\eta_y^2)\mathtt{Z}_2^{(0)}\right)-\frac{ k_1}{|\bfk|^2}\left(\left(- k_2+\eta_x(\bfk\cdot\nabla\eta^\perp)\right)\mathtt{Z}_1^{(0)} +\left( k_1+\eta_y(\bfk\cdot\nabla\eta^\perp)\right)\mathtt{Z}_2^{(0)}\right).
\end{align*}
Substituting for $\mathtt{Z}^{(-1)}_3$ from \eqref{eq:bdry10} into \eqref{eq:bdry20}--\eqref{eq:bdry30}, we obtain
\begin{equation*}
\mathtt{Z}^{(-1)}_1(\bfx,\bfk)=-\frac{\mathtt{F}_2+\alpha \mathtt{F}_4}{\lambda^{(1)}},\qquad \mathtt{Z}^{(-1)}_2(\bfx,\bfk)=\frac{\mathtt{F}_1+\alpha \mathtt{F}_3}{\lambda^{(1)}}
\end{equation*}
and hence
$$\mathtt{Z}^{(-1)}_3(\bfx,\bfk)=\frac{1}{\lambda^{(1)}}\nabla\eta^\perp\cdot(\mathtt{F}_1+\alpha \mathtt{F}_3,\mathtt{F}_2+\alpha \mathtt{F}_4)^T.$$

Equating terms of order $1$ in equation \eqref{eq:def_GDNO}, we find that
\begin{align*}
\lambda_\alpha^{(0)}(\bfx,\bfk) & =
\partial_x\mathtt{Z}^{(0)}_2+\ii k_1\mathtt{Z}^{(-1)}_2+\eta_y\partial_x\mathtt{Z}^{(0)}_3+\ii k_1\eta_y\mathtt{Z}^{(-1)}_3-\partial_y\mathtt{Z}^{(0)}_1-\ii k_2\mathtt{Z}^{(-1)}_1-\eta_x\partial_y\mathtt{Z}^{(0)}_3-\ii k_2\eta_x\mathtt{Z}^{(-1)}_3 \nonumber \\
&=\partial_x\mathtt{Z}^{(0)}_2-\partial_y\mathtt{Z}^{(0)}_1+\eta_y\partial_x\mathtt{Z}^{(0)}_3-\eta_x \partial_y\mathtt{Z}^{(0)}_3+\frac{\ii k_1+\ii\eta_y( k\cdot\nablap\eta)}{\lambda^{(1)}}\mathtt{F}_1-\frac{-\ii k_2+\ii\eta_x( k\cdot\nablap\eta)}{\lambda^{(1)}}\mathtt{F}_2\nonumber \\
&\qquad\mbox{} +\alpha\left[\frac{\ii k_1+\ii\eta_y( \bfk\cdot\nablap\eta)}{\lambda^{(1)}}\mathtt{F}_3-\frac{-\ii k_2+\ii\eta_x( \bfk\cdot\nablap\eta)}{\lambda^{(1)}}\mathtt{F}_4\right] \nonumber \\
&=\lambda^{(0)}+\alpha\frac{(\bfk\cdot\nabla\eta)(\bfk\cdot\nablap\eta)}{|\bfk|^2},% \label{eq:DGNOpSublead}
\end{align*}
where
\begin{align*}
\lambda^{(0)}(\bfx,\bfk)&\coloneqq  \frac{1+|\nabla\eta|^2}{2\lambda^{(1)}} \left(\nablac (\mathtt{m}^{(1)} \, \nabla\eta )+ \ii \nabla_{\bfk} \lambda^{(1)}\cdot \nabla \mathtt{m}^{(1)}  \right) %\label{eq:GNOpSublead}
\end{align*}
is the sub-principal symbol of the classical Dirichlet--Neumann operator (see Alazard, Burq and Zuily
\cite[Eq.\ (3.11)]{AlazardBurqZuily11}).

\subsection{The operator $\bfM(\eta)$}  \label{subsec:asympM}
Let $s \geq 2$, $\bfg \in H^{s-\frac{1}{2}}({\mathbb R}^2 / \Lambda)^2$ and $\hat{\bfB} \in H^s(D_0/\Lambda)^3$ be
the unique solution of the boundary-value problem \eqref{Flattened B BVP 1}--\eqref{Flattened B BVP 6} with $\bfgamma=\bfzero$. The variable
$$\hat{\bfB}(\bfx, w) \coloneqq \tilde{\bfB}(\bfx,v),\qquad w \coloneqq \frac{1}{\delta h}(h+\eta)v$$
satisfies 
$$
L\hat{\bfB}= \bfzero \In D_0,
$$
and
\begin{alignat}{2}
\Div^{\varrho} \hat{\bfB} &=0 & & \at w=0, \label{bddryB1}\\
\hat{\bfB}\cdot\bfN&=0 & & \at w=0, \label{bddryB2}\\
\nablac \hat{\bfB}_{||}^\perp &=\nablac \bfg^\perp & & \at w=0,\label{bddryB3}\\
\langle (\curl^{\varrho} \hat{\bfB})_\parallel \rangle &=\bfzero.\label{bddryB4}
\end{alignat}
(equation \eqref{bddryB1} actually holds in $\bar{D}_0$). The boundary conditions \eqref{bddryB1}--\eqref{bddryB3} can be written more explicitly as
\begin{alignat}{2}
& \hat{B}_{1x}+\hat{B}_{2y}+\frac{1}{\delta}(-\eta_x\hat{B}_{1w}-\eta_y\hat{B}_{2w}+\hat{B}_{3w})=0 & & \at w=0, \label{eq:bdry1_B} \\
& \eta_x\hat{B}_1+\eta_y\hat{B}_2-\hat{B}_3=0 & & \at w=0, \label{eq:bdry2_B} \\
& (\hat{B}_2+\hat{B}_3\eta_y)_x-(\hat{B}_1+\hat{B}_3\eta_x)_y=g_{2x}-g_{1y} & & \at w=0, \label{eq:bdry3_B}
\end{alignat}

Substituting
\begin{equation}
\hat{B}_3=\eta_x\hat{B}_1+\eta_y\hat{B}_2 \label{eq:eqn for B3}
\end{equation}
(see equation \eqref{eq:bdry2_B}) and
\begin{equation}
\partial_w \hat{\bfB}|_{w=0} = M\hat{\bfB}|_{w=0} + R_\infty\bfg\label{eq:Mpseudo 1}
\end{equation}
(see Lemma \ref{lem:regtobd}, noting that $\bfB$ is a linear function of $\bfg$) into equations \eqref{eq:bdry1_B}, \eqref{eq:bdry3_B}, we find that
\begin{equation}
P\begin{pmatrix} \hat{B}_1|_{w=0} \\ \hat{B}_2|_{w=0} \end{pmatrix} = \begin{pmatrix} 0 \\ g_{2x}-g_{1y} \end{pmatrix} + R_\infty\bfg,
\label{eq:eqn for P (B)}
\end{equation}
where $P \in \Psi^1({\mathbb R}^2 / \Lambda)$ is a properly supported pseudodifferential operator with principal symbol
$$\mathtt{P}^{(1)}(\bfx,\bfk) = \begin{pmatrix}
\ii  k_1 & \ii  k_2 \\
\ii\eta_x\eta_y k_1-\ii(1+\eta_x^2) k_2 & \ii(1+\eta_y^2) k_1 -\ii\eta_x\eta_y k_2
\end{pmatrix}.$$
Observe that $\mathtt{P}^{(1)}(\bfx,\bfk)$ is invertible for $|\bfk| \neq 0$,
so that $P$ is elliptic and hence admits a parametrix $Q \in \Psi^{-1}({\mathbb R}^2/\Lambda)$ such that $PQ-I \in \Psi^{-\infty}({\mathbb R}^2/\Lambda)$.
We thus find from equation \eqref{eq:eqn for P (B)} that
$$
\begin{pmatrix} \hat{B}_1|_{w=0} \\ \hat{B}_2|_{w=0} \end{pmatrix} = Q\begin{pmatrix} 0 \\ g_{2x}-g_{1y}  \end{pmatrix} + R_\infty\bfg,
$$
and appending \eqref{eq:eqn for B3} to this equation yields
\begin{equation}
\hat{\bfB}|_{w=0} = Z \bfg + R_\infty\bfg, \label{eq:Mpseudo 2}
\end{equation}
where $\mathtt{Z} \in S^0({\mathbb R}^2 / \Lambda)$ and $Z=\Op \mathtt{Z}$.

We have that 
$$
\bfM(\eta)(\bfgamma,\bfg)=\underbrace{\bfM(\eta)(\bfgamma,\bfzero)}_{\displaystyle \in C^\infty({\mathbb R}^2/\Lambda)^2}+\bfM(\eta)(\bfzero,\bfg),
$$
and in the new coordinates
\begin{align}
\bfM(\eta)(\bfzero,\bfg) &= - 
\begin{pmatrix}
\tilde{B}_{3y} \\ - \tilde{B}_{3x}
\end{pmatrix}
 + \frac{1}{\delta } \, \tilde{B}_{3w} \, \nablap\eta + \frac{1}{\delta } \, 
\begin{pmatrix}
\tilde{B}_{2w} \\ -  \tilde{B}_{1w} 
\end{pmatrix}
 - (\tilde{B}_{2x}-\tilde{B}_{1y}) \, \nabla\eta + \frac{1}{\delta} \, (\eta_x \, \tilde{B}_{2w} - \eta_y \, \tilde{B}_{1w} ) \, \nabla\eta \bigg|_{w=0} . \label{eq:def_MOp}
\end{align}
Inserting $\hat{\bfB}|_{w=0}$ and $\partial_w\hat{\bfB}|_{w=0}$ from \eqref{eq:Mpseudo 1}, \eqref{eq:Mpseudo 2}
into this formula shows that
$$\bfM(\eta)(\bfzero,\bfg)=\Op\nu_{\alpha}\bfg+R_\infty\bfg,$$
where $\nu_{\alpha} \in S^1({\mathbb R}^2/\Lambda)$. The asymptotic expansions
$$\mathtt{Z} \sim \sum_{j \leq 0} \mathtt{Z}^{(j)}, \qquad \nu_\alpha \sim\sum_{j \leq 1} \nu_\alpha^{(j)}$$
can be determined recursively by substituting\enlargethispage{5mm}
$$\hat{\bfB}|_{w=0} = Z \bfg + R_\infty\bfg, \qquad \partial_w\hat{\bfB}|_{w=0} = MZ \bfg + R_\infty\bfg$$
into
\eqref{eq:bdry1_B}--\eqref{eq:bdry3_B}.

\begin{remark}
The asymptotic expansion of $\nu_\alpha$ can also be determined from the formula
$$
\bfM(\eta)(\bfzero,\bfg) \coloneqq  - \nabla \big( H(\eta)(\bfzero,\cdot)^{-1} (\nablac \bfg^\perp) \big) + \alpha \, \nablap \Delta^{-1} (\nablac \bfg^\perp)
$$
and the asymptotic expansion of the symbol $\lambda_\alpha$ of $H(\eta)(\bfzero,\cdot)$.
\end{remark}

\subsubsection{Principal symbol} \label{subsubsec:leadingMOp}

Equating terms of order $1$ in equations \eqref{eq:bdry1_B}, \eqref{eq:bdry3_B} and order $0$ in equation \eqref{eq:bdry2_B} yields
\begin{align*}
\ii k_1 \, \mathtt{Z}^{(0)}_{11}  + \ii k_2 \mathtt{Z}^{(0)}_{21}  -\eta_x \,   \mathtt{m}^{(1)} \mathtt{Z}^{(0)}_{11}  -\eta_y \,  \mathtt{m}^{(1)}  \mathtt{Z}^{(0)}_{21}  +  \mathtt{m}^{(1)}  \mathtt{Z}^{(0)}_{31}  &=0, \\
\ii k_1 \, \mathtt{Z}^{(0)}_{12}  + \ii k_2 \mathtt{Z}^{(0)}_{22}  -\eta_x \,  \mathtt{m}^{(1)} \mathtt{Z}^{(0)}_{12}  -\eta_y \,  \mathtt{m}^{(1)}  \mathtt{Z}^{(0)}_{22}  +  \mathtt{m}^{(1)} \mathtt{Z}^{(0)}_{32}    &=0, \\
\eta_x\mathtt{Z}_{11}^{(0)}+\eta_y\mathtt{Z}_{21}^{(0)}-\mathtt{Z}_{31}^{(0)}&=0,\\
\eta_x\mathtt{Z}_{12}^{(0)}+\eta_y\mathtt{Z}_{22}^{(0)}-\mathtt{Z}_{32}^{(0)}&=0,\\
\ii k_1\mathtt{Z}_{21}^{(0)}-\ii k_2\mathtt{Z}_{11}^{(0)}+\ii(\bfk\cdot\nablap\eta)\mathtt{Z}_{31}^{(0)}&=-\ii k_2,\\
\ii k_1\mathtt{Z}_{22}^{(0)}-\ii k_2\mathtt{Z}_{12}^{(0)}+\ii(\bfk\cdot\nablap\eta)\mathtt{Z}_{32}^{(0)}&=\ii k_1,
\end{align*}
whose unique solution is
\begin{align*}
& \mathtt{Z}_{11}^{(0)}(\bfx,\bfk)=\frac{ k_2^2}{(\lambda^{(1)})^2}, &\mathtt{Z}_{12}^{(0)}(\bfx,\bfk)&=-\frac{ k_1 k_2}{(\lambda^{(1)})^2},\\
&\mathtt{Z}_{21}^{(0)}(\bfx,\bfk)=-\frac{ k_1 k_2}{(\lambda^{(1)})^2}, &\mathtt{Z}_{22}^{(0)}(\bfx,\bfk)&=\frac{ k_1^2}{(\lambda^{(1)})^2},\\
&\mathtt{Z}_{31}^{(0)}(\bfx,\bfk)=-\frac{ k_2(\bfk\cdot\nablap\eta)}{(\lambda^{(1)})^2}, &\mathtt{Z}_{32}^{(0)}(\bfx,\bfk)&=\frac{ k_1(\bfk\cdot\nablap\eta)}{(\lambda^{(1)})^2}.
\end{align*}
%Inserting the last four equations into the first two, we obtain
%\begin{align*}
%& - \ii k_1 \, \mathtt{Z}^{(0)}_{31} \, \eta_x  + \ii k_2 (1 - \mathtt{Z}^{(0)}_{31} \, \eta_y)  -\eta_x \, \left( -  \mathtt{m}^{(1)} \mathtt{Z}^{(0)}_{31} \, \eta_x  \right) -\eta_y \, \left( \mathtt{m}^{(1)}  (1 - \mathtt{Z}^{(0)}_{31} \, \eta_y)   \right)   +  \mathtt{m}^{(1)}  \mathtt{Z}^{(0)}_{31}     =0, \\
%& \ii k_1 \, (-1-\mathtt{Z}^{(0)}_{32} \, \eta_x)  - \ii k_2 \mathtt{Z}^{(0)}_{32} \, \eta_y   -\eta_x \, \left( \mathtt{m}^{(1)} (-1 - \mathtt{Z}^{(0)}_{32} \, \eta_x) \right)  -\eta_y \, \left( - \mathtt{m}^{(1)} \mathtt{Z}^{(0)}_{32} \, \eta_y    \right)  + \mathtt{m}^{(1)}  \mathtt{Z}^{(0)}_{32}    =0
%\end{align*}
%and hence
%\begin{align*}
%\mathtt{Z}^{(0)}_{31} = \frac{ 1  }{ \lambda^{(1)}  } \, \left( m ^{(1)}  \eta_y - \ii \,  k_2 \right) , &\; \; \mathtt{Z}^{(0)}_{32} = \frac{ 1  }{ \lambda^{(1)} } \, \left(  - m ^{(1)}  \eta_x + \ii \,  k_1  \right) .
%\end{align*}

Equating terms of order $1$ in equation \eqref{eq:def_MOp}, we find that
\begin{align}
\nu_{\alpha}^{(1)}(\bfx,\bfk) \bfg
&= 
\begin{pmatrix}
- \ii  k_2 (\mathtt{Z}^{(0)}_{31}g_1 + \mathtt{Z}^{(0)}_{32}g_2) \\
\ii  k_1 (\mathtt{Z}^{(0)}_{31}g_1 + \mathtt{Z}^{(0)}_{32}g_2)
\end{pmatrix}
 + \mathtt{m}^{(1)} \; (\mathtt{Z}^{(0)}_{31} g_1+  \mathtt{Z}^{(0)}_{32} g_2) \nablap\eta  
  + \begin{pmatrix}
\mathtt{m}^{(1)} \, (\mathtt{Z}^{(0)}_{21} g_1 +  \mathtt{Z}^{(0)}_{22} g_2)  \\
-\mathtt{m}^{(1)} \, (\mathtt{Z}^{(0)}_{11} g_1 +  \mathtt{Z}^{(0)}_{12} g_2) 
\end{pmatrix} \nonumber \\
&\qquad\mbox{} - \left[ \ii k_1 \, (\mathtt{Z}^{(0)}_{21}g_1 + \mathtt{Z}^{(0)}_{22} g_{2}) - \ii k_2 \, ( \mathtt{Z}^{(0)}_{11} g_1 + \mathtt{Z}^{(0)}_{12} g_2)  \right] \nabla\eta \nonumber \\
&\qquad\mbox{} + \bigg[ \eta_x \mathtt{m}^{(1)} \, (\mathtt{Z}^{(0)}_{21} g_1 +  \mathtt{Z}^{(0)}_{22} g_2)   -\eta_y \, \mathtt{m}^{(1)} \, (\mathtt{Z}^{(0)}_{11} g_1 +  \mathtt{Z}^{(0)}_{12} g_2) \bigg] \nabla\eta . \nonumber 
\end{align}
The first component of $\nu_{\alpha}^{(1)} \bfg$ can be rewritten as
\begin{align*}
&\eta_x ( \ii  k_2 - \eta_y \, \mathtt{m}^{(1)} ) \, ( \mathtt{Z}^{(0)}_{11} g_1 + \mathtt{Z}^{(0)}_{12} g_2) + \left( -\ii  k_1 \, \eta_x +(1+\eta_x^2) \mathtt{m}^{(1)} \right) \, ( \mathtt{Z}^{(0)}_{21} g_1 + \mathtt{Z}^{(0)}_{22} g_2) \nonumber \\
&\quad+ (-\ii  k_2 + \eta_y \mathtt{m}^{(1)}) \, ( \mathtt{Z}^{(0)}_{31} g_1 + \mathtt{Z}^{(0)}_{32} g_2) \nonumber \\
=&\left[\eta_x(\ii k_2-\eta_y\mathtt{m}^{(1)} )\mathtt{Z}_{11}^{(0)}+(-\ii k_1\eta_x+(1+\eta_x^2)\mathtt{m}^{(1)} )\mathtt{Z}_{21}^{(0)}+(-\ii k_2+\eta_y\mathtt{m}^{(1)} )\mathtt{Z}_{31}^{(0)}\right]g_1\\
&\quad\mbox{} +\left[\eta_x(\ii k_2-\eta_y\mathtt{m}^{(1)} )\mathtt{Z}_{12}^{(0)}+(-\ii k_1\eta_x+(1+\eta_x^2)\mathtt{m}^{(1)} )\mathtt{Z}_{22}^{(0)}+(-\ii k_2+\eta_y \mathtt{m}^{(1)} )\mathtt{Z}_{32}^{(0)}\right]g_2\\
=&-\frac{ k_1 k_2}{\lambda^{(1)}}g_1+\frac{ k_1^2}{\lambda^{(1)}}g_2\\
=&\frac{ k_1}{\lambda^{(1)}}(\bfk\cdot\bfg^\perp),
\end{align*}
and in the same way we find that the second component of $\nu_{\alpha}^{(1)}(\bfx,\bfk) \bfg$ is
$\dfrac{ k_2}{\lambda^{(1)}}(\bfk\cdot\bfg^\perp)$;
altogether we obtain
\begin{align*}
\nu_\alpha^{(1)}(\bfx,\bfk)\bfg=\bfk\frac{(\bfk\cdot\bfg^\perp)}{\lambda^{(1)}}. % \label{eq:nu1formula}
\end{align*}

\subsubsection{Sub-principal symbol} \label{subsubsec:subleadMOp}

Equating terms of order $0$ in equations \eqref{eq:bdry1_B}, \eqref{eq:bdry3_B} and order $-1$ in equation \eqref{eq:bdry2_B} yields\enlargethispage{5mm}
\begin{align*}
\ii k_1\mathtt{Z}_{11}^{(-1)}+\ii k_2\mathtt{Z}_{21}^{(-1)}+ k_2\mathtt{G}_1&=0,\\
\ii k_1\mathtt{Z}_{12}^{(-1)}+\ii k_2\mathtt{Z}_{22}^{(-1)}+ k_1\mathtt{G}_2&=0,\\
\mathtt{Z}_{31}^{(-1)}&=\eta_x\mathtt{Z}_{11}^{(-1)}+\eta_y\mathtt{Z}_{21}^{(-1)},\\
\mathtt{Z}_{32}^{(-1)}&=\eta_x\mathtt{Z}_{12}^{(-1)}+\eta_y\mathtt{Z}_{22}^{(-1)},\\
(-\ii k_2+\ii(\bfk\cdot\nablap\eta)\eta_x)\mathtt{Z}_{11}^{(-1)}+(\ii k_1+\ii(\bfk\cdot\nablap\eta)\eta_y)\mathtt{Z}_{21}^{(-1)}+ k_2\mathtt{G}_3&=0,\\
(-\ii k_2+\ii(\bfk\cdot\nablap\eta)\eta_x)\mathtt{Z}_{12}^{(-1)}+(\ii k_1+\ii(\bfk\cdot\nablap\eta)\eta_y)\mathtt{Z}_{22}^{(-1)}+ k_1\mathtt{G}_4&=0,
\end{align*}
where 
\begin{align*}
\mathtt{G}_1(\bfx,\bfk)&=\frac{2(\bfk\cdot\nablap\lambda^{(1)})}{(\lambda^{(1)})^3}-\frac{\ii}{(\lambda^{(1)})^2}\left[ k_2(\nabla_{\bfk}\cdot\nabla\eta_x)- k_1(\nabla_{\bfk}\mathtt{m}^{(1)} \cdot\nabla\eta_y)\right]+\frac{\ii\alpha}{2\lambda^{(1)}},\\
\mathtt{G}_2(\bfx,\bfk)&=-\frac{2(\bfk\cdot\nablap\lambda^{(1)})}{(\lambda^{(1)})^3}+\frac{\ii}{(\lambda^{(1)})^2}\left[ k_2(\nabla_{\bfk}\mathtt{m}^{(1)} \cdot\nabla\eta_x)- k_1(\nabla_{\bfk}\mathtt{m}^{(1)} \cdot\nabla\eta_y)\right]-\frac{\ii\alpha}{2\lambda^{(1)}},\\
\mathtt{G}_3(\bfx,\bfk)&=\frac{1}{(\lambda^{(1)})^3}\bigg[2(1+\eta_y^2)\partial_x\lambda^{(1)} k_1+2(1+\eta_x^2)\partial_y\lambda^{(1)} k_2-2\eta_x\eta_y(\partial_x\lambda^{(1)} k_2+\partial_y\lambda^{(1)} k_1)\\
&\hspace{18mm}\mbox{} +(\eta_y\eta_{xx}-\eta_x\eta_{xy})\lambda^{(1)} k_2-(\eta_y\eta_{xy}-\eta_x\eta_{yy})\lambda^{(1)} k_1\bigg],\\
\mathtt{G}_4(\bfx,\bfk)&=\frac{1}{(\lambda^{(1)})^3}\bigg[-2(1+\eta-y^2)\partial_x\lambda^{(1)} k_1-2(1+\eta_x^2)\partial_y\lambda^{(1)} k_2+2\eta_x\eta_y(\partial_x\lambda^{(1)} k_2+\partial_y\lambda^{(1)} k_1)\\
&\hspace{18mm}\mbox{}-(\eta_y\eta_{xx}-\eta_x\eta_{xy})\lambda^{(1)} k_2+(\eta_y\eta_{xy}-\eta_x\eta_{yy})\lambda^{(1)} k_1\bigg],
\end{align*}
whose unique solution is
\begin{align*}
\mathtt{Z}_{11}^{(-1)}(\bfx,\bfk)&=\frac{\ii k_2}{(\lambda^{(1)})^2}\left(( k_1+(\bfk\cdot\nablap\eta)\eta_y)- k_2\mathtt{G}_3\right),\\
\mathtt{Z}_{12}^{(-1)}(\bfx,\bfk)&=\frac{\ii \mathtt{G}_2}{(\lambda^{(1)})^2}\left((\lambda^{(1)})^2- k_2( k_2-(\bfk\cdot\nablap\eta)\eta_x)\right)-\frac{\ii k_1 k_2\mathtt{G}_4}{(\lambda^{(1)})^2},\\
\mathtt{Z}_{21}^{(-1)}(\bfx,\bfk)&=\frac{\ii \mathtt{G}_1}{(\lambda^{(1)})^2}\left((\lambda^{(1)})^2- k_1( k_1+(\bfk\cdot\nablap\eta)\eta_y)\right)+\frac{\ii k_1 k_2\mathtt{G}_3}{(\lambda^{(1)})^2},\\
\mathtt{Z}_{22}^{(-1)}(\bfx,\bfk)&=\frac{\ii k_1}{(\lambda^{(1)})^2}\left(( k_2-(\bfk\cdot\nablap\eta)\eta_x)\mathtt{G}_2+ k_1\mathtt{G}_4\right),\\
\mathtt{Z}_{31}^{(-1)}(\bfx,\bfk)&=\eta_x\mathtt{Z}_{11}^{(-1)}+\eta_y\mathtt{Z}_{21}^{(-1)},\\
\mathtt{Z}_{32}^{(-1)}(\bfx,\bfk)&=\eta_x\mathtt{Z}_{12}^{(-1)}+\eta_y\mathtt{Z}_{22}^{(-1)}.
\end{align*}

Inserting these formulae into the zeroth order part of \eqref{eq:def_MOp}, we find after a lengthy but straighforward computation that
$$
\nu_{\alpha}^{(0)}(\bfx,\bfk)\bfg=\left(\begin{array}{c}
\zeta_1(\bfx,\bfk)\\
\zeta_2(\bfx,\bfk)
\end{array}\right)(\bfk\cdot\bfg^\perp),
$$
where 
\begin{align*}
\zeta_1(\bfx,\bfk)&=\frac{\ii}{2(\lambda^{(1)})^5}\bigg( k_1^2(-1+2\eta_y^2)\eta_x- k_1 k_2\eta_y(3+4\eta_x^2)+2 k_2^2\eta_x(1+\eta_x^2)+\ii k_1\lambda^{(1)}\bigg)\\
&\qquad \times \bigg( k_1^2\eta_{yy}-2 k_1 k_2\eta_{xy}+ k_2^2\eta_{xx}\bigg)+\frac{\alpha}{(\lambda^{(1)})^2}\left( k_2(1+\eta_x^2)- k_1\eta_x\eta_y\right),\\
\zeta_2(\bfx,\bfk)&=\frac{\ii}{2(\lambda^{(1)})^5}\bigg(2 k_1^2\eta_y(1+\eta_y^2)- k_1 k_2\eta_x(3+4\eta_y^2)+ k_2^2\eta_y(-1+2\eta_x^2)+\ii k_2\lambda^{(1)}\bigg)\\
&\qquad \times \bigg( k_1^2\eta_{yy}-2 k_1 k_2\eta_{xy}+ k_2^2\eta_{xx}\bigg)+\frac{\alpha}{(\lambda^{(1)})^2}\left(- k_1(1+\eta_y^2)+ k_2\eta_x\eta_y\right).
\end{align*}

\section{Approximate solutions} \label{sec:approx}

In this section we construct approximate solutions of 
\begin{equation} J(\eta,\bfmu)=0 \label{eq:hydroeq with mu}
\end{equation}
 for $\beta\geq 0$ in the form of power series and moreover prove their convergence for
$\beta>0$; the solutions have wave velocity $\bfc$ close to a reference value $\bfc_0$ chosen such that the transversality condition (T) holds. Assuming that the
non-resonance condition (NR) also holds, we consider $J$ as a locally analytic mapping $X_s^\beta \times {\mathbb R}^2 \rightarrow H^s({\mathbb R}^2/ \Lambda)$ for a sufficiently large value of $s$, where
\begin{align*}
 X_s^\beta&\coloneqq\begin{cases}
H^{s+2}(\mathbb{R}^2/\Lambda), &  \mbox{if $\beta>0$,}\\
H^{s+1}(\mathbb{R}^2/\Lambda),  &  \mbox{if $\beta=0$.}
\end{cases}
\end{align*}
Our strategy is to perform a Lyapunov--Schmidt reduction, and we therefore proceed to investigate the kernel and range of 
$$J_{10}(\eta)\coloneqq \mathrm{d}_1J[0,\bfzero](\eta)=\bfT_1(\eta)\cdot\bfc_0+g\eta-\beta\Delta\eta.$$

Write
\begin{equation*}
\eta(\bfx)=\sum_{\bfk\in\Lambda^\prime}\hat{\eta}_{\bfk}\ee^{\ii\bfk\cdot\bfx},
\end{equation*}
so that
$$(J_{10}\eta)(\bfx) =g\hat\eta_{\bfzero}+\sum_{\bfk\in\Lambda^\prime\setminus\{\bfzero\}}\frac{\mathtt{c}(|\bfk|)}{|\bfk|^2}\rho(\bfk,\bfc_0,\beta)\hat{\eta}_{\bfk}\ee^{\ii\bfk\cdot\bfx}.$$
The equation $J_{10} \eta=0$ is equivalent to
\begin{equation*}
\rho(\bfk,\bfc_0,\beta)\hat{\eta}_{\bfk}=0
\end{equation*}
for $\bfk \in \Lambda^\prime\setminus\{\bfzero\}$,
which by assumption has non-trivial solutions if and only if $\bfk=\pm\bfk_1,\pm\bfk_2$; it follows that
\begin{equation*}
\ker(J_{10})=\{A\ee^{\ii\bfk_1\cdot\bfx}+B\ee^{\ii\bfk_2\cdot\bfx}+\bar{A}\ee^{-\ii\bfk_1\cdot\bfx}+\bar{B}\ee^{-\ii\bfk_2\cdot\bfx}\colon A, B \in {\mathbb C}\}.
\end{equation*}
%\begin{gather}\label{kernel_system}
%\begin{aligned}
%H_0(\bfk)\hat{\Phi}_{\bfk}-\ii(\bfc_0\cdot \bf k)\hat{\eta}_{\bfk}&=0,\\
%\ii(\bfc_0\cdot\bfk)\hat{\Phi}_{\bfk}-\frac{\alpha}{|\bfk|^2}(\bfc_0\cdot\bfk^{\perp})(\bfc_0\cdot \bfk)\hat{\eta}_{\bfk}+g\hat{\eta}_{\bfk}+\beta|\bfk|^2\hat{\eta}_\bfk&=0.
%\end{aligned}
%\end{gather}
%For $\bfk=(0,0)$ \eqref{kernel_system} holds if and only if $\hat{\eta}_{00}=0$. For $\bfk\neq (0,0)$ we have that \eqref{kernel_system} is equivalent with
%\begin{gather}\label{kernel-eq-n0}
%\begin{aligned}
%\hat{\Phi}_{\bfk}&=\frac{\ii(\bfc_0\cdot\bfk)}{H_0(\bfk,\alpha,h)}\hat{\eta}_{\bfk},\\
%\rho(\bfk,\bfc_0,\beta)\hat{\eta}_{\bfk}&=0,
%\end{aligned}
%\end{gather}
%where
%\begin{equation*}
%\rho(\bfk,\bfc_0,\beta)=\left(g+\beta|\bfk|^2-\alpha\frac{(\bfc_0\cdot\bfk^\perp)(\bfc_0\cdot\bfk)}{|\bfk|^2}\right)H_0(\bfk)-(\bfc_0\cdot\bfk)^2.
%\end{equation*}
%By assumption $\rho(\bfk,\bfc_0,\beta)\neq0$ for $\bfk\neq \pm \bfk_1,\pm \bfk_2$, hence \eqref{kernel-eq-n0} implies $\hat{\Phi}_{\bfk}=\hat{\eta}_{\bfk}=0$ for $\bfk\neq \pm\bfk_1,\pm \bfk_2$. It follows from these calculations that 
%\begin{equation*}
%\ker(J_0^\beta)=\spn\{v_{\bfk_1}, v_{\bfk_2},\bar{v}_{\bfk_1},\bar{v}_{\bfk_2}\}.
%\end{equation*}.
We next consider the range of $J_{10}$. Let 
\begin{align*}
f(\bfx)&=\sum_{\bfk\in\Lambda^\prime}\hat{f}_{\bfk}\ee^{\ii\bfk\cdot\bfx} \in H^s(\mathbb{R}^2/\Lambda).
\end{align*}
The equation $J_{10} \eta=f$ is equivalent to
$$
g\hat\eta_{\bfzero} = \hat{f}_{\bfzero}
$$
and
\begin{equation}\label{range-eq}
\frac{\mathtt{c}(|\bfk|)}{|\bfk|^2}\rho(\bfk,\bfc_0,\beta)\hat{\eta}_{\bfk}=\hat{f}_{\bfk}
\end{equation}
for $\bfk \in \Lambda^\prime\setminus\{\bfzero\}$. Obviously
\begin{equation}
\hat\eta_{\bfzero} = \frac{1}{g}\hat{f}_{\bfzero}, \label{eq:zeromode}
\end{equation}
while for $\bfk\neq \pm\bfk_1,\pm \bfk_2$ equation \eqref{range-eq} has the unique solution 
\begin{equation}\label{eq:solution-linear-eq}
\hat{\eta}_{\bfk}=\frac{|\bfk|^2}{\mathtt{c}(|\bfk|)\rho(\bfk,\bfc_0,\beta)}\hat{f}_{\bfk},
\end{equation}
and for $\bfk=\pm\bfk_1,\pm\bfk_2$ it is solvable if and only if $\hat{f}_{\pm\bfk_1}=\hat{f}_{\pm\bfk_2}=0$.
For $\beta>0$ we find that $\rho(\bfk,\bfc_0,\beta)\gtrsim |\bfk|^3$ for sufficiently large $|\bfk|$, so that the series
\begin{equation*}
\sum_{\bfk\in\Lambda^\prime \atop \bfk\neq \pm\bfk_1,\pm\bfk_2}\hat{\eta}_{\bfk}\ee^{\ii\bfk\cdot\bfx},
\end{equation*}
where $\hat{\eta}_{\bfk}$ is given by \eqref{eq:zeromode}, \eqref{eq:solution-linear-eq}, converges in $H^{s+2}(\mathbb{R}^2/\Lambda)$. It follows that
$J_{10}\colon H^{s+2}(\mathbb{R}^2/\Lambda) \rightarrow H^s(\mathbb{R}^2/\Lambda) $ is Fredholm with index $0$, where
\begin{equation*}
\ran(J_{10})=\{f\in H^s(\mathbb{R}^2/\Lambda)\colon \hat{f}_{\pm\bfk_1}=\hat{f}_{\pm\bfk_2}=0\}
\end{equation*}
and $J_{10}^{-1}\colon \ran(J_{10}) \rightarrow H^{s+2}(\mathbb{R}^2/\Lambda)$ is given by
\eqref{eq:zeromode}, \eqref{eq:solution-linear-eq}.
In contrast $\rho(\bfk,\bfc_0,0)$ is not bounded from below as $|\bfk|\rightarrow \infty$, so that \eqref{eq:zeromode},
\eqref{eq:solution-linear-eq} does not define a bounded operator from  $H^s(\mathbb{R}^2/\Lambda)$ to $H^{s+1}(\mathbb{R}^2/\Lambda)$ for any $s$.
We therefore proceed formally, noting that the procedure is rigorously valid for $\beta>0$.

To apply the Lyapunov--Schmidt reduction let $\Pi$ be the orthogonal projection of $H^s(\mathbb{R}^2/\Lambda)$ onto $\text{ker}(J_{10})$ with respect to the $L^2(\mathbb{R}^2/\Lambda)$ inner product $\langle \cdot\,,\cdot\rangle$. Write $\eta=\eta_1+\eta_2$, where
\begin{equation*}
\eta_1=A\ee^{\ii\bfk_1\cdot\bfx}+B\ee^{\ii\bfk_2\cdot\bfx}+\bar{A}\ee^{-\ii\bfk_1\cdot\bfx}+\bar{B}\ee^{-\ii\bfk_2\cdot\bfx},
\end{equation*}
and $\eta_2\in\ker(J_{10})^\perp= (I-\Pi)X_s^\beta$, and decompose \eqref{eq:hydroeq with mu} as
\begin{align}
\Pi J(\eta_1+\eta_2,\bfmu)&=0,\label{kereq}\\
(I-\Pi) J(\eta_1+\eta_2,\bfmu)&=0.\label{raneq}
\end{align}
The linearisation of $(I-\Pi) J$ at $0$ is 
\begin{equation*}
(I-\Pi)J_{10}\colon (I-\Pi)X_s^\beta\rightarrow (I-\Pi)H^s(\mathbb{R}^2/\Lambda).
\end{equation*} 
For $\beta>0$ this operator is an isomorphism (see above) and we can solve \eqref{raneq} to determine $\eta_2$ as a locally analytic function of $\eta_1$ and $\bfmu$; substituting $\eta_2=\eta_2(\eta_1,\mu)$ into \eqref{kereq} yields the reduced equation
\begin{equation}
\Pi J(\eta_1+\eta_2(\eta_1,\bfmu),\bfmu)=0. \label{eq:redeq}
\end{equation}\enlargethispage{5mm}
Note that $\eta_2={\mathcal O}(|(\eta_1,\bfmu)||\eta_1|)$ and the left-hand side of equation \eqref{eq:redeq} is
also ${\mathcal O}(|(\eta_1,\bfmu)||\eta_1|)$ because\linebreak $\Pi J_{10}(\eta_1 +\eta_2(\eta_1,\bfmu))=0$.
For $\beta=0$ we can only formally solve \eqref{raneq} for $\eta_2$ as a function of $\eta_1$ and $\bfmu$.

We proceed to solve equation \eqref{eq:redeq}, which can be written as
$$
\langle  J(\eta_1+\eta_2(\eta_1,\bfmu),\bfmu),\ee^{\ii\bfk_i\cdot\bfx}\rangle=0,\qquad i=1, 2,
$$
because $ J$ is real-valued. We write these equations as
\begin{align*}
f_1(A,B,\bar{A},\bar{B},\bfmu)&=0,\\
f_2(A,B,\bar{A},\bar{B},\bfmu)&=0,
\end{align*}
and note that
$$f_j(A,B,\bar{A},\bar{B},\bfmu) = O(|(A,B,\bfmu)||(A,B)|).$$
Recall that $ J$ is equivariant with respect to the symmetries $S_0$ and $T_{\bfv^\prime}$ (see Remark \ref{rem:symmetries}), which act on the coordinates $(A,B,\bar{A},\bar{B})$ as 
$$S_0(A,B,\bar{A},\bar{B})=(\bar{A},\bar{B},A,B), \qquad T_{\bfv^\prime}(A,B,\bar{A},\bar{B})=(A\ee^{\ii\bfk_1\cdot\bfv^\prime},B\ee^{\ii\bfk_2\cdot\bfv^\prime},\bar{A}\ee^{-\ii\bfk_1\cdot\bfv^\prime},\bar{B}\ee^{-\ii\bfk_2\cdot\bfv^\prime}),$$
so that the reduced equation remains equivariant under these symmetries, that is
\begin{align*}
f_1(A\ee^{\ii\bfk_1\cdot\bfv^\prime},B\ee^{\ii\bfk_2\cdot\bfv^\prime},\bar{A}\ee^{-\ii\bfk_1\cdot\bfv^\prime},\bar{B}\ee^{-\ii\bfk_2\cdot\bfv^\prime},\bfmu)&=\ee^{\ii\bfk_1\cdot\bfv^\prime}f_1(A,B,\bar{A},\bar{B},\bfmu),\\
f_2(A\ee^{\ii\bfk_1\cdot\bfv^\prime},B\ee^{\ii\bfk_2\cdot\bfv^\prime},\bar{A}\ee^{-\ii\bfk_1\cdot\bfv^\prime},\bar{B}\ee^{-\ii\bfk_2\cdot\bfv^\prime},\bfmu)&=\ee^{\ii\bfk_2\cdot\bfv^\prime}f_2(A,B,\bar{A},\bar{B},\bfmu),\\
f_1(\bar{A},\bar{B},A,B,\bfmu)&=\bar{f}_1(A,B,\bar{A},\bar{B},\bfmu),\\
f_2(\bar{A},\bar{B},A,B,\bfmu)&=\bar{f}_2(A,B,\bar{A},\bar{B},\bfmu).
\end{align*}
It follows that
\begin{align}
f_1(A,B,\bar{A},\bar{B},\bfmu)&=Ag_1(|A|^2,|B|^2,\bfmu), \label{f1 eqn}\\
f_2(A,B,\bar{A},\bar{B},\bfmu)&=Bg_2(|A|^2,|B|^2,\bfmu), \label{f2 eqn}
\end{align}
where $g_1,g_2$ are real-valued locally analytic functions which vanish at the origin.

Solutions to equations \eqref{f1 eqn}, \eqref{f2 eqn} 
with $A \neq 0$, $B=0$, such that
\begin{equation*}
g_1(|A|^2,0,\bfmu)=0,
\end{equation*}
lead to solutions of \eqref{eq:hydroeq with mu}
of the form $\eta=\eta_1+\eta_2(\eta_1,\bfmu)$ with $\eta_1=A\ee^{\ii\bfk_1\cdot\bfx}+\overline{A}\ee^{-\ii\bfk_1\cdot\bfx}$,
so that $\eta$ depends on the single variable $\tilde{x}\coloneqq \bfk_1\cdot\bfx$. Such waves are often called \emph{$2\tfrac{1}{2}$-dimensional waves} since they only depend upon one horizontal variable $\tilde{x}$. Similarly, solutions to \eqref{f1 eqn}, \eqref{f2 eqn} 
with $A = 0$, $B \neq 0$ give rise to $2\tfrac{1}{2}$-dimensional waves depending on the single horizontal variable $\bfk_2\cdot\bfx$. We refer to Lokharu, Seth and Wahl\'{e}n \cite[Section 1.2.2]{LokharuWahlen19} for a more detailed discussion on $2\tfrac{1}{2}$-dimensional waves.  Fully three-dimensional waves are found by assuming that $A\neq 0$ and $B \neq 0$, in which case \eqref{f1 eqn}, \eqref{f2 eqn}  are equivalent to 
\begin{align}
g_1(|A|^2,|B|^2,\bfmu)&=0, \label{reduced_eq1}\\
g_2(|A|^2,|B|^2,\bfmu)&=0. \label{reduced_eq2}
\end{align}

\begin{proposition}\label{implicit_prop}
There exist $\varepsilon>0$ and analytic functions $\mu_i\colon B_\varepsilon ({\bf 0},\mathbb{R}^2)\rightarrow {\mathbb R}$, $i=1,2$ such that $\mu_i(0,0)=0$ and $(|A|^2,|B|^2,\mu_1(|A|^2,|B|^2),\mu_2(|A|^2,|B|^2))$ is the unique local solution of \eqref{reduced_eq1}, \eqref{reduced_eq2}.
\end{proposition}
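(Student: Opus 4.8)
The plan is to obtain $\mu_1$, $\mu_2$ from the analytic implicit-function theorem applied to the map
$$G\colon (s,t,\bfmu)\longmapsto\big(g_1(s,t,\bfmu),\,g_2(s,t,\bfmu)\big),$$
which is analytic from a neighbourhood of the origin in ${\mathbb R}^2\times{\mathbb R}^2$ into ${\mathbb R}^2$ and satisfies $G(0,0,\bfzero)=\bfzero$ (recall $g_1$, $g_2$ vanish at the origin). If the $2\times2$ matrix $\mathrm{d}_\bfmu G(0,0,\bfzero)$ is invertible, the theorem yields a unique analytic $\bfmu=\bfmu(s,t)$ near $(0,0)$ with $\bfmu(0,0)=\bfzero$ and $G(s,t,\bfmu(s,t))=\bfzero$, and putting $s=|A|^2$, $t=|B|^2$ gives the assertion together with the stated uniqueness. (We work in the regime $\beta>0$, where the Lyapunov--Schmidt reduction, and hence $g_1$, $g_2$, are genuinely analytic; for $\beta=0$ the same computation is valid at the level of formal power series.) Everything thus reduces to computing $\mathrm{d}_\bfmu G(0,0,\bfzero)$.

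The key step is to identify $g_i(0,0,\bfmu)$ with the dispersion function. By definition $f_i(A,B,\bar A,\bar B,\bfmu)=\langle J(\eta_1+\eta_2(\eta_1,\bfmu),\bfmu),\ee^{\ii\bfk_i\cdot\bfx}\rangle$ with $f_1=Ag_1$, $f_2=Bg_2$, so $g_i(0,0,\bfmu)$ is the coefficient of $A$ (respectively $B$) in $f_i$ with the other modulus set to zero. I would expand $J$ about $\eta=0$: since $\bfS(0)=\bfzero$ one has $\bfT(0)=\bfzero$ and $\underline{\bfu}^\star\cdot\bfN|_{\eta=0}=0$, hence $J(0,\bfmu)=\bfzero$ and $J(\eta_1+\eta_2,\bfmu)=\mathrm{d}_1J[0,\bfmu](\eta_1+\eta_2)+{\mathcal O}(|\eta_1+\eta_2|^2)$. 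Because $\eta_2={\mathcal O}(|(\eta_1,\bfmu)||\eta_1|)$, the quadratic remainder is ${\mathcal O}(|(A,B)|^2)$ and contributes nothing linear; and since $\eta_2\in\ker(J_{10})^\perp$ while $\mathrm{d}_1J[0,\bfmu]$ is a Fourier multiplier, the $\eta_2$-part of $\mathrm{d}_1J[0,\bfmu](\eta_1+\eta_2)$ has vanishing $\ee^{\pm\ii\bfk_i\cdot\bfx}$ Fourier coefficients and drops out of $\langle\,\cdot\,,\ee^{\ii\bfk_i\cdot\bfx}\rangle$. The computation of $J_{10}=\mathrm{d}_1J[0,\bfzero]$ carried out above goes through verbatim with $\bfc_0$ replaced by $\bfc_0+\bfmu$ (the reference velocity entering only through $\rho$), so $\mathrm{d}_1J[0,\bfmu]$ is the Fourier multiplier with symbol $\mathtt{c}(|\bfk|)\,|\bfk|^{-2}\rho(\bfk,\bfc_0+\bfmu,\beta)$ on the modes $\bfk\neq\bfzero$. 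Collecting these facts gives
$$g_i(0,0,\bfmu)=|\Omega|\,\frac{\mathtt{c}(|\bfk_i|)}{|\bfk_i|^2}\,\rho(\bfk_i,\bfc_0+\bfmu,\beta),\qquad i=1,2.$$

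Differentiating in $\bfmu$ at $\bfmu=\bfzero$ and using $\rho(\bfk_i,\bfc_0,\beta)=0$ then yields
$$\nabla_\bfmu g_i(0,0,\bfzero)=|\Omega|\,\frac{\mathtt{c}(|\bfk_i|)}{|\bfk_i|^2}\,\nabla_\bfc\rho(\bfk_i,\bfc_0,\beta),\qquad i=1,2.$$
The scalar prefactors are non-zero: $\bfk_i\neq\bfzero$, and (NR) keeps $h\sqrt{\alpha^2-|\bfk_i|^2}$ away from the zeros and poles of $\cot$ when $|\bfk_i|<|\alpha|$, while $\mathtt{c}(|\bfk_i|)>0$ automatically when $|\bfk_i|>|\alpha|$. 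Hence the two rows of $\mathrm{d}_\bfmu G(0,0,\bfzero)$ are non-zero scalar multiples of $\nabla_\bfc\rho(\bfk_1,\bfc_0,\beta)$ and $\nabla_\bfc\rho(\bfk_2,\bfc_0,\beta)$, which are linearly independent by the transversality condition (T); therefore $\mathrm{d}_\bfmu G(0,0,\bfzero)$ is invertible and the analytic implicit-function theorem applies. I expect the main obstacle to be the bookkeeping in the previous paragraph --- rigorously verifying that neither the quadratic-in-$\eta$ remainder nor the correction $\eta_2$ affects the coefficient of $A$ (respectively $B$) in $f_i$, so that $g_i(0,0,\bfmu)$ really is a non-zero multiple of $\rho(\bfk_i,\bfc_0+\bfmu,\beta)$; once that identification is in hand, the remainder is a direct application of (T) and the implicit-function theorem.
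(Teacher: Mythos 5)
Your proposal is correct and follows essentially the same route as the paper: both reduce the problem to the analytic implicit-function theorem applied in $\bfmu$, identify the linearisation $\nabla_{\bfmu}g_i(0,0,\bfzero)$ as a non-zero multiple of $\nabla_{\bfc}\rho(\bfk_i,\bfc_0,\beta)$, and conclude invertibility from the transversality condition (T). Your derivation of $g_i(0,0,\bfmu)$ as a multiple of $\rho(\bfk_i,\bfc_0+\bfmu,\beta)$ (via the Fourier-multiplier structure of $\mathrm{d}_1J[0,\bfmu]$ and the orthogonality of $\eta_2$) is exactly the ``short calculation'' the paper invokes to relate $\langle J_{1j}\ee^{\ii\bfk\cdot\bfx},\ee^{\ii\bfk\cdot\bfx}\rangle$ to $\partial_{c_j}\rho(\bfk,\bfc_0,\beta)$.
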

\begin{proof}
Write equations \eqref{reduced_eq1}, \eqref{reduced_eq2} as
\begin{align}
a_1\mu_1+a_2\mu_2+\mathcal{O}(|(|A|^2,|B|^2)|+|(|A|^2,|B|^2,\bfmu)|^2)&=0, \label{eq:trans1} \\
b_1\mu_1+b_2\mu_2+\mathcal{O}(|(|A|^2,|B|^2)|+|(|A|^2,|B|^2,\bfmu)|^2)&=0, \label{eq:trans2}
\end{align}
where
\begin{align*}
a_1&=\langle J_{11}\ee^{\ii\bfk_1\cdot\bfx},\ee^{\ii\bfk_1\cdot\bfx}\rangle, & b_1&=\langle J_{11}\ee^{\ii\bfk_2\cdot\bfx},\ee^{\ii\bfk_2\cdot\bfx}\rangle,\\
a_2&=\langle J_{12}\ee^{\ii\bfk_1\cdot\bfx},\ee^{\ii\bfk_1\cdot\bfx}\rangle, &b_2&=\langle J_{12}\ee^{\ii\bfk_2\cdot\bfx},\ee^{\ii\bfk_2\cdot\bfx}\rangle,
\end{align*}
and $J_{11}=\partial_{\mu_1}\mathrm{d}_1 J[0,\bfmu]|_{\bfmu=0}$, $J_{12}=\partial_{\mu_2}\mathrm{d}_1 J[0,\bfmu]|_{\bfmu=0}$.
A short calculation shows that
$$
\frac{\partial}{\partial c_{1}}\rho(\bfk,\bfc_0,\beta)=\frac{|\bfk|^2}{\mathtt{c}(|\bfk|)}\langle J_{11}\ee^{\ii\bfk\cdot\bfx},\ee^{\ii\bfk\cdot\bfx}\rangle,\qquad
\frac{\partial}{\partial c_{2}}\rho(\bfk,\bfc_0,\beta)=\frac{|\bfk|^2}{\mathtt{c}(|\bfk|)}\langle J_{12}\ee^{\ii\bfk\cdot\bfx},\ee^{\ii\bfk\cdot\bfx}\rangle,
$$
and hence
\begin{align*}
a_1&=\frac{\mathtt{c}(|\bfk_1|)}{|\bfk_1|^2}\frac{\partial}{\partial{c_{1}}}\rho(\bfk_1,\bfc_0,\beta),& b_1&=\frac{\mathtt{c}(\bfk_2)}{|\bfk_2|^2}\frac{\partial}{\partial{c_{1}}}\rho(\bfk_2,\bfc_0,\beta),\\
a_2&=\frac{\mathtt{c}(\bfk_1)}{|\bfk_1|^2}\frac{\partial}{\partial{c_{2}}}\rho(\bfk_1,\bfc_0,\beta), & b_2&=\frac{\mathtt{c}(\bfk_2)}{|\bfk_2|^2}\frac{\partial}{\partial{c_{2}}}\rho(\bfk_2,\bfc_0,\beta).
\end{align*}
Equations \eqref{eq:trans1}, \eqref{eq:trans2} can be locally solved for $\mu_1$, $\mu_2$ as functions of $|A|^2$, $|B|^2$
by the implicit function theorem provided that
$$
\det\left(\begin{array}{cc}
a_1 & a_2\\
b_1 & b_2
\end{array}\right)\neq 0.
$$
The above formulae show that this condition holds if and only if
$\nabla_{\!\bfc}\,\rho(\bfk_1,\bfc_0,\beta)$ and $\nabla_{\!\bfc}\,\rho(\bfk_2,\bfc_0,\beta)$ are linearly independent.
\end{proof}

Our main result now follows by substituting $\bfmu = \bfmu(|A|^2,|B|^2)$ into $\eta=\eta_1+\eta_2(\eta_1,\bfmu)$.
\begin{theorem}
Suppose that $\beta>0$. There exist $\varepsilon>0$, a neighbourhood $V$ of the origin in $X_s^\beta \times {\mathbb R}^2$ and analytic functions
$\mu_1,\mu_2 \colon B_\varepsilon ({\bf 0},{\mathbb R}^2)\rightarrow {\mathbb R}$ and $\eta \colon B_\varepsilon ({\bf 0},{\mathbb C}^4) \rightarrow X_s^\beta$ such that
$$\{(\eta,\bfmu) \in X_s^\beta \times {\mathbb R}^2 \colon  J(\eta,\bfmu)=0,\ \eta \neq 0\} \cap V = \{ (\eta(A,B,\bar{A},\bar{B}),\bfmu(|A|^2,|B|^2)) \colon (A,B,\bar{A},\bar{B}) \in B_\varepsilon^\prime ({\bf 0},{\mathbb C}^4)\};$$
furthermore $\bfmu(0,0)=\bfzero$ and
$$\eta(\bfx)=A\ee^{\ii\bfk_1\cdot\bfx}+B\ee^{\ii\bfk_2\cdot\bfx}+\bar{A}\ee^{-\ii\bfk_1\cdot\bfx}+\bar{B}\ee^{-\ii\bfk_2\cdot\bfx}
+O(|(A,B,\bar{A},\bar{B})|^2).$$
\end{theorem}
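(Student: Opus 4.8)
The plan is to obtain the theorem as the final step of a Lyapunov--Schmidt reduction applied to $J$, resting on the analytic machinery of Sections~\ref{sec:operators}--\ref{sec:asympexpNHOp}. First I would confirm that, for $s$ large enough and $\beta>0$, $J$ is a locally analytic map $X_s^\beta\times{\mathbb R}^2\to H^s({\mathbb R}^2/\Lambda)$: this follows by combining the analyticity of $\eta\mapsto\bfT(\eta)$ in Theorem~\ref{thm:analGDNO} (which fixes the derivative count, so that $\bfT(\eta)\in H^s$ precisely when $\eta\in X_s^\beta=H^{s+2}$) with the analyticity of multiplication, of $\eta\mapsto(1+|\nabla\eta|^2)^{-1}$, and of the Fourier multipliers $\Delta^{-1}$ and $\nablap$ on the relevant Sobolev spaces; the surface-tension term $-\beta\Delta\eta$ is exactly accommodated by the two extra derivatives in $X_s^\beta$.

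Next I would carry out the reduction. From the analysis preceding Proposition~\ref{implicit_prop}, $J_{10}=\mathrm{d}_1J[0,\bfzero]$ is Fredholm of index $0$, with $\ker(J_{10})$ spanned by $\ee^{\pm\ii\bfk_1\cdot\bfx}$, $\ee^{\pm\ii\bfk_2\cdot\bfx}$ and $\ran(J_{10})=\{f:\hat f_{\pm\bfk_1}=\hat f_{\pm\bfk_2}=0\}$; moreover for $\beta>0$ the restriction $(I-\Pi)J_{10}\colon(I-\Pi)X_s^\beta\to(I-\Pi)H^s$ is an isomorphism with bounded inverse (this is where $\beta>0$ is essential, since $\rho(\bfk,\bfc_0,\beta)\gtrsim|\bfk|^3$). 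Writing $\eta=\eta_1+\eta_2$ with $\eta_1=A\ee^{\ii\bfk_1\cdot\bfx}+B\ee^{\ii\bfk_2\cdot\bfx}+\bar{A}\ee^{-\ii\bfk_1\cdot\bfx}+\bar{B}\ee^{-\ii\bfk_2\cdot\bfx}\in\ker(J_{10})$ and $\eta_2\in(I-\Pi)X_s^\beta$, I would solve the range equation $(I-\Pi)J(\eta_1+\eta_2,\bfmu)=\bfzero$ by the analytic implicit-function theorem for $\eta_2=\eta_2(\eta_1,\bfmu)$, locally analytic with $\eta_2={\mathcal O}(|(\eta_1,\bfmu)|\,|\eta_1|)$ (using $\eta_2\equiv0$ when $\eta_1=0$, since $(0,\bfmu)$ solves $J=0$ identically because $\bfT(0)=\bfzero$). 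Substituting into the kernel equation and exploiting the $S_0$-- and $T_{\bfv^\prime}$--equivariance of $J$ (Remark~\ref{rem:symmetries}), as in \eqref{f1 eqn}--\eqref{f2 eqn}, yields $f_1=Ag_1(|A|^2,|B|^2,\bfmu)$ and $f_2=Bg_2(|A|^2,|B|^2,\bfmu)$ with $g_1,g_2$ real-analytic and vanishing at the origin.

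I would then invoke Proposition~\ref{implicit_prop}, whose hypothesis is supplied by the transversality condition (T) (non-vanishing of the Jacobian determinant $a_1b_2-a_2b_1$): this gives analytic $\mu_1,\mu_2\colon B_\varepsilon({\bf 0},{\mathbb R}^2)\to{\mathbb R}$ with $\mu_i(0,0)=0$ solving $g_1=g_2=0$. Setting $\bfmu=\bfmu(|A|^2,|B|^2)$ makes $f_1=Ag_1=0$ and $f_2=Bg_2=0$ simultaneously, so $\eta(A,B,\bar{A},\bar{B}):=\eta_1+\eta_2\big(\eta_1,\bfmu(|A|^2,|B|^2)\big)$ solves \eqref{eq:hydroeq with mu}; since $|\bfmu(|A|^2,|B|^2)|=O(|(A,B,\bar{A},\bar{B})|^2)$ one gets $\eta_2={\mathcal O}(|\eta_1|^2)$ and hence the asserted expansion $\eta=\eta_1+O(|(A,B,\bar{A},\bar{B})|^2)$. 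For the converse inclusion, any solution $(\eta,\bfmu)\in V$ with $\eta\neq\bfzero$ satisfies $\eta=\eta_1+\eta_2(\eta_1,\bfmu)$ with $(A,B)\neq(0,0)$ (otherwise $\eta=\eta_2(0,\bfmu)=\bfzero$), the reduced equations force $Ag_1=Bg_2=0$, and the uniqueness clause of Proposition~\ref{implicit_prop} identifies $\bfmu$ with $\bfmu(|A|^2,|B|^2)$; since $(A,B)$ is read off from the $\pm\bfk_1,\pm\bfk_2$ Fourier modes of $\eta$, the parametrisation is injective, and shrinking $V$ completes the identification of the solution set.

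The substantive work is upstream, in the analyticity, differentiability and Fredholm results of Sections~\ref{sec:operators}--\ref{sec:asympexpNHOp}; within the present proof the only step demanding real care is the Sobolev-space bookkeeping that makes $J$ a locally analytic map between the correct spaces, in particular the choice $X_s^\beta=H^{s+2}$ forced by the regularity loss in $\bfM$ and $\bfT$ together with the surface-tension term. Once that is settled, the theorem is a formal consequence of the Fredholm property of $J_{10}$, the analytic implicit-function theorem, and Proposition~\ref{implicit_prop}.
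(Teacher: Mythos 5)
Your proposal follows essentially the same route as the paper: the same Lyapunov--Schmidt decomposition $\eta=\eta_1+\eta_2$, solving the range equation by the analytic implicit-function theorem (valid only for $\beta>0$ because $\rho(\bfk,\bfc_0,\beta)\gtrsim|\bfk|^3$), reducing the kernel equation to $Ag_1=Bg_2=0$ via the $S_0$- and $T_{\bfv^\prime}$-equivariance, and then invoking Proposition~\ref{implicit_prop} and substituting $\bfmu=\bfmu(|A|^2,|B|^2)$ back into $\eta=\eta_1+\eta_2(\eta_1,\bfmu)$. The paper states this final step in one line; your write-up is a correct elaboration of it (including the converse inclusion, which the paper leaves implicit), so there is nothing further to add.
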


\begin{remarks} $ $
\begin{itemize}
\item[(i)] Elements of the solution set $\{ (\eta(A,B,\bar{A},\bar{B}),\bfmu(|A|^2,|B|^2)) \colon (A,B,\bar{A},\bar{B}) \in B_\varepsilon^\prime ({\bf 0},{\mathbb C}^4)\}$
with $A=0$ or $B=0$ are $2\tfrac{1}{2}$-dimensional waves (see above).
\item[(ii)] Elements of the solution set $\{ (\eta(A,B,\bar{A},\bar{B}),\bfmu(|A|^2,|B|^2)) \colon (A,B,\bar{A},\bar{B}) \in B_\varepsilon^\prime ({\bf 0},{\mathbb C}^4)\}$
with $A$, $B \in {\mathbb R}$ are waves which are invariant under the reflection $S_0$. Note that it is possible to restrict to such solutions before performing the
Lyapunov--Schmidt reduction; this approach was taken by Craig and Nicholls \cite{CraigNicholls02} in a similar study of irrotational travelling waves.
\end{itemize}
\end{remarks}

The terms in the series
$$\eta=A\ee^{\ii\bfk_1\cdot\bfx}+B\ee^{\ii\bfk_2\cdot\bfx}+\bar{A}\ee^{-\ii\bfk_1\cdot\bfx}+\bar{B}\ee^{-\ii\bfk_2\cdot\bfx}
+\hspace{-4mm}\sum_{i+j+k+l \geq 2}\hspace{-2mm}\eta_{ijkl}A^iB^j\bar{A}^k\bar{B}^l
$$
and
$$\mu_i = \sum_{j+k \geq 1} \mu_{i,jk} |A|^{2j} |B|^{2k}, \qquad i=1,2,$$
can be determined recursively by substituting these expressions into \eqref{eq:hydroeq with mu}
and equating monomials in $(A,B,\bar{A},\bar{B})$. Note that the series can be computed to any order for $\beta \geq 0$
but their convergence has been established only for $\beta>0$.

\begin{itemize}
\item
We find that
\begin{equation*}
\eta_{2,2}(\eta_1)=\sum_{i+j+k+l=2}\eta_{2,ijkl}A^iB^j\bar{A}^k\bar{B}^l
\end{equation*}
satisfies the equation
$$
J_{10} \eta_{2,2}=-J_{20}(\eta_1,\eta_1), \label{eq:matching}
$$
where $J_{20}\coloneqq \frac{1}{2}\mathrm{d}_1^2 J[0,\bfzero]$, so that
\begin{align*}
J_{20}(\eta_1,\eta_1)&=A^2J_{20}(\ee^{\ii\bfk_1\cdot\bfx},\ee^{\ii\bfk_1\cdot\bfx})+2ABJ_{20}(\ee^{\ii\bfk_1\cdot\bfx},\ee^{\ii\bfk_2\cdot\bfx})+2|A|^2J_{20}(\ee^{\ii\bfk_1\cdot\bfx},\ee^{-\ii\bfk_1\cdot\bfx})\\
&\qquad\mbox{} +2A\bar{B}J_{20}(\ee^{\ii\bfk_1\cdot\bfx},\ee^{-\ii\bfk_2\cdot\bfx})+B^2J_{20}(\ee^{\ii\bfk_2\cdot\bfx},\ee^{\ii\bfk_2\cdot\bfx})+2\bar{A}BJ_{20}(\ee^{-\ii\bfk_1\cdot\bfx},\ee^{\ii\bfk_2\cdot\bfx})\\
&\qquad\mbox{} +2|B|^2J_{20}(\ee^{\ii\bfk_2\cdot\bfx},\ee^{-\ii\bfk_2\cdot\bfx})+\bar{A}^2J_{20}(\ee^{-\ii\bfk_1\cdot\bfx},\ee^{-\ii\bfk_1\cdot\bfx})+2\bar{A}\bar{B}J_{20}(\ee^{-\ii\bfk_1\cdot\bfx},\ee^{-\ii\bfk_2\cdot\bfx})\\
&\qquad\mbox{} +\bar{B}^2J_{20}(\ee^{-\ii\bfk_2\cdot\bfx},\ee^{-\ii\bfk_2\cdot\bfx}).
\end{align*}

For $\bfl,\bfk$ with $\bfk \neq -\bfl$ we find that
\begin{align*}
J_{20}(\ee^{\ii\bfk\cdot\bfx}, \ee^{\ii\bfl\cdot\bfx})&=\underbrace{\bigg[\frac{1}{2}\mathtt{T}_{10}(\bfk)\cdot\mathtt{T}_{10}(\bfl)+\frac{1}{2}(\bfk\cdot \bfc_0)(\bfl \cdot\bfc_0)+\mathtt{T}_{20,2}(\bfk,\bfl)\cdot \bfc_0+\frac{\alpha}{2}(\mathtt{T}_{10}(\bfk)+\mathtt{T}_{10}(\bfl))\cdot\bfc_0^\perp\bigg]}_{\displaystyle \eqqcolon \mathtt{p}_{20,2}(\bfk,\bfl)}\ee^{\ii(\bfk+\bfl)\cdot\bfx},
\end{align*}
while
\begin{align*}
J_{20}(\ee^{\ii\bfk\cdot\bfx},\ee^{-\ii\bfk\cdot\bfx})&=\underbrace{\frac{1}{2}|\mathtt{T}_{10}(\bfk)|^2-\frac{1}{2}(\bfk\cdot\bfc_0)^2+\mathtt{T}_{20,1}(\bfk)\cdot\bfc_0+\alpha\mathtt{T}_{10}(\bfk)\cdot\bfc_0^\perp}_{\displaystyle \eqqcolon\mathtt{p}_{20,1}(\bfk)},
\end{align*}
where
\begin{align*}
\mathtt{T}_{10}(\bfk) & = - \left( \alpha \, \bfk^{\perp} + \bfk \, \mathtt{c}(|\bfk|) \right) \frac{ \bfc_0 \cdot \bfk}{|\bfk|^2}, \\
\mathtt{T}_{20,2}(\bfk,\bfl)&=\frac{1}{2|\bfk+\bfl|^2}\left(\alpha(\bfk+\bfl)^\perp+(\bfk+\bfl)\mathtt{c}(|\bfk+\bfl|)\right)\bigg[\alpha(\bfk+\bfl)\cdot\bfc_0+\alpha(\bfk\cdot\bfl^\perp)\left(\frac{\bfc_0\cdot\bfl}{|\bfl|^2}-\frac{\bfc_0\bfk}{|\bfk|^2}\right)\\
&\hspace{6.75cm}\mbox{} +(\bfk+\bfl)\cdot\left(\frac{\bfc_0\cdot\bfl}{|\bfl|^2}\mathtt{c}(|\bfl|)\bfl+\frac{\bfc_0\cdot\bfk}{|\bfk|^2}\mathtt{c}(|\bfk|)\bfk\right)\bigg]\\
& \qquad\mbox{}+\left((\alpha^2-|\bfl|^2)\bfl-\alpha\mathtt{c}(|\bfl|)\bfl^\perp\right)\frac{\bfc_0\cdot\bfl}{2|\bfl|^2}
+\left((\alpha^2-|\bfk|^2)\bfk-\alpha\mathtt{c}(|\bfk|)\bfk^\perp\right)\frac{\bfc_0\cdot\bfk}{2|\bfk|^2}\\
&\qquad -\frac{1}{2}\bfk(\bfc_0\cdot\bfl)-\frac{1}{2}\bfl(\bfc_0\cdot\bfk),\\
\mathtt{T}_{20,1}(\bfk)&=\alpha(\alpha\bfk-\mathtt{c}(|\bfk|)\bfk^\perp).
\end{align*}
The solution of the equation
\begin{equation*}
J_{10} f \ee^{\ii(\bfl+\bfk)\cdot\bfx}=\mathtt{p}_{20,2}(\bfk,\bfl)\ee^{\ii(\bfl+\bfk)\cdot\bfx}, \qquad \bfk \neq -\bfl,
\end{equation*}
is
\begin{align*}
f=\underbrace{\frac{|\bfk+\bfl|^2}{\mathtt{c}(|\bfk+\bfl|)\rho(\bfk+\bfl,\bfc_0,\beta)}\mathtt{p}_{20,2}(\bfk,\bfl)}_{\displaystyle \eqqcolon\mathtt{q}_{20,2}(\bfl,\bfk)},
\end{align*}
while the solution of
\begin{equation*}
J_{10} f=\mathtt{p}_{20,1}(\bfk)
\end{equation*}
is simply
\begin{equation*}
f=\frac{1}{g}\mathtt{p}_{20,1}(\bfk).
\end{equation*}
Altogether we find that
\begin{align*}
\eta_{2,2000}&=-\mathtt{q}_{20,2}(\bfk_1,\bfk_1)\ee^{2\ii\bfk_1\cdot\bfx}, & \eta_{2,0020}=\overline{\eta_{2,2000}},\\
\eta_{2,1100}&=-2\mathtt{q}_{20,2}(\bfk_1,\bfk_2)\ee^{\ii(\bfk_1+\bfk_2)\cdot\bfx},& \eta_{2,0011}=\overline{\eta_{2,1100}},\\
\eta_{2,1010}&=-\frac{2}{g}\mathtt{p}_{20,1}(\bfk_1),\\
\eta_{2,1001}&=-2\mathtt{q}_{20,2}(\bfk_1,-\bfk_2)\ee^{\ii(\bfk_1-\bfk_2)\cdot\bfx}, & \eta_{2,0110}=\overline{\eta_{2,1001}},\\
\eta_{2,0200}&=-\mathtt{q}_{20,2}(\bfk_2,\bfk_2)\ee^{2\ii\bfk_2\cdot\bfx}, & \eta_{2,0002}=\overline{\eta_{2,0200}},\\
\eta_{2,0101}&=-\frac{2}{g}\mathtt{p}_{20,1}(\bfk_2).
\end{align*}

\item
Expanding \eqref{reduced_eq1}, \eqref{reduced_eq2} further as
\begin{align*}
a_1\mu_1+a_2\mu_2+a_3|A|^2+a_4|B|^2+\mathcal{O}(|(|A|^2,|B|^2,\bfmu)|^2)&=0, \\
b_1\mu_1+b_2\mu_2+b_3|A|^2+b_4|B|^2+\mathcal{O}(|(|A|^2,|B|^2,\bfmu)|^2)&=0,
\end{align*}
we find that
\begin{align*}
\mu_1(|A|^2,|B|^2)&=-\frac{a_3b_2-a_2b_3}{a_1b_2-b_1a_2}|A|^2-\frac{a_4b_2-a_2b_4}{a_1b_2-b_1a_2}|B|^2+\mathcal{O}(|(|A|^2,|B|^2)|^2),\\
\mu_2(|A|^2,|B|^2)&=-\frac{a_1b_3-a_3b_1}{a_1b_2-b_1a_2}|A|^2-\frac{a_1b_4-a_4b_1}{a_1b_2-b_1a_2}|B|^2+\mathcal{O}(|(|A|^2,|B|^2)|^2).
\end{align*}
The coefficients $a_3,a_4,b_3,b_4$ are given by 
\begin{align*}
a_3&=\left\langle 2J_{20}(\ee^{\ii\bfk_1\cdot\bfx},\eta_{2,1010})+2J_{20}(\ee^{-\ii\bfk_1\bfx},\eta_{2,2000})+3J_{30}(\ee^{\ii\bfk_1\cdot\bfx},\ee^{\ii\bfk_1\cdot\bfx},\ee^{-\ii\bfk_1\cdot\bfx}),\ee^{\ii\bfk_1\cdot\bfx}\right\rangle\\
&=-\frac{4}{g}\mathtt{p}_{20,1}(\bfk_1)\mathtt{p}_{20,2}(\bfk_1,\bfzero)-2\mathtt{q}_{20,2}(\bfk_1,\bfk_1)\mathtt{p}_{20,2}(-\bfk_1,2\bfk_1)+3\mathtt{p}_{30,1}(\bfk_1),\\
a_4&=\left\langle 2J_{20}(\ee^{\ii\bfk_1\cdot\bfx},\eta_{2,0101})+2J_{20}(\ee^{\ii\bfk_2\cdot\bfx},\eta_{2,1001})+2J_{20}(\ee^{-\ii\bfk_2\cdot\bfx},\eta_{2,1100})
+6J_{30}(\ee^{\ii\bfk_1\cdot\bfx},\ee^{\ii\bfk_2\cdot\bfx},\ee^{-\ii\bfk_2\cdot\bfx}),\ee^{\ii\bfk_1\cdot\bfx}\right\rangle\\
&=-\frac{4}{g}\mathtt{p}_{20,1}(\bfk_2)\mathtt{p}_{20,2}(\bfk_1,\bfzero)-4\mathtt{q}_{20,2}(\bfk_1,-\bfk_2)\mathtt{p}_{20,2}(\bfk_2,\bfk_1-\bfk_2)-4\mathtt{q}_{20,2}(\bfk_1,\bfk_2)\mathtt{p}_{20,2}(-\bfk_2,\bfk_1+\bfk_2)+6\mathtt{p}_{30,2}(\bfk_1,\bfk_2),\\
b_3&=\left\langle 2J_{20}(\ee^{\ii\bfk_2\cdot\bfx},\eta_{2,1010})+2J_{20}(\ee^{\ii\bfk_1\cdot\bfx},\eta_{2,0110})+2J_{20}(\ee^{-\ii\bfk_1\cdot\bfx},\eta_{2,1100})+6J_{30}(\ee^{\ii\bfk_2\cdot\bfx},\ee^{\ii\bfk_1\cdot\bfx},\ee^{-\ii\bfk_1\cdot\bfx}),\ee^{\ii\bfk_2\cdot\bfx}\right\rangle\\
&=-\frac{4}{g}\mathtt{p}_{20,1}(\bfk_1)\mathtt{p}_{20,2}(\bfk_2,\bfzero)-4\mathtt{q}_{20,2}(-\bfk_1,\bfk_2)\mathtt{p}_{20,2}(\bfk_1,\bfk_2-\bfk_1)-4\mathtt{q}_{20,2}(\bfk_1,\bfk_2)\mathtt{p}_{20,2}(-\bfk_1,\bfk_1+\bfk_2)+6\mathtt{p}_{30,2}(\bfk_2,\bfk_1),\\
b_4&=\left\langle 2J_{20}(\ee^{\ii\bfk_2\cdot\bfx},\eta_{2,0101})+2J_{20}(\ee^{-\ii\bfk_2\cdot\bfx},\eta_{2,0200})+3J_{30}(\ee^{\ii\bfk_2\cdot\bfx},\ee^{\ii\bfk_2\cdot\bfx},\ee^{-\ii\bfk_2\cdot\bfx}),\ee^{\ii\bfk_2\cdot\bfx}\right\rangle\\
&=-\frac{4}{g}\mathtt{p}_{20,1}(\bfk_2)\mathtt{p}_{20,2}(\bfk_2,\bfzero)-2\mathtt{q}_{20,2}(\bfk_2,\bfk_2)\mathtt{p}_{20,2}(-\bfk_2,2\bfk_2)+3\mathtt{p}_{30,1}(\bfk_2),
\end{align*}
where $J_{30}=\frac{1}{3!}\mathrm{d}_1^3 J[0,\bfzero]$.

One finds that
\begin{alignat*}{2}
	J_{30}(\ee^{\ii\bfk\cdot\bfx},\ee^{\ii\bfk\cdot\bfx},\ee^{-\ii\bfk\cdot\bfx})&=\mathtt{p}_{30,1}(\bfk)\ee^{\ii\bfk\cdot\bfx}, & & \qquad \bfk\neq\bfzero,\\
	J_{30}(\ee^{\ii\bfk\cdot\bfx},\ee^{\ii\bfl\cdot\bfx},\ee^{-\ii\bfl\cdot\bfx})&=\mathtt{p}_{30,2}(\bfk,\bfl)\ee^{\ii\bfk\cdot\bfx},& & \qquad \bfk\neq-\bfl,
\end{alignat*}
where
\begin{align*}
\mathtt{p}_{30,1}(\bfk)&= \frac{2}{3}\mathtt{T}_{10}(\bfk)\cdot \mathtt{T}_{20,1}(\bfk)+\frac{1}{3}\mathtt{T}_{10}(\bfk)\cdot\mathtt{T}_{20,2}(\bfk,\bfk)-\frac{1}{3}\alpha(\bfc_0\cdot\bfk)(\bfc_0^\perp\cdot\bfk)-\frac{1}{3}(\bfc_0\cdot\bfk)(\mathtt{T}_{10}(\bfk)\cdot\bfk)\\
&\qquad\mbox{} -\frac{\alpha^2}{2}\mathtt{T}_{10}(\bfk)\cdot\bfc_0+\frac{\alpha}{3}\left(2\bfc_0^\perp\cdot\mathtt{T}_{20,1}(\bfk)+\bfc_0^\perp\cdot\mathtt{T}_{20,2}(\bfk,\bfk)\right)+\mathtt{T}_{30,1}(\bfk)\cdot\bfc_0-\frac{\beta}{2}|\bfk|^4,\\
\mathtt{T}_{30,1}(\bfk)&=-\frac{1}{3}\bfr_1(\bfk)\frac{c_0\cdot\bfk}{|\bfk|^2}\mathtt{c}(|\bfk|)\mathtt{c}(2|\bfk|)-\frac{1}{12}\bfr_2(2\bfk)\frac{\bfc\cdot\bfk}{|\bfk|^2}\mathtt{c}(|\bfk|)-\frac{1}{6}\bfk(\bfc_0\cdot\bfk)\mathtt{c}(|\bfk|)\\
&\qquad\mbox{} -\frac{1}{2}\bfr_1(\bfk)(\alpha^2-|\bfk|^2)\frac{\bfc_0\cdot\bfk}{|\bfk|^2}+\frac{\alpha}{3}\bfr_2(\bfk)^\perp\frac{\bfc_0\cdot\bfk}{|\bfk|^2}\\
&\qquad\mbox{}+\frac{\alpha}{12}\bfr_1(2\bfk)^\perp\frac{\bfc_0\cdot\bfk}{|\bfk|^2}\mathtt{c}(|\bfk|)+\frac{\alpha}{6}\bfr_2(\bfk)^\perp\frac{\bfc_0\cdot\bfk}{|\bfk|^2}+\frac{\alpha}{6}\bfr_1(\bfk)\frac{\bfc_0^\perp\cdot\bfk}{|\bfk|^2}\mathtt{c}(|\bfk|)\\
&\qquad\mbox{}+\frac{\alpha}{12}\bfr_2(2\bfk)\frac{\bfc_0^\perp\cdot\bfk}{|\bfk|^2}+\frac{\alpha}{6}\bfk_1(\bfc_0^\perp\cdot\bfk)+\frac{\alpha^2}{6}\bfr_1(\bfk)\frac{\bfc_0\cdot\bfk}{|\bfk|^2}
\end{align*}
and
\begin{align*}
\mathtt{p}_{30,2}(\bfk,\bfl)&=\frac{1}{3}\left(\mathtt{T}_{10}(\bfk)\cdot\mathtt{T}_{20,1}(\bfk)+\mathtt{T}_{10}(\bfl)\cdot\mathtt{T}_{20,2}(\bfk,-\bfl)+\mathtt{T}_{10}(\bfl)\cdot\mathtt{T}_{20,2}(\bfk,\bfl)\right)-\frac{\alpha}{3}(\bfc_0\cdot\bfl)(\bfc_0^\perp\cdot\bfl)\\
&\qquad\mbox{}-\frac{1}{3}(\bfc_0\cdot\bfl)(\mathtt{T}_{10}(\bfk)\cdot\bfl)-\frac{\alpha^2}{6}\left(\mathtt{T}_{10}(\bfk)\cdot\bfc_0+2\mathtt{T}_{10}(\bfl)\cdot\bfc_0\right)+\frac{\alpha}{3}\big(\bfc_0^\perp\cdot\mathtt{T}_{20,1}(\bfl)\\
&\qquad\mbox{}+\bfc_0^\perp\cdot\mathtt{T}_{20,2}(\bfk,-\bfl)+\bfc_0^\perp\cdot\mathtt{T}_{20,2}(\bfk,\bfl)\big)+\mathtt{T}_{30,2}(\bfk,\bfl)\cdot\bfc_0-\frac{\beta}{6}\left(|\bfk|^2|\bfl|^2+2(\bfk\cdot\bfl)^2\right),\\
\mathtt{T}_{30,2}(\bfk,\bfl)&=-\frac{1}{6}\bfr_1(\bfk)\frac{\bfk}{|\bfk|^2}\cdot\bigg(\bfr_1(\bfk-\bfl)\bigg[\frac{\bfk-\bfl}{|\bfk-\bfl|^2} \cdot \bfr_3(\bfk,\bfl)\bigg]+\bfr_1(\bfk+\bfl) \bigg[\frac{\bfk+\bfl}{|\bfk+\bfl|^2}\cdot\bfr_3(\bfk,\bfl)\bigg]\bigg)\\
&\qquad\mbox{}-\frac{1}{12}\bfr_2(\bfk-\bfl)\bigg[\frac{\bfk-\bfl}{|\bfk-\bfl|^2}\cdot\bfr_3(\bfk,\bfl)\bigg] -\frac{1}{12}\bfr_2(\bfk+\bfl)\bigg[\frac{\bfk+\bfl}{|\bfk+\bfl|^2}\cdot\bfr_3(\bfk,\bfl)\bigg]\\
&\qquad\mbox{} -\frac{1}{6}\bfl\bigg[\bfl\cdot\bfr_3(\bfk,\bfl)\bigg]-\frac{1}{6}\bfr_1(\bfk)\frac{\bfk}{|\bfk|^2} \cdot \bigg[2\bfr_2(\bfl)\frac{\bfc_0\cdot\bfl}{|\bfl|^2}+\bfr_2(\bfk)\frac{\bfc_0\cdot\bfk}{|\bfk|^2}\bigg]\\
&\qquad\mbox{}+\frac{1}{6}\bfk\bigg[\bfk\cdot\bfr_1(\bfl)\frac{\bfc_0\cdot\bfl}{|\bfl|^2}\bigg]+\frac{\alpha}{6}\bfr_2(\bfl)^\perp\frac{\bfc_0\cdot\bfl}{|\bfl|^2}\\
&\qquad\mbox{}+\frac{\alpha}{12}\bigg(\bfr_1(\bfk-\bfl)^\perp\bigg[\frac{\bfk-\bfl}{|\bfk-\bfl|^2}\cdot\bfr_3(\bfk,\bfl)\bigg]
+\bfr_2(\bfl)^\perp\frac{\bfc_0\cdot\bfl}{|\bfl|^2}+\bfr_2(\bfk)^\perp\frac{\bfc_0\cdot\bfk}{|\bfk|^2}\bigg)\\
&\qquad\mbox{}+\frac{\alpha}{12}\bigg(\bfr_1(\bfk+\bfl)^\perp\bigg[\frac{\bfk+\bfl}{|\bfk+\bfl|^2}\cdot\bfr_3(\bfk,\bfl)\bigg]
+\frac{1}{|\bfk|^2}\bfr_2(\bfk)^\perp\frac{\bfc_0\cdot\bfk}{|\bfk|^2}+\bfr_2(\bfl)^\perp\frac{\bfc_0\cdot\bfl}{|\bfl|^2}\bigg)\\
&\qquad\mbox{}+\frac{\alpha}{6}\bfr_1(\bfk) \frac{\bfk}{|\bfk|^2}\cdot\bigg(\bfr_1(\bfk-\bfl)\frac{\bfc_0^\perp\cdot(\bfk-\bfl)}{|\bfk-\bfl|^2}
+\bfr_1(\bfk+\bfl)\frac{\bfc_0^\perp\cdot(\bfk+\bfl)}{|\bfk+\bfl|^2}\bigg) \\
&\qquad\mbox{}+\frac{\alpha}{6}\bfr_2(\bfk-\bfl)\frac{\bfc_0\cdot(\bfk-\bfl)}{|\bfk-\bfl|^2}+\frac{\alpha}{6}\bfr_2(\bfk+\bfl)\frac{\bfc_0\cdot(\bfk+\bfl)}{|\bfk+\bfl|^2}\\
&\qquad\mbox{}+\frac{\alpha}{3}\bfl(\bfc_0^\perp\cdot\bfl)+\frac{\alpha^2}{6}\bfr_1(\bfk)\frac{\bfc_0\cdot\bfk}{|\bfk|^2}-\frac{\alpha}{6}\bfr_1(\bfk)\bigg[\frac{\bfk}{|\bfk|^2}\cdot\bfr_1(\bfl)^\perp\bigg]\frac{\bfc_0\cdot\bfl}{|\bfl|^2} -\frac{\alpha^2}{6}\bfr_1(\bfl)\frac{\bfc_0\cdot\bfl}{|\bfl|^2},
\end{align*}
with\pagebreak
\begin{align*}
\bfr_1(\bfk)&=\alpha\bfk^\perp+\bfk\mathtt{c}(|\bfk|),\\
\bfr_2(\bfk)&=\bfk(\alpha^2-|\bfk|^2)-\alpha\bfk^\perp\mathtt{c}(|\bfk|),\\
\bfr_3(\bfk,\bfl)&=\bfr_1(\bfk)\frac{\bfc_0\cdot\bfk}{|\bfk|^2}+\bfr_1(\bfl)\frac{\bfc_0\cdot\bfl}{|\bfl|^2}.
\end{align*}

\end{itemize}

\enlargethispage{1cm}

\section*{Acknowledgements}

This research was supported by the Swedish Research Council under grant no.\ 2021-06594 while the authors were in residence at Institut Mittag-Leffler in Djursholm, Sweden in Autumn 2023. Additional support was
provided by the Swedish Research Council (grant no.\ 2020-00440), the European Research Council (grant agreement no.\ 678698) and the Knut and Alice Wallenberg Foundation. The project has also received funding from the European Union's Horizon 2020
research and innovation programme under the Marie Sk\l{}odowska-Curie grant agreement no.\ 101034255.

\vspace{-4mm}\begin{figure}[!h]
\includegraphics[width = .1\textwidth]{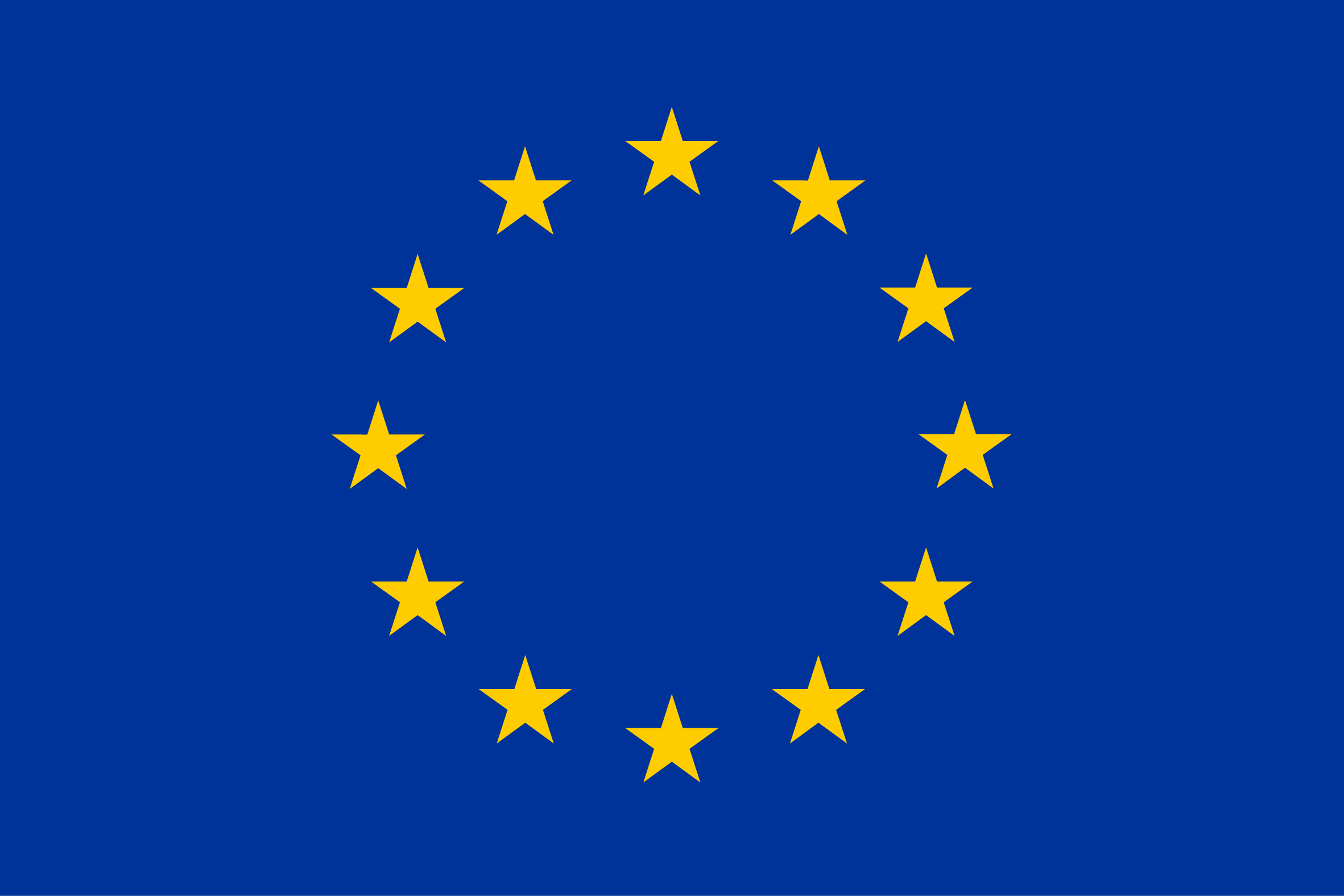}
\end{figure}

%Create the reference section using BibTeX\colon
\vspace{-6mm}\bibliography{mdg}
\bibliographystyle{standard}

\end{document}